\numberwithin{equation}{section}
\numberwithin{figure}{section}
\theoremstyle{plain}
\newtheorem{thm}{\protect\theoremname}[section]
  \theoremstyle{plain}
  \newtheorem*{conjecture*}{\protect\conjecturename}
  \theoremstyle{remark}
  \newtheorem{rem}[thm]{\protect\remarkname}
  \theoremstyle{plain}
  \newtheorem{lem}[thm]{\protect\lemmaname}
  \theoremstyle{plain}
  \newtheorem{prop}[thm]{\protect\propositionname}
  \theoremstyle{definition}
  \newtheorem{example}[thm]{\protect\examplename}
  \theoremstyle{plain}
  \newtheorem{cor}[thm]{\protect\corollaryname}
\def\makebbb#1{
    \expandafter\gdef\csname#1\endcsname{
        \ensuremath{\Bbb{#1}}}
}\makebbb{R}\makebbb{N}\makebbb{Z}\makebbb{C}\makebbb{H}\makebbb{E}\makebbb{H}\makebbb{P}\makebbb{B}\makebbb{Q}\makebbb{E}
  \providecommand{\conjecturename}{Conjecture}
  \providecommand{\corollaryname}{Corollary}
  \providecommand{\examplename}{Example}
  \providecommand{\lemmaname}{Lemma}
  \providecommand{\propositionname}{Proposition}
  \providecommand{\remarkname}{Remark}
\providecommand{\theoremname}{Theorem}
\begin{document}

\title{Real Monge-Ampère equations and Kähler-Ricci solitons on toric log
Fano varieties}

\author{Robert J. Berman, Bo Berndtsson}

\email{robertb@chalmers.se, bob@chalmers.se}

\curraddr{Mathematical Sciences - Chalmers University of Technology and University
of Gothenburg - SE-412 96 Gothenburg, Sweden }
\begin{abstract}
We show, using a direct variational approach, that the second boundary
value problem for the Monge-Ampère equation in $\R^{n}$ with exponential
non-linearity and target a convex body $P$ is solvable iff $0$ is
the barycenter of $P.$ Combined with some toric geometry this confirms,
in particular, the (generalized) Yau-Tian-Donaldson conjecture for
toric log Fano varieties $(X,\Delta)$ saying that $(X,\Delta)$ admits
a (singular) Kähler-Einstein metric iff it is K-stable in the algebro-geometric
sense. We thus obtain a new proof and extend to the log Fano setting
the seminal result of Zhou-Wang concerning the case when $X$ is smooth
and $\Delta$ is trivial. Li's toric formula for the greatest lower
bound on the Ricci curvature is also generalized. More generally,
we obtain Kähler-Ricci solitons on any log Fano variety and show that
they appear as the large time limit of the Kähler-Ricci flow. Furthermore,
using duality, we also confirm a conjecture of Donaldson concerning
solutions to Abreu's boundary value problem on the convex body $P$
in the case of a given canonical measure on the boundary of $P.$

\tableofcontents{}
\end{abstract}
\maketitle

\section{Introduction}

\subsection{Monge-Ampère equations in $\R^{n}$}

Let us start by recalling the setting for the second boundary value
problem for the real Monge-Ampère operator in the entire space $\R^{n}$
\cite{bak}. A convex function $\phi$ on $\R^{n}$ is said to be
a (classical) solution for the latter problem if it is smooth and
satisfies the following two conditions: 
\[
(i)\,\,\,\det(\frac{\partial^{2}\phi}{\partial x_{i}\partial x_{j}})=F(\phi,d\phi),
\]
 where $F$ is a given positive smooth function on $\R^{n+1}$ and
\[
(ii)\,\,\, d\phi(\R^{n})=\Omega
\]
 where $\Omega$ is a (necessarily convex) given domain in $\R^{n}.$
We will be concerned with the case when the domain $\Omega$ is \emph{bounded,
}i.e. its closure $P:=\bar{\Omega}$ is a convex body and 
\[
F(t,p)=e^{-\gamma t}g(p)^{-1}
\]
for $\gamma\in\R,$ where $g$ is a positive smooth function on $\R^{n}.$
After a trivial scaling we may as well assume that $\gamma=\pm1.$
As is well-known, the positive case is, by far, most challenging one
and the equation does then usually not admit any solutions. Our main
result gives the general structure of the solutions:
\begin{thm}
\label{thm:existe real ma intr}Let $P$ be a convex body containing
$0$ in its interior. Then there is a smooth convex function $\phi$
such that 
\[
g(d\phi)\det(\frac{\partial^{2}\phi}{\partial x_{i}\partial x_{j}})=e^{-\phi}
\]
and such that its gradient induces a diffeomorphism 
\[
d\phi:\,\R^{n}\rightarrow\mbox{int }(P)
\]
iff $0$ is the barycenter for the measure $g(p)dp$ on $P.$

Moreover, $\phi$ is then uniquely determined up to the action of
the additive group $\R^{n}$ by translations and
\begin{itemize}
\item $\phi(x)-\sup_{p\in P}\left\langle x,p\right\rangle $ is globally
bounded on $\R^{n}$ 
\item the Legendre transform $\phi^{*}$ of $\phi$ is Hölder continuous
up to the boundary of $P$ for any Hölder exponent in $[0,1[.$ 
\end{itemize}
\end{thm}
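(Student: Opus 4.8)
The plan is to realize the equation as the Euler--Lagrange equation of a suitable functional and then run the direct method of the calculus of variations. Normalize $g$ so that $\int_P g\,dp=1$ and write $h_P(x):=\sup_{p\in P}\langle x,p\rangle$ for the support function. On the space of convex functions $\phi$ on $\R^{n}$ for which $\phi-h_P$ is bounded (equivalently, for which the Legendre transform $\phi^{*}$ is finite, hence bounded, on $P$) I would consider
\[
\mathcal F(\phi):=\int_P \phi^{*}\,g\,dp-\log\int_{\R^{n}}e^{-\phi}\,dx .
\]
Computing the first variation with $\phi_t=\phi+t\psi$, using the envelope formula $\tfrac{d}{dt}\phi_t^{*}(p)=-\psi(d\phi^{*}(p))$ and the change of variables $p=d\phi(x)$, $dp=\det(D^{2}\phi)\,dx$, gives
\[
\delta\mathcal F(\phi)[\psi]=\int_{\R^{n}}\psi\Big(\tfrac{e^{-\phi}}{\int e^{-\phi}}-g(d\phi)\det(D^{2}\phi)\Big)\,dx ,
\]
so a critical point solves $g(d\phi)\det(D^{2}\phi)=e^{-\phi}/\!\int e^{-\phi}$, and adding the constant $\log\int e^{-\phi}$ produces a genuine solution of the stated equation. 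Two elementary symmetries of $\mathcal F$ will organize the argument: since $\int_P g\,dp=1$ it is invariant under $\phi\mapsto\phi+c$, while under translations $\phi\mapsto\phi(\cdot-a)$ it changes by $\langle a,\int_P p\,g\,dp\rangle$, hence is translation invariant precisely when $0$ is the barycenter of $g\,dp$.

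The necessity of the barycenter condition I would prove directly. Pushing the equation forward by $d\phi$ gives $g(p)\,dp=(d\phi)_{*}(e^{-\phi}\,dx)$ on $\mathrm{int}(P)$, whence
\[
\int_P p\,g(p)\,dp=\int_{\R^{n}}\nabla\phi\,e^{-\phi}\,dx=-\int_{\R^{n}}\nabla(e^{-\phi})\,dx=0 ,
\]
the boundary term vanishing because $e^{-\phi}$ decays (as $\phi\sim h_P$) while $g$ is bounded; dividing by $\int_P g\,dp$ shows $0$ is the barycenter.

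For the converse I would apply the direct method to $\mathcal F$. After quotienting by the additive constants (fixed by the normalization) and, when $0$ is the barycenter, by translations, the central task --- and the step I expect to be the principal obstacle --- is to prove that $\mathcal F$ is bounded below and coercive on this quotient. The barycenter condition enters exactly here: translation invariance removes the non-compact direction along which $\mathcal F$ would otherwise be driven to $-\infty$, and one must turn this into a quantitative a priori bound that simultaneously controls the growth of $\phi$ relative to $h_P$ and the behaviour of $\phi^{*}$ near $\partial P$. Granting coercivity, a minimizing sequence of normalized convex functions is precompact (Blaschke/Helly selection for convex functions of uniformly controlled growth), and both terms of $\mathcal F$ are lower semicontinuous under local uniform convergence (the exponential integral by dominated convergence using the uniform growth control, the Legendre term by Fatou's lemma), so a minimizer $\phi$ exists; it is a priori only a convex (Alexandrov) solution of the Monge--Amp\`ere equation with measure $e^{-\phi}\,dx/\!\int e^{-\phi}$.

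It remains to upgrade the minimizer and extract the refined properties. Since the right-hand side is a positive smooth density, Caffarelli's interior regularity theory yields strict convexity and smoothness of $\phi$, so that $d\phi$ is a local diffeomorphism; the second boundary condition $d\phi(\R^{n})=\mathrm{int}(P)$ follows from $\phi^{*}$ being finite and bounded on $P$, which also gives the first bullet, namely that $\phi-h_P$ is globally bounded. Uniqueness modulo $\R^{n}$-translations I would obtain from the convexity of $\mathcal F$ along the natural affine structure on convex functions, equivalently from a comparison/maximum-principle argument for the Monge--Amp\`ere operator. Finally, the H\"older continuity of $\phi^{*}$ up to $\partial P$ for every exponent in $[0,1[$ is a boundary-regularity statement for the Legendre transform: the boundedness together with the superlinear ($h_P$-type) growth of $\phi$ controls the modulus of continuity of $\phi^{*}$ at $\partial P$, which I would establish by estimating difference quotients of $\phi^{*}$ using the a priori bounds from the existence step.
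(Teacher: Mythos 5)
Your overall strategy coincides with the paper's: both realize the equation as the Euler--Lagrange equation of the functional $\int_{P}\phi^{*}g\,dp-\log\int e^{-\phi}dx$ and run the direct method, with Caffarelli's interior theory supplying regularity and a Sobolev-type estimate on $P$ supplying the global bound and the H\"older continuity of $\phi^{*}$. Your necessity argument (push-forward plus integration by parts, $\int_{P}p\,g\,dp=-\int\nabla(e^{-\phi})\,dx=0$) is a pleasant and more elementary alternative to the paper's, which instead deduces necessity from the fact that any solution maximizes the functional, combined with the observation that the functional is unbounded above along translations whenever the barycenter is not at the origin.

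There is, however, a genuine gap at exactly the point you flag as the principal obstacle: you \emph{grant} the coercivity of $\mathcal{F}$ rather than proving it, and translation invariance alone does not yield it. This coercivity estimate is the main new content of the paper (Theorem \ref{thm:coerciv k-e}): for every $\delta>0$ one has $\mathcal{G}(\phi)\leq(1-\delta)\mathcal{E}(\phi,\phi_{P})+C_{\delta}$ for all normalized $\phi.$ The paper proves it in two steps: first a crude Moser--Trudinger inequality obtained from concavity of $\mathcal{G}$ along the geodesics $\phi_{t}=((1-t)\phi_{0}^{*}+t\phi_{1}^{*})^{*}$ (a consequence of the Prekopa--Leindler inequality, the energy term being affine along such geodesics), evaluated from a reference $\phi_{0}$ satisfying $e^{-\phi_{0}}\leq C\cdot MA(\phi_{0})$; then a refinement via the scaling $\phi\mapsto t\phi(\cdot/t)$, which under the Legendre transform is $u\mapsto tu$ and improves the constant in front of the energy term to $1-\delta.$ Without this input the existence half of the theorem is not established (nor is the uniform integrability needed for the semicontinuity of the exponential term along a minimizing sequence, which the paper also extracts from the coercivity inequality via H\"older). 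A secondary, smaller gap: your first-variation computation is carried out as if the minimizer were smooth, whereas a priori it is only a convex function and the perturbation $\phi+t\psi$ leaves the space of convex functions; the paper handles this with the projection $Pr(\phi+tv)=(\phi+tv)^{**}$ and a differentiability result for $\mathcal{E}\circ Pr$ (Proposition \ref{pro:diff of composed energy convex}), using that the term $-\log\int e^{-\phi}dx$ is monotone so that the projected competitor does not increase the functional.
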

The proof uses a variational approach to construct a solution $\phi$
as a maximizer of the functional 
\[
\mathcal{G}(\phi):=\log\int_{\R^{n}}e^{-\phi}dx-\int_{P}\phi^{*}gdp
\]
on the space of all convex functions whose gradient image is $P.$
The main point of the argument is to establish a direct coercivity
estimate for the latter functional of independent interest, which
can be seen as a refined Moser-Trudinger type inequality (see Theorem
\ref{thm:coerciv k-e}). In fact, the argument shows that \emph{any}
asymptotically minimizing sequence of the functional above converges
- up to normalization - to a solution $\phi$ as in the previous theorem.
This extra flexibility will be used when establishing the convergence
of the Kähler-Ricci flow below.

\subsection{Toric Kähler-Einstein geometry}

We will mainly focus on the case when 
\[
g(p)=e^{\left\langle a,p\right\rangle }
\]
 for a given vector $a\in\R^{n}.$ The main differential-geometrical
motivation comes from the study of \emph{Kähler-Einstein metrics}
or more generally \emph{Kähler-Ricci solitons }on toric varieties.

\subsubsection{Kähler-Einstein metrics}

Recall that a Kähler form $\omega$ on a compact complex manifold
$X$ is a closed positive two-form, which equivalently means that,
locally, 
\begin{equation}
\mbox{\ensuremath{\omega}}=i\partial\bar{\partial}\phi\label{eq:omega in terms of k pot}
\end{equation}
for a local function $\phi,$ called the Kähler potential. The Kähler
metric $\omega$ is said to be\emph{ Kähler-Einstein }if the Riemannan
metric defined by its real part has constant Ricci curvature, which
in form notation is written as
\[
\mbox{Ric \ensuremath{\omega=\gamma\omega}}
\]
for some $\gamma=0,-1$ or $+1.$ Since the Ricci form $\mbox{Ric \ensuremath{\omega}}$
represents the first Chern class of $X:$ 
\[
c_{1}(X):=c_{1}(-K_{X}),
\]
where $-K_{X}:=\Lambda^{n}(TX),$ is the anti-canonical line bundle
on $X,$ it follows that, in the case $\gamma=\pm1,$ the Kähler potential
$\phi$ in \ref{eq:omega in terms of k pot} represents a positively
curved metric on the line bundle $-\gamma K_{X}.$ Hence, if $X$
admits a Kähler-Einstein metric then $c_{1}(X)$ is non-positive if
$\gamma\leq0$ and positive if $\gamma>0.$ Conversely, as shown in
the fundamental works of Yau and Aubin (when $\gamma<0)$ any complex
manifold $X$ with $c_{1}(X)$ non-positive admits a Kähler-Einstein
metric. However, in the case when $c_{1}(X)$ is positive, i.e. $X$
is a\emph{ Fano manifold, }there are well-known obstructions to the
existence of Kähler-Einstein metrics and the fundamental Yau-Tian-Donaldson
conjecture expresses all the obstructions in terms of a suitable notion
of algebro-geometric stability (see section \ref{sub:K-stability}):
\begin{conjecture*}
(Yau-Tian-Donaldson) A Fano manifold $X$ admits a Kähler-Einstein
metric iff it is $K-$polystable
\end{conjecture*}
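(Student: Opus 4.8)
The statement splits into two implications. The \emph{necessity} of K-polystability is the more accessible direction, and I would establish it through the energy functionals of K\"ahler geometry. Given a K\"ahler-Einstein metric $\omega$ on the Fano manifold $X$, the Mabuchi K-energy $\mathcal{M}$ (equivalently, Ding's functional $\mathcal{D}$) attains its minimum at $\omega$ and is convex along the weak geodesics of the space of K\"ahler potentials. To each test configuration $(\mathcal{X},\mathcal{L})$ for $(X,-K_X)$ one attaches a geodesic ray emanating from $\omega$, and the plan is to show that the asymptotic slope of $\mathcal{M}$ along this ray equals the Donaldson-Futaki invariant $\mathrm{DF}(\mathcal{X},\mathcal{L})$. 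Convexity together with the minimizing property of $\omega$ then forces $\mathrm{DF}\ge 0$, with equality only when the ray is trivial, i.e. when the test configuration is a product induced by a one-parameter subgroup of $\mathrm{Aut}(X)$; this is exactly K-polystability. The delicate point here is the regularity needed to make the slope formula rigorous for general, non-smooth test configurations, which I would handle through the pluripotential theory of finite-energy geodesics and the Legendre-type duality between the Mabuchi and Ding functionals.

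The \emph{sufficiency} direction --- K-polystability implies existence --- is the deep part. I would pursue the variational strategy closest in spirit to the present paper, recasting the problem as coercivity of an energy functional, now on the infinite-dimensional space of metrics rather than on convex functions. Concretely: (i) reformulate K-stability non-Archimedeanly, so that the Donaldson-Futaki invariant becomes the value of a non-Archimedean Ding functional $\mathcal{D}^{\mathrm{NA}}$ on the space of non-Archimedean metrics (equivalently, on filtrations or valuations), and prove that K-polystability is equivalent to the \emph{coercivity} of $\mathcal{D}^{\mathrm{NA}}$ modulo the action of $\mathrm{Aut}(X)^{0}$; (ii) transfer this to genuine coercivity of the Ding functional $\mathcal{D}$ along geodesic rays, by matching the Archimedean and non-Archimedean asymptotic slopes; (iii) run the direct method, taking a minimizing sequence for $\mathcal{D}$, normalized modulo automorphisms, and using the coercivity estimate to force strong compactness in the finite-energy topology, the limit being a critical point and hence a weak K\"ahler-Einstein potential; (iv) bootstrap regularity to a smooth K\"ahler-Einstein metric. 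Step (iii) is precisely the general analogue of the coercivity-plus-compactness argument by which Theorem \ref{thm:existe real ma intr} produces a maximizer, and its coercivity estimate plays the role of the refined Moser-Trudinger inequality.

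An alternative, and historically the first complete, route to sufficiency is the continuity method through K\"ahler-Einstein metrics with conical singularities along a divisor $D\in|-mK_X|$, solving $\mathrm{Ric}\,\omega_\beta=\beta\omega_\beta+(1-\beta)[D]$ and deforming the cone angle $2\pi\beta$ from small values, where existence is elementary, toward $\beta=1$. Openness along the path follows from an implicit function theorem in suitable conical H\"older spaces, and the crux is \emph{closedness}: from a sequence of conical solutions one extracts a Gromov-Hausdorff limit using the uniform Ricci lower bound and non-collapsing, and must identify this limit as a normal $\Q$-Fano variety carrying a weak conical K\"ahler-Einstein metric. This identification rests on the \emph{partial $C^{0}$ estimate} --- a uniform lower bound on the density of states of the pluri-anticanonical Bergman kernels --- which algebraizes the limit and exhibits the degeneration $X\rightsquigarrow X_{\infty}$ as a special test configuration. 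K-polystability then forces $X_{\infty}\cong X$ and rules out any jump of the cone angle, so the path reaches $\beta=1$ and produces a smooth K\"ahler-Einstein metric.

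The main obstacle, in either route, is the same analytic phenomenon seen from two sides. In the variational route it is establishing the strong compactness in step (iii), i.e. ruling out escape of energy to a degeneration, which is exactly where K-polystability (rather than mere K-semistability) must be invoked to close the argument; in the continuity-method route it is the partial $C^{0}$ estimate together with the Donaldson-Sun identification of Gromov-Hausdorff limits as projective varieties. I expect this compactness and identification step to be by far the hardest, since it is precisely the point at which the global algebro-geometric hypothesis of stability is converted into the analytic control --- no loss of energy, and an algebraic limit isomorphic to $X$ --- that existence requires.
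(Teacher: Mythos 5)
The first thing to note is that the paper does not prove this statement at all: it is stated as a conjecture, and was open at the time of writing. What the paper actually proves is the toric log Fano case, Theorem \ref{thm:-existence of ke intro}, where K-polystability is only tested against toric degenerations, and the route is entirely different from yours: the toric dictionary of section \ref{sec:Toric-varieties,-polytopes} converts the K\"ahler-Einstein equation into the real Monge-Amp\`ere equation of Theorem \ref{thm:existe real ma intr} on $\R^{n}$, existence is then shown to be equivalent to the barycenter of the canonical polytope sitting at the origin, and this is matched with the vanishing of the toric log Futaki invariants and with toric K-polystability via Lemma \ref{lem:lower bound on l when bary}, Prop \ref{pro:toric futaki inv} and Prop \ref{prop:log don-fut inv for toric log fano}. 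The analytic heart is the coercive Moser-Trudinger inequality of Theorem \ref{thm:coerciv k-e}, proved by a direct convexity (Prekopa) and scaling argument; none of this machinery appears in your proposal, and conversely none of it extends beyond the toric category. The only statement the paper makes about the general conjecture is a citation of \cite{berm3} for the direction ``existence implies K-polystability''.

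Judged as a proof of the general conjecture, your text has genuine gaps: it is a research program, not a proof, and each of its load-bearing steps is itself a theorem of major difficulty that you invoke rather than establish. In the necessity direction, the claim that the asymptotic slope of the Mabuchi or Ding functional along the geodesic ray of a test configuration equals the Donaldson-Futaki invariant is not correct as stated --- the slope is a non-Archimedean functional that agrees with the Donaldson-Futaki invariant only after normalizing the test configuration, and the equality case (zero slope forces a product configuration) is exactly the delicate point, which the paper defers to \cite{berm3}. In the sufficiency direction, your step (i) --- that K-polystability is equivalent to coercivity of the Ding functional modulo $\mathrm{Aut}(X)^{0}$ --- is essentially the conjecture itself in variational disguise, so assuming it makes the argument circular unless you prove it (this equivalence, and its polystable refinement handling the automorphism group, was only established years after this paper); similarly, the partial $C^{0}$ estimate and the identification of Gromov-Hausdorff limits as $\Q$-Fano varieties in your continuity-method route constitute the entire content of the eventual resolution, not a step within your argument. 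Your outline correctly locates where the difficulty lies, but locating the difficulty is not the same as overcoming it; the paper, by contrast, sidesteps all of it by restricting to the toric setting, where convex analysis makes the problem genuinely tractable.
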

More generally, from the point of view of current birational algebraic
geometry, or more precisely the Minimal Model Program (MMP), is is
natural to allow $X$ to be a\emph{ singular} Fano variety or more
generally to consider the category of \emph{log Fano varieties} $(X,\Delta)$,
where $X$ is a normal algebraic variety and $\Delta$ is a $\Q-$divisor
on $X$ such that the log anti-canonical line bundle $-(K_{X}+\Delta)$
is an ample $\Q-$line bundle. Here will also assume, as usual, that
the coefficients of $\Delta$ are $<1,$ but we do allow negative
coefficients (see section \ref{sub:Log-Fano-varieties}). The notion
of K-stability still makes sense for $X$ singular (see \cite{od,l-x}
for recent developments) and its log version was recently considered
in \cite{do3,li2,o-s}. As for the notion of a Kähler-Einstein metric
$\omega$ associated to a log Fano variety $(X,\Delta)$ it was recently
studied in \cite{bbegz}: by definition $\omega$ is a Kähler current
in $c_{1}(-(K_{X}+\Delta))$ with continuous potentials, satisfying
the following equation of currents on $X:$ 
\begin{equation}
\mbox{Ric \ensuremath{\omega-[\Delta]=\omega}}\label{eq:k-e eq for log pair intro}
\end{equation}
By the regularity result in \cite{bbegz} such a (singular) metric
$\omega$ restricts to a bona fide Kähler-Einstein metric on the Zariski
open set $X_{0}$ defined as the complement of $\Delta$ in the regular
locus of $X.$ See also section \ref{sub:Further-comparison-with}
below for relations to the theory of Kähler-Einstein metrics on Fano
manifolds with edge-cone singularities, where there is been great
progress recently.

The present paper concerns the case of \emph{toric} log Fano varieties
$(X,\Delta).$ In particular, $X$ is a\emph{ }toric variety, i.e.
a compact projective algebraic variety with an action of the complex
torus 
\[
T_{c}:=\C^{*n}\approxeq T\times\R^{n},
\]
(where $T$ is the real torus) such that $(X,T_{c})$ is an equivariant
compactification of $T_{c}$ with its standard action on itself and
the divisor $\Delta$ is supported ``at infinity'', i.e. in $X-T_{c}.$
As explained in section \ref{sec:Toric-varieties,-polytopes}) there
is a correspondence 
\[
(X,\Delta)\longleftrightarrow P
\]
 between $n-$dimensional toric log Fano varieties $(X,\Delta)$ and
rational convex polytopes $P$ in $\R^{n}$ containing $0$ in their
interior. Briefly, if $P$ is written as the intersection of affine
half-spaces $\left\langle l_{F},\cdot\right\rangle \geq-a_{F},$ where
the index $F$ runs over all facets of $P$ and $l_{F}$ denotes the
inward primitive lattice vector, normal to the facet $F,$ then 
\[
\Delta=\sum_{F}(1-a_{F})D_{F},
\]
 where $D_{F}$ is the toric invariant prime divisor defined by the
facet $F.$ Applying Theorem \ref{thm:existe real ma intr} to such
a polytope $P,$ with $g=1,$ we then deduce the following
\begin{thm}
\label{thm:-existence of ke intro} Let $X$ be a toric Fano variety.
Then the following is equivalent:
\begin{itemize}
\item $(X,\Delta)$ admits a toric log Kähler-Einstein metric
\item $0$ is the barycenter of the canonical polytope $P_{(X,\Delta)}$
associated to $X$
\item The log Futaki invariants of $(X,\Delta)$ vanish
\item $(X,\Delta)$ is log $K-$polystable with respect to toric degenerations
\end{itemize}
\end{thm}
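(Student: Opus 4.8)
The plan is to take condition (2), that the origin is the barycenter of $P:=P_{(X,\Delta)}$ with respect to Lebesgue measure $dp$, as the central hub and to establish the three equivalences $(1)\Leftrightarrow(2)$, $(2)\Leftrightarrow(3)$ and $(2)\Leftrightarrow(4)$, where I label the four conditions $(1)$--$(4)$ in the order stated. Throughout I would invoke the toric dictionary recalled in the introduction: a $T$-invariant metric on $-(K_{X}+\Delta)$ lying in the correct curvature class corresponds to a convex function $\phi$ on $\R^{n}$, the logarithmic coordinates on the open orbit $T_{c}$, whose gradient image is $\mbox{int}(P)$, with the support numbers $a_{F}$ of $P$ encoding the coefficients $1-a_{F}$ of $\Delta$.

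For $(1)\Leftrightarrow(2)$ the key point is purely a translation: the log K\"ahler-Einstein equation $\mbox{Ric }\omega-[\Delta]=\omega$, written in the $T$-invariant coordinates $x$, reduces \emph{exactly} to the real Monge-Amp\`ere equation $\det(\partial^{2}\phi/\partial x_{i}\partial x_{j})=e^{-\phi}$ with target $\mbox{int}(P)$. Indeed the $\Delta$-twisted adapted volume form produces the factor $e^{-\phi}$ on the right-hand side and the Jacobian of the gradient map on the left, while the divisor $\Delta$ is absorbed into the geometry of $P$ rather than into the measure, so that the relevant weight is $g\equiv1$. Since a toric log K\"ahler-Einstein metric is by definition $T$-invariant, Theorem \ref{thm:existe real ma intr} applied with $g=1$ produces such a metric if and only if $0$ is the barycenter of $(P,dp)$, which is precisely $(2)$.

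For $(2)\Leftrightarrow(3)$ I would use the standard toric formula expressing the log Futaki invariant in the direction of a torus-generated holomorphic vector field $v\in\R^{n}$ as the linear functional $\mbox{Fut}(v)=c\left\langle v,\int_{P}p\,dp\right\rangle$ for a fixed nonzero constant $c$; since the reductive part of the automorphism group of a toric variety is the torus itself, all log Futaki invariants vanish iff $\int_{P}p\,dp=0$, i.e. iff $\mbox{bar}(P)=0$. For $(2)\Leftrightarrow(4)$ I would recall that a toric degeneration of $(X,\Delta)$ is encoded by a rational piecewise-linear convex function $u$ on $P$, and that the associated log Donaldson-Futaki invariant, equivalently the non-Archimedean Ding invariant, computes combinatorially to the \emph{affine-linear} functional $\mbox{DF}(u)=\frac{1}{V}\int_{P}u\,dp-u(0)$, with $V=\mbox{Vol}(P)$ (consistency with the previous formula on linear $u$, which correspond to product degenerations, is a useful sanity check). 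If $\mbox{bar}(P)=0$ then Jensen's inequality for the uniform probability measure $dp/V$ gives $\frac{1}{V}\int_{P}u\,dp\geq u(\mbox{bar}(P))=u(0)$, so $\mbox{DF}(u)\geq0$, with equality exactly when $u$ is affine on $P$, i.e. when the test configuration is a product; this is log K-polystability. Conversely, if $\mbox{bar}(P)\neq0$, testing against the linear function $u(p)=\left\langle -\mbox{bar}(P),p\right\rangle$ yields $\mbox{DF}(u)=-|\mbox{bar}(P)|^{2}<0$, destabilizing $(X,\Delta)$.

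The genuine analytic difficulty, namely the existence direction $(2)\Rightarrow(1)$, has already been isolated in Theorem \ref{thm:existe real ma intr}, so within the present statement the remaining work is the toric translation together with the combinatorics. The main obstacle I anticipate is therefore bookkeeping rather than analysis: one must check carefully that the $\Delta$-twisted volume form, the log Futaki invariant and the log Donaldson-Futaki invariant all reduce to integrals against the \emph{bare} Lebesgue measure $dp$ on $P$ (so that $g\equiv1$ throughout), paying particular attention to the canonical choice of boundary measure and to the fact that the coefficients $1-a_{F}$ of $\Delta$ are allowed to be negative; and one must confirm that the equality case in Jensen's inequality correctly isolates the product test configurations, which is exactly what promotes K-semistability to K-polystability.
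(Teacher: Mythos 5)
Your overall architecture --- using the barycenter condition as the hub, reducing $(1)\Leftrightarrow(2)$ to Theorem \ref{thm:existe real ma intr} with $g\equiv1$ via the toric dictionary, and treating $(3)$ and $(4)$ combinatorially --- is exactly the paper's. The equivalence $(1)\Leftrightarrow(2)$ and the computation of the Futaki invariant on toric vector fields are essentially correct as you state them. But there is a genuine gap in $(2)\Leftrightarrow(4)$: your asserted formula $\mathrm{DF}(u)=\frac{1}{V}\int_{P}u\,dp-u(0)$ for the log Donaldson--Futaki invariant of a toric degeneration is not correct. That expression is the (non-Archimedean) \emph{Ding} invariant; the Donaldson--Futaki invariant, which is what the paper's definition of log K-polystability (via weights on $\det H^{0}(X_{0},kL_{0})$) actually requires, is the functional $\mathcal{L}_{\sigma_{P}}(u)=\int_{\partial P}u\,\sigma_{P}-n\int_{P}u\,dp$ involving a boundary integral against the canonical measure $\sigma_{P}$ (Prop \ref{prop:log don-fut inv for toric log fano}, obtained by combining Donaldson's weight computation with the relation \ref{eq:formula for F' in terms of F} between $\sigma_{P}$ and the lattice measure $\sigma_{P}'$). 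The two functionals genuinely differ on non-affine $u$: already for $P=[-1,1]$ and $u=|p|$ one gets $\mathcal{L}_{\sigma_{P}}(u)=1$ versus $1/2$ for your expression. They happen to have the same positivity and vanishing behaviour, but proving that is precisely the content of Lemma \ref{lem:lower bound on l when bary} (namely $\mathcal{L}_{\sigma_{P}}(u)\geq\int_{P}u\,dp$ for convex $u$ with $u(0)=0$, with equality iff $u$ is linear), which is the step your Jensen argument silently replaces. So you need either to derive the correct boundary-integral formula and then prove that lemma, or to justify why positivity of the Ding invariant is equivalent to positivity of the Donaldson--Futaki invariant; as written, neither is done.

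Two smaller points. First, your reduction of $(3)$ to toric vector fields rests on the claim that the reductive automorphism group of a toric variety is the torus; this is false (e.g. $\mathrm{Aut}(\P^{n})=PGL(n+1)$). The reduction is still valid because the Futaki character kills the derived subalgebra and is hence determined on a maximal torus, but the paper instead handles the non-toric directions by observing that boundedness of the Ding (hence Mabuchi) functional forces the linear-in-$t$ function $\mathcal{M}_{(X,\Delta)}(\phi_{t})$ to be constant along any one-parameter flow. Second, in checking the equality case you should make sure the destabilizing and vanishing statements are phrased for the Donaldson--Futaki invariant with the correct sign convention used in the paper ($f(\mathcal{X},\mathcal{L})\leq0$ with equality only for products).
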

This confirms the (generalized) Yau-Tian-Donaldson conjecture in the
category of toric log Fano varieties. Of course, it is natural to
ask if log K-polystability wrt toric degenerations implies log K-polystability
wrt \emph{any} test configuration? In fact, as shown very recently
in \cite{berm3}, in a general non-toric setting, the existence of
a log Kähler-Einstein metric does imply log K-polystability and hence
the full Yau-Tian-Donaldson conjecture holds for any toric log Fano
variety.

In the case when $X$ is smooth and $\Delta$ is the trivial divisor
the previous theorem was first shown in the seminal work \cite{w-z}
by Zhou-Wang, except for the last point, proven in \cite{z-z0}. One
of our motivations for considering Kähler-Einstein metrics on \emph{singular
}toric varieties $X$ comes from our recent work on the Ehrhart volume
conjecture for polytopes \cite{b-b2}. Another motivation comes from
the fact that, while there exist only a finite number of smooth Fano
varieties of dimension $n,$ there exists an infinite number of singular
ones. On the other hand it is well-known that the number becomes finite
if the Gorenstein index of $X$ is fixed. The most well-studied class
of toric Fano varities are those of Gorenstein index one, which correspond
to\emph{ reflexive} lattice polytopes $P$ (i.e. the dual $P^{*}$
is also a lattice polytope). This is a huge class of lattice polytopes
which plays an important role in string theory, as they give rise
to many examples of mirror symmetric Calabi-Yau manifolds \cite{ba}.
Already in dimension three there are 4319 isomorphism classes of such
polytopes \cite{k-s}, while there are only 105 families of \emph{smooth
}Fano threefolds, all in all. 

For a general log Fano variety $(X,\Delta)$ we also obtain a generalization
of recent results of Székelyhidi \cite{sz} and Li \cite{li1} concerning
greatest lower bounds on the Ricci curvature of metrics in $c_{1}(-(K_{X}+\Delta))$
(see Theorems \ref{thm:general R as ricci curv}, \ref{thm:R for lof fano}).

\subsubsection{Kähler-Ricci solitons}

In the case when $X$ is smooth it was furthermore shown in \cite{w-z}
that any toric Fano manifold admits a (shrinking)\emph{ Kähler-Ricci
soliton, }i.e. a Kähler metric $\omega$ and an associated holomorphic
vector field $V$ on $X$ such that 
\begin{equation}
\mbox{Ric \ensuremath{\omega=\omega}}+L_{V}\omega,\label{eq:k-r soliton eq intro}
\end{equation}
 where $L_{V}$ denotes the Lie derivative of $\omega$ wrt (the real
part of) $V.$ In the case when $(X,\Delta)$ is a log Fano variety
we will say that $\omega$ is a \emph{log Kähler-Ricci soliton} associated
to $(X,\Delta,V)$ if $\omega$ is a Kähler current in $c_{1}(-(K_{X}+\Delta))$
with continuous potentials satisfying the equation \ref{eq:k-r soliton eq intro},
with $\mbox{Ric \ensuremath{\omega}}$ replaced by the log Ricci curvature
$\mbox{Ric \ensuremath{\omega-}}[\Delta]$ and such that $\omega$
is smooth on $X_{0}.$ One motivation for studying Kähler-Ricci solitons
on a singular toric variety (even when $\Delta$ is trivial) is a
conjecture of Tian \cite{ti1} saying that on any Fano manifold the
Kähler-Ricci flow converges, modulo automorphisms, to a Kähler-Ricci
soliton on a Zariski open set of codimension at least two (the complex
structure is allowed to jump in the limit; see also \cite{so-t} for
the corresponding conjecture for general Fano varieties). 
\begin{thm}
\label{thm:existe of k-r sol intro}Any toric log Fano variety $(X,\Delta)$
admits a (singular) toric log Kähler-Ricci soliton $(\omega,V),$
where the metric $\omega$ is unique up to toric automorphisms and
the vector field $V$ is uniquely determined by the vanishing of the
modified log Futaki invariants associated to $(X,\Delta,V).$ Concretely,
$V$ is the invariant holomorphic vector field with components $a_{i},$
where the vector $a$ is the unique critical point of the Laplace
transform of the measure $1_{P_{(X,\Delta)}}dp.$ 
\end{thm}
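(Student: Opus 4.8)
The plan is to reduce the soliton problem to the real Monge-Amp\`ere equation of Theorem \ref{thm:existe real ma intr} with the exponential weight $g(p)=e^{\langle a,p\rangle}$, and then to pin down the soliton vector field $V$ --- equivalently the parameter $a$ --- by a soft convexity argument applied to the Laplace transform of $P$. First I would invoke the toric dictionary of Section \ref{sec:Toric-varieties,-polytopes}: a $T$-invariant metric $\omega$ in $c_{1}(-(K_{X}+\Delta))$ corresponds, in the logarithmic coordinates on the open orbit, to a convex function $\phi$ on $\R^{n}$ with $d\phi(\R^{n})=\mathrm{int}(P)$, and the log Ricci equation $\mathrm{Ric}\,\omega-[\Delta]=\omega$ becomes $\det(\partial^{2}\phi/\partial x_{i}\partial x_{j})=e^{-\phi}$. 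Taking $V$ to be the invariant holomorphic vector field with components $a_{i}$, the Lie-derivative term $L_{V}\omega$ in the soliton equation (\ref{eq:k-r soliton eq intro}) contributes the factor $e^{\langle a,d\phi\rangle}$, so that the log K\"ahler-Ricci soliton equation for $(X,\Delta,V)$ is exactly
\[
e^{\langle a,d\phi\rangle}\det\Big(\frac{\partial^{2}\phi}{\partial x_{i}\partial x_{j}}\Big)=e^{-\phi},\qquad d\phi(\R^{n})=\mathrm{int}(P),
\]
i.e.\ the equation of Theorem \ref{thm:existe real ma intr} with $g(p)=e^{\langle a,p\rangle}$.

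By that theorem this equation is solvable --- with $\phi$ smooth and $d\phi$ a diffeomorphism onto $\mathrm{int}(P)$ --- precisely when $0$ is the barycenter of $e^{\langle a,p\rangle}\,dp$ on $P$, that is when $\int_{P}p\,e^{\langle a,p\rangle}\,dp=0$. The key observation is that this is the critical-point equation $\nabla L(a)=0$ for the Laplace transform $L(a):=\int_{P}e^{\langle a,p\rangle}\,dp$. I would then show $L$ admits a unique critical point. It is smooth with Hessian $\int_{P}p_{i}p_{j}\,e^{\langle a,p\rangle}\,dp$, which is positive definite because $\int_{P}\langle\xi,p\rangle^{2}e^{\langle a,p\rangle}\,dp>0$ for every $\xi\neq 0$ (the integrand being positive on a set of positive measure, as $P$ has nonempty interior); hence $L$ is strictly convex. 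Coercivity follows from $0\in\mathrm{int}(P)$: for a unit vector $u$ there is a ball about $0$ contained in $P$ on which $\{\langle u,p\rangle\geq\delta\}$ has positive measure for some $\delta>0$, forcing $L(tu)\geq c\,e^{t\delta}\to\infty$ as $t\to\infty$. A strictly convex coercive function has a unique minimizer, which is its unique critical point $a$; this determines $V$. Moreover, under the dictionary the modified log Futaki invariant evaluated on the invariant field with components $b$ is proportional to the pairing of $b$ with $\nabla L(a)=\int_{P}p\,e^{\langle a,p\rangle}\,dp$, so its vanishing for all $b$ is equivalent to $\nabla L(a)=0$; thus $V$ is uniquely characterized by the vanishing of these invariants.

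Finally I would transport the solution back across the dictionary. For the chosen $a$, Theorem \ref{thm:existe real ma intr} yields $\phi$, unique up to additive translation by $\R^{n}$; since these translations correspond exactly to the non-compact toric automorphisms, the associated metric $\omega$ is a log K\"ahler-Ricci soliton for $(X,\Delta,V)$, unique up to toric automorphism. The stated boundary regularity of $\phi$ --- that $\phi-\sup_{p\in P}\langle x,p\rangle$ is bounded and $\phi^{*}$ is H\"older up to $\partial P$ --- guarantees that $\omega$ is a K\"ahler current in $c_{1}(-(K_{X}+\Delta))$ with continuous potentials, while smoothness of $\phi$ together with the diffeomorphism property gives that $\omega$ is smooth on $X_{0}$. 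I expect the main obstacle to be the bookkeeping of the toric dictionary: correctly matching the Lie-derivative term to the exponential weight and the modified log Futaki invariant to $\nabla L$, so that the purely convex-analytic statement (existence and uniqueness of the critical point of $L$) translates faithfully into the geometric existence and uniqueness. Given Theorem \ref{thm:existe real ma intr}, the remaining analytic content --- coercivity and strict convexity of the Laplace transform --- is comparatively soft.
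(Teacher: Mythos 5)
Your reduction is exactly the paper's: for the invariant field with components $a_{i}$ the soliton equation becomes the real Monge--Amp\`ere equation of Theorem \ref{thm:existe real ma intr} with weight $g(p)=e^{\langle a,p\rangle}$ (the paper writes this as $MA_{\R}(\phi)=Ce^{-\phi+\langle a,d\phi\rangle}dx$ and matches the $L_{V}\omega$ term with $dd^{c}\langle a,d\phi\rangle$ via Cartan's formula), the barycenter condition becomes $\nabla L(a)=0$ for the Laplace transform $L$ of $1_{P}dp$, uniqueness of $a$ follows from strict convexity plus the coercivity forced by $0\in\mbox{int}(P)$, and the identification of the modified log Futaki invariants with the pairing against $\nabla L(a)$ is the content of Prop.\ \ref{pro:toric futaki inv}. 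Up to an immaterial sign convention on $a$, this part is correct and coincides with the paper's argument.

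The gap is in your final paragraph, on regularity. Smoothness of $\phi$ on $\R^{n}$ together with $d\phi$ being a diffeomorphism onto $\mbox{int}(P)$ only gives smoothness of $\omega$ on the open orbit $T_{c}$. But $X_{0}$, the complement of the support of $\Delta$ in $X_{reg}$, is in general strictly larger than $T_{c}$: it contains the portions of the invariant boundary divisors $D_{F}$ with $a_{F}=1$ (which do not occur in $\Delta$) lying in $X_{reg}$. Smoothness of $\omega$ across those divisors is not a consequence of Theorem \ref{thm:existe real ma intr}; the paper proves it separately in Section \ref{sub:Regularity-of-singular} by running the modified K\"ahler--Ricci flow from $\psi_{0}=\psi$, using Song--Tian's smoothing on $X_{0}$ for $t>0$ and the monotonicity of $\mathcal{G}_{g}$ along the flow to conclude $\psi_{t}=\psi_{0}$. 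Likewise, continuity of the potential on all of $X$ is not deduced directly from the H\"older continuity of $\phi^{*}$ up to $\partial P$; the paper gets it from the uniform bound $\left|\langle a,d\phi\rangle\right|\leq C$ (formula \ref{eq:hamiltonian toric-2}), which places the density of the Monge--Amp\`ere measure in $L^{p}(X,\mu_{P})$ for every $p>1$, and then invokes the continuity result of \cite{bbegz}. Without a substitute for these two steps your $\omega$ is only known to be a soliton on the torus, not a log K\"ahler--Ricci soliton in the sense defined in the introduction (continuous potentials on $X$ and smooth on $X_{0}$).
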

We briefly remark that, given a log Fano variety $(X,\Delta),$ it
seems natural to expect that one can obtain a \emph{complete} Kähler-Ricci
soliton on the quasi-projective variety $X-\Delta$ (i.e. the complement
in $X$ of the support of $\Delta)$ by taking suitable limits of
log Kähler-Ricci solitons (see section \ref{sub:Relations-to-complete}).
This is in line with the discussion in \cite{do3} concerning limits
of Kähler-Einstein metrics with edge-cone singularities.

It is interesting to compare the uniqueness property for toric Kähler-Einstein
metrics contained in the previous theorem with the general results
in \cite{bern2,bbegz} saying that any two log Kähler-Einstein metrics
associated to a given log Fano variety $(X,\Delta)$ coincide up to
the action of the automorphism group of $\mbox{\ensuremath{(X,\Delta)}},$
when $\Delta$ has\emph{ positive} coefficients. However, when negative
coefficients are present it is well-known that this uniqueness property
fails in general and hence the uniqueness property in the toric category
- in the case when the divisor $\Delta$ has negative coefficients
- appears to be rather surprising (compare the discussion in example
\ref{ex: on the sphere}).

We will also show, building on \cite{bbegz}, that on any Fano variety
$X$ the Kähler-Ricci flow converges weakly, modulo automorphisms,
to a Kähler-Ricci soliton on $X$ (Theorem \ref{thm:conv of k-r flow}).
This gives a (weak) confirmation of the toric case of the conjecture
in \cite{so-t} (which asks for the stronger notion of Gromov-Hausdorff
convergence). We recall that in the case of a smooth Fano variety,
not necessary toric, the (strong) convergence towards a Kähler-Ricci
soliton - when one exists - was shown by Tian-Zhou \cite{t-z2}, using
Perelman's estimates.

W next turn to a dual version of Theorem \ref{thm:existe real ma intr}
formulated directly on the convex body $P.$ It concerns the ``Kähler-Einstein
case'' when $g=1$ and is motivated by the works of Abreu \cite{ab}
and Donaldson \cite{do2}. First recall that a Kähler metric $\omega$
on a complex complex manifold $X$ satisfies the Kähler-Einstein equation
precisely when $\omega$ is in $c_{1}(X)$ and its scalar curvature
$S_{\omega}$ is constant and equal to one with appropriate normalizations.
Moreover, as shown by Bando-Mabuchi $\omega$ is then a minimizer
of the Mabuchi K-energy functional.

\subsection{Abreu's equation on a convex body}

As shown by Abreu \cite{ab} in the toric setting the scalar curvature
of the Kähler metric on $T_{c}$ induced by the Hessian of a smooth
and strictly convex function $\phi$ on $\R^{n}$ may be written in
term of the Legendre transform $u$ of $\phi$ as $S(u),$ where $S$
is the following fourth order fully non-linear operator: 
\begin{equation}
S(u):=-\sum_{i=1}^{n}\frac{\partial^{2}u^{ij}}{\partial x_{i}\partial x_{j}},\label{eq:def of S oper intro}
\end{equation}
 where $(u^{ij})$ denotes the inverse of the Hessian matrix $(u_{ij})=(\frac{\partial^{2}u}{\partial x_{i}\partial x_{j}}).$
As a consequence any smooth solution $\phi$ as in Theorem \ref{thm:existe real ma intr}
(for $g=1)$ yields a solution to an equation in the interior of $P$
involving $S(u):$ 
\[
S(u)=1
\]
But there may be many very different solutions to the latter equation
since the boundary behavior of $u$ at $\partial P$ has to be taken
into account. To make this precise we note that there is canonical
measure $\sigma_{P}$ defined on the boundary of $P.$ It may be defined
in terms of the normal variations of the domain $P$ (see formula
\ref{eq:canonical measure text}). Following Donaldson \cite{d0}
any measure $\sigma,$ absolutely continuous wrt the induced Euclidean
measure $\lambda_{\partial P}$ on the boundary, defines a functional
$\mathcal{F}_{\sigma}$ on the space $\mathcal{C}^{\infty}$ of all
strictly convex functions $u$ on $P$ which are smooth in the interior
and continuous up to the boundary: 
\[
\mathcal{F}_{\sigma}(u):=-\int_{P}\log\det(u_{ij})dp+\mathcal{L}_{\sigma}(u),
\]
where $\mathcal{L}_{\sigma}$ is the linear functional 
\begin{equation}
\mathcal{L}_{\sigma}(u):=(\int_{\partial P}u\sigma-a\int_{P}udp),\,\,\,\, a:=\int_{\partial P}\sigma/\int_{P}dp\label{eq:def of general  linear functional intro}
\end{equation}
As explained in section \ref{sub:The-Mabuchi-funtional} the functional
$\mathcal{F}_{\sigma_{P}}$ may, in the case when $P=P_{(X,\Delta)}$
for a log Fano variety $(X,\Delta)$ be identified with the log version
of the Mabuchi K-energy functional. 
\begin{thm}
\label{thm:donaldson type intro}Let $P$ be a convex body containing
$0$ in its interior. Then the functional $\mathcal{F}_{\sigma_{P}}$
admits a minimizer $u$ in $\mathcal{C}^{\infty}$ iff $0$ is the
barycenter of $P.$ Moreover, the minimizer is then unique modulo
the addition of affine functions and satisfies Abreu's equation 
\[
S(u)=1
\]
in the interior of $P.$ 
\end{thm}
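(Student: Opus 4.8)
The plan is to prove Theorem~\ref{thm:donaldson type intro} by exploiting the duality between $\mathcal{F}_{\sigma_P}$ on convex functions $u$ over $P$ and the functional $\mathcal{G}$ on convex functions $\phi$ over $\R^n$ whose gradient image is $P$, via the Legendre transform. The key observation is that if $\phi$ is a smooth convex function with $d\phi:\R^n\to\mathrm{int}(P)$ a diffeomorphism and $u=\phi^*$ is its Legendre transform, then $u_{ij}$ is the inverse of the Hessian of $\phi$ in dual coordinates, so that $\det(u_{ij})$ and the Monge-Amp\`ere density of $\phi$ are reciprocal. I first would make precise the relationship between the two functionals: one expects that (up to an additive constant and a sign convention) $\mathcal{F}_{\sigma_P}(\phi^*) = -\mathcal{G}(\phi)$ modulo a normalization, so that maximizers of $\mathcal{G}$ correspond exactly to minimizers of $\mathcal{F}_{\sigma_P}$. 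The crucial point is that the boundary term $\mathcal{L}_{\sigma_P}$ in $\mathcal{F}$ should match the linear term $-\int_P \phi^* g\,dp$ (with $g=1$) plus the normalization $\log\int e^{-\phi}dx$ after an integration-by-parts that relates the boundary integral against $\sigma_P$ to the interior integral of $u=\phi^*$.

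Concretely, I would proceed in three steps. First, establish the integration-by-parts identity expressing $\int_{\partial P}u\,\sigma_P$ in terms of interior data of $u$, using the definition of the canonical measure $\sigma_P$ via normal variations; this is where the specific choice $\sigma=\sigma_P$ enters and makes the affine-invariance and the barycenter condition transparent. Second, using Theorem~\ref{thm:existe real ma intr} with $g=1$, produce the candidate minimizer: when $0$ is the barycenter of $P$, that theorem yields a smooth convex $\phi$ solving $\det(\partial^2\phi)=e^{-\phi}$ with $d\phi:\R^n\to\mathrm{int}(P)$ a diffeomorphism, and I would set $u:=\phi^*$. Taking the Legendre transform of the Monge-Amp\`ere equation and differentiating, the relation $\det(u_{ij})=e^{\phi}=e^{u^*}$ combined with the fourth-order structure of $S$ yields $S(u)=1$ in $\mathrm{int}(P)$ by a direct computation in dual coordinates. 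Third, translate the barycenter condition: the linear functional $\mathcal{L}_{\sigma_P}$ is well-defined and the existence of a minimizer is equivalent to $\mathcal{L}_{\sigma_P}$ being bounded below on the relevant affine-invariant slice, which by the duality is equivalent to the coercivity of $\mathcal{G}$, which Theorem~\ref{thm:existe real ma intr} tells us holds precisely when $0$ is the barycenter.

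For uniqueness modulo affine functions, I would use that $u\mapsto-\int_P\log\det(u_{ij})\,dp$ is strictly convex along linear paths in $u$ (since $\log\det$ is concave on positive-definite matrices), while $\mathcal{L}_{\sigma_P}$ is linear; hence $\mathcal{F}_{\sigma_P}$ is strictly convex modulo the affine functions in its kernel, forcing the minimizer to be unique up to addition of affine functions. The affine ambiguity corresponds exactly to the $\R^n$-translation ambiguity of $\phi$ in Theorem~\ref{thm:existe real ma intr} together with the additive normalization. The main obstacle I anticipate is the regularity and boundary-behavior bookkeeping: to genuinely identify $\mathcal{F}_{\sigma_P}(\phi^*)$ with $-\mathcal{G}(\phi)$ one must control the boundary integral $\int_{\partial P}u\,\sigma_P$ for $u=\phi^*$, which is only H\"older continuous up to $\partial P$ (as stated in Theorem~\ref{thm:existe real ma intr}), so the integration-by-parts and the passage from the smooth class $\mathcal{C}^\infty$ to this limiting $u$ require an approximation argument showing the functional is continuous along the relevant minimizing sequences. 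Verifying that the minimizer obtained from the variational problem on $\R^n$ actually lands in the admissible class $\mathcal{C}^\infty$ on $\mathrm{int}(P)$ — i.e. that interior smoothness of $\phi$ transfers to interior smoothness of $u=\phi^*$ — is a standard but essential consequence of the nondegeneracy of the Hessian guaranteed by the diffeomorphism property, and I would invoke this to close the argument.
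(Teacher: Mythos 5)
Your overall strategy (Legendre duality between $P$ and $\R^{n}$, feeding in Theorem \ref{thm:existe real ma intr}, strict convexity of $-\int\log\det(u_{ij})$ for uniqueness, and a direct dual-coordinate computation for $S(u)=1$) is the right one, and the uniqueness and Abreu-equation steps would go through. But the central identity you build the argument on is false: $\mathcal{F}_{\sigma_{P}}(\phi^{*})$ is \emph{not} equal to $-\mathcal{G}(\phi)$ up to normalization. What is true (Proposition \ref{pro:form for F in terms of linear}) is that $\mathcal{F}_{\sigma_{P}}(u)$ coincides, up to the factor $V(P)/n!$, with the \emph{Mabuchi-type} functional $\mathcal{M}_{P}(u^{*})$: the term $-\int_{P}\log\det(u_{ij})dp$ is the relative entropy $D(MA(\phi),dx)$ rewritten by the change of variables $p=d\phi_{|x}$, and no integration by parts can convert it into the free-energy term $\log\int e^{-\phi}dx$ appearing in $\mathcal{G}$. (A quick sanity check: under $\phi\mapsto\phi+c$ the two sides of your proposed identity scale differently, and for fixed $\int e^{-\phi}dx$ and $\int_{P}\phi^{*}dp$ the functional $\mathcal{F}_{\sigma_{P}}$ still varies through $\int\log\det(u_{ij})$.) The two functionals agree only at the level of infima and minimizers, and establishing \emph{that} is a genuine step: the paper does it via an infinite-dimensional Legendre/convex duality (the Gibbs variational principle identifying $D$ and $E$ as Legendre transforms of $\mathcal{I}_{-}$ and $\mathcal{E}_{-}$, Proposition \ref{pro:comparison mab and ding for P }). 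Without this bridge you cannot conclude that the $\phi$ produced by Theorem \ref{thm:existe real ma intr} minimizes $\mathcal{F}_{\sigma_{P}}$, nor that existence of a minimizer of $\mathcal{F}_{\sigma_{P}}$ forces the barycenter condition.

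The second weak point is your reliance on an integration by parts on $P$ relating $\int_{\partial P}u\,\sigma_{P}$ to interior data, both to identify the functionals and to verify that $u=\phi^{*}$ is a critical point of $\mathcal{F}_{\sigma_{P}}$. For a general convex body $\partial P$ need not be smooth, $\sigma_{P}$ is only an a.e.-defined measure, $u$ is merely H\"older up to the boundary, and $(u^{ij})$ has no controlled boundary asymptotics (no Guillemin-type boundary conditions are available here); the paper explicitly designs its argument to avoid any integration by parts on $P$ for exactly this reason. The identity $\frac{1}{n!}\int\phi\, MA(\phi)=\int_{\partial P}u\,\sigma_{P}-(n+1)\int_{P}u\,dp$ is instead obtained by the elementary dilation argument $\sigma(t)=\int_{tP}u\,dp$, which sidesteps all boundary regularity issues. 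So the gap is concrete: you need to replace the claimed pointwise equality of $\mathcal{F}_{\sigma_{P}}$ with $-\mathcal{G}$ by (i) the change-of-variables identification of $\mathcal{F}_{\sigma_{P}}$ with the Mabuchi functional and (ii) the Ding--Mabuchi duality equating their infima and minimizers; as written, the proposal's main step does not hold.
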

Donaldson conjectured (see Conjecture 7.2.2 in \cite{d0}) that, given
\emph{any} measure $\sigma$ as above there is a corresponding minimizer
under the following condition:

\[
\mathcal{L}_{\sigma}(u)>0
\]
for any non-affine convex function. In our ``canonical case'' where
the measure in question is $\sigma_{P}$ the latter condition is satisfied
precisely when $0$ is the barycenter of $P$ (see Lemma \ref{lem:lower bound on l when bary})
and the previous corollary thus confirms Donaldson's conjecture in
this case. The case when $P$ is a two-dimensional polytope and $\sigma$
coincides with a multiple of $\lambda_{\partial P}$ on each facet
was settled by Donaldson in a series of papers leading up to \cite{do2}.
As emphasized in \cite{d0} the main motivation for Donaldson\textquoteright{}s
conjecture comes from the toric version of the general Yau-Tian-Donaldson
conjecture concerning constant scalar curvature metrics in $c_{1}(L)$
for a given polarized manifold $(X,L),$ which, as explained by Donaldson,
corresponds to a certain measure on the boundary of the lattice polytope
$P_{(X,L)}$ determined by the integral structure. As it turns this
latter measure coincides with our measure $\sigma_{P}$ precisely
when $(X,L)$ is equal to $(X,-K_{X})$ for a toric Fano variety (see
section \ref{sub:Comparison-with-Donaldson's}). The point - from
our point of view - is that any toric line bundle $L\rightarrow X$
can always be written as $L=-(K_{X}+\Delta),$ where $(X,\Delta)$
is a toric log Fano variety and hence Theorem \ref{thm:existe of k-r sol intro}
furnishes, under the corresponding barycenter condition, a Kähler
current in $c_{1}(L)$ with constant Ricci curvature on $X-\Delta$
and where the singularities along $\Delta$ are encoded by the measure
$\sigma_{P}$ (compare Cor \ref{cor:existe of log ke if delta changes}).

\subsection{\label{sub:Further-comparison-with}Further comparison with previous
results and methods}

In terms of toric geometry the key ingredient in our approach is a
direct convexity argument showing that the Ding type functional $\mathcal{G}_{(X,\Delta,V)}$
associated to a toric log Fano variety $(X,\Delta)$ with a toric
vector field $V$ is\emph{ relatively proper }(in the sense of \cite{z-z,z-z2})
and even\emph{ relatively coercive }on the space of $T-$invariant
metrics, if the appropriate assumption on the barycenter (Futaki invariant)
holds (see Theorems \ref{thm:coerciv k-e}, \ref{thm:mab functional is coerc for toric is equi to ke}).
Given this relative properness we can adapt the variational approach
in \cite{begz,berm2,bbegz} to our setting to deduce the existence
of a maximizer satisfying the corresponding Kähler-Ricci solution
equation. The coercivity of $\mathcal{G}_{(X,\Delta,V)}$ implies
in particular that the corresponding Mabuchi K-energy type functional
$\mathcal{M}_{(X,\Delta,V)}$ is also relatively coercive. It should
be pointed out that in the general setting of a \emph{smooth} Fano
manifold $X,$ not necessarily toric, but with $\Delta=0,$ the properness
of the corresponding functionals - a priori assuming the existence
of a Kähler-Einstein metric - was shown by Tian \cite{ti1}, who also
conjectured its coercivity, eventually proved in \cite{p-s-s-w}.
For the corresponding results in the presence of a Kähler-Ricci soliton,
see \cite{c-t-z}. 

Another variational approach approach, in the more general setting
of constant scalar curvature Kähler metrics in $c_{1}(L),$ for $(X,L)$
smooth and toric, has been developed in \cite{z-z,z-z0,z-z2} building
on \cite{d0}. In particular, it is shown in \cite{z-z2}, that if
the corresponding Mabuchi functional $\mathcal{M}_{(X,L)}$ is relatively
proper, then it admits a weak minimizer. However, the question of
its regularity and whether it satisfies the constant scalar curvature
equation was left open. One virtue of the present approach is thus
that, when $L=-K_{:X},$ the minimizer can indeed be shown to satisfy
the Kähler-Einstein equation, even in the general setting of log Kähler-Einstein
metrics and Kähler-Ricci solitons. On the other hand, our methods
are closely tied to the Monge-Ampère operator and it does not seem
clear, at this point, how to extend them to the general setting of
constant scalar curvature metrics. 

Log Fano varieties $(X,\Delta)$ with $X$ smooth and $\Delta=(1-t)D$
for a smooth divisor $D$ and $t\in[0,1[$ have recently been studied
in depth in \cite{do3,j-m-r} from the point of view of \emph{edge-cone
singularities. }In particular, assuming that the corresponding Mabuchi
functional $\mathcal{M}_{(X,\Delta)}$ is proper it was shown in \cite{j-m-r}
how to use a continuity method to obtain Kähler-Einstein metrics on
$X-D$ which have cone singularities with an angle $2\pi t$ transversely
to $D$ (and in particular the metrics satisfy the equation \ref{eq:k-e eq for log pair intro}
on $X).$ More precisely, the metrics admit a polyhomogenous expansion
along $D$ in the sense of the ``edge calculus''. It seems likely
that, using these latter results, it can be shown that when $(X,(1-t)D)$
is moreover toric the Kähler-Einstein metrics constructed here also
have cone singularities etc. However, there is a technical problem
coming from the fact that in the toric setting $\mathcal{M}_{(X,\Delta)}$
is only \emph{relatively} proper due to the presence of holomorphic
vector fields. It should also be pointed that, under the assumption
that $t\in]0,1/2[$ (and similarly in the log smooth case where $X$
is smooth and $D$ has simple normal crossings) it is shown in \cite{cgh}
that\emph{ any} log Kähler-Einstein metrics has cone singularities.
Of course, it would also be very interesting to understand the relations
to edge-cone type singularities in the case when $X$ itself is singular.
However, at this point it does not even seem clear what the appropriate
local models are, even if $\Delta=0.$

It should also be pointed out that in the case when $X$ is a Fano
variety with quotient singularities, i.e. $X$ is an orbifold (which
corresponds to the polytope $P$ being simple) the existence of a
Kähler-Ricci soliton was obtained recently in \cite{s-x}, building
on \cite{w-z}. The orbifold situation was further studied in \cite{leg}. 

When the first draft of the present paper had been completed two new
preprints of Song-Wang \cite{so-j} and Li-Sun \cite{li-s} appeared
which are relevant for the discussion on edge-cone Kähler-Einstein
metrics above. In particular, in \textbf{\cite{so-j} }certain\emph{
toric} edge-cone Kähler-Einstein metrics are obtained on any given
smooth toric Fano variety $X,$ by a method of continuity. We have
included a discussion on the more precisely relations to \cite{so-j,li-s}
in section \ref{sub:Existence-of-K=0000E4hler-Einstein singular along div}.

\subsection{Organization}

In section \ref{sec:Monge-Amp=0000E8re-equations-in} we start by
setting up a variational approach to solving Monge-Ampère equations
in $\R^{n}$ with target a convex body. The core of the section is
a direct proof of a coercive Moser-Trudinger type inequality, which
is the basis of the proof of Theorem \ref{thm:existe real ma intr}
stated in the introduction. In the following section \ref{sec:Toric-varieties,-polytopes}
we give a fairly detailed exposition of toric varieties emphasizing
analytical aspects of toric log Fano varieties, which in particular
allows us to rephrase the results in the previous section in terms
of toric Kähler-Einstein geometry. Then in section we \ref{sec:K-energy-type-functionals}
explore the relations to the Mabuchi K-energy functional, Futaki invariant
and K-stability. Finally, in section \ref{sec:Convergence-of-the}
we show that the (weak) Kähler-Ricci flow on any toric Fano variety
converges weakly to a (singular) Kähler-Ricci soliton. 

At least part of the length of the paper is explained by our effort
to make the paper readable for the reader with a background in convex
analysis, as well as for the complex geometers.

\section{\label{sec:Monge-Amp=0000E8re-equations-in}Monge-Ampère equations
in $\R^{n}$ and Convex bodies }

In this section we will adopt a direct variational approach to solve
the Monge-Ampère equation in Theorem \ref{thm:existe real ma intr},
stated in the introduction. This means that the solutions will be
obtained as the maximizers of a certain functional $\mathcal{G}$
on a space $\mathcal{E}_{P}^{1}(\R^{n})$ of convex functions on $\R^{n}$
of ``finite energy''. At least formally the solutions are critical
points of $\mathcal{G}$ and according to the usual scheme of the
calculus of variations the existence proof is thus divided into two
distinct parts: 
\begin{itemize}
\item A coercivity (properness) estimate for $\mathcal{G}$, which yields
the existence of a maximizer $\phi$ (when the barycenter condition
on $P$ holds) 
\item An argument showing that any maximizer indeed satisfies the equation
in question 
\end{itemize}
The key new ingredient in our approach is the accomplishment of the
first point using a direct convexity argument. As for the second point
we will develop a real analog of the Kähler geometry setting considered
in \cite{bbgz,begz,bbegz} by introducing appropriate finite energy
spaces of convex functions and establishing a crucial differentiability
result for the ``energy of convexification'' (Prop \ref{pro:diff of composed energy convex}). 

Of course, our comparison with the Kähler geometry setting may appear
as an anachronism: the variational approach to real Monge-Ampère equations,
originating in Alexandrov's seminal work on the Minkowski problem
on the $n-$sphere (see the book \cite{bak} and references therein)
certainly precedes its complex analog. On the other hand, as far as
we know the precise convex analytical setting in $\R^{n}$ (as opposed
to the $n-$sphere) that we we need does not appear to have been developed
in the literature. Moreover, the analogy between the real and complex
settings gives a useful testing ground for conjectures in the Kähler
geometry setting.

It is also interesting to see that our variational approach is closely
related to the variational principles appearing in the theory of optimal
transport (see section \ref{sub:The-inhomogenous-Monge-Amp=0000E8re}).

\subsection{Setup}

Let $P$ be a\emph{ convex body} in an affine space of real dimension
$n$, i.e. $P$ is a compact convex subset with non-empty interior.
Identifying the affine space with the vector space $\R^{n},$ with
linear coordinates $p(=(p_{1},...,p_{n}),$ we may as well assume
that the origin $0$ is contained in the interior of $P.$ We will
identify the dual vector space with $\R^{n},$ with linear coordinates
$x(=(x_{1},...,x_{n}),$ 

A convex functions $\phi(x)$ on $\R^{n}$ is, by definition, convex
along affine lines, i.e. $\phi(tx+(1-t)y)\leq t\phi(x)+(1-t)\phi(y)$
and takes values in $]\infty,\infty]$ and we will exclude the case
when $\phi$ is identically $\infty.$ Note that such functions are
called\emph{ proper} \emph{convex functions }in \cite{r}, while we
will, to conform to more standard general terminology, say that a
function $\phi(x)$ is\emph{ proper} if $|\phi|\rightarrow\infty,$
as $|x|\rightarrow\infty.$ 

The \emph{subdifferential} $d\phi_{|x}$ of $\phi$ at $x$ is the
closed set of $\R^{n}$ consisting of all $p$ such that $f(y)\geq f(x)+\left\langle p,y-x\right\rangle $
for $y\in\R^{n}.$ In particular, $d\phi_{|x}$ is a equal to a point
(the usual differential of $\phi$ at $x)$ if $\phi$ is differentiable
at $x.$ The Monge-Ampère measure $MA_{\}(\phi)}$ of a finite convex
function $\phi$ is the (Borel) measure, which with our normalization
convention is defined by 
\begin{equation}
(MA(\phi)(E):=n!\int_{d\phi(E)}dp\label{eq:def of real ma}
\end{equation}
for any Borel set $E$ (see \cite{r-t,gu}); this is sometimes also
called the \emph{Monge-Ampère measure in the sense of Alexandrov.}
More generally, given any function $g$ in $L^{1}\R^{n})$ we can
define the ``$g-$Monge-Ampère measure'' $MA_{g}(\phi)$ by replacing
the measure $dp$ in the definition \ref{eq:def of real ma} by $gdp,$
so that $MA_{g}(\phi)=g(d\phi)MA(\phi)$ if $\phi$ is smooth, In
fact, all the results below for the operator $MA$ generalize word
for word to this more general setting, but for clarity of exposition
we will mainly stick to the case when $g=1.$ 
\begin{rem}
\label{rem:g-ma}The reason that $MA(\phi)$ (and more generally $MA_{g})$
indeed defines a bona fide measure is that the multivalued map from
$\R^{n}\rightarrow\R^{n}$ defined by $x\mapsto d\phi_{|x}$ (often
called the ``normal mapping'' in the literature) is invertible almost
everywhere on its image (wrt Lebesgue measure $dp$). This is a consequence
of the almost everywhere differentiability of the Legendre transform
(compare Lemma 1.1.12 in \cite{gu} or Lemma \ref{pro:var prop of leg}
below). 
\end{rem}
Let now $P$ be a given convex body in $\R^{n}$ containing $0$ in
its interior and of volume 
\[
V(P):=\mbox{Vol\ensuremath{(P)}}
\]
and denote by $\mathcal{P}(\R^{n})$ be the space of all convex functions
$\phi(x)$ on $\R^{n}$ such that 
\[
\phi(x)\leq\phi_{P}(x)+C,
\]
 where $\phi_{P}$ is the \emph{support function} \emph{of $P,$}
i.e. the following one-homogenous convex function 
\[
\phi_{P}(x):=\sup_{p\in P}\left\langle x,p\right\rangle 
\]

We let $\mathcal{P}_{+}(\R^{n})$ be the subspace of $\mathcal{P}(\R^{n})$
of elements of ``maximal growth'': 
\[
-C+\phi_{P}(x)\leq\phi(x)\leq\phi_{P}(x)+C
\]
 In particular, any $\phi$ in $\mathcal{P}_{+}(\R^{n})$ is \emph{proper}.
Standard examples of strictly convex and smooth elements in n $\mathcal{P}_{+}(\R^{n})$
are obtained by setting 
\begin{equation}
\phi_{P,k}:=\frac{1}{k}\log\int_{P}e^{k\left\langle x,p\right\rangle }\frac{dp}{V}\label{eq:phi P k smooth}
\end{equation}
for a given positive integer $k$ (note that $\phi_{P}=\phi_{P,\infty}).$ 

We equip the space $\mathcal{P}(\R^{n})$ with the topology defined
by point-wise convergence. Thanks to the uniform Lipschitz bound on
the elements in $\mathcal{P}(\R^{n})$ (coming from the boundedness
$\mathcal{P}(\R^{n}))$ this coincides with the topology defined by
local\emph{ uniform} convergence. 
\begin{lem}
\label{lem:(regularization).-For-any}(regularization). For any $\phi$
in $\mathcal{P}(\R^{n})$ there is a sequence of strictly convex smooth
functions $\phi_{j}$ in $\mathcal{P}_{+}(\R^{n})$ decreasing to
$\phi.$\end{lem}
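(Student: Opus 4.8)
The plan is to build the approximating sequence in three stages---first forcing the correct (maximal) growth, then smoothing, and finally restoring strict convexity---while arranging the parameters so that the resulting sequence is genuinely decreasing. First I would fix the growth: since $\phi\in\mathcal{P}(\R^{n})$ only satisfies the upper bound $\phi\le\phi_{P}+C$, I replace it by
\[
\psi_{j}:=\max(\phi,\phi_{P}-j).
\]
Each $\psi_{j}$ is convex, lies in $\mathcal{P}_{+}(\R^{n})$ (it is squeezed between $\phi_{P}-j$ and $\phi_{P}+C$), the sequence is decreasing in $j$, and $\psi_{j}\downarrow\phi$ pointwise because $\phi_{P}(x)-j\to-\infty$ for each fixed $x$.

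Next comes smoothing by convolution. Let $\rho_{\epsilon}$ be a symmetric mollifier and set $\psi_{j}^{\epsilon}:=\psi_{j}\ast\rho_{\epsilon}$. These are $C^{\infty}$ and convex, and the symmetry of $\rho_{\epsilon}$ together with Jensen's inequality gives $\psi_{j}^{\epsilon}\ge\psi_{j}$; moreover, writing the convolution as the average of $\tfrac12(\psi_{j}(x-\epsilon z)+\psi_{j}(x+\epsilon z))$, which is even and convex in $\epsilon$ and hence nondecreasing for $\epsilon\ge0$, shows that $\psi_{j}^{\epsilon}$ decreases to $\psi_{j}$ as $\epsilon\downarrow0$. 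Because convolution preserves the pointwise order, $\psi_{j}^{\epsilon}$ is also decreasing in $j$, and the Lipschitz bound on $\phi_{P}$ keeps $\psi_{j}^{\epsilon}$ inside $\mathcal{P}_{+}(\R^{n})$. Diagonalising---taking $\Psi_{n}:=\psi_{n}^{\epsilon_{n}}$ with $\epsilon_{n}\downarrow0$ chosen fast enough along an exhaustion by compacts---produces a single decreasing sequence of smooth convex functions in $\mathcal{P}_{+}(\R^{n})$ with $\Psi_{n}\downarrow\phi$; here I use that increasing $n$ and decreasing $\epsilon_{n}$ both push the values down, so monotonicity of $\Psi_{n}$ is automatic.

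The last stage, \emph{strict convexity}, is where I expect the \textbf{main obstacle}, and it is also where the constraint $\phi\le\phi_{P}+C$ bites. One cannot simply add a small strictly convex term: any strictly convex function on $\R^{n}$ has at least linear growth, so an additive correction would destroy the upper bound defining $\mathcal{P}_{+}(\R^{n})$. The remedy is to take a convex combination with the smooth strictly convex model $\phi_{P,k}$ from \eqref{eq:phi P k smooth}, which has exactly the support-function growth. Concretely I fix $R:=\phi_{P,k}+C'$, a smooth strictly convex element of $\mathcal{P}_{+}(\R^{n})$, with $C'$ chosen large enough that $R\ge\Psi_{1}$ (possible since $\phi_{P,k}\ge\phi_{P}-C_{k}$ while $\Psi_{1}\le\phi_{P}+C_{1}$), and then set
\[
\phi_{n}:=(1-c_{n})\Psi_{n}+c_{n}R,\qquad c_{n}\downarrow0.
\]
Each $\phi_{n}$ is smooth, strictly convex (the weight $c_{n}>0$ on $R$ forces a positive definite Hessian), lies in $\mathcal{P}_{+}(\R^{n})$, and satisfies $\phi_{n}\ge\phi$ as a convex combination of functions $\ge\phi$.

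The point that makes the sequence decreasing is precisely the choice $R\ge\Psi_{n}$ for all $n$ (which holds since $\Psi_{n}\le\Psi_{1}\le R$): from
\[
\phi_{n}-\phi_{n+1}=(1-c_{n+1})(\Psi_{n}-\Psi_{n+1})+(c_{n}-c_{n+1})(R-\Psi_{n}),
\]
both terms are nonnegative, because $\Psi_{n}$ and $c_{n}$ are decreasing and $R-\Psi_{n}\ge0$. Finally $c_{n}\to0$ together with $\Psi_{n}\to\phi$ and the local boundedness of $R-\phi$ give $\phi_{n}\downarrow\phi$ locally uniformly, which by the Lipschitz bound is the same as the pointwise convergence in the stated topology. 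The only genuinely delicate bookkeeping is the diagonal choice of $\epsilon_{n}$ in the smoothing step and the constant $C'$ in the strict-convexity step; once these are made as above, all four required properties---smoothness, strict convexity, membership in $\mathcal{P}_{+}(\R^{n})$, and monotone decrease to $\phi$---hold simultaneously.
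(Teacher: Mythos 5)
Your proof is correct and follows essentially the same route as the paper: truncate below by $\phi_{P}-j$ to force maximal growth, smooth by convolution, and restore strict convexity by convex combination with a fixed smooth strictly convex element of $\mathcal{P}_{+}(\R^{n})$ such as $\phi_{P,k}+C'$. The only (harmless) difference is that you take the maximum with $\phi_{P}-j$ \emph{before} mollifying, which lets you avoid the regularized max the paper uses, and you spell out the monotonicity bookkeeping more explicitly.
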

\begin{proof}
Given $t\in\R^{n}$ and $\phi$ in $\mathcal{P}(\R^{n})$ we have
that $\phi(\cdot+t)$ is also in $\mathcal{P}(\R^{n})$ and hence
we may first define $\psi_{j}$ by convolutions in the usual way so
that $\psi_{j}$ is smooth and decreases to $\phi.$ Finally, we may
then set $\phi_{j}:=\max_{\epsilon}(\psi_{j},\phi_{P}-j)$ where $\max_{\epsilon}$
denotes a regularized max, which has the required properties a part
from the \emph{strict} convexity.. But this may be achieved by taking
suitable convex combinations of $\phi_{j}$ and $\phi_{0},$ where
$\phi_{0}$ is any fixed smooth and strictly convex function in $\mathcal{P}_{+}(\R^{n})$
such that $\phi\leq\phi_{0},$ for example $\phi_{0}=\phi_{P,1}+C$
for a sufficiently large constant $C.$ 
\end{proof}

\subsection{\label{sub:Relation-to-the-log}Relation to the complex setting:
the Log map}

Let $\mbox{Log }$be the map from $\C^{*n}$ to $\R^{n}$ defined
by $\mbox{Log}(z):=x:=(\log(|z_{1}|^{2},...,\log(|z_{n}|^{2}),$ so
that the real torus $T$ acts transitively on its fibers, We will
refer to $x$ as the (real) \emph{logarithmic coordinates} on $\C^{*n}.$
Given a psh $T-$invariant bounded function $\phi(z)$ on $\C^{*n}$
we will, abusing notation slightly, write $\phi(x)$ for the corresponding
convex function on $\R^{n},$ i.e. $\phi(x):=\phi(z)$ for any $z\in\mbox{(Log)}^{.1}\{x\}.$
The normalizing constant $n!$ in the definition of $MA(\phi)$ has
been chosen so that 
\[
\mbox{(Log)}_{*}MA_{\C}(\phi)=MA(\phi)
\]
where $MA_{\C}(\phi)$ is the Monge-Ampère measure on $\C^{*n}$ of
the locally bounded psh function $\phi(z),$ as defined in pluripotential
theory (compare \cite{r-t} and section \ref{sub:Pluricomplex-energy-and}).
Note however, that $MA_{g}$ for $g\neq1$ does not have any immediate
pluripotential analog.

\subsection{The Legendre transform from $\R^{n}$ to the convex body $P$}

Recall that th\emph{e Legendre(-Fenchel) transform} (also called the\emph{
conjugate} function in \cite{r}) of a convex function $\phi(x)$
is defined by 
\[
\phi^{*}(p):=\sup_{x\in\R^{n}}\left\langle p,x\right\rangle -\phi(x)
\]
which is a convex function on $\R^{n}$ with values in $]-\infty,\infty].$
Since the Legendre transform is an involution, i.e. $\phi^{^{**}}=\phi,$
we have 
\[
\phi_{P}^{*}(x)=0\,\mbox{on\,\ensuremath{P,\,\,\,\phi_{P}^{*}(x)=\infty\,\mbox{on\,\ensuremath{\R^{n}-P}}}}
\]
and 
\begin{equation}
\phi(x)\leq\psi(x)\Leftrightarrow\phi^{*}(p)\geq\psi^{*}(p)\label{eq:using monoton of legendre}
\end{equation}
It follows immediately that the following proposition holds:
\begin{prop}
\label{pro:leg is isometric} If $\phi$ is in $\mathcal{P}(\R^{n})$
then $\phi^{*}=\infty$ on the complement of $P.$ Moreover, the Legendre
transform induces a bijection between $\mathcal{P}_{+}(\R^{n})$ and
the space $\mathcal{H}(P)$ of bounded convex functions on $P$ such
that 
\[
\sup_{\R^{n}}(\phi-\phi_{P})=-\inf_{P}\phi^{*},\,\,\,\inf_{\R^{n}}(\phi-\phi_{P})=-\sup\phi^{*}
\]
and 
\[
\left\Vert \phi-\phi_{P}\right\Vert _{L^{\infty}(\R^{n})}=\left\Vert \phi^{*}\right\Vert _{L^{\infty}(P)}
\]

\end{prop}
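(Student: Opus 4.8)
The plan is to derive every assertion from the two structural properties already recorded for the Legendre transform: its order-reversing character (equation \ref{eq:using monoton of legendre}), together with the elementary shift rule $(\psi+c)^*=\psi^*-c$, and the involution $\phi^{**}=\phi$. First I would dispatch the claim that $\phi^*\equiv\infty$ off $P$. Membership $\phi\in\mathcal{P}(\R^n)$ means $\phi\le\phi_P+C$, so order-reversal gives $\phi^*\ge(\phi_P+C)^*=\phi_P^*-C$; since $\phi_P^*=\infty$ on $\R^n-P$, the same holds for $\phi^*$.

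For the bijection I would treat the two directions separately. If $\phi\in\mathcal{P}_{+}(\R^n)$, feeding the two-sided bound $\phi_P-C\le\phi\le\phi_P+C$ through order-reversal and using $\phi_P^*=0$ on $P$ yields $-C\le\phi^*\le C$ on $P$; combined with the convexity of $\phi^*$ and the previous paragraph this places $\phi^*$ in $\mathcal{H}(P)$. Conversely, extending $u\in\mathcal{H}(P)$ by $+\infty$ off $P$ and setting $\phi:=u^*$, the symmetric computation from $|u|\le C$ gives $\phi_P-C\le\phi\le\phi_P+C$, so $\phi\in\mathcal{P}_{+}(\R^n)$. The involution $\phi^{**}=\phi$ then shows the two assignments are mutually inverse, which is exactly the asserted bijection.

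It remains to prove the three norm identities, and here the key device is a chain of equivalences valid for every $c\in\R$:
\begin{align*}
\sup_{\R^n}(\phi-\phi_P)\le c &\iff \phi\le\phi_P+c \\
 &\iff \phi^*\ge -c \ \text{on}\ P \iff \inf_P\phi^*\ge -c.
\end{align*}
Since this holds for all $c$, it forces $\sup_{\R^n}(\phi-\phi_P)=-\inf_P\phi^*$; running the same argument with the inequalities reversed (and $\phi\ge\phi_P+c\iff\phi^*\le -c$ on $P$) gives $\inf_{\R^n}(\phi-\phi_P)=-\sup_P\phi^*$. Taking the larger of the two one-sided suprema then produces the norm equality, since $\|\phi-\phi_P\|_{L^\infty(\R^n)}=\max(-\inf_P\phi^*,\,\sup_P\phi^*)=\|\phi^*\|_{L^\infty(P)}$.

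I expect the only genuinely delicate point to be the surjectivity half of the bijection: reconstructing $\phi^*=u$ from $\phi=u^*$ rests on $u^{**}=u$, which requires the $\infty$-extension of $u$ to be lower semicontinuous up to $\partial P$, since the Legendre transform only recovers the lower semicontinuous convex envelope of its argument. One must therefore either build lower semicontinuity into the definition of $\mathcal{H}(P)$ or argue that the boundary values, carried on a Lebesgue-null set, do not affect the transform. Everything else reduces to routine bookkeeping with order-reversal and the shift rule.
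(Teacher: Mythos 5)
Your argument is correct and takes essentially the route the paper intends: the paper offers no written proof, asserting only that the proposition ``follows immediately'' from the order-reversal property \ref{eq:using monoton of legendre}, the involution $\phi^{**}=\phi$, and the computation of $\phi_{P}^{*}$, and your write-up is a careful expansion of precisely those three ingredients. Your closing caveat about lower semicontinuity at $\partial P$ is a legitimate fine point the paper glosses over; it is resolved by observing that $u^{*}$ depends only on the restriction of $u$ to the interior of $P$ (since convexity forces boundary values to dominate the radial limits from inside), so one implicitly identifies elements of $\mathcal{H}(P)$ with their closed representatives.
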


\subsubsection{\label{sub:Functions-with-full}Functions with full Monge-Ampère
mass}

We will say that an element $\phi$ in $\mathcal{P}(\R^{n})$ has
\emph{full Monge-Ampère mass} if the total mass of $MA(\phi)$ on
$\R^{n}$ coincides with $n!$ times the volume $V(P)$ of $P.$ Following
the terminology in \cite{begz} in the Kähler geometry setting we
will denote this subclass of $\mathcal{P}(\R^{n})$ by $\mathcal{E}_{P}(\R^{n})$
(compare Remark \ref{rem:weighted energy classes}  below).
\begin{prop}
\label{pro:cont for full ma}If $\phi_{j}$ converges to $\phi$ in
$\mathcal{E}_{P}(\R^{n}),$ then $\int vMA(\phi_{j})\rightarrow\int vMA(\phi)$
for any bounded continuous function $v$ on $\R^{n}.$\end{prop}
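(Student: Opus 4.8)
The plan is to reduce the statement to a tightness argument: the proposition is really about upgrading the \emph{classical} weak continuity of the Monge-Amp\`ere operator, which a priori only tests against compactly supported functions, to the larger class of bounded continuous functions. Since the measures $MA(\phi_{j})$ and $MA(\phi)$ all live on the noncompact space $\R^{n}$, the only thing that can go wrong is that a fraction of the mass escapes to infinity in the limit; the role of the hypothesis $\phi_{j},\phi\in\mathcal{E}_{P}(\R^{n})$ is precisely to rule this out, since it forces every measure in sight to have exactly the same total mass $n!V(P)$.

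First I would record the classical fact that, for convex functions converging locally uniformly on $\R^{n}$, the associated Monge-Amp\`ere measures converge vaguely, i.e. $\int v\,MA(\phi_{j})\to\int v\,MA(\phi)$ for every $v\in C_{c}(\R^{n})$. Recall that convergence in $\mathcal{E}_{P}(\R^{n})\subset\mathcal{P}(\R^{n})$ is by definition local uniform convergence, so this applies. This is standard for the real Monge-Amp\`ere operator defined through the gradient image as in \eqref{eq:def of real ma} (see \cite{gu,r-t}); the underlying mechanism is the good behaviour of the normal mappings under local uniform convergence of convex functions.

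Writing $\mu_{j}:=MA(\phi_{j})$ and $\mu:=MA(\phi)$, the second and main step is to pass from $C_{c}$ to $C_{b}$ using mass conservation. Fix $v\in C_{b}(\R^{n})$ with $|v|\le M$, and for $R>0$ choose a cutoff $\chi\in C_{c}(\R^{n})$ with $\mathbf{1}_{B_{R}}\le\chi\le1$, where $B_{R}$ is the ball of radius $R$ about the origin. The key identity is
\[
\int(1-\chi)\,d\mu_{j}=n!V(P)-\int\chi\,d\mu_{j},
\]
and since $\int\chi\,d\mu_{j}\to\int\chi\,d\mu$ by Step 1 (as $\chi\in C_{c}$) while $\mu_{j}(\R^{n})=\mu(\R^{n})=n!V(P)$ by the full-mass hypothesis, the right-hand side converges to $n!V(P)-\int\chi\,d\mu=\int(1-\chi)\,d\mu\le\mu(\R^{n}\setminus B_{R})$. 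As $\mu$ is a finite measure this tends to $0$ as $R\to\infty$, so the tails of the $\mu_{j}$ are uniformly small for $R$ large. Splitting $\int v\,d\mu_{j}-\int v\,d\mu$ into the compactly supported part $\int v\chi\,d(\mu_{j}-\mu)$, which tends to $0$ by Step 1 since $v\chi\in C_{c}$, plus the two tail terms bounded by $M\int(1-\chi)\,d\mu_{j}$ and $M\int(1-\chi)\,d\mu$, and taking $\limsup_{j}$, yields a bound of order $M\,\mu(\R^{n}\setminus B_{R})$; letting $R\to\infty$ gives the claim.

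The only real obstacle is the one identified above: controlling the escape of mass to infinity. I expect Step 2 to be routine once the full-mass condition is invoked, and I anticipate that the substance (if not simply a citation) is the vague continuity of Step 1. It is worth stressing that the full-mass hypothesis is genuinely necessary: for a sequence in $\mathcal{P}(\R^{n})$ whose total masses drop in the limit, mass can concentrate near $\partial P$ in the gradient picture, equivalently escape to infinity in $x$, and then convergence against $C_{b}$ functions fails.
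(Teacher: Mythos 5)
Your proof is correct and follows essentially the same route as the paper: invoke the classical weak convergence of $MA(\phi_{j})$ against compactly supported test functions, then use the equality of total masses $\int MA(\phi_{j})=\int MA(\phi)=n!V(P)$ together with the decomposition $v=v\chi+v(1-\chi)$ for a cutoff $\chi$ to control the tails. The paper states this in three lines; you have merely written out the tightness estimate in full detail.
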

\begin{proof}
If $v$ has compact support this is well-known to hold for\emph{ any}
sequence $\phi_{j}$ of convex functions converging locally uniformly
to $\phi$ \cite{r-t,gu}. Moreover, if $\phi_{j}$ converges $\phi$
in $\mathcal{E}_{P}(\R^{n}),$ then by assumption $\int MA(\phi_{j})=\int MA(\phi).$
Hence, writing $v$ as $v(\chi+(1-\chi))$ for $\chi$ a cut-off function
supported on a sufficiently large ball proves the proposition. 
\end{proof}
According to the following basic lemma any $\phi\in\mathcal{P}_{+}(\R^{n})$
has full Monge-Ampère mass, i.e. it is in the class $\mathcal{E}_{P}(\R^{n}).$
\begin{lem}
\label{lem:prop of image}The following properties of the image of
the subgradients of convex functions hold:
\begin{itemize}
\item If $\phi$ is a finite convex function $\phi$ on $\R^{n}$ then $d\phi(\R^{n})\subset\{\phi^{*}<\infty\}.$
In particular $d\phi(\R^{n})\subset P$ if $\phi\in\mathcal{P}(\R^{n})$
and $\phi\in\mathcal{E}_{P}(\R^{n})$ iff $d\phi(\R^{n})=P$ up to
a set of measure zero. 
\item If $\phi\in\mathcal{P}_{+}(\R^{n})$ then the interior of $P$ is
contained in $\{\phi^{*}<\infty\}$ and hence
\begin{equation}
\phi\in\mathcal{P}_{+}(\R^{n})\implies\int_{\R^{n}}MA_{\R}(\phi)=n!V(P)\label{eq:volume of convex f}
\end{equation}

\end{itemize}
\end{lem}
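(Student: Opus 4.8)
The plan is to treat the two bullets in turn: the first is essentially a direct consequence of the defining inequality of the subdifferential together with Proposition \ref{pro:leg is isometric}, while the second requires a genuine duality argument to establish a reverse inclusion.

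For the first bullet I would start from the Fenchel--Young relation: if $p\in d\phi_{|x}$ then $\phi(y)\geq\phi(x)+\left\langle p,y-x\right\rangle$ for all $y$, so taking the supremum over $y$ gives $\phi^{*}(p)=\left\langle p,x\right\rangle -\phi(x)<\infty$. Hence $d\phi(\R^{n})\subset\{\phi^{*}<\infty\}$. When $\phi\in\mathcal{P}(\R^{n})$, Proposition \ref{pro:leg is isometric} gives $\phi^{*}=\infty$ off $P$, so $\{\phi^{*}<\infty\}\subset P$ and the inclusion $d\phi(\R^{n})\subset P$ follows. For the characterization of $\mathcal{E}_{P}(\R^{n})$ I would use that, by the definition \ref{eq:def of real ma} of the Monge--Amp\`ere measure together with the a.e.\ invertibility of the normal map recorded in Remark \ref{rem:g-ma}, the total mass equals $\int_{\R^{n}}MA(\phi)=n!\,|d\phi(\R^{n})|$, the Lebesgue measure of the image. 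Since $d\phi(\R^{n})\subset P$ and $|P|=V(P)$, the equality $\int MA(\phi)=n!V(P)$ is then equivalent to $|d\phi(\R^{n})|=|P|$, i.e.\ to $d\phi(\R^{n})=P$ up to a null set.

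For the second bullet, the inclusion $\mathrm{int}(P)\subset\{\phi^{*}<\infty\}$ is immediate from Proposition \ref{pro:leg is isometric}, which asserts that $\phi^{*}$ is bounded on $P$ for $\phi\in\mathcal{P}_{+}(\R^{n})$; in particular $P$ lies in the effective domain of the convex function $\phi^{*}$. The crux is then to upgrade the inclusion $d\phi(\R^{n})\subset P$ of the first bullet to the reverse containment. Here I would invoke the duality between the subdifferentials of a proper convex function and its conjugate, namely $p\in d\phi_{|x}\Leftrightarrow x\in d\phi^{*}_{|p}$. Since $\phi^{*}$ is finite on $P$, every $p\in\mathrm{int}(P)$ is an interior point of $\mathrm{dom}(\phi^{*})$, so the subdifferential $d\phi^{*}_{|p}$ is nonempty; choosing $x\in d\phi^{*}_{|p}$ yields $p\in d\phi_{|x}$, whence $\mathrm{int}(P)\subset d\phi(\R^{n})$. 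Combined with $d\phi(\R^{n})\subset P$ this gives $|d\phi(\R^{n})|=V(P)$, and therefore $\int_{\R^{n}}MA(\phi)=n!\,|d\phi(\R^{n})|=n!V(P)$.

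I expect the main obstacle to be precisely this reverse inclusion $\mathrm{int}(P)\subset d\phi(\R^{n})$: one must correctly invoke the nonemptiness of the subdifferential of $\phi^{*}$ at interior points of its effective domain, which in turn hinges on the boundedness of $\phi^{*}$ on all of $P$ (not merely finiteness on its interior) supplied by Proposition \ref{pro:leg is isometric}. The remaining steps are formal manipulations with the Legendre transform and with the definition of the Monge--Amp\`ere measure.
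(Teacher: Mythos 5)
Your proof is correct and follows essentially the same route as the paper: the key step in both is to show that every interior point $p$ of $P$ lies in the image of $d\phi$, which the paper does by observing that $\left\langle p,x\right\rangle -\phi(x)$ is proper (so the sup defining $\phi^{*}(p)$ is attained at some $x_{p}$, i.e. $p\in d\phi_{|x_{p}}$), and which you do by citing the equivalent abstract facts that $d\phi_{|x}^{*}$ is nonempty at interior points of $\mathrm{dom}(\phi^{*})$ and that $x\in d\phi_{|p}^{*}\Leftrightarrow p\in d\phi_{|x}$. The only cosmetic quibble is that nonemptiness of the subdifferential at $p$ needs only that $p$ is interior to the effective domain of $\phi^{*}$, not boundedness of $\phi^{*}$ on all of $P$, so your emphasis on the latter is unnecessary though harmless.
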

\begin{proof}
The first point follows immediately from the definition of a subgradient
and the second point follows from the fact that $\left\langle p,x\right\rangle -\phi(x)$
is clearly proper if $p$ is an interior point of $P$ and $\phi\in\mathcal{P}_{+}(\R^{n}).$
Indeed, then the sup defining $\phi^{*}(p)$ is attained, say at $x_{p},$
and it follows that $p\in d\phi_{|x_{p}}.$ The final statement \ref{eq:volume of convex f}
then follows from the well-known fact that the topological boundary
of $P$ is a nullset for Lebesgue measure.
\end{proof}
Before continuing it will be convenient to record the following property:
\begin{lem}
\label{lem:full ma is proper}Any $\phi$ in $\mathcal{P}(\R^{n})$
with full Monge-Ampère mass is proper. More precisely, there exists
a constant $C>0$ such that $\phi(x)\geq|x|/C-C$.\end{lem}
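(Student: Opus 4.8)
The plan is to reduce the desired linear lower bound to a one-sided bound on the Legendre transform $\phi^{*}$ over a small ball centered at the origin, and then to recover $\phi$ from $\phi^{*}$ via the involution property $\phi=\phi^{**}$. First I would note that every $\phi\in\mathcal{P}(\R^{n})$ is finite everywhere: indeed $\phi\leq\phi_{P}+C$ and $\phi_{P}$ is finite (being the support function of the bounded set $P$), while $\phi>-\infty$ by the standing convention. Hence $\phi$ is a finite, continuous convex function, so the Fenchel--Moreau identity $\phi=\phi^{**}$ applies, and since $\phi^{*}=\infty$ off $P$ by Proposition \ref{pro:leg is isometric}, we may write
\[
\phi(x)=\sup_{p\in P}\bigl(\langle p,x\rangle-\phi^{*}(p)\bigr).
\]

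The key step is to show that $\phi^{*}$ is bounded \emph{above} on some closed ball $\bar{B}_{r}$ centered at $0$ with $\bar{B}_{r}\subset\mathrm{int}(P)$; such an $r>0$ exists because $0\in\mathrm{int}(P)$. This is where the full Monge--Amp\`ere mass hypothesis enters. By Lemma \ref{lem:prop of image}, the assumption $\phi\in\mathcal{E}_{P}(\R^{n})$ gives $d\phi(\R^{n})=P$ up to a Lebesgue null set, and since $d\phi(\R^{n})\subseteq\{\phi^{*}<\infty\}$, the effective domain $\{\phi^{*}<\infty\}$ has full measure in $P$. As this domain is convex, a convex subset of $P$ of full measure must have interior equal to $\mathrm{int}(P)$, so $\phi^{*}$ is a \emph{finite} convex function on $\mathrm{int}(P)$. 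Finiteness of a convex function on an open set forces continuity, and continuity on $\mathrm{int}(P)\supseteq\bar{B}_{r}$ yields $M:=\sup_{\bar{B}_{r}}\phi^{*}<\infty$.

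Finally I would combine the two ingredients: restricting the supremum over $P$ to the sub-supremum over $p\in B_{r}$ and using $\phi^{*}\leq M$ there, together with the fact that the support function of $B_{r}$ is $r|x|$, we get
\[
\phi(x)\geq\sup_{p\in B_{r}}\langle p,x\rangle-M=r|x|-M,
\]
so that $\phi(x)\geq|x|/C-C$ with $C:=\max(1/r,M)$, which also gives properness.

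The main obstacle is precisely the second step: upgrading the full-mass statement, which only asserts $\phi^{*}<\infty$ \emph{almost everywhere} on $P$, to genuine finiteness of $\phi^{*}$ on an open neighborhood of $0$. The passage uses the convexity of the effective domain (a full-measure convex subset of $P$ has interior $\mathrm{int}(P)$), after which the automatic continuity of finite convex functions on open sets does the remaining work; without the full-mass hypothesis one could have $\phi^{*}\equiv+\infty$ near $0$ and the linear lower bound would fail.
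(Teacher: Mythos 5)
Your proof is correct and follows essentially the same route as the paper: both arguments upgrade the almost-everywhere finiteness of $\phi^{*}$ on $P$ (coming from the full-mass hypothesis via Lemma \ref{lem:prop of image}) to finiteness, and hence boundedness, of $\phi^{*}$ on a small closed ball around the origin by convexity of the effective domain, and then conclude $\phi(x)=\phi^{**}(x)\geq\epsilon|x|-C$ by restricting the supremum in the Legendre transform to that ball. Your write-up is, if anything, slightly more explicit than the paper's about the use of the Fenchel--Moreau involution.
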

\begin{proof}
First note that if $\phi$ is an element in $\mathcal{E}(\R^{n})$
then $\{\phi^{*}<\infty\}$ is the closure of the interior of $P$
(the converse is trivial). Indeed, by the first point in \ref{lem:prop of image}
(and since the topological boundary of $P$ is a nullset) the interior
of $P$ has full measure in $\{\phi^{*}<\infty\}$ and in particular
is dense in the convex set $\{\phi^{*}<\infty\}.$ But then it follows
from a simple argument, using convexity, that all of the interior
has to be contained in $\{\phi^{*}<\infty\}.$ Finally, we note that
if $\phi$ is a convex function (finite) convex function $\phi$ on
$\R^{n}$ such that $0$ is contained in the interior of $\{\phi^{*}<\infty\}$
then $\phi$ is proper. Indeed, by assumption $u:=\phi^{*}$ is finite
on a closed small ball $B_{\epsilon}$ of radius $\epsilon$ centered
at $0$ and since $\phi^{*}$ is continuous there if follows that
$|\phi^{*}|\geq C$ on $B_{\epsilon}.$ Hence, $\phi(x)=u^{*}(x)\geq\sup_{p\in B_{\epsilon}}\left\langle p,x\right\rangle -C=\epsilon|x|-C$
which concludes the proof.
\end{proof}
We will also have great use for the following variational properties
of the Legendre transform:
\begin{lem}
\label{pro:var prop of leg}Let $\phi\in\mathcal{P}(\R^{n})$ and
$p$ an element in the convex set $\{\phi^{*}<\infty\}(\subset P)$ 
\begin{itemize}
\item $\phi^{*}$ is differentiable at $p$ iff the sup defining $\phi^{*}$
is attained a unique point $x_{p}$ and the differential at $p$ is
then given by $x_{p}=d\phi_{|p}^{*}$
\item If $\phi$ has full Monge-Ampère mass and $v$ is a continuous function
on $\R^{n}$ and $\phi^{*}$ is differentiable at $p$ then 
\[
\frac{d(\phi+tv)^{*}}{dt}_{t=0}=-v(d\phi_{|p}^{*})
\]

\item Moreover we then have, for any non-negative continuous function $v,$
\[
\int_{\R^{n}}MA(\phi)v=\int_{P}v(d\phi_{|p}^{*})dp
\]
(where the rhs is well-defined since the derivative of a convex function
exists a.e. wrt $dp)$ 
\end{itemize}
\end{lem}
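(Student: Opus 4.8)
The plan is to dispatch the three assertions in order: the first is pure convex duality, the second an envelope-theorem (Danskin) computation, and the third a change of variables resting on the first. For the differentiability criterion I would argue entirely through the Fenchel equality. A point $x$ attains the supremum defining $\phi^{*}(p)=\sup_{x}(\langle p,x\rangle-\phi(x))$ precisely when $\phi(x)+\phi^{*}(p)=\langle p,x\rangle$, and by the standard duality of subgradients this is equivalent to $x\in d\phi^{*}_{|p}$, the subdifferential of $\phi^{*}$ at $p$. Hence the set of maximizers coincides with $d\phi^{*}_{|p}$. Since a finite convex function is differentiable at a point exactly when its subdifferential there reduces to a single point, $\phi^{*}$ is differentiable at $p$ iff the supremum is attained at a unique $x_{p}$, and the differential is then that point $x_{p}$. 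This needs nothing beyond convexity.

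For the second assertion set $g(t):=(\phi+tv)^{*}(p)=\sup_{x}h_{t}(x)$ with $h_{t}(x):=\langle p,x\rangle-\phi(x)-tv(x)$; as a supremum of functions affine in $t$ it is convex and in particular admits one-sided derivatives. Evaluating the supremum at the fixed maximizer $x_{p}=d\phi^{*}_{|p}$ gives $g(t)\geq g(0)-tv(x_{p})$, while testing a maximizer $x_{t}$ of $h_{t}$ against $g(0)$ gives $g(t)\leq g(0)-tv(x_{t})$. Dividing by $t$ with the appropriate sign reduces everything to the continuity input $v(x_{t})\to v(x_{p})$ as $t\to0$, which I would obtain from convergence of the maximizers: Lemma \ref{lem:full ma is proper} supplies the coercivity $\phi(x)\geq|x|/C-C$, so for small $|t|$ the functionals $h_{t}$ are uniformly coercive, the $x_{t}$ exist and stay in a fixed ball, and every limit point maximizes $h_{0}$ and hence equals $x_{p}$ by the uniqueness from the first assertion. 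Continuity of $v$ then yields $g'(0)=-v(x_{p})$.

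The third assertion I would read as the pushforward identity $MA(\phi)=n!\,(d\phi^{*})_{*}(1_{P}\,dp)$. Since $\phi$ has full Monge-Amp\`ere mass, $\{\phi^{*}<\infty\}$ equals the closure of $P$, so the convex function $\phi^{*}$ is differentiable at almost every $p\in P$; by the first assertion its gradient $d\phi^{*}_{|p}$ is then the unique $x$ with $p\in d\phi_{|x}$, and by Remark \ref{rem:g-ma} the normal map $x\mapsto d\phi_{|x}$ is almost everywhere invertible with exactly this inverse. Consequently $1_{d\phi(E)}(p)=1_{E}(d\phi^{*}_{|p})$ for almost every $p$, and integrating against $dp$ converts the definition \ref{eq:def of real ma}, namely $MA(\phi)(E)=n!\int_{d\phi(E)}dp$, into $n!\int_{P}1_{E}(d\phi^{*}_{|p})\,dp$; a routine monotone-class approximation then upgrades indicators to an arbitrary non-negative continuous $v$, which is the asserted identity (with the normalizing factor $n!$ carried by $MA$).

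The one genuinely delicate step is the convergence of maximizers in the second assertion, and here I expect the main obstacle to be growth control on $v$: the argument needs $\phi+tv$ to remain proper for small $t$ so that the $x_{t}$ cannot escape to infinity. This holds whenever $v$ is dominated by the linear growth of $\phi$ furnished by Lemma \ref{lem:full ma is proper} — in particular whenever $v$ is bounded, which is the only case used in the applications, where $v$ arises as a difference of two functions in $\mathcal{P}_{+}(\R^{n})$. I would therefore carry out the envelope argument under that harmless proviso.
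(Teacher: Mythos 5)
Your treatment of the first two points is essentially the paper's. For the first point the paper simply cites Rockafellar, and your Fenchel-equality argument (maximizers of $\langle p,\cdot\rangle-\phi$ $=$ $d\phi^{*}_{|p}$, differentiability $\iff$ singleton subdifferential) is exactly that standard proof. For the second point the paper proves a general envelope claim for $g(t)=\sup_{x}(G_{0}+tv)$ with $G_{0}$ proper usc, unique maximizer, and $v$ \emph{bounded}, using the same ingredients you use: existence of near-maximizers $x_{t}$ in a fixed compact set, convergence $x_{t}\to x_{p}$ by upper semicontinuity and uniqueness, and convexity of $g$ in $t$; your two-sided sandwich $g(0)-tv(x_{p})\leq g(t)\leq g(0)-tv(x_{t})$ is if anything a slightly cleaner way to close the argument. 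You are also right, and in agreement with the paper, that boundedness of $v$ must be added to the hypotheses as stated. Where you genuinely diverge is the third point: the paper first proves the identity for smooth strictly convex $\phi$ by the change of variables $p=d\phi_{|x}$ and then passes to the limit along a decreasing smooth approximation $\phi_{j}\searrow\phi$, using $du_{j|p}\to du_{|p}$ a.e.\ together with Proposition \ref{pro:cont for full ma} and dominated convergence; you instead work directly from the definition \ref{eq:def of real ma}, proving $1_{d\phi(E)}(p)=1_{E}(d\phi^{*}_{|p})$ for a.e.\ $p$ via the a.e.\ invertibility of the normal map (Remark \ref{rem:g-ma}) and reading the statement as a pushforward identity. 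Your route is more self-contained (no regularization, no continuity of $MA$ under convergence) and makes transparent why the right-hand side is well defined; the paper's route has the advantage of reducing everything to the classical smooth computation. Note also that your derivation correctly produces the factor $n!$ on the right-hand side, consistent with the normalization \ref{eq:def of real ma} and with the way the identity is used in \ref{eq:key relation}; the displayed formula in the lemma omits it.

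One small repair in your second step: the bound $\phi(x)\geq|x|/C-C$ from Lemma \ref{lem:full ma is proper} is centered at the origin and does not by itself make $x\mapsto\langle p,x\rangle-\phi(x)-tv(x)$ proper for an arbitrary interior point $p$ of $P$ (it only handles $|p|<1/C$). What you need is the same estimate recentered at $p$: since $\phi^{*}$ is finite, hence bounded, on a small closed ball $B_{\epsilon}(p)$, one gets $\phi(x)\geq\langle p,x\rangle+\epsilon|x|-C_{p}$, and then properness survives the bounded perturbation $tv$. This is exactly the argument used in the proof of Lemma \ref{lem:full ma is proper}, so the fix is immediate, but as written the cited inequality is not quite the one doing the work.
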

Since, these content of the lemma above appears to be mostly well-known
in the case when $\phi$ is smooth and strictly convex, we have, for
completeness, provided a proof of the general case in the appendix.

\subsection{\label{sub:Compactness,-normalization-and}Compactness, normalization
and the action of the group $\R^{n}$}

We let $\mathcal{P}(\R^{n})_{0}$ be the subspace of all \emph{sup-normalized}
$\mbox{\ensuremath{\phi}:}$ 
\[
\sup_{\R^{n}}(\phi-\phi_{P})=0
\]

\begin{prop}
\textup{If $\phi$ is in $\mathcal{P}(\R^{n})$ then 
\[
\sup_{\R^{n}}(\phi-\phi_{P})=0\iff\phi(0)=0
\]
and hence the space $\mathcal{P}(\R^{n})_{0}$ is compact.}\end{prop}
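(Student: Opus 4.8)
The plan is to reduce both assertions to the single identity
\[
\sup_{\R^{n}}(\phi-\phi_{P})=\phi(0),\qquad\phi\in\mathcal{P}(\R^{n}),
\]
after which the stated equivalence is immediate: since $\phi_{P}(0)=\sup_{p\in P}\langle 0,p\rangle=0$, the left-hand side vanishes exactly when $\phi(0)=0$. One inequality is free, obtained by evaluating $\phi-\phi_{P}$ at the origin: $\phi(0)=\phi(0)-\phi_{P}(0)\le\sup_{\R^{n}}(\phi-\phi_{P})$. Note also that $\phi\le\phi_{P}+C<\infty$ forces $\phi$ to be finite-valued, hence continuous, so no $+\infty$ values intervene in what follows.

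For the reverse inequality $\sup_{\R^{n}}(\phi-\phi_{P})\le\phi(0)$ I would exploit the asymptotic (recession) behaviour forced by membership in $\mathcal{P}(\R^{n})$. Fix $x$ and $t\ge1$, and write $x$ as the convex combination $x=\tfrac1t(tx)+(1-\tfrac1t)\,0$; convexity of $\phi$ gives $\phi(x)-\phi(0)\le\tfrac1t\bigl(\phi(tx)-\phi(0)\bigr)$. Invoking the defining bound $\phi\le\phi_{P}+C$ together with the $1$-homogeneity of the support function yields $\phi(tx)\le t\,\phi_{P}(x)+C$, whence $\phi(x)-\phi(0)\le\phi_{P}(x)+(C-\phi(0))/t$. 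Letting $t\to\infty$ gives $\phi(x)-\phi_{P}(x)\le\phi(0)$ for every $x$, and taking the supremum establishes the identity. This recession step---converting the linear growth bound into control of the asymptotic slope $\lim_{t\to\infty}\phi(tx)/t\le\phi_{P}(x)$---is the one genuinely nontrivial point, and it is where I expect the main difficulty to lie; everything else is bookkeeping. (Alternatively, the direction $\phi(0)=0\Rightarrow\phi\le\phi_{P}$ can be read off from the Legendre transform, since then $\phi^{*}\ge-\phi(0)=0$ while $\phi^{*}=\infty$ off $P$, so $\phi=\phi^{**}=\sup_{p\in P}(\langle p,\cdot\rangle-\phi^{*})\le\phi_{P}$.)

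For compactness of $\mathcal{P}(\R^{n})_{0}$ I would verify the hypotheses of the Arzel\`a--Ascoli theorem in the local-uniform topology, which by the uniform Lipschitz bound coincides with the pointwise topology used here. By the first point of Lemma \ref{lem:prop of image} every $\phi\in\mathcal{P}(\R^{n})$ has $d\phi(\R^{n})\subset P$, so each such $\phi$ is Lipschitz with the common constant $\sup_{p\in P}|p|<\infty$; this furnishes equicontinuity. For the pointwise bounds, every $\phi\in\mathcal{P}(\R^{n})_{0}$ satisfies $\phi\le\phi_{P}$ directly from the definition (upper bound), while convexity applied to $0=\tfrac12 x+\tfrac12(-x)$ together with the now-established $\phi(0)=0$ gives $\phi(x)\ge-\phi(-x)\ge-\phi_{P}(-x)$ (lower bound); crucially both bounds are uniform in $\phi$, the lower one being the only subtle point, resolved at once by the convexity inequality at $\{0,x,-x\}$. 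Closedness is then routine: a local-uniform limit of convex functions dominated by $\phi_{P}$ and vanishing at $0$ retains all three properties and hence lies in $\mathcal{P}(\R^{n})_{0}$. Arzel\`a--Ascoli yields relative compactness, and closedness upgrades this to compactness.
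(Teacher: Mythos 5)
Your proof is correct and follows essentially the same route as the paper: both establish the identity $\sup_{\R^{n}}(\phi-\phi_{P})=\phi(0)$ via convexity along the ray $t\mapsto tx$ and then apply Arzel\`a--Ascoli using the uniform Lipschitz bound coming from $d\phi(\R^{n})\subset P$. The only (harmless) variation is that you derive the key inequality $\phi(x)\leq\phi(0)+\phi_{P}(x)$ from the growth bound $\phi\leq\phi_{P}+C$ by a recession-slope limit $t\to\infty$, whereas the paper reads it off directly from the containment of the subgradient image in $P$.
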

\begin{proof}
Since, by definition, the gradient image of $\phi$ is in $P$ it
follows from the convexity of $\phi$ along the affine line $t\rightarrow tx$
that $\phi(x)\leq\phi(0)+\phi_{P}(x),$ i.e. $\sup(\phi-\phi_{P})\leq\phi(0).$
Since trivially, $\phi(0)=\phi(0)-\phi_{P}(0)\leq\sup_{\R^{n}}(\phi-\phi_{P})$
this proves the equivalence in the proposition. In particular, if
$\phi_{j}$ is a sequence in $\mathcal{P}(\R^{n})_{0}$ then $\phi_{j}(0)=0.$
Hence, since that gradient image of $\phi$ is in $P$ (and in particular
bounded) we deduce that $\sup_{K}|\phi(x)|\leq C_{K}$ on any given
compact subset of $K$ and that $\phi$ is Lipschitz continuous on
$K$ with a uniform Lipschitz constant. Applying the Arzelà\textendash{}Ascoli
theorem on $K$ thus concludes the proof of the compactness.
\end{proof}
We will say that $\phi$ is \emph{normalized }if it is sup-normalized
and $\phi\geq0,$ i.e. 
\begin{equation}
0=\phi(0)=\inf_{\R^{n}}\phi\label{eq:def of normalized}
\end{equation}
Given a function a strictly convex function $\phi\in\mathcal{P}_{+}(\R^{n})$
we define its \emph{normalization} $\tilde{\phi}$ by 
\begin{equation}
\tilde{\phi}(a):=\phi_{a}-\phi(a)\label{eq:normalization}
\end{equation}
where $a$ is the point where the infimum of $\tilde{\phi}$ is attained
and $\phi_{a}(x):=\phi(x+a)$ defines the action of the group $\R^{n}$
on $\mathcal{P}_{+}(\R^{n})$ by translations. Note that even if $\tilde{\phi}$
is not strictly convex we may always define its normalization $\tilde{\phi}$
by taking some point $a$ where the infimum of $\tilde{\phi}$ is
attained (but $a$ may not be uniquely determined). Also note that
under the Legendre transform 
\begin{equation}
(\phi_{a})^{*}(p)=\phi^{*}(p)-\left\langle a,p\right\rangle \label{eq:transl under legendre}
\end{equation}
for any $a\in\R^{n}.$

\subsection{The functional $\mathcal{E}$ and the finite energy class $\mathcal{E}_{P}^{1}(\R^{n})$}

Fix a reference element $\phi_{0}$ in $\mathcal{P}_{+}(\R^{n}).$
Then there is a unique functional $\mathcal{E}:=\mathcal{E}(\cdot,\phi_{0})$
on $\mathcal{P}_{+}(\R^{n})$ such that 
\begin{equation}
d\mathcal{E}_{|\phi}=MA(\phi)\label{eq:first deriv of energy in global conv}
\end{equation}
 normalized so that $\mathcal{E}(\phi_{0},\phi_{0})=0.$ To see this
we may first define 
\begin{equation}
\mathcal{E}(\phi,\phi_{0}):=\int_{0}^{1}(\phi-\phi_{0})MA(\phi_{0}(1-t)+t\phi)dt\label{eq:def of energy as mixed in global conv}
\end{equation}
and then verify that \ref{eq:first deriv of energy in global conv}
indeed holds. This could be shown using integration by parts, but
we will give a different proof in the course of the proof of the following
proposition. We can first extend the functional $\mathcal{E}$ to
be defined on $\mathcal{E}_{P}(\R^{n})$ by the formula \ref{eq:def of energy as mixed in global conv}
and define the class of all $\phi$ in $\mathcal{P}(\R^{n})$ of\emph{
finite energy} by 
\[
\mathcal{E}_{P}^{1}(\R^{n}):=\{\phi\in\mathcal{E}_{P}(\R^{n}):\,\,\mathcal{E}(\phi)>-\infty
\]
Note that the various spaces are related as follows:

\begin{equation}
\mathcal{P}_{+}(\R^{n})\subset\mathcal{E}_{P}^{1}(\R^{n})\subset\mathcal{E}_{P}(\R^{n})\subset\mathcal{P}(\R^{n})\label{eq:inclusions of spaces in convex setting}
\end{equation}
More generally, given a bounded function $g$ on $P$ replacing $MA$
with $MA_{g}$ we obtain a corresponding functional $\mathcal{E}_{g},$
precisely as before. Anyway, since $g$ is bounded the corresponding
finite energy space is independent of $g.$
\begin{prop}
\label{pro:prop of energy in global conv}We have that in the space
$\mathcal{E}_{P}^{1}(\R^{n})$ the functional $\mathcal{E}(\cdot,\phi_{0})$
defined by \ref{eq:def of energy as mixed in global conv} (which
by definition is finite) satisfies 
\[
d\mathcal{E}_{|\phi}=MA(\phi)
\]
In general the functional $\mathcal{E}(\cdot,\phi_{P})$ be extended
to an increasing (wrt the usual order relation) and upper semi-continuous
functional $\mathcal{P}(\R^{n})\rightarrow[-\infty,\infty[$ by setting 

\begin{equation}
\mathcal{E}(\phi,\phi_{P})=-n!\int\phi^{*}dp\label{eq:energy as legendre in global conv}
\end{equation}
In particular, an element $\phi$ in $\mathcal{P}(\R^{n})$ is in
$\mathcal{E}_{P}^{1}(\R^{n})$ iff $\phi^{*}$ is in $L^{1}(P,dp).$
For the energy functional associated to a function $g$ on $P$ the
formula \ref{eq:energy as legendre in global conv} holds with $dp$
replaced by $gdp.$\end{prop}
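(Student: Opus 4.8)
The plan is to route everything through the Legendre transform, so that the only analytic input is Lemma \ref{pro:var prop of leg}. Write $J(\phi):=-n!\int_{P}\phi^{*}\,dp$, which is well defined in $[-\infty,\infty)$ for every $\phi\in\mathcal{P}(\R^{n})$, since $\phi^{*}(p)\geq-\phi(0)$ is bounded below on $P$. The first task is to establish the differential formula $dJ_{|\phi}=MA(\phi)$ on the set where $\phi^{*}\in L^{1}(P)$. Fixing a bounded continuous $v$ and setting $g_{s}(p):=(\phi+sv)^{*}(p)=\sup_{x}(\langle p,x\rangle-\phi(x)-sv(x))$, I note that $g_{s}(p)$ is a supremum of affine functions of $s$, hence convex in $s$, and that $|v|\leq\|v\|_{\infty}$ forces the uniform Lipschitz bound $|g_{s}(p)-g_{0}(p)|\leq|s|\,\|v\|_{\infty}$. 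Thus the difference quotients $s^{-1}(g_{s}-g_{0})$ are dominated by the constant $\|v\|_{\infty}\in L^{1}(P)$, while by the second bullet of Lemma \ref{pro:var prop of leg} they converge for a.e.\ $p$ to $-v(d\phi^{*}_{|p})$. Dominated convergence then yields $\frac{d}{ds}\big|_{s=0}J(\phi+sv)=n!\int_{P}v(d\phi^{*}_{|p})\,dp$, which the integration formula (third bullet of Lemma \ref{pro:var prop of leg}) identifies with $\int v\,MA(\phi)$.

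Next I would identify $J$ with the mixed-energy definition (\ref{eq:def of energy as mixed in global conv}) via the fundamental theorem of calculus along line segments. For $\phi\in\mathcal{E}_{P}^{1}(\R^{n})$ put $\phi_{t}:=(1-t)\phi_{0}+t\phi$; convexity of the Legendre transform gives $\phi_{t}^{*}\leq(1-t)\phi_{0}^{*}+t\phi^{*}\in L^{1}(P)$, so the class is convex and $\phi_{t}$ stays in it. Applying the differential formula with the constant variation $\frac{d}{dt}\phi_{t}=\phi-\phi_{0}$ gives $\frac{d}{dt}J(\phi_{t})=\int(\phi-\phi_{0})\,MA(\phi_{t})$; this derivative is continuous in $t$ by Proposition \ref{pro:cont for full ma}, so integrating from $0$ to $1$ produces $J(\phi)-J(\phi_{0})=\int_{0}^{1}\int(\phi-\phi_{0})\,MA(\phi_{t})\,dt=\mathcal{E}(\phi,\phi_{0})$. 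This simultaneously shows that the mixed energy has differential $MA(\phi)$ and, on taking $\phi_{0}=\phi_{P}$ (where $\phi_{P}^{*}=0$ on $P$, so $J(\phi_{P})=0$), delivers the Legendre representation (\ref{eq:energy as legendre in global conv}). The extension to the degenerate case $\int_{P}\phi^{*}=+\infty$ (value $-\infty$) follows by regularizing from above (Lemma \ref{lem:(regularization).-For-any}): taking $\phi_{j}\downarrow\phi$ gives $\phi_{j}^{*}\uparrow\phi^{*}$, and monotone convergence passes the identity to the limit.

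The remaining assertions are then formal consequences of (\ref{eq:energy as legendre in global conv}). Monotonicity is immediate, since $\phi\leq\psi$ gives $\phi^{*}\geq\psi^{*}$ by (\ref{eq:using monoton of legendre}) and hence $\mathcal{E}(\phi,\phi_{P})\leq\mathcal{E}(\psi,\phi_{P})$. For upper semicontinuity I use that $\phi\mapsto\phi^{*}(p)=\sup_{x}(\langle p,x\rangle-\phi(x))$ is lower semicontinuous under pointwise (equivalently local uniform) convergence; together with the uniform lower bound $\phi_{j}^{*}\geq-\phi_{j}(0)\geq-C$ (the values $\phi_{j}(0)$ being bounded along a convergent sequence), Fatou's lemma gives $\liminf_{j}\int_{P}\phi_{j}^{*}\,dp\geq\int_{P}\phi^{*}\,dp$, i.e.\ $\limsup_{j}\mathcal{E}(\phi_{j},\phi_{P})\leq\mathcal{E}(\phi,\phi_{P})$. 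Finiteness of $\mathcal{E}$ is now equivalent to $\int_{P}\phi^{*}\,dp<\infty$, which (as $\phi^{*}$ is bounded below) means $\phi^{*}\in L^{1}(P,dp)$; that this matches the definition of $\mathcal{E}_{P}^{1}(\R^{n})$ uses that the reference constant $J(\phi_{0})$ is finite, since $\phi_{0}^{*}\in L^{\infty}(P)$ for $\phi_{0}\in\mathcal{P}_{+}(\R^{n})$ by Proposition \ref{pro:leg is isometric}. Finally, the weighted statement is obtained verbatim with $dp$ replaced by $g\,dp$, all the above manipulations being insensitive to the bounded weight $g$.

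I expect the main obstacle to be the second paragraph rather than the first: the pointwise differentiation is clean once Lemma \ref{pro:var prop of leg} is in hand, but the fundamental-theorem-of-calculus identification requires verifying that the segment $\phi_{t}$ remains in the finite-energy class, that the derivative $t\mapsto\int(\phi-\phi_{0})\,MA(\phi_{t})$ is genuinely continuous (this is exactly where Proposition \ref{pro:cont for full ma} is essential), and that the whole identity survives passage to the $-\infty$ value under the regularization $\phi_{j}\downarrow\phi$.
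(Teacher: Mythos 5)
Your proposal is correct and follows essentially the same route as the paper: your first paragraph is exactly the paper's key relation $L^{*}(-n!\lambda)=MA$ (derived from Lemma \ref{pro:var prop of leg}), your fundamental-theorem-of-calculus identification along the affine segment is the paper's integration of the resulting exact one-form along a curve, and the monotonicity and upper semi-continuity arguments (order-reversal of the Legendre transform plus Fatou) are identical. The only cosmetic difference is that you phrase the path-independence concretely via dominated convergence and the FTC where the paper speaks of closed and exact one-forms on $\mathcal{H}(P)$.
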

\begin{proof}
Denote by $\mathcal{H}(P)$ the space of all convex functions on $P$
and denote by $L$ the map from $\mathcal{E}_{P}(\R^{n})$ to $\mathcal{H}(P)$
induced by the Legendre transform. i.e $(L\phi)(p):=\phi^{*}(p).$
Let $\lambda$ be the linear functional on $\mathcal{H}(P)$ defined
by integration on $P$ against $dp$ and let $MA$ be the one-form
on $\mathcal{E}_{P}(\R^{n})$ such that the linear functional $MA_{|\phi}$
is defined by integration on $\R^{n}$ against the Monge-Ampère measure
$MA(\phi).$ Then we have the following key relation 
\begin{equation}
L^{*}(-n!\lambda)=MA\label{eq:key relation}
\end{equation}
Indeed, this follows immediately from the second point in Lemma \ref{pro:var prop of leg},
since by definition $\left\langle L^{*},v\right\rangle _{\phi}=-n!\left\langle \lambda,d(L(\phi+tv))/dt_{t=0}\right\rangle .$
Since $\lambda$ clearly defines a\emph{ closed} one-form on $\mathcal{H}(P)$
(and even exact with primitive $I_{P}:=\int_{P}(\cdot)dp),$ it follows
that the pull-back $MA$ is also closed and even exact. In particular
it has a primitive $\mathcal{E}$ (unique up to the normalization
$\mathcal{E}(\phi_{0})=0$) such that 
\[
\mathcal{E}(\phi,\phi_{0})=\int_{[0,1]}\gamma^{*}MA,\,\,\,\gamma:\,[0,1]\rightarrow\mathcal{H}(\R^{n})
\]
where $\gamma$ is any smooth curve connecting $\phi_{0}$ and $\phi.$
In particular, taking $\gamma$ to be an affine curve in $\mathcal{H}(\R^{n})$
gives the formula \ref{eq:def of energy as mixed in global conv},
while taking $\gamma$ to be the pull-back under $L$ of an affine
curve in $\mathcal{H}(P)$ gives the formula \ref{eq:energy as legendre in global conv}
on $\mathcal{E}_{P}^{1}(\R^{n})$ (using that $L\phi_{P}=0).$

Finally, we note that it follows immediately from the fact that the
Legendre transform is decreasing together with Fatou's lemma that
the functional on $\mathcal{P}(\R^{n})$ defined by \ref{eq:energy as legendre in global conv}
is increasing and upper-semicontinuous.
\end{proof}
We will often omit the explicit dependence on a reference $\phi_{0}$
in the definition of $\mathcal{E}(\cdot,\phi_{0}).$ Anyway, as a
consequence of the property \ref{eq:first deriv of energy in global conv}
\emph{differences} $\mathcal{E}(\phi)-\mathcal{E}(\psi)$ are independent
of the choice of $\phi_{0}.$ 
\begin{rem}
Since any $\phi$ in $\mathcal{P}(\R^{n})$ may be written as a decreasing
limit of elements $\phi_{j}$ in $\mathcal{P}_{+}(\R^{n})$ (just
set $\phi_{j}=\max\{\phi,\phi_{p}+1/j\})$ we could, by the monotonicity
and upper semi-continuity of $\mathcal{E}$ in the previous proposition,
equivalently have defined $\mathcal{E}$ by $\mathcal{P}(\R^{n})$
by setting 
\begin{equation}
\mathcal{E}(\phi)=\inf\left\{ \mathcal{E}(\psi):\,\psi\in\mathcal{P}_{+}(\R^{n}),\,\,\,\psi\geq\phi\right\} \label{eq:energie as inf}
\end{equation}
using formula \ref{eq:def of energy as mixed in global conv} and
then let $\mathcal{E}_{P}^{1}(\R^{n}):=\{\mathcal{E}>-\infty\}.$
More concretely, we could even assume that $\psi$ above is smooth
(by the approximation Lemma \ref{lem:(regularization).-For-any} ). \end{rem}
\begin{example}
The function $\phi(x)=0$ is \emph{not} in the space $\mathcal{E}_{P}^{1}(\R^{n}).$
Indeed, its Legendre transform is identically equal to infinity on
the complement of $0$ and hence $\phi^{*}$ is not in $L^{1}(P,dp).$
This also shows that the formula \ref{eq:def of energy as mixed in global conv}
is not valid in general on the complement of $\mathcal{E}_{P}(\R^{n}).$\end{example}
\begin{rem}
\label{rem:weighted energy classes}The reason to use the notation
$\mathcal{E}_{P}(\R^{n})$ for the space of all $\phi$ with full
Monge-Ampère mass is that, just as in the Kähler geometry setting
\cite{begz}, $\phi\in\mathcal{E}_{P}(\R^{n})$ iff $\phi$ has finite\emph{
$\chi-$weighted energy} for some convex positive weight $\chi,$
i.e. $\phi$ is in the class $\mathcal{E}_{P}^{\chi}(\R^{n})$ defined
as in \ref{eq:energie as inf}, but with $\mathcal{E}(\psi)$ replaced
by 
\begin{equation}
\int_{0}^{1}\chi(\phi-\phi_{0})MA(\phi_{0}(1-t)+t\phi)dt\label{eq:def of weighted energy}
\end{equation}

\end{rem}

\subsection{The projection $Pr$ on the convexification}

Fix an element $\phi$ in $\mathcal{P}(\R^{n})$ and a bounded continuous
function $v,$ i.e. $v\in\mathcal{C}_{b}^{0}(\R^{n}).$ If $\phi$
is in the ``boundary'' of $\mathcal{P}(\R^{n}),$ i.e. $\phi$ is
not strictly convex, then some perturbation $\phi+v$ will leave the
space $\mathcal{P}(\R^{n}).$ As a remedy for this we define the projection
operator $Pr$ from $\{\phi\}+\mathcal{C}_{b}^{0}(\R^{n})$ onto $\mathcal{P}(\R^{n})$
by 
\[
Pr(\phi+v)(x):=\sup_{\psi\in\mathcal{P}(\R^{n})}\{\psi(x):\,\,\,\psi\leq\phi+v\}
\]
Noting that $Pr(\phi+v)^{*}=(\phi+v)^{*}$ we could also use 
\[
Pr(\phi+v)=(\phi+v)^{**}
\]
as the definition of $Pr(\phi+v).$
\begin{prop}
\label{pro:diff of composed energy convex}Fix an element $\phi$
in $\mathcal{E}_{P}^{1}(\R^{n}).$ Then the functional 
\[
\mathcal{C}_{b}^{0}(\R^{n}):\,\,\, v\mapsto(\mathcal{E}\circ Pr)(\phi+v)(=\int_{P}(\phi+v)^{*}dp)
\]
 is Gateaux differentiable and the differential at $v$ is given by
$MA(Pr(\phi+v)).$ \end{prop}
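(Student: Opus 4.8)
The plan is to transport the whole question to the Legendre side and then run a concavity-plus-comparison argument that lets the genuinely convex function $\Phi:=Pr(\phi+v)$ carry all the analytic weight. First I would note that $Pr(\phi+v)^{*}=(\phi+v)^{*}$ and that $|(\phi+v)^{*}-\phi^{*}|\le\|v\|_{L^{\infty}(\R^{n})}$ pointwise, so that $(\phi+v)^{*}\in L^{1}(P,dp)$; by Proposition \ref{pro:prop of energy in global conv} this means $Pr(\phi+v)\in\mathcal{E}_{P}^{1}(\R^{n})$ and, by \eqref{eq:energy as legendre in global conv} (with reference $\phi_{P}$, which only shifts $\mathcal{E}$ by a $v$-independent constant),
\[
F(v):=(\mathcal{E}\circ Pr)(\phi+v)=-n!\int_{P}(\phi+v)^{*}\,dp .
\]
For each fixed $p$ the value $(\phi+v)^{*}(p)=\sup_{x}(\langle p,x\rangle-\phi(x)-v(x))$ is a supremum of functions that are affine in $v$, hence convex in $v$; integrating, $F$ is \emph{concave} on $\mathcal{C}_{b}^{0}(\R^{n})$. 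Consequently $t\mapsto F(v+tw)$ is concave for every $w$, and it is enough to prove that its one-sided derivatives at $t=0$ agree and equal $\int_{\R^{n}}w\,MA(Pr(\phi+v))$.

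Next I would introduce the comparison functional $G(t):=-n!\int_{P}(\Phi+tw)^{*}\,dp$, where $\Phi:=Pr(\phi+v)$. Since $\Phi$ is an honest convex function lying in $\mathcal{E}_{P}^{1}(\R^{n})$, Lemma \ref{pro:var prop of leg} applies to it directly: for a.e. $p$ the transform $\Phi^{*}$ is differentiable and $\tfrac{d}{dt}(\Phi+tw)^{*}(p)\big|_{t=0}=-w(d\Phi_{|p}^{*})$. The difference quotients are bounded in absolute value by $\|w\|_{L^{\infty}}$, so dominated convergence lets me differentiate under the integral, and the change-of-variables identity (third bullet of Lemma \ref{pro:var prop of leg}) assembles the result into
\[
G'(0)=n!\int_{P}w(d\Phi_{|p}^{*})\,dp=\int_{\R^{n}}w\,MA(\Phi)=\int_{\R^{n}}w\,MA(Pr(\phi+v)).
\]

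The structural heart of the argument is the one-sided comparison $F(v+tw)\ge G(t)$ for all $t$, with equality at $t=0$. Indeed $\Phi\le\phi+v$ gives $\Phi+tw\le\phi+v+tw$, hence $(\Phi+tw)^{*}\ge(\phi+v+tw)^{*}$ by order-reversal of the Legendre transform, so $G(t)\le F(v+tw)$; and $\Phi^{*}=(\phi+v)^{*}$ gives $G(0)=F(v)$. Thus $F(v+tw)-F(v)\ge G(t)-G(0)$ for all $t$. Dividing by $t>0$ and letting $t\to0^{+}$ yields $\tfrac{d^{+}}{dt}F(v+tw)\big|_{0}\ge G'(0)$, while dividing by $t<0$ (which reverses the inequality) and letting $t\to0^{-}$ yields $\tfrac{d^{-}}{dt}F(v+tw)\big|_{0}\le G'(0)$.

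Finally I would combine this with the concavity of $t\mapsto F(v+tw)$, which forces the reverse relation $\tfrac{d^{+}}{dt}F\big|_{0}\le\tfrac{d^{-}}{dt}F\big|_{0}$. Together these sandwich both one-sided derivatives against $G'(0)$, so they coincide and equal $G'(0)$; hence $F$ is Gateaux differentiable at $v$ with $dF_{v}(w)=\int_{\R^{n}}w\,MA(Pr(\phi+v))$, and since this is linear and continuous in $w$ the differential is $MA(Pr(\phi+v))$, as claimed. The main obstacle is exactly the non-convexity of $\phi+v$: one cannot apply Lemma \ref{pro:var prop of leg} to it directly, since it has no controlled subgradient structure and the sup defining its transform need not be attained at a single point. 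The device that removes this obstacle is to bracket $F$ between its own value and the convexified functional $G$ built from $\Phi=Pr(\phi+v)$, using only $Pr(\phi+v)\le\phi+v$, the identity $Pr(\phi+v)^{*}=(\phi+v)^{*}$, and concavity, so that all the genuine differentiation is performed on the convex function $\Phi$, where Lemma \ref{pro:var prop of leg} is available.
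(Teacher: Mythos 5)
Your proof is correct. It rests on the same two pillars as the paper's argument --- the Legendre-side representation $(\mathcal{E}\circ Pr)(\phi+v)=-n!\int_{P}(\phi+v)^{*}dp$ of Prop \ref{pro:prop of energy in global conv} and the variational Lemma \ref{pro:var prop of leg} --- but it is organized differently, and the difference is substantive at one point. The paper differentiates pointwise in $p$: for each $p$ the map $t\mapsto(\phi+tv)^{*}(p)$ is convex, its one-sided derivatives coincide for a.e.\ $p$, and monotone convergence passes this under the integral; to treat a general base point $v$ it then evaluates the derivative of $t\mapsto\int_{P}(\phi+tv)^{*}dp$ at $t=1$, which tacitly requires the derivative formula of Lemma \ref{pro:var prop of leg} for perturbations of the \emph{non-convex} function $\phi+v$ (this is covered by the general claim proved in the appendix, where $G_{0}$ is only assumed proper and usc, but the paper does not spell this out). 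Your sandwich argument removes exactly this issue: all differentiation is performed on the auxiliary functional $G$ built from the genuinely convex $\Phi=Pr(\phi+v)$, where the lemma applies verbatim, and the two-sided derivative of $F$ is recovered from the comparison $F(v+tw)\ge G(t)$ with equality at $t=0$ (order-reversal of the Legendre transform applied to $\Phi\le\phi+v$) together with the concavity of $F$, which you correctly obtain by integrating the convexity in $v$ of the pointwise suprema $(\phi+v)^{*}(p)$. What the paper's route buys is brevity and an explicit description of the one-sided derivatives along the whole segment $\phi+tv$; what yours buys is that the only measure-theoretic input is the a.e.\ differentiability of the single convex function $\Phi^{*}$, with the comparison-plus-concavity mechanism cleanly separated from the convex analysis. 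Your bookkeeping of the factor $n!$ in $G'(0)=n!\int_{P}w(d\Phi_{|p}^{*})dp=\int_{\R^{n}}w\,MA(\Phi)$ is also the one consistent with the normalization \ref{eq:def of real ma} of $MA$.
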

\begin{proof}
Let us first consider the differential of the functional at $v=0.$
Observe that, for $p$ fixed, $t\mapsto(\phi+tv)^{*})(p)$ is convex
and hence its right and left derivatives $d_{\pm}(p)$ exist everywhere,
defining functions which are in $L_{loc}^{\infty}(P)$ and $d_{+}(p)=d_{-}(p)$
for almost every $p$ (by the first point of Prop \ref{pro:var prop of leg}).
Moreover, by convexity $d_{\pm}(p)$ are both defined as monotone
limit. Hence it follows from the Lebesgue monotone convergence theorem
that 
\[
d(\int_{P}(\phi+tv)^{*}dp)/dt_{t=0^{\pm}}=\int_{P}d_{\pm}(p)dp
\]
 and since $d_{+}(p)=d_{-}(p)$ a.e. wrt $dp$ this gives the desired
differentiability property of the functional $\mapsto(\mathcal{E}\circ Pr).$
Moreover, by Lemma \ref{pro:var prop of leg} we have if $u:=\phi^{*}$
that 
\[
d(\int_{P}(\phi+tv)^{*}dp)/dt_{t=0^{\pm}}=-\int_{P}v(du_{p})dp
\]
 and the proof if concluded by invoking the formula in the third point
of Lemma \ref{pro:var prop of leg}. Finally, to obtain the differential
at any $v$ we note that the previous argument gives that 
\[
d(\int_{P}(\phi+tv)^{*}dp)/dt_{t=1^{\pm}}=-\int_{P}v(d(\phi+v)^{*}(p))dp.
\]
But since $(\phi+v)^{*}=\psi^{*}$ for $\psi=Pr(\phi+v)$ we can now
apply the formula in the third point of Lemma \ref{pro:var prop of leg}
to $\psi.$
\end{proof}

\subsection{Geodesics }

Given two strictly convex and smooth functions $\phi_{0}$ and $\phi_{1}$
in $\mathcal{P}_{+}(\R^{n})$ the geodesic $\phi_{t}$ in $\mathcal{P}_{+}(\R^{n})$
from $\phi_{0}$ to $\phi_{1}$ is defined as the the map $[0,1]\rightarrow\mathcal{P}_{+}(\R^{n})$
defined by 
\[
\phi_{t}=((1-t)\phi_{0}^{*}+t\phi_{1}^{*})^{*}
\]
 i.e. under the Legendre transform (for $t$ fixed) $\phi_{t}$ corresponds
to an affine curve in $\mathcal{H}(P).$ In particular, $\phi_{t}(x)$
is smooth and convex in $(t,x).$

\subsection{\label{sub:Variational-principles-and}Variational principles and
a coercive Moser-Trudinger type inequality}

Given $(P,g)$ we consider the following Moser-Trudinger type functional
on $\mathcal{P}_{+}(\R^{n}):$ 
\[
\mathcal{G}_{(P,g)}(\phi):=\frac{1}{V(P,g)}\mathcal{E}_{g}(\phi,\phi_{P})-\mathcal{I}(\phi),\,\,\,\,\,\mathcal{I}(\phi):=-\log\int e^{-\phi}dx
\]
To simplify the notation we will set $g=1$ and write $\mathcal{G}_{(P,g)}=\mathcal{G},$
but the proofs in the general case are the same. Note that $\mathcal{G}$
is a well-defined as a functional on $\mathcal{E}_{P}^{1}(\R^{n})$
taking values in $]-\infty,\infty].$ The normalization of the energy
term above is made so that $\mathcal{G}(\phi+c)=\mathcal{G}(\phi).$
Moreover, we have the following simple, but crucial 
\begin{lem}
\label{lem:inv of m-t functional}The functional $\mathcal{G}$ is
invariant under the action of $\R^{n}$ by translations, $\phi\mapsto\phi_{a},$
iff $0$ is the barycenter of $P.$ Moreover, if $\mathcal{G}(\phi)$
is bounded from above, then $0$ is the barycenter of $P.$\end{lem}
\begin{proof}
Since the volumes form $dx$ is invariant under translations so is
the functional $\mathcal{I}$ and hence $\mathcal{G}$ is invariant
under translations iff the function $\mathcal{E}(\cdot,\phi_{P})$
is. The proof of the first statement is thus concluded by noting that
\[
\mathcal{E}(\phi_{a},\phi_{P})-\mathcal{E}(\phi,\phi_{P})=-\int_{P}((\phi_{a})^{*}-\phi^{*})dp=\int_{P}\left\langle a,p\right\rangle dp,
\]
 where we have used \ref{eq:transl under legendre} in the last equality.
Finally, if $0$ is not the barycenter of $P$ we can take a curve
$\phi_{t}$ in $\mathcal{P}_{+}(\R^{n})$ such that $\phi_{t}^{*}(0)=tp_{i_{0}}$
where $p$ is the barycenter of $P$ and $p_{i_{0}}\neq0.$ Then it
follows as above that 
\[
\mathcal{G}(\phi_{t})=\log\int e^{-\phi_{P}}dx+\mathcal{E}(\phi_{t},\phi_{P})=\log\int e^{-\phi_{P}}dx-tp_{i_{0}}
\]
 which is unbounded from above when either $t\rightarrow\infty$ or
$t\rightarrow-\infty.$
\end{proof}
Next, we have the following key concavity property:
\begin{prop}
\label{prop:The-functional-G is conc}The functional $\mathcal{G}$
is concave along geodesics in $\mathcal{P}_{+}(\R^{n})$ and strictly
concave modulo the action of $\R^{n}$ by translations. In particular,
any solution $\phi$ as in the statement of Theorem \ref{thm:existe real ma intr}
maximizes $\mathcal{G}$ on the space $\mathcal{P}_{+}(\R^{n}).$\end{prop}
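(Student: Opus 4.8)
The plan is to treat the two pieces of $\mathcal{G}=\frac{1}{V(P)}\mathcal{E}(\cdot,\phi_P)-\mathcal{I}$, with $\mathcal{I}(\phi)=-\log\int_{\R^n}e^{-\phi}\,dx$, separately along a geodesic $\phi_t=((1-t)\phi_0^{*}+t\phi_1^{*})^{*}$ joining two smooth strictly convex $\phi_0,\phi_1\in\mathcal{P}_+(\R^n)$. For the energy I would use the Legendre representation $\mathcal{E}(\phi,\phi_P)=-n!\int_P\phi^{*}\,dp$ from Proposition \ref{pro:prop of energy in global conv}. Since, by the very definition of a geodesic, the dual $\phi_t^{*}=(1-t)\phi_0^{*}+t\phi_1^{*}$ is \emph{affine} in $t$, the map $t\mapsto\mathcal{E}(\phi_t,\phi_P)=-n!\int_P\bigl((1-t)\phi_0^{*}+t\phi_1^{*}\bigr)\,dp$ is affine; in particular it contributes nothing to the second derivative of $\mathcal{G}(\phi_t)$.

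Thus the whole statement reduces to the term $-\mathcal{I}(\phi_t)=\log\int_{\R^n}e^{-\phi_t(x)}\,dx$. Here I would invoke the joint convexity of $(t,x)\mapsto\phi_t(x)$ recorded in the subsection on geodesics: the integrand $e^{-\phi_t(x)}$ is then log-concave in $(t,x)$, so by Pr\'ekopa's theorem its $x$-marginal $t\mapsto\int_{\R^n}e^{-\phi_t}\,dx$ is log-concave, i.e. $t\mapsto\log\int e^{-\phi_t}\,dx$ is concave. Adding the affine energy term shows that $\mathcal{G}$ is concave along geodesics.

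For \emph{strict} concavity modulo translations I would analyse the equality case. Writing $\mu_t$ for the probability measure proportional to $e^{-\phi_t}\,dx$, a direct computation gives
\[
\frac{d^{2}}{dt^{2}}\log\int e^{-\phi_t}\,dx=\mathrm{Var}_{\mu_t}(\partial_t\phi_t)-\int\partial_t^{2}\phi_t\,d\mu_t .
\]
The Brascamp--Lieb inequality applied to $h=\partial_t\phi_t$ bounds $\mathrm{Var}_{\mu_t}(\partial_t\phi_t)$ above by $\int\langle\nabla_x\partial_t\phi_t,(\mathrm{Hess}_x\phi_t)^{-1}\nabla_x\partial_t\phi_t\rangle\,d\mu_t$, while the Schur-complement consequence of the joint convexity of $\phi_t$ bounds this same quantity above by $\int\partial_t^{2}\phi_t\,d\mu_t$; together these re-prove concavity and show that equality can hold for all $t$ only when both inequalities are saturated. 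Saturation of Brascamp--Lieb forces $\partial_t\phi_t=\langle b(t),\nabla_x\phi_t\rangle+c(t)$ with $b(t)$ constant in $x$, and saturation of the Schur bound forces $b(t)$ to be constant in $t$ as well; this is exactly the infinitesimal generator of a translation, so integrating the transport equation gives $\phi_t=(\phi_0)_{ta}$ up to additive constants for a fixed $a\in\R^n$. Hence the geodesic is an orbit of the $\R^n$-action, which is the asserted strict concavity. I expect this equality analysis to be the main obstacle, since it requires extracting rigidity from two simultaneous equalities and matching it against the Legendre structure of the geodesic.

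Finally, for the ``in particular'' I would compute the first variation. Using $d\mathcal{E}_{|\phi}=MA(\phi)$ (Proposition \ref{pro:prop of energy in global conv}) and differentiating $\mathcal{I}$ gives, for $v\in\mathcal{C}_b^0(\R^n)$,
\[
d\mathcal{G}_{|\phi}(v)=\frac{1}{V(P)}\int_{\R^n} v\,MA(\phi)-\frac{\int_{\R^n} v\,e^{-\phi}\,dx}{\int_{\R^n} e^{-\phi}\,dx},
\]
which vanishes for every $v$ exactly when $MA(\phi)$ is proportional to $e^{-\phi}\,dx$; after fixing the additive normalization of $\phi$ this is precisely the equation $\det(\partial^{2}\phi/\partial x_i\partial x_j)=e^{-\phi}$ of Theorem \ref{thm:existe real ma intr}. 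Thus a solution $\phi$ is a critical point of $\mathcal{G}$. Given any $\psi\in\mathcal{P}_+(\R^n)$, concavity applied to the geodesic $\phi_t$ from $\phi$ to $\psi$ yields a concave function $h(t)=\mathcal{G}(\phi_t)$ with $h'(0)=d\mathcal{G}_{|\phi}(\dot\phi_0)=0$, whence $\mathcal{G}(\psi)=h(1)\le h(0)=\mathcal{G}(\phi)$. Therefore every such $\phi$ maximizes $\mathcal{G}$ on $\mathcal{P}_+(\R^n)$.
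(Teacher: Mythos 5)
Your proof is correct and follows essentially the same route as the paper: affinity of the energy term in $t$ via the Legendre-transform representation, Prekopa's theorem applied to the jointly convex $\phi_t(x)$ for the $-\mathcal{I}$ term, and the critical-point-plus-concavity argument for the maximization claim. The only difference is that you re-derive the equality case of Prekopa by hand, via the second-variation formula, Brascamp--Lieb and a Schur-complement bound, where the paper simply invokes the known equality case of the Prekopa--Leindler inequality to conclude that an affine profile forces $\phi_t(x)=\phi(x+ta)$.
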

\begin{proof}
Let $\phi_{t}$ be a geodesic in $\mathcal{P}_{+}(\R^{n}).$ By the
Prekopa-Leindler inequality 
\begin{equation}
t\mapsto-\log\int_{\R^{n}}e^{-\phi_{t}}dx\label{eq:log int functional in convex case}
\end{equation}
 is convex, since $\phi_{t}$ is convex in $(x,t)$ (see \cite{b-b2}
for complex geometric generalizations of this inequality). Moreover,
by Prop \ref{pro:prop of energy in global conv} $\mathcal{E}(\phi_{t},\phi_{0})$
is affine wrt $t$ and hence $\mathcal{G}(\phi_{t})$ is convex as
desired. The last statement follows from the equality case for the
Prekopa-Leindler inequality giving that if the function in \ref{eq:log int functional in convex case}
is affine in $t$ then $\phi_{t}(x)=\phi(x+ta)$ for some vector $a.$
The final statement now follows by connecting a given element $\phi$
(which by approximation may be assumed smooth) and the solution $\phi_{0}$
with a geodesic and using that the differential of $\mathcal{G}$
vanishes at $\phi_{0}.$ 
\end{proof}
We are now in the position to prove one of the main results in the
present paper:
\begin{thm}
\label{thm:coerciv k-e}Let $P$ be a convex body containing $0$
in its interior. For any $\delta>0$ there is a constant $C_{\delta}$
such that 
\[
\mathcal{G}(\phi)\leq(1-\delta)\mathcal{E}(\phi,\phi_{P})+C_{\delta}
\]
for any normalized $\phi\in\mathcal{P}_{+}(\R^{n}),$ i.e. $\phi(0)=0$
and $\phi\geq0.$ Moreover, \textup{$\mathcal{\mathcal{G}}$ is bounded
from above on} $\mathcal{P}_{+}(\R^{n})$ iff it is invariant under
the action of $\R^{n}$ by translations iff $0$ is the barycenter
of $P.$ \end{thm}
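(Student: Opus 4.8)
The plan is to reduce the coercivity inequality to a single Moser--Trudinger estimate and to prove that estimate by a direct convexity argument on the sublevel sets of $\phi$. Writing $\mathcal{G}(\phi)=E(\phi)-\mathcal{I}(\phi)$ with $E(\phi):=\frac{1}{V(P,g)}\mathcal{E}(\phi,\phi_{P})=-\frac{1}{V(P)}\int_{P}\phi^{*}\,dp$ the (normalized) energy term and $\mathcal{I}(\phi)=-\log\int_{\R^{n}}e^{-\phi}\,dx$, I first note that for a normalized $\phi$ one has $\phi^{*}(p)\geq\langle p,0\rangle-\phi(0)=0$ on $P$, so $E(\phi)\leq0$. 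Cancelling the common energy term, the asserted bound $\mathcal{G}(\phi)\leq(1-\delta)E(\phi)+C_{\delta}$ becomes equivalent to
\[
\log\int_{\R^{n}}e^{-\phi}\,dx\leq\frac{\delta}{V(P)}\int_{P}\phi^{*}\,dp+C_{\delta},
\]
and this is the estimate I would establish.

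Using $\phi\geq0$, I would start from the layer-cake identity $\int_{\R^{n}}e^{-\phi}\,dx=\int_{0}^{\infty}|\{\phi\leq t\}|\,e^{-t}\,dt$ and bound the volume of the convex sublevel set $\{\phi\leq t\}$. The Fenchel--Young inequality $\phi(x)\geq\langle p,x\rangle-\phi^{*}(p)$ gives the containment $\{\phi\leq t\}\subseteq\bigcap_{p\in P}\{x:\langle p,x\rangle\leq t+\phi^{*}(p)\}$, so it suffices to intersect the finitely many half-spaces coming from points $p_{1},\dots,p_{n+1}$ in a fixed ball $B_{\rho_{0}}(0)\subseteq\mathrm{int}\,P$ whose convex hull contains a smaller ball $B_{\rho_{1}}(0)$ (both radii depending only on $P$). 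For such points the intersection lies in the ball of radius $(t+M)/\rho_{1}$, where $M:=\max_{i}\phi^{*}(p_{i})$, whence $|\{\phi\leq t\}|\leq C_{n}\rho_{1}^{-n}(t+M)^{n}$.

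The crucial point, and the one place where the convexity of $\phi^{*}$ enters decisively, is to control $M$ by the energy. Since $\phi^{*}$ is convex and nonnegative, Jensen's inequality applied to a ball $B_{r}(p_{i})\subseteq P$ gives $\phi^{*}(p_{i})\leq\frac{1}{|B_{r}|}\int_{B_{r}(p_{i})}\phi^{*}\,dp\leq\frac{V(P)}{|B_{r}|}\cdot\frac{1}{V(P)}\int_{P}\phi^{*}\,dp$, so that with $r,\rho_{0}$ chosen in terms of the distance from $0$ to $\partial P$ one obtains $M\leq c(P)A$, where $A:=\frac{1}{V(P)}\int_{P}\phi^{*}\,dp$ and $c(P),\rho_{1}$ are independent of $\phi$. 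Integrating the volume bound against $e^{-t}\,dt$ and using $\int_{0}^{\infty}(t+M)^{n}e^{-t}\,dt\leq C_{n}(M+1)^{n}$ yields $\log\int e^{-\phi}\,dx\leq n\log(1+c(P)A)+C(P)$, and finally the sublinearity of the logarithm, $n\log(1+s)\leq\varepsilon s+C_{\varepsilon}$ for every $\varepsilon>0$, converts this into the required $\frac{\delta}{V(P)}\int_{P}\phi^{*}\,dp+C_{\delta}$. The main obstacle is precisely this Jensen step: one must check that $c(P)$ and $\rho_{1}$ can be taken uniform over all normalized $\phi$, i.e. that any direction in which $\phi^{*}$ is large contributes proportionally to $\int_{P}\phi^{*}\,dp$; this uniformity is what prevents the sublevel sets from degenerating and makes the bound hold for \emph{every} $\delta>0$.

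For the ``moreover'' the three equivalences follow by combining this inequality with Lemma \ref{lem:inv of m-t functional}. If $0$ is the barycenter, then $\mathcal{G}$ is invariant under translations, so any $\phi\in\mathcal{P}_{+}(\R^{n})$ may be replaced by its normalization without changing $\mathcal{G}$; the inequality then gives $\mathcal{G}(\phi)\leq(1-\delta)E(\phi)+C_{\delta}\leq C_{\delta}$, since $E(\phi)\leq0$, so $\mathcal{G}$ is bounded from above. The reverse implications, that boundedness from above forces $0$ to be the barycenter and that translation invariance is equivalent to the barycenter condition, are exactly the content of Lemma \ref{lem:inv of m-t functional}, which closes the cycle.
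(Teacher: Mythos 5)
Your argument is correct, and it takes a genuinely different route from the paper's. The paper proves the crude bound $\log\int e^{-\phi}dx\leq-C\mathcal{E}(\phi,\phi_{P})+C$ by exploiting the concavity of $\mathcal{G}$ along geodesics (via Prekopa--Leindler) together with a carefully chosen reference $\phi_{0}$ satisfying $e^{-\phi_{0}}\leq C\cdot MA(\phi_{0})$, and only afterwards upgrades the constant to an arbitrary $\delta$ by the scaling $\phi\mapsto t\phi(\cdot/t)$. You instead bound $\int e^{-\phi}dx$ directly by the layer-cake formula, trap the sublevel sets $\{\phi\leq t\}$ in a ball of radius $(t+M)/\rho_{1}$ using Fenchel--Young against finitely many points $p_{1},\dots,p_{n+1}$ whose hull contains a ball around $0$, control $M=\max_{i}\phi^{*}(p_{i})$ by the energy via the sub-mean-value property of the convex nonnegative function $\phi^{*}$, and extract the arbitrary $\delta$ from the sublinearity of the logarithm rather than from scaling. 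All steps check out: the "main obstacle" you flag is in fact harmless, since the points $p_{i}$ and the radii $\rho_{0},\rho_{1},r$ are fixed once and for all in terms of $P$ (e.g.\ the vertices of a simplex inscribed in a small ball about $0$), so the constants are automatically uniform in $\phi$; and the reduction of the coercivity estimate to the single inequality $\log\int e^{-\phi}dx\leq\delta A+C_{\delta}$, as well as the treatment of the "moreover" part via Lemma \ref{lem:inv of m-t functional}, match the paper. What your approach buys is a completely elementary, self-contained proof that never invokes Prekopa--Leindler, geodesics, or the existence of a reference solution to an inhomogeneous Monge--Amp\`ere equation; what the paper's approach buys is a scheme (geodesic concavity plus a Ricci-bounded reference) that transfers to the complex K\"ahler setting, where your sublevel-set geometry has no direct analogue, and the concavity statement it relies on is in any case needed elsewhere in the paper for the uniqueness of maximizers.
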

\begin{proof}
\emph{Step one} \emph{(a crude M-T type inequality):} there is a positive
constant $C$ such that 
\[
\log\int e^{-\phi}dx\leq-C\mathcal{E}(\phi,\phi_{P})\text{}+C
\]
for any $\phi\in\mathcal{P}_{+}(\R^{n})$ such that $\phi(0)=0$ (or
equivalently such that $\sup(\phi-\phi_{P})=0).$ 

We first fix a reference $\phi_{0}$ in $\mathcal{P}_{+}(\R^{n})$
such that $\int e^{-\phi_{0}}dx=1.$ Given $\phi$ in $\mathcal{P}_{+}(\R^{n})$
there is a \emph{geodesic} $\phi_{t}$ in $\mathcal{P}_{+}(\R^{n})$
starting at $\phi_{0}$ such that $\phi_{1}=\phi.$ By the previous
proposition $\mathcal{G}(\phi_{t})$ is concave giving $\mathcal{G}(\phi_{1})\leq\mathcal{G}(\phi_{0})+d\mathcal{G}(\phi_{t})/dt_{t=0},$
where 
\[
d\mathcal{G}(\phi_{t})/dt_{t=0}=\int_{\R^{n}}(-d\phi_{t}/dt_{t=0})(e^{-\phi_{0}}dx-MA(u_{0}))
\]
By the invariance of $\mathcal{G}$ under $\phi\mapsto\phi+c$ we
may assume that $\sup(\phi_{1}-\phi_{0})=0$ and hence by the convexity
of $\phi_{t}$ wrt $t$ we have $d\phi_{t}/dt_{t=0}\leq0.$ Next we
note that we may take the fixed reference $\phi_{0}$ so that 
\begin{equation}
e^{-\phi_{0}}\leq C\cdot MA(\phi_{0})\label{eq:u 0 with ricci controle}
\end{equation}
 for some constant $C.$ Accepting, for the moment, the existence
of such a $\phi_{0}$ we deduce that there is a constant $C$ such
that 
\[
\mathcal{G}(\phi)-\mathcal{G}(\phi_{0})\leq C\int-d\phi_{t}/dt_{t=0})MA(\phi_{0})=-C\mathcal{E}(\phi,\phi_{0}),
\]
using \ref{eq:first deriv of energy in global conv} and that $\mathcal{E}(\phi_{t},\phi_{0})$
is affine wrt $t.$ Finally, since $\mathcal{E}(\phi,\phi_{0})=\mathcal{E}(\phi,\phi_{P})+\mathcal{E}(\phi_{P},\phi_{0})$
this concludes the proof up to the existence of $\phi_{0}\in\mathcal{P}_{+}(\R^{n})$
satisfying \ref{eq:u 0 with ricci controle}. An explicit choice of
$\phi_{0}$ may be obtained by setting $\phi_{0}=\phi_{P,1}$ as in
\ref{eq:phi P k smooth}. The property \ref{eq:u 0 with ricci controle}
can then be checked by straightforward, but somewhat tedious calculations.
Alternatively, we may set $\phi_{0}=\phi$ for a solution $\phi\in\mathcal{P}_{+}(\R^{n})$
of the inhomogeneous Monge-Ampère equation $MA(\phi)=e^{-\psi}dx,$
where $\psi$ is any given element in $\mathcal{P}_{+}(\R^{n}),$
e.g. $\psi=\phi_{P}$ or even in $\mathcal{E}_{P}(\R^{n})$ (see Cor
\ref{cor:inhomo m-a for e to - psi}). Since, $\phi\in\mathcal{P}_{+}(\R^{n})$
we have $\psi\leq\phi+A$ and hence the property \ref{eq:u 0 with ricci controle}
follows with $C=e^{-A}.$ 

\emph{Step two:} \emph{refinement by scaling}. Let now $\phi$ be
a normalized function in $\mathcal{P}_{+}(\R^{n})$ and in particular
$\phi\geq0.$ We will improve the inequality in the previous step
by a scaling argument. To this end fix $t\in]0,1[.$ Since $\phi\geq t\phi$
we have that 
\[
\log\int e^{-\phi}dx\leq\log\int e^{-t\phi}dx=\log\int e^{-\phi_{t}}d(x/t)
\]
where $\phi_{t}(x):=t\phi(x/t).$ Note that $\phi_{t}=(tu)^{*},$
where $u=\phi^{*}$ and in particular $\phi_{t}\in\mathcal{P}_{+}(\R^{n})_{0}.$
Hence, applying the previous step gives 
\[
\log\int e^{-\phi_{t}}dx\leq-C\mathcal{E}(\phi_{t},\phi_{P})\text{}+C=Ct\int_{P}u+C
\]
All in all this means that 
\[
\log\int e^{-\phi}dx\leq Ct\int_{P}udp+C+n\log t
\]
and hence setting $\delta=t/C$ concludes the proof of the first statement
of the theorem. 

Finally, if $0$ is the barycenter of $P$ then we have, by the previous
lemma, and the definition of the normalization $\tilde{\phi}$ that
\[
\mathcal{G}(\phi)=\mathcal{G}(\tilde{\phi})\leq0+C
\]
for any $\mathcal{P}_{+}(\R^{n}).$ The converse was proved in the
previous lemma.\end{proof}
\begin{rem}
The scaling argument in the previous proof is somewhat analogous to
a scaling argument used by Donaldson \cite{d0} for the Mabuchi functional
on a toric manifold.
\end{rem}
Interestingly, the boundedness of $\mathcal{G}$ under the moment
condition on $(P,g),$ say for $g=1,$ is also a consequence of the
functional form of the\emph{ }Santaló\emph{ }inequality \cite{a-k-m}.
Indeed, the latter inequality says that, for any convex function $\phi(x)$
in $\R^{n}$ the following inequality holds after perhaps replacing
$\phi$ by $\phi_{a}$ for some $a\in\R^{n}:$ $\int e^{-\phi(x)}dx\int e^{-\phi^{*}(p)}dp\leq(2\pi)^{n}$
and hence the boundedness of $\mathcal{G}$ follows from Jensen's
inequality. However, for the proof of Theorem \ref{thm:existe real ma intr}
we do need the stronger coercivity inequality obtained in the previous
theorem.

\subsection{Proof of Theorem \ref{thm:existe real ma intr}}

\emph{Step 1: }the sup of $\mathcal{G}$ is attained, i.e. there exists
a maximizer $\phi$ of finite energy.
\begin{proof}
By Prop \ref{pro:prop of energy in global conv} $\mathcal{E}$ is
upper semi-continuous (usc) on the space $\mathcal{P}(\R^{n}).$ .
By the invariance under translations (see Lemma \ref{lem:inv of m-t functional})
we can take a sequence of normalized functions $\phi_{j}$ in $\mathcal{P}_{+}(\R^{n})$
such that $\mathcal{G}(\phi_{j})\rightarrow\sup\mathcal{G}.$ But
then it follows from the coercivity inequality in Theorem \ref{thm:coerciv k-e}
that there is constant $C$ such that $\mathcal{E}(\phi_{j})\geq-C.$
By the compactness of $\mathcal{P}_{0}(\R^{n}).$ we may, after perhaps
passing to a subsequence, assume that $\phi_{j}\rightarrow\phi$ in
$\mathcal{P}(\R^{n}),$ where $\mathcal{E}(\phi)\geq-C,$ since $\mathcal{E}(\phi)$
is usc. The proof of step 1 is now concluded by noting that the functional
$-\mathcal{I}$ is usc along $\phi_{j},$ or more precisely that 
\begin{equation}
\int e^{-\phi_{j}}dx=\lim_{j\rightarrow\infty}\int e^{-\phi_{j}}dx\label{eq:conv of exp integral in proof}
\end{equation}
Indeed, since $\mathcal{E}(\phi_{j})\geq-C$ it follows from the coercivity
inequality in Theorem \ref{thm:coerciv k-e} (and a simple scaling
argument) that 
\[
\int e^{-p(\phi_{j}-\phi_{P})}\mu_{P}\leq C_{p},\,\,\,\mu_{P}:=e^{-\phi_{P}}dx
\]
 for some positive number $p>1.$ But since $\phi_{j}\rightarrow\phi$
uniformly on any compact set the desired upper semi-continuity of
$-\mathcal{I}$ then follows from Hölder's inequality. Indeed, integrating
the lhs in \ref{eq:conv of exp integral in proof} over the complement
of a ball $B_{R}$ of radius $R$ gives 
\[
\int_{\R^{n}-B_{R}}e^{-(\phi_{j}-\phi_{P})}\mu_{P}\leq(\int_{\R^{n}-B_{R}}e^{-p(\phi_{j}-\phi_{P})}\mu_{P})^{1/p}\mu_{P}(\R^{n}-B_{R})
\]
 and since $\mu_{P}$ has finite mass on $\R^{n}$ the rhs above can
be made arbitrary small by taking $R$ sufficiently large.

Step 2: the maximizer $\phi$satisfies the equation $MA(\phi)=e^{-\phi}dx/\int e^{-\phi}dx$
in the weak sense

Fix a smooth function $v$ of compact support and and consider the
following functional on the real line: $t\mapsto f(t):=\frac{1}{V}\mathcal{E}(Pr(\phi+tv)-\mathcal{I}(\phi).$
Since $\mathcal{I}$ is increasing we have that $f(t)\leq\mathcal{G}(Pr(\phi+tv))\leq\mathcal{G}(\phi)$
(since $\phi$ is a maximizer), i.e. $f(t)\leq f(0).$ But by Prop
\ref{pro:diff of composed energy convex} $f(t)$ is differentiable
and hence $df/dt=0$ at $t=0,$ which by Prop \ref{pro:diff of composed energy convex}
gives the desired equation, since $Pr(\phi)=\phi.$ 

Step 3: local regularity of solutions

We will give the argument for the more general case when $\phi$ is
a finite energy solution of $MA_{g}(\phi)=Ce^{-F_{1}(\phi)}$ where
$g(p)=e^{-F_{2}(p)}$ for $F_{1}$ a continuous function on $\R^{n}$
and $F_{2}$ a bounded function on $P$ (see Remark \ref{rem:g-ma}
for the definition of $MA_{g}).$ By Lemma \ref{lem:full ma is proper}
$\phi$ is proper and hence the sublevel sets $\Omega_{R}:=\{\phi<R\}$
are bounded convex domains exhausting $\R^{n}.$ Fixing $R,$ writing
$\Omega:=\Omega_{R}$ and replacing $\phi$ with $\phi-R$ we then
have that $\phi=0$ on $\partial\Omega$ and $1/Cdx\leq MA(\phi)\leq Cdx$
on $\Omega$ for some positive constant $C.$ Hence, it follows from
the first point in Theorem \ref{thm:(Caffarelli-).-Assume} below
that $\phi$ is in the Hölder class $\mathcal{C}_{loc}^{1,\alpha}$
for some $\alpha>1.$ In particular, the gradient $d\phi$ is a single-valued
continuous function and hence $MA(\phi)=f$ for a continuous function
$f$ such that $1/C'\leq f\leq C'$ in $\Omega.$ Applying the second
point in Theorem \ref{thm:(Caffarelli-).-Assume} thus shows that
$\phi$ is in the Sobolev space $W_{loc}^{2,p}(\Omega)$ for any $p>1.$
Finally, by general Evans-Krylov theory for non-linear PDEs we deduce
that $\phi\in\mathcal{C}^{\infty}(\R^{n}).$ 

Step 4: global regularity

Applying Cor \ref{cor:inhomo m-a for e to - psi} below with $\psi=\phi$
shows that $\phi-\phi_{P}$ is globally bounded on $\R^{n}.$ More
precisely, the Legendre transform of $\phi$ is Hölder continuous
up to the boundary of $P$ for any Hölder exponent $\gamma<1.$

Uniqueness: Let $\phi_{0}$ and $\phi_{1}$ be two solutions and let
$\phi_{t}$ be the geodesic segment connecting them. By the strict
concavity in Prop \ref{prop:The-functional-G is conc} $\phi_{t}=\phi_{0}(x+ta)$
for some vector $a$ which concludes the proof. 
\end{proof}

\subsection{\label{sub:The-invariant-R of convex bod}The invariant $R$ of a
convex body}

Let $P$ be a convex body containing $0$ and define the following
invariant $R_{P}\in]0,1],$ which is a measure of the failure of $P$
having the property that its barycenter $b$ coincides with $0:$
\begin{equation}
R_{P}:=\frac{\left\Vert q\right\Vert }{\left\Vert q-b\right\Vert },\label{eq:inv R of conv bod}
\end{equation}
 where $q$ is the point in $\partial P$ where the line segment starting
at $b$ and passing through $0$ meets $\partial P.$ In the case
when $P$ is the canonical polytope associated to a smooth Fano variety,
the invariant $R_{P}$ was introduced in \cite{li1}, where it was
shown to coincide with another seemingly analytical invariant. A slight
modification of the proof of Theorem \ref{thm:existe real ma intr}
gives the following theorem, which - when translated to toric geometry
- generalizes the main result of \cite{li1} (see section \ref{sub:The-invariant-R of log Fano})
:
\begin{thm}
\label{thm:R for ma}Let $P$ be a convex body containing $0$ in
its interior and fix an element $\phi_{0}\in\mathcal{P}_{+}(\R^{n}).$
Then the invariant $R_{P}$ coincides with the following two numbers,
defined as the sup over all  $r\in[0,1[$ such that 
\begin{itemize}
\item there is a solution $\phi\in\mathcal{P}_{+}(\R^{n})$ to the equation
\begin{equation}
\det(\frac{\partial^{2}\phi}{\partial x_{i}\partial x_{j}})=e^{-r\phi}e^{-(1-r)\phi_{0}}\label{eq:eq in statement of thm r for ma}
\end{equation}

\item The following functional on $\mathcal{P}_{+}(\R^{n})$ is bounded
from above:
\end{itemize}
\[
G_{r,\phi_{0}}(\phi):=\frac{1}{V(P)}\mathcal{E}(\phi,\phi_{P})+\frac{1}{r}\log\int e^{-r\phi}e^{-(1-r)\phi_{0}}dx
\]
\end{thm}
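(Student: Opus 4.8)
The plan is to run, for each fixed $r\in{]0,1[}$, the variational machinery used for Theorem~\ref{thm:existe real ma intr} with $\mathcal{G}$ replaced by $G_{r,\phi_0}$, and then to locate the critical value of $r$. First I would observe that $G_{r,\phi_0}$ enjoys the same concavity as $\mathcal{G}$: along a geodesic $\phi_t$ the energy $\frac{1}{V(P)}\mathcal{E}(\phi_t,\phi_P)$ is affine (Prop~\ref{pro:prop of energy in global conv}) and, since $r\phi_t(x)+(1-r)\phi_0(x)$ is convex in $(t,x)$, the Prekopa--Leindler inequality shows $\frac1r\log\int e^{-r\phi_t-(1-r)\phi_0}dx$ is concave, exactly as in Prop~\ref{prop:The-functional-G is conc}; moreover it is strictly concave modulo translations. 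Combined with the differentiability of the composed energy (Prop~\ref{pro:diff of composed energy convex}) this shows that the Euler--Lagrange equation of $G_{r,\phi_0}$ is $MA(\phi)=c\,e^{-r\phi-(1-r)\phi_0}dx$, which after normalizing the additive constant is precisely \eqref{eq:eq in statement of thm r for ma}. Hence, for fixed $r$, a solution of \eqref{eq:eq in statement of thm r for ma} is a maximizer of $G_{r,\phi_0}$ (so the functional is then bounded from above), and conversely any maximizer solves \eqref{eq:eq in statement of thm r for ma}. Both admissible sets of $r$ in the statement will therefore coincide once existence of a maximizer is tied to boundedness from above.

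Next I would determine, for each $r$, exactly when $G_{r,\phi_0}$ is bounded from above. The scaling step in the proof of Theorem~\ref{thm:coerciv k-e}, applied to $e^{-r\phi}\le e^{-rs\phi}$ for normalized $\phi\ge 0$, already yields a coercivity estimate $G_{r,\phi_0}(\phi)\le-\epsilon\,(-\mathcal{E}(\phi,\phi_P))+C_r$ valid for every normalized $\phi$ (with $\epsilon>0$ and $C_r$ depending only on $r$); since $G_{r,\phi_0}$ is \emph{not} translation invariant (the weight $\phi_0$ breaks the symmetry of Lemma~\ref{lem:inv of m-t functional}), the whole $r$--dependence of boundedness is concentrated on the translation orbit $\phi\mapsto\phi_a$. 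The key computation is the slope of $G_{r,\phi_0}$ along $a=Re$, $R\to\infty$: the energy contributes $R\langle e,b\rangle$ (with $b$ the barycenter of $P$), exactly as in Lemma~\ref{lem:inv of m-t functional}, while the substitution $x\mapsto x-a$ turns the weight into $\phi_0(x-Re)$, which by $\phi_0\le\phi_P+C$ and the subadditivity and homogeneity of $\phi_P$ produces the slope $-\tfrac{1-r}{r}R\,\phi_P(-e)$. Testing with the translates $(\phi_P)_{Re}\in\mathcal{P}_+(\R^n)$ then gives the clean lower bound $G_{r,\phi_0}((\phi_P)_{Re})\ge R\bigl(\langle e,b\rangle-\tfrac{1-r}{r}\phi_P(-e)\bigr)+O(1)$, so $G_{r,\phi_0}$ is unbounded as soon as some direction $e$ satisfies $\langle e,b\rangle>\tfrac{1-r}{r}\phi_P(-e)$; equivalently, $G_{r,\phi_0}$ can be bounded from above only if $-\tfrac{r}{1-r}b\in P$.

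It remains to identify this threshold with the invariant \eqref{eq:inv R of conv bod}. As $r$ increases, $\tfrac{r}{1-r}$ increases from $0$ to $\infty$ and the point $-\tfrac{r}{1-r}b$ runs along the ray from $0$ in the direction $-b$, i.e.\ the half-line from $b$ through $0$, leaving $P$ exactly at the boundary point $q$ of \eqref{eq:inv R of conv bod}; since $0$ lies between $b$ and $q$ one has $\|q-b\|=\|q\|+\|b\|$, and setting $-\tfrac{r}{1-r}b=q$ gives $\tfrac{r}{1-r}=\|q\|/\|b\|$, that is $r=\|q\|/\|q-b\|=R_P$. Thus $-\tfrac{r}{1-r}b\in\mathrm{int}\,P$ precisely for $r<R_P$. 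For such $r$ I would upgrade the normalized coercivity to genuine coercivity on all of $\mathcal{P}_+(\R^n)$ (strict interiority supplies a uniform negative slope $\langle a,b\rangle-\tfrac{1-r}{r}\phi_P(-a)\le-\epsilon|a|$) and then run Step~1 of the proof of Theorem~\ref{thm:existe real ma intr} --- maximizing sequence, sup-normalization, compactness of $\mathcal{P}(\R^n)_0$, upper semicontinuity of $\mathcal{E}$ and of $-\mathcal{I}$ --- to produce a maximizer, hence a solution of \eqref{eq:eq in statement of thm r for ma}. For $r>R_P$ the explicit ray above makes $G_{r,\phi_0}$ unbounded, so there is no maximizer and, by the concavity of the first paragraph, no solution. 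Both suprema in the statement are therefore equal to $R_P$.

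The main obstacle is the coercivity upgrade at $r<R_P$: turning the heuristic translation slope into a rigorous upper bound that is \emph{uniform} over the shape $\psi$ when a general $\phi\in\mathcal{P}_+(\R^n)$ is written as a translate $\psi_a$ of a normalized $\psi$. The naive bound $\phi_0\ge\phi_P-C$ followed by linearizing $\phi_P$ loses too much --- it yields an inner integral whose convergence is not controlled by the condition $-\tfrac{r}{1-r}b\in\mathrm{int}\,P$ --- so one must instead apply the crude Moser--Trudinger inequality (whose constant, crucially, is independent of the function) to the convex function $r\psi+(1-r)\phi_P(\cdot-a)$, whose gradient image is again $P$, and extract the sharp linear-in-$a$ slope together with the barycenter term from the energy/Legendre identities. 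This is cleanly handled by splitting into the regime where $-\mathcal{E}(\psi,\phi_P)$ is large (controlled by the scaling estimate, valid for all $r$) and the complementary precompact family of shapes (on which the translation constants are uniform); the strict inclusion $-\tfrac{r}{1-r}b\in\mathrm{int}\,P$ then furnishes the gap needed to conclude. Everything else --- concavity, differentiability, regularity and uniqueness modulo translations --- is inherited from the proof of Theorem~\ref{thm:existe real ma intr}.
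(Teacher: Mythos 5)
Your overall architecture matches the paper's: the concavity of $G_{r,\phi_{0}}$ along geodesics via Prekopa applied to $r\phi_{t}+(1-r)\phi_{0}$, the identification of solutions of \eqref{eq:eq in statement of thm r for ma} with maximizers, the translation test showing that boundedness forces $(1-r)\phi_{P}(x)+r\left\langle b,x\right\rangle \geq0$ for all $x$ (equivalently $-\tfrac{r}{1-r}b\in P$), and the elementary identification of this threshold with $R_{P}$ all appear in the paper's proof in essentially the form you give them. The genuine gap is in the positive direction, which you yourself flag as ``the main obstacle'': proving that $G_{r,\phi_{0}}$ is bounded above (and then coercive) when $(1-r)\phi_{P}+r\left\langle b,\cdot\right\rangle \geq0.$ Your plan --- write $\phi=\psi_{a}$ with $\psi$ normalized and control the translation slope uniformly in the shape $\psi$ --- runs into exactly the problem you describe: the honest upper bound obtained from $\phi_{0}\geq\phi_{P}-C$ and subadditivity of $\phi_{P}$ contributes $\left\langle a,b\right\rangle +\tfrac{1-r}{r}\phi_{P}(a)$, which is nonnegative and generically grows linearly in $|a|$; the favorable slope $\left\langle a,b\right\rangle -\tfrac{1-r}{r}\phi_{P}(-a)$ is only a \emph{lower} bound along the special family $(\phi_{P})_{a}.$ Your proposed repair (crude Moser--Trudinger applied to $r\psi+(1-r)\phi_{P}(\cdot-a)$, plus a splitting into a large-energy regime and a precompact family of shapes) is not carried out, and it is far from clear that it can be: the crude inequality carries an unspecified constant $C\neq1,$ so it does not cancel against the energy term $\tfrac{1}{V(P)}\mathcal{E}(\phi,\phi_{P}),$ and the Legendre transform of the sum $r\psi+(1-r)\phi_{P}(\cdot-a)$ is an infimal convolution whose integral over $P$ is not controlled by $\mathcal{E}(\psi,\phi_{P})$ and $a$ in any evident way.

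The paper closes this gap with a single substitution that makes the shape/translation decomposition unnecessary: since $(1-r)\phi_{P}(x)+r\left\langle b,x\right\rangle \geq0$ one has $e^{-(1-r)\phi_{P}(x)}\leq e^{r\left\langle b,x\right\rangle }$ pointwise, hence
\[
G_{r,\phi_{P}}(\phi)\leq\frac{1}{V(P)}\mathcal{E}(\phi,\phi_{P})+\frac{1}{r}\log\int e^{-(r\phi(x)-r\left\langle b,x\right\rangle )}dx;
\]
the convex function $r\phi-r\left\langle b,\cdot\right\rangle $ lies in $\mathcal{P}_{+}(\R^{n})$ for the translated and scaled body $r(P-\{b\}),$ whose barycenter is the origin, so the boundedness statement of Theorem \ref{thm:coerciv k-e} applies on all of $\mathcal{P}_{+}(\R^{n})$ with no normalization, and a change of variables in the Legendre transform converts the resulting energy term back into $-\tfrac{1}{V(P)}\mathcal{E}(\phi,\phi_{P}).$ Boundedness at $r+\delta$ then yields coercivity at $r$ by the same scaling trick as in Theorem \ref{thm:coerciv k-e}, and existence of a maximizer (hence of a solution) follows as in Theorem \ref{thm:existe real ma intr}. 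You should replace your coercivity upgrade with this reduction; the rest of your argument then goes through.
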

\begin{proof}
First observe that $R_{P}$ is the sup over all $r\in[0,1]$ such
that 
\begin{equation}
(1-r)\phi_{P}(x)+r\left\langle b,x\right\rangle \geq0\label{eq:bound in proof of R for ma}
\end{equation}
Accepting this for the moment we will start by showing that if $r$
is a positive number such that \ref{eq:bound in proof of R for ma}
holds for $r+\delta$, for some $\delta>0,$ then $G_{r+\delta}$
is bounded from above. By assumption $-(1-r)\phi_{P}(x)\leq r\left\langle b,x\right\rangle $
and hence 
\[
G_{r,\phi_{P}}(\phi)\leq\frac{1}{V(P)}\mathcal{E}(\phi,\phi_{P})+\frac{1}{r}\log\int e^{-r\phi}e^{r\left\langle b,x\right\rangle }dx
\]
Applying the boundedness statement in Theorem \ref{thm:coerciv k-e}
to the translated and scaled convex body $rP-\{b\}$ (which has its
barycenter at $0)$ thus shows that $G_{r,\phi_{P}}(\phi)\leq C_{r}$
and, by the same argument, $G_{r+\delta,\phi_{P}}(\phi)\leq C_{r+\delta}.$
Moreover, since $\phi_{0}-\phi_{P}$ is bounded the corresponding
inequalities also hold when $\phi_{P}$ is replaced by $\phi_{0}.$

Next, we show that the inequality the bound $G_{r+\delta,\phi_{0}}(\phi)\leq C_{r+\delta}$
implies the existence of a solution to equation \ref{eq:eq in statement of thm r for ma}
for the parameter $r.$ First, by a simple scaling argument, it follows
that the functional $G_{r,\phi_{0}}$ is coercive, i.e. there exists
positive numbers $\delta$ and $C_{\delta}$ such that 
\[
G_{r,\phi_{0}}(\phi)\leq\delta\mathcal{E}(\phi,\phi_{P})+C_{\delta}
\]
 for any sup-normalized $\phi.$ But then it follows, exactly as in
the proof of Theorem \ref{thm:existe real ma intr}, that any sup-normalized
maximizing sequence of $G_{r,\phi_{0}}$ converges to a solution to
the equation \ref{eq:eq in statement of thm r for ma}. 

Conversely, let $r$ be such that there is solution to the equation
\ref{eq:eq in statement of thm r for ma}. It then follows, just like
in the proof of Prop \ref{prop:The-functional-G is conc} (now using
the Prekopa inequality for the convex function $(t,x)\mapsto$ $r\phi_{t}(x)+(1-r)\phi_{0}(x))$
that $G_{r,\phi_{P}}$ (and hence also $G_{r,\phi_{0}})$ is bounded
from above. Now fix a point $a\in\R^{n}.$ By definition $\phi_{P}(x+a)\leq\phi_{P}(x)+\phi_{P}(a)$
and hence, for any $\phi\in\mathcal{P}_{+}(\R^{n})$ we have 
\[
-\frac{(1-r)}{r}\phi_{P}(a)+\frac{1}{r}\log\int e^{-r\phi}e^{-(1-r)\phi_{P}}dx\leq\frac{1}{r}\log\int e^{-r\phi(x)}e^{-(1-r)\phi_{P}(x+a)}dx
\]
Making the change of variables $x\mapsto x+a$ in the latter integral
and applying the boundedness of $G_{r,\phi_{P}}$ thus gives that
\[
-\frac{(1-r)}{r}\phi_{P}(a)+\frac{1}{r}\log\int e^{-r\phi}e^{-(1-r)\phi_{P}}dx\leq-\frac{1}{V(P)}\int(\phi_{-a})^{*}dp+C
\]
But, using \ref{eq:transl under legendre} and rearranging we deduce
that 
\[
G_{r,\phi_{P}}(\phi)-C\leq(1-r)\phi_{P}(a)+r\left\langle b,a\right\rangle 
\]
Assume for a contradiction that the rhs above is negative $(=-\delta).$
Then replacing $a$ with $ta$ for $t$ a positive number $t$ shows
that $G_{r,\phi_{P}}(\phi)\leq-\delta t+C$ and hence letting $t\rightarrow\infty$
yields a contradiction.

Finally, we come back to the first claim concerning the inequality
\ref{eq:bound in proof of R for ma}. First observe that the inequality
holds for $r\leq R_{P}.$ Indeed, by definition, the lhs in \ref{eq:bound in proof of R for ma}
is equal to the sup over $\left\langle (1-r)p+rb,x\right\rangle ,$
as $p$ ranges over all points in $P.$ In particular, for $r=R_{P}$
and $p=q$ we have by definition $(1-r)p+rb=0$ and hence \ref{eq:bound in proof of R for ma}
holds for $r\leq R_{P}.$ Conversely, suppose that $r>R_{p}$ and
let $a$ be the vector defining a supporting hyperplane of $P$ at
$q,$ i.e. $\left\langle p,a\right\rangle \leq\left\langle q,a\right\rangle $
for any $p\in P.$ Hence, the lhs in \ref{eq:bound in proof of R for ma}
for $x=a$ is bounded from above by $\left\langle (1-r)q+rb,a\right\rangle :=f(r).$
Finally, by assumption $f(r)=0$ at $r=R_{P}$ and $df(r)/dr=\left\langle b-q,a\right\rangle <0,$
since $b$ is in the interior of $P$ and hence $f(r)<0$ for any
$r>R_{P}.$ 
\end{proof}

\subsection{\label{sub:The-inhomogenous-Monge-Amp=0000E8re}The inhomogeneous
Monge-Ampère equation}

In this section we will establish some local and global regularity
properties for the inhomogeneous Monge-Ampère equation used above. 
\begin{thm}
\label{thm:inhomo}Let $P$ be a convex body in $\R^{n}$ and let
$\mu$ be measure on $\R^{n}$ of total mass $V(P).$ Then there exists
a unique (mod $\R)$ convex function $\phi$ on $\R^{n}$ such that
\begin{equation}
MA(\phi)=\mu\label{eq:inhomo ma}
\end{equation}
with $\phi\in\mathcal{E}_{P}(\R^{n}),$ i.e. the closure of the subgradient
image is $P:$ 
\[
\overline{d\phi(\R^{n})}=P
\]

\begin{itemize}
\item If moreover \textup{
\[
\int_{\R^{n}}|x|^{q}\mu<\infty
\]
 for some number $q>n$ then $\phi-\phi_{P}$ is bounded and if the
finiteness holds for any $q>0$ then }the Legendre transform $\phi^{*}$
of $\phi$ is Hölder continuous up to the boundary of $P$ for any
Hölder exponent in $[0,1[.$ 
\item If $\mu=fdx$ for $f$ smooth and strictly positive a solution $\phi$
is unique modulo constants and smooth. In particular, the gradient
$d\phi$ then maps $\R^{n}$ diffeomorphically onto the image of the
interior of $P.$ 
\end{itemize}
\end{thm}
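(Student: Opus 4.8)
The plan is to read the equation as an optimal-transport problem and to obtain the existence and uniqueness assertions directly from the Brenier--McCann theorem. Let $\nu$ be the uniform (Lebesgue) measure on $P$, normalized to have the same total mass as $\mu$; by hypothesis the two masses agree. By the third part of Lemma~\ref{pro:var prop of leg}, a convex function $\phi$ satisfies $MA(\phi)=\mu$ exactly when its Legendre gradient $d\phi^{*}$ pushes $\nu$ forward to $\mu$. First I would apply the Brenier--McCann theorem to the pair $(\nu,\mu)$: because the source $\nu$ is absolutely continuous, there is a convex potential $u$ on $P$ with $(du)_{*}\nu=\mu$, and $du$ is unique $\nu$-a.e. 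Setting $\phi:=u^{*}$ gives $\phi^{*}=u$, hence $d\phi^{*}=du$ and $MA(\phi)=\mu$. Since $u$ is convex and bounded below on the bounded set $P$, one checks $\phi\le\phi_{P}+C$, so $\phi\in\mathcal P(\R^{n})$; and as $MA(\phi)=\mu$ has full Monge--Amp\`ere mass, Lemma~\ref{lem:prop of image} yields $\phi\in\mathcal E_{P}(\R^{n})$ with $\overline{d\phi(\R^{n})}=P$. Uniqueness mod $\R$ follows at once: two solutions have Legendre transforms $u_{0},u_{1}$ whose gradients both push the absolutely continuous $\nu$ to $\mu$, so McCann's uniqueness forces $du_{0}=du_{1}$ a.e. on the connected set $P$; thus $u_{0}-u_{1}$ is constant and therefore so is $\phi_{0}-\phi_{1}$.

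For the first bullet I would use the same transport map quantitatively. From $(du)_{*}\nu=\mu$ and $du=d\phi^{*}$,
\[
\int_{\R^{n}}|x|^{q}\,d\mu(x)=\int_{P}|du(p)|^{q}\,d\nu(p),
\]
so finiteness of the $q$-th moment of $\mu$ is equivalent to $\nabla u\in L^{q}(P)$. When $q>n$, Morrey's inequality on the bounded convex (Lipschitz) domain $P$ bounds the oscillation of the convex function $u=\phi^{*}$ by $\|\nabla u\|_{L^{q}(P)}$ and places it in $C^{0,1-n/q}(\bar P)$; in particular $\phi^{*}$ is bounded on $P$, which by Proposition~\ref{pro:leg is isometric} is equivalent to $\phi-\phi_{P}$ being bounded. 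Letting $q\to\infty$ drives the exponent $1-n/q$ to $1$, so if all moments of $\mu$ are finite then $\phi^{*}$ is H\"older continuous up to $\partial P$ for every exponent in $[0,1[$.

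For the second bullet only smoothness and the diffeomorphism claim remain, the existence and uniqueness being the special case already settled. Here I would argue exactly as in Steps 3--4 of the proof of Theorem~\ref{thm:existe real ma intr}: by Lemma~\ref{lem:full ma is proper} the sublevel sets $\{\phi<R\}$ are bounded convex domains exhausting $\R^{n}$, on each of which $\phi$ solves $MA(\phi)=f\,dx$ with $f$ pinched between two positive constants, so Caffarelli's estimates (Theorem~\ref{thm:(Caffarelli-).-Assume}) give $\phi\in C^{1,\alpha}_{loc}$ and then $W^{2,p}_{loc}$, and Evans--Krylov theory together with Schauder bootstrapping upgrades this to $\phi\in C^{\infty}$. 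Smoothness forces $\det(\partial^{2}\phi/\partial x_{i}\partial x_{j})=f>0$ everywhere, so the Hessian is positive definite and $\phi$ is strictly convex; hence $d\phi$ is an injective local diffeomorphism and maps $\R^{n}$ diffeomorphically onto its (open) image, as claimed.

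The step I expect to be the main obstacle is the very first one: establishing existence and uniqueness for a completely general finite measure $\mu$ --- with no regularity, no moment bound, and possibly non-compact support --- while pinning down that the resulting potential lands exactly in $\mathcal E_{P}(\R^{n})$ with the prescribed gradient image. This is where the full force of the Brenier--McCann theory (equivalently, the classical solvability of the second boundary value problem, cf. \cite{bak,gu}) is needed, and where the measure-theoretic bookkeeping must be carried out with care: the identity $MA(\phi)=\mu$ has to be read off from the transport relation $(d\phi^{*})_{*}\nu=\mu$ through the a.e.-invertibility of the normal map (Remark~\ref{rem:g-ma}). By comparison, the moment refinement via Morrey's inequality and the smoothness via Caffarelli's and Evans--Krylov's theorems are comparatively routine.
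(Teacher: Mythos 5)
Your proposal is correct in outline, but it takes a genuinely different route from the paper for the core existence and uniqueness. The paper proves existence by a direct variational argument entirely parallel to its proof of Theorem \ref{thm:existe real ma intr}: after reducing to measures with finite first moment, it maximizes $\mathcal{G}_{\mu}(\phi)=\frac{1}{V}\mathcal{E}(\phi,\phi_{P})-\int\phi\,\mu$ over the compact space $\mathcal{P}(\R^{n})_{0}$ (upper semi-continuity of $\mathcal{E}$ plus continuity of $\mathcal{I}_{\mu}$ give a maximizer) and then derives the Euler--Lagrange equation via the projection operator and Prop \ref{pro:diff of composed energy convex}; uniqueness is obtained from a comparison principle as in \cite{bak}. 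You instead import the Brenier--McCann theorem for the pair $(\nu,\mu)$ with $\nu$ the normalized Lebesgue measure on $P$, read the equation off from the push-forward identity in the third point of Lemma \ref{pro:var prop of leg}, and get uniqueness from McCann's a.e.-uniqueness of the monotone transport map. Both work: the paper itself remarks, right after the proof, that its variational principle is equivalent to the Kantorovich duality, so the two arguments are dual formulations of the same fact. What your route buys is brevity and a clean uniqueness statement with no moment hypotheses (McCann's theorem needs none since the \emph{source} $\nu$ is absolutely continuous and compactly supported); what the paper's route buys is self-containedness and reuse of machinery (compactness of $\mathcal{P}(\R^{n})_{0}$, differentiability of $\mathcal{E}\circ Pr$) already built for the main theorem. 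Two small points to tighten in your write-up: (i) the third point of Lemma \ref{pro:var prop of leg} is stated only for $\phi$ of full Monge--Amp\`ere mass, so before invoking the ``exactly when'' you should check that $\phi:=u^{*}$ is automatically in $\mathcal{E}_{P}(\R^{n})$ --- this follows because $u=\phi^{*}$ is finite on the interior of $P$, so every interior point lies in $d\phi(\R^{n})$ and the mass is full; (ii) keep track of the normalizing factor $n!$ in the definition \ref{eq:def of real ma} when matching total masses. The regularity bullets are handled essentially as in the paper (Sobolev/Morrey embedding on the Lipschitz domain $P$ for the moment statement, Caffarelli plus Evans--Krylov for smoothness), so no issues there.
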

\begin{proof}
Uniqueness modulo constant follows from a comparison principle argument
as in section 16.2 in \cite{bak}. 

\emph{Existence: }First observe that it will be enough to prove the
result when $\mu$ has finite first moments, i.e. 
\begin{equation}
\int\mu|x|<\infty.\label{eq:moment cond}
\end{equation}
 Indeed, any measure $\mu$ can be written as $\mu=f\nu$ where $\nu$
has finite first moments (e.g. take $f=(1+|x|)$ and $(1+|x|)^{-1}\mu)$
and we can solve the $MA(\phi_{i})=\mu_{i}$ where $\mu_{i}=\max(f,i)\nu$
with $\phi_{i}\in\mathcal{P}(\R^{n})_{0}.$ Finally, by compactness
we have, after perhaps passing to a subsequence, that $\phi_{i}\rightarrow\phi\in\mathcal{P}(\R^{n})_{0}$
and by the local continuity of $MA$ acting on $\mathcal{P}(\R^{n})_{0}$
(see section \ref{sub:Functions-with-full}) $MA(\phi)=\mu$ 

Assume now that $\mu$ has finite first moments. Since, the gradient
image of any $\phi$ in $\mathcal{P}(\R^{n})_{0}$ is uniformly bounded
there is a constant $C$ such that $|\phi(x)|\leq C|x|$ and hence
the functional 
\[
I_{\mu}(\phi):=\int\phi\mu
\]
 is finite on $\mathcal{P}(\R^{n})_{0}.$ In fact, it is even \emph{continuous.
}Indeed, if $\phi_{j}\rightarrow\phi$ in $\mathcal{P}(\R^{n})_{0}$
then the convergence is uniform on any large ball $B_{R}$ of radius
$R,$ so that the desired continuity is obtained by decomposing $\mu=1_{B_{R}}\mu+1_{\R^{n}-B_{R}}$
for a large ball $B_{R}$ of radius $R$ and using the uniform bound
$|\phi(x)|\leq C|x|$ on $\R^{n}-B_{R}$ together with \ref{eq:moment cond}
and finally letting $R\rightarrow\infty.$ As a consequence the functional
\[
\mathcal{G}_{\mu}(\phi):=\frac{1}{V}\mathcal{E}(\phi,\phi_{P})-\mathcal{I}_{\mu}(\phi)
\]
 is upper semi-continuous on the compact space $\mathcal{P}(\R^{n})_{0}.$
In particular it has a maximizer $\phi_{\mu}$ of finite energy and
the proof is concluded by noting that $\phi_{\mu}$ satisfies the
equation \ref{eq:inhomo ma}. Indeed, this is shown precisely as in
the end of the proof of Theorem \ref{thm:existe real ma intr}, using
the projection operator $Pr.$

\emph{Regularity:} this is proved exactly as in the proof of Theorem
\ref{thm:existe real ma intr}, using Caffarelli's interior regularity
results (see below). For the global $C^{0}-$bound and the Hölder
regularity see Prop \ref{pro:sob type ineq}.
\end{proof}
It should be emphasized that the existence of a (weak) solution $\phi$
for a general measure $\mu$ in the previous theorem is essentially
well-known (for example, this is shown in \cite{bak} when $\mu$
has an $L^{1}-$density). The result is also closely related to the
theory of the problem of \emph{optimal transportation of measures}.
Briefly, the problem, in the original formulation of Monge, is to
find, given two probability measure measure $\mu$ and $\nu$ on $\R^{n},$
a map $T$ such that $T_{*}\mu=\nu$ where $T$ minimizes a certain
cost function $c(x,p).$ In the present setting $\nu=1_{P}gdp$ and
$c(x,p)=\left\langle x,p\right\rangle $ and then $T=d\phi,$ for
$\phi$ the solution in the previous theorem, under suitable regularity
assumptions. Interestingly, the variational principle used in our
proof is equivalent to the duality formula for the minimum of the
Kantorovich problem for optimal transport (see Theorem 6.1.1 in \cite{a-g-s}
and references therein).
\begin{cor}
\label{cor:inhomo m-a for e to - psi}Let $P$ be a convex body in
$\R^{n}.$ For any given $\psi\in\mathcal{E}_{P}(\R^{n})$ there is
unique (mod $\R)$ function $\phi\in\mathcal{P}_{+}(\R^{n})$ such
that 
\[
MA(\phi)=V(P)e^{-\psi}dx/\int e^{-\psi}dx
\]
\end{cor}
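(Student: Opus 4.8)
The plan is to obtain $\phi$ as a direct application of Theorem \ref{thm:inhomo} to the \emph{fixed} measure
$$\mu := V(P)\,\frac{e^{-\psi}\,dx}{\int_{\R^{n}}e^{-\psi}\,dx},$$
and then to upgrade the resulting solution from the class $\mathcal{E}_{P}(\R^{n})$ to $\mathcal{P}_{+}(\R^{n})$ by verifying the moment hypothesis of that theorem. First I would check that $\mu$ is a well-defined finite measure of total mass $V(P)$. Since $\psi\in\mathcal{E}_{P}(\R^{n})$ is a finite convex function it is in particular locally bounded, so $\int e^{-\psi}\,dx>0$; and by Lemma \ref{lem:full ma is proper} there is a constant $C>0$ with $\psi(x)\geq |x|/C-C$, whence $e^{-\psi(x)}\leq e^{C}e^{-|x|/C}$ is integrable. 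Thus $0<\int e^{-\psi}\,dx<\infty$, and $\mu$ has total mass exactly $V(P)$, as required by the hypotheses of Theorem \ref{thm:inhomo}.

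With $\mu$ in hand, the existence-and-uniqueness part of Theorem \ref{thm:inhomo} directly produces a convex function $\phi\in\mathcal{E}_{P}(\R^{n})$, unique modulo the additive action of $\R$, solving $MA(\phi)=\mu$, i.e. $MA(\phi)=V(P)e^{-\psi}\,dx/\int e^{-\psi}\,dx$. Note that here the right-hand side is a fixed datum depending only on $\psi$ and not on the unknown $\phi$, so no fixed-point argument is needed and the statement is genuinely a corollary of the inhomogeneous theory, in contrast with the truly nonlinear K\"ahler-Einstein equation of Theorem \ref{thm:existe real ma intr}. It remains only to show that $\phi$ in fact lies in the smaller class $\mathcal{P}_{+}(\R^{n})$, equivalently that $\phi-\phi_{P}$ is globally bounded on $\R^{n}$.

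This last point is the crux, and it follows from the first bullet of Theorem \ref{thm:inhomo} once we check the moment condition $\int_{\R^{n}}|x|^{q}\,\mu<\infty$ for some $q>n$ (indeed for every $q>0$). Reusing the exponential bound $e^{-\psi(x)}\leq e^{C}e^{-|x|/C}$ from Lemma \ref{lem:full ma is proper}, we get $\int |x|^{q} e^{-\psi}\,dx\leq e^{C}\int |x|^{q} e^{-|x|/C}\,dx<\infty$ for every $q>0$, so all moments of $\mu$ are finite. The first bullet of Theorem \ref{thm:inhomo} then yields that $\phi-\phi_{P}$ is bounded --- that is, $-C'+\phi_{P}\leq\phi\leq\phi_{P}+C'$ --- so $\phi\in\mathcal{P}_{+}(\R^{n})$; and since the finiteness holds for every $q>0$ one even obtains, as a bonus, that $\phi^{*}$ is H\"older continuous up to $\partial P$ for any exponent in $[0,1[$. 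The main (and essentially only) obstacle is thus the verification of the moment condition, but this is immediate from the properness estimate $\psi(x)\geq|x|/C-C$ supplied by Lemma \ref{lem:full ma is proper}; the uniqueness mod $\R$ within $\mathcal{P}_{+}(\R^{n})$ is inherited directly from the uniqueness mod $\R$ already asserted in Theorem \ref{thm:inhomo}.
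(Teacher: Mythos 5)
Your argument is correct and follows essentially the same route as the paper: the paper's (very terse) proof likewise invokes Lemma \ref{lem:full ma is proper} to get the lower bound $\psi(x)\geq|x|/C-C$, checks that this gives the moment condition of Theorem \ref{thm:inhomo}, and lets that theorem furnish existence, uniqueness mod $\R$, and membership in $\mathcal{P}_{+}(\R^{n})$. You have merely written out the details (integrability of $e^{-\psi}$, finiteness of all moments, the resulting H\"older continuity of $\phi^{*}$) that the paper leaves implicit.
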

\begin{proof}
We may assume that $0$ is contained in the interior of $P.$ By lemma
\ref{lem:full ma is proper} the moment condition in the previous
theorem is satisfied, so that the previous theorem furnishes the desired
solution.\end{proof}
\begin{rem}
\label{rem:integr and lelong in kahler}In the Kähler geometry setting
the analog of the previous corollary is known to hold for a Kähler
class $[\omega]$ on a smooth manifold $X.$ The point is that if
$v\in\mathcal{E}(X,\omega)$ then $v$ has no Lelong numbers and hence
$e^{-pv}\in L^{1}(X,dV)$ for any $p>0,$ so that $v$ is bounded
by Kolodziej's estimate.
\end{rem}

\subsubsection{A global $C_{0}-$estimate}

In the arguments above we used the following 
\begin{prop}
\label{pro:sob type ineq}If the $q:$th moment of $\mu$ is finite
for some $q>n,$ then any solution $\phi\in\mathcal{P}(\R^{n})$ to
equation \ref{eq:inhomo ma} is in $\mathcal{P}_{+}(\R^{n}).$ More
precisely, the following inequality holds for any $\phi\in\mathcal{P}_{0}(\R^{n})$
with full Monge-Ampere mass: 
\[
\left\Vert \phi-\phi_{P}\right\Vert _{\mathcal{C}^{0}(\R^{n})}\leq C_{q}(-\mathcal{E}(\phi,\phi_{P})+(\int_{\R^{n}}MA(\phi)|x|^{q}))^{1/q}
\]
for any $q>n.$ More generally, the Legendre transform $u:=\phi^{*}$
is in the Hölder space $\mathcal{C}^{\gamma}(P)$ for $\gamma=1-n/q$
if the $q$th moments of $MA(\phi)$ are finite.\end{prop}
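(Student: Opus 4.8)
The plan is to carry the entire statement over to the convex body $P$ through the Legendre transform and to recognise the desired bound as an instance of the Sobolev--Morrey embedding, which for $q>n$ puts $W^{1,q}(P)$ into the H\"older space $\mathcal{C}^{\gamma}(P)$ with $\gamma=1-n/q$. Set $u:=\phi^{*}$. Replacing $\phi$ by $\phi+c$ changes neither $MA(\phi)$, nor its moments, nor membership in $\mathcal{P}_{+}(\R^{n})$, so I may assume $\phi$ is sup-normalized; by Proposition \ref{pro:leg is isometric} this gives $\inf_{P}u=0$, hence $u\ge 0$ and $\|\phi-\phi_{P}\|_{\mathcal{C}^{0}(\R^{n})}=\sup_{P}u$. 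Formula \ref{eq:energy as legendre in global conv} identifies $-\mathcal{E}(\phi,\phi_{P})=n!\int_{P}u\,dp$, and the third point of Lemma \ref{pro:var prop of leg}, applied to the continuous weight $v(x)=|x|^{q}$, identifies $\int_{\R^{n}}MA(\phi)|x|^{q}=\int_{P}|\nabla u|^{q}\,dp$, where $\nabla u$ is the almost everywhere defined gradient of the convex function $u$. Thus the inequality to be proved is exactly a bound of $\sup_{P}u$ by $\|\nabla u\|_{L^{q}(P)}$, and the last assertion is the H\"older continuity of $u$ up to $\partial P$.

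I may assume the right-hand side is finite, i.e. $\int_{P}|\nabla u|^{q}\,dp<\infty$. The first task is to verify that $u\in W^{1,1}(P)$, which is not automatic, because a priori $u$ might be unbounded near $\partial P$; here convexity supplies the missing integrability. Pick $p_{0}\in P$ with $u(p_{0})=0=\min_{P}u$. For almost every $p$ the subgradient inequality at $p$ gives $0=u(p_{0})\ge u(p)+\langle\nabla u(p),p_{0}-p\rangle$, whence $0\le u(p)\le\langle\nabla u(p),p-p_{0}\rangle\le\mathrm{diam}(P)\,|\nabla u(p)|$. Integrating and applying H\"older yields $\int_{P}u\,dp\le\mathrm{diam}(P)\,V(P)^{1-1/q}\|\nabla u\|_{L^{q}(P)}<\infty$, so $u\in L^{1}(P)$; since for a convex function the distributional gradient coincides with the pointwise one, it follows that $u\in W^{1,1}(P)$ with $\nabla u\in L^{q}(P)$, which is all that Morrey's representation requires.

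Now $P$ is a bounded convex, hence Lipschitz, domain, and $q>n$, so Morrey's inequality applies to $u\in W^{1,1}(P)$ with $\nabla u\in L^{q}(P)$: the function $u$ has a representative in $\mathcal{C}^{\gamma}(\bar{P})$ with $\gamma=1-n/q$, and there is a constant $C=C(n,q,P)$ with $\mathrm{osc}_{P}\,u\le C\|\nabla u\|_{L^{q}(P)}$ and $[u]_{\mathcal{C}^{\gamma}(\bar{P})}\le C\|\nabla u\|_{L^{q}(P)}$. Since $\inf_{P}u=0$ the oscillation bound reads $\sup_{P}u\le C(\int_{\R^{n}}MA(\phi)|x|^{q})^{1/q}$, and adding the nonnegative quantity $-\mathcal{E}(\phi,\phi_{P})=n!\int_{P}u\,dp$ under the root gives the stated (in fact slightly weaker) inequality. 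Finiteness of $\sup_{P}u=\|\phi-\phi_{P}\|_{\mathcal{C}^{0}(\R^{n})}$ is precisely the assertion $\phi\in\mathcal{P}_{+}(\R^{n})$, and the seminorm bound is the claimed H\"older regularity of $u=\phi^{*}$ up to the boundary; the general case $\phi\in\mathcal{P}(\R^{n})$ follows from the sup-normalized one after adding back the constant.

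The two preparatory steps, the Legendre dictionary and the convexity estimate, are routine, so the genuine analytic content, and the point deserving the most care, is invoking Morrey's embedding \emph{up to the boundary} of a general convex body. For this I would use the Gilbarg--Trudinger pointwise representation $|u(x)-u_{P}|\le C\int_{P}|x-y|^{1-n}|\nabla u(y)|\,dy$, with $u_{P}$ the average of $u$ over $P$, valid for $u\in W^{1,1}$ on convex domains, and observe that the resulting Riesz potential is a bounded, indeed $\mathcal{C}^{\gamma}$, function of $x$ exactly when $(n-1)\tfrac{q}{q-1}<n$, i.e. when $q>n$; the exponent $\gamma=1-n/q$ and the blow-up of $C_{q}$ as $q\downarrow n$ both emerge from this single integrability threshold.
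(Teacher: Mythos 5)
Your proof is correct and follows essentially the same route as the paper's: pass to $u=\phi^{*}$ via the Legendre transform, identify $-\mathcal{E}(\phi,\phi_{P})$ with $\int_{P}u\,dp$ and the $q$th moment of $MA(\phi)$ with $\int_{P}|\nabla u|^{q}dp$, and invoke the Sobolev--Morrey embedding $W^{1,q}\hookrightarrow\mathcal{C}^{1-n/q}$ on the Lipschitz (convex) domain $P$. The only difference is cosmetic and in your favor: you supply the integrability of $u$ itself via the subgradient inequality and use the average-free Riesz-potential form of Morrey's estimate, whereas the paper uses the full $\|u\|_{\mathcal{C}^{0}}\leq C_{q}(\int|u|+(\int|du|^{q}))^{1/q}$ inequality with the $\int|u|$ term absorbed by the energy.
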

\begin{proof}
By assumption $P$ is the closure of an open convex domain $D$ and
in particular the boundary of $P$ is Lipschitz (cf. \cite{e-e} Sec.
V.4.1{]}). We will deduce the desired inequality from the Sobolev
inequality on the Lipschitz domain $D$ which says that $W^{q,1}$
injects in $\mathcal{C}^{0}(D)$ in a continuous manner if $q>n$
and 
\[
\left\Vert u\right\Vert _{\mathcal{C}^{0}(D)}\leq C_{q}(\int_{D}|u|dp+(\int_{D}|du|^{q}))^{1/q}
\]
Indeed, setting $f(x):=|x|^{q}$ and taking $\phi\in\mathcal{P}_{0}(\R^{n})$
we let $u:=\phi^{*}$ so that $u\geq0$ (since $\phi(0)=0).$ But
then the inequality to be proved follows immediately from combining
Prop \ref{pro:prop of energy in global conv}, Prop \ref{pro:leg is isometric}
and the last point in Lemma \ref{pro:var prop of leg}. The last claim
follows from the general formulation of the Sobolev embedding theorem
for Hölder spaces.\end{proof}
\begin{rem}
\label{rem:The-moment-condition in kahler}The moment condition on
$\mu$ in the previous proposition may also equivalently formulated
in the following way: $\int\mu f_{\delta}(\phi-\phi_{P})<\infty$
for any $\phi\in\mathcal{P}(\R^{n})$ and some $\delta>0,$ where
$f_{\delta}(x):=x^{n+\delta}.$ In the Kähler geometry setting it
is known, as a consequence of Kolodziej's estimates that a measure
$\mu$ with the stronger integrability property obtained by setting
$f(x)=e^{\delta x}$ for any $\delta>0,$ has a bounded Monge-Ampère
potential $v_{\mu},$ i.e. a bounded solution to $MA(v)=\mu.$ Moreover,
it has been conjectured \cite{d-g-pk-z} that the latter property
is equivalent to $v_{\mu}$ being Hölder continuous. Comparing with
the real setting it seems hence natural to ask if boundedness of $v_{\mu}$
holds for $f(x)=x^{n+\delta}?$ 
\end{rem}

\subsection{Caffarelli's interior regularity results }

Let $\Omega$ be a bounded open convex set in $\R^{n}$ and $f$ a
function on $\Omega.$ Consider the following boundary value problem
for a convex function $\phi$ in $\Omega,$ continuous up to the boundary:
\[
MA(\phi)=fdx\,\,\mbox{in\,}\Omega,\,\,\,\,\phi=0\,\,\mbox{on\,\ensuremath{\partial}\ensuremath{\ensuremath{\Omega}}}
\]

\begin{thm}
\label{thm:(Caffarelli-).-Assume}(Caffarelli \cite{ca1,ca0}). Assume
that $f>0.$ Then any (convex) solution $\phi$ on $\Omega$ of the
previous equation is
\begin{itemize}
\item strictly convex and locally $\mathcal{C}^{1,\alpha}$ for some $\alpha>0$
if there exists a constant $C$ such that $1/C\leq f\leq C.$
\item in the class $W_{loc}^{2,p}$ for any $p>1,$ if $f$ is continuous
\item smooth if $f$ is smooth
\end{itemize}
\end{thm}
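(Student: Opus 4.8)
The plan is to derive all three statements by following Caffarelli's theory \cite{ca0,ca1}, in which everything rests on a single geometric fact, the \emph{strict convexity} of $\phi$, after which the estimates follow by increasingly quantitative normalization-and-compactness arguments. The recurring tools are: the comparison principle for $MA$ (if two convex functions agree on $\partial\Omega$ and $MA(u)\ge MA(v)$ in $\Omega$, then $u\le v$); the Alexandrov maximum principle, which bounds the depth of a convex function vanishing on $\partial\Omega$ by the geometry of $\Omega$ times the total mass $MA(\phi)(\Omega)$; the affine invariance $MA(\phi\circ A)=(\det A)^{2}A^{*}MA(\phi)$; and John's lemma, which normalizes any bounded convex set by an affine map $T$ so that $B_{1}\subset T(S)\subset B_{n}$. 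The genuinely hard input is strict convexity; the three bulleted statements are comparatively formal once it, and its quantitative refinements, are in hand.

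First I would prove, under the two-sided bound $1/C\le f\le C$, that no supporting hyperplane touches the graph of $\phi$ along a set possessing an extreme point in $\Omega$; equivalently, every interior contact set is a single point. I would argue by contradiction: subtracting the supporting affine function, normalize so that $\phi\ge 0$, let $\Sigma=\{\phi=0\}$ be the contact set, and assume $\Sigma$ contains a nontrivial segment with an extreme endpoint $x_{0}$ in the interior; after an affine normalization put $x_{0}$ at the origin with $\Sigma\subset\{x_{n}\ge 0\}$ and $\Sigma\cap\{x_{n}=0\}=\{0\}$. Tilting the plane, examine the convex sublevel sets $A_{\epsilon}=\{\phi<\epsilon x_{n}\}$. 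The lower bound $MA(\phi)\ge\lambda>0$ forces each $A_{\epsilon}$ to carry a definite Monge-Amp\`ere mass, hence by the Alexandrov estimate to have definite geometric size; on the other hand, the flatness of $\phi$ along $\Sigma$ together with the upper bound $MA(\phi)\le\Lambda$ forces the subgradient image over $A_{\epsilon}$ to degenerate onto a lower-dimensional set as $\epsilon\to 0$. Balancing these two facts gives the contradiction, and hence the strict-convexity dichotomy: interior contact sets are points or reach $\partial\Omega$; since $\phi=0$ on $\partial\Omega$, the latter is excluded and $\phi$ is strictly convex in $\Omega$. \textbf{This is where the real difficulty lies}; everything afterward is bookkeeping built on top of it.

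Given strict convexity, the sections $S_{h}(x_{0})=\{\phi<\phi(x_{0})+\langle d\phi_{|x_{0}},\cdot-x_{0}\rangle+h\}$ shrink to $x_{0}$ and are compactly contained in $\Omega$ for small $h$. I would normalize each by an affine map $T_{h}$ with $B_{1}\subset T_{h}(S_{h})\subset B_{n}$ and show, by comparing against the explicit radial solutions of $MA(w)=c$ (constant) on balls, that the volume of $S_{h}$ scales like $h^{n/2}$ and that the \emph{eccentricity} of the normalized sections stays bounded, i.e.\ the sections do not become arbitrarily long and thin. This uniform control of the shape of sections translates into a H\"older modulus of continuity for the gradient, giving $\phi\in\mathcal{C}^{1,\alpha}_{loc}$ for some $\alpha>0$, and quantifies the strict convexity.

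For the $W^{2,p}$ statement with $f$ merely continuous, I would localize to a section so small that the oscillation of $f$ there is below a prescribed threshold; then $f$ is uniformly close to the constant $f(x_{0})$, and by stability of the Monge-Amp\`ere equation (closeness of right-hand sides forces $\mathcal{C}^{0}$-closeness of the normalized solutions, via comparison) the normalized $\phi$ is uniformly close to the quadratic solving the constant-coefficient equation. Consequently $\phi$ is trapped between paraboloids of comparable opening at most points, and a Calder\'on--Zygmund-type covering by sections, using the engulfing property from the previous step, shows that the measure of the set where $\phi$ cannot be touched from above by a paraboloid of opening $M$ decays geometrically in $M$, at a rate that can be made faster than any prescribed $M^{-p}$ by shrinking the threshold; this yields $D^{2}\phi\in L^{p}_{loc}$ for every $p>1$. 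Finally, when $f$ is smooth I would invoke Pogorelov's interior second-derivative estimate, available precisely because $\phi$ is strictly convex and $f$ is smooth and positive, to bound $D^{2}\phi$ locally; the equation $\log\det D^{2}\phi=\log f$ is then uniformly elliptic and concave in the Hessian, so the Evans--Krylov theorem gives an interior $\mathcal{C}^{2,\alpha}$ estimate, and differentiating the equation and bootstrapping with linear Schauder theory upgrades $\phi$ to $\mathcal{C}^{\infty}$ --- the same Evans--Krylov step already used in the proof of Theorem \ref{thm:existe real ma intr}.
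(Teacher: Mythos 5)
Your proposal is correct and takes essentially the same route as the paper, whose proof is simply an assembly of Caffarelli's results (strict convexity from Cor.~2 of \cite{ca1}, $\mathcal{C}^{1,\alpha}$ from \cite{ca}, $W_{loc}^{2,p}$ from \cite{ca0}, then Evans--Krylov); what you have done is reconstruct, in a correct outline, the internal arguments of those cited papers (contact-set dichotomy via the Alexandrov estimate, John-normalized sections and their engulfing property, the covering argument for $W^{2,p}$). One small point in your favor: you correctly insert Pogorelov's interior second-derivative estimate to get uniform ellipticity before invoking Evans--Krylov, a step the paper's one-line treatment of the third bullet leaves implicit.
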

\begin{proof}
For the proof of the first point we first recall the following special
case of Cor 2 in\cite{ca1}: if the following holds in the viscosity
sense; 
\[
1/Cdx\leq MA(\phi)\leq Cdx,\,\,\,\phi=0\,\,\mbox{on}\,\partial\ensuremath{\Omega}
\]
in a bounded set $\Omega,$ then $\phi$ is strictly convex in $\Omega.$
Moreover, as pointed out in \cite{ca1} if the previous inequalities
hold weakly (i.e. in the sense of Alexandrov) then they hold in the
viscosity sense (see also Prop 1.3.4 in \cite{gu} where it is assumed
that $f$ is continuous, which anyway will always be the case in this
paper). Hence $\phi$ is strictly convex in our case. But then, as
shown in \cite{ca}, if follows from the previous differential inequalities
that $\phi$ is in fact locally $\mathcal{C}^{1,\alpha}$ for some
$\alpha>0.$ As for the second point it is contained in the main result
in \cite{ca0} and the final point then follows from Evans-Krylov
theory for fully non-linear elliptic operators follows by standard
linear bootstrapping.
\end{proof}

\section{\label{sec:Toric-varieties,-polytopes}Toric log Fano varieties,
polytopes and Kähler-Ricci solitons}

\subsection{Log \label{sub:Log-Fano-varieties}Fano varieties }

Let $X$ be an $n-$dimensional normal compact projective variety.
Recall that a (Weil-) divisor $D$ on $X$ is a formal sum of prime
divisors, i.e. codimension one irreducible subvarieties. As usual
we will often identify divisors up to linear equivalence: $D\sim D'$
if $D-D'$ is principal, i.e. equal to the zero divisor of a rational
function on $X.$ A divisor $D$ is a \emph{Cartier divisor }if it
is locally principal and we can hence identify Cartier divisors on
$X$ with line bundles on $X.$ In case $X$ is regular these two
notions of divisors coincide. We will use additive notation for tensor
products of line bundles on $X.$ 

When $X$ is smooth the \emph{canonical line bundle} $K_{X}$ is defined
as the top-exterior power of the cotangent bundle of $X.$ When $X$
is singular $K_{X}$ is well-defined on the regular locus $X_{reg}$
of $X$ and $K_{X}$ is said to be \emph{$\Q-$Cartier} (also called
a $\Q-$line bundle) if there is a positive number $r$ such that
$rK_{X}$ extends from $X_{reg}$ to a line bundle defined on all
of $X.$ The minimal such integer $r$ is called the \emph{Gorenstein
index} of $X$ and $X$ is said to be Gorenstein if $r$ is equal
to one. 

A normal variety $X$ is said to be \emph{Fano }if $-K_{X}$ is \emph{$\Q-$Cartier}
and ample (in the literature such a variety is sometimes said to be
a $\Q-$Fano variety). When $X$ is toric any Fano variety has\emph{
log-terminal} singularities (also called \emph{Kawamata log-terminal,
}or\emph{ klt} for short); see Remark \ref{rem:toric is klt}. Such
singularities play a key role in the Minimal Model Program (MMP);
see \cite{bbegz} and references therein. From the algebro-geometric
point of view klt singularities are defined in terms of discrepancies
on resolutions of $X,$ but there is also a direct analytical definition
on $X$ that is the one that we will use here (see below). 

Similarly, if $(X,\Delta)$ is a\emph{ log pair }in the sense of MMP,
i.e. $X$ is a normal variety and $\Delta$ is a $\Q-$divisor on
$X$ such that the log-canonical line bundle 
\[
K_{(X,\Delta)}:=K_{X}+\Delta
\]
is \emph{$\Q-$line bundle.} Here will also assume that $\Delta$
has coefficients $<1$ (in particular we do allow negative coefficients).
We will write $X_{0}$ for the complement of $\Delta$ in $X_{reg}.$
A log pair $(X,\Delta)$ is said to be a \emph{log Fano variety} if
$-K_{(X,\Delta)}$ is ample. We will also fix a section $s_{\Delta}$
on $X_{reg}$ whose zero divisor is $r\Delta$ for some integer $r.$
There is also a notion of klt singularities for log pairs $(X,\Delta)$
(see below).

\subsection{\label{sub:Metrics-on-line}Metrics on line bundles, $\omega-$psh
functions and the klt condition}

Given a holomorphic $L\rightarrow X$ we let $\mathcal{H}(X,L)$ be
the space of all (possibly singular) metrics on $L$ with positive
curvature current and denote by $\mathcal{H}_{b}(X,L)$ the subspace
consisting of the locally bounded metrics (see below). We will use
additive notation for (Hermitian) metrics on $L.$ This means that
a metric $\left\Vert \cdot\right\Vert $ on $L$ is represented by
a collection of local functions $\phi(:=\{\phi_{U}\})$ defined as
follows: given a local generator $s$ of $L$ on an open subset $U\subset X$
we define $\phi_{U}$ by the relation 
\[
\left\Vert s\right\Vert ^{2}=e^{-\phi_{U}},
\]
 where $\phi_{U}$ is upper semi-continuous (usc). It will convenient
to identify the additive object $\phi$ with the metric it represents.
Of course, $\phi_{U}$ depends on $s$ but the curvature current 
\[
dd^{c}\phi:=\frac{i}{2\pi}\partial\bar{\partial}\phi_{U}
\]
 is globally well-defined on $X$ and represents the first Chern class
$c_{1}(L),$ which with our normalizations lies in the integer lattice
of $H^{2}(X,\R).$ By definition the metric $\phi$ is \emph{smooth}
if $\phi_{U}$ can be chosen smooth, i.e. it is the restriction of
a smooth function for some local embedding $U\rightarrow\C^{m}.$
When $X$ is smooth a smooth metric $\phi$ is said to be\emph{ strictly
positively curved} if $dd^{c}\phi>0$ (as a $(1,1)-$form) and for
a general variety $X$ the metric $\phi$ is said to be smooth if
it is locally the restriction of a positively curved metric on some
ambient space, i.e. $dd^{c}\phi$ is a Kähler form on $X.$ Continuous
and bounded (also called \emph{locally }bounded) metrics etc are defined
in a similar manner and then $dd^{c}\phi$ is a well-defined positive
\emph{current} on $X.$ Fixing $\phi_{0}\in\mathcal{H}_{b}(X,L)$
and setting $\omega_{0}:=dd^{c}\phi_{0}$ the map $\phi\mapsto v:=\phi-\phi_{0}$
thus gives an isomorphism between the space of all metrics on $L$
with positive current and the space $PSH(X,\omega_{0})$ of all $\omega_{0}-$psh
functions \cite{g-z}, i.e. the space of all usc functions on $X$
such that $dd^{c}v+\omega_{0}\geq0.$

\subsubsection{The measure $\mu_{\phi}$ and the klt condition}

In the particular case when $L=-K_{X}$ a locally bounded metric $\phi$
determines a measure $\mu_{\phi}$ on $X$ defined as follows on $X_{0}$
(and extended by zero to all of $X):$ 
\begin{equation}
\mu_{\phi}=i^{n^{2}}e^{-\phi_{U}}\chi^{1/r}\wedge\bar{\chi}^{1/r}\label{eq:mu of phi}
\end{equation}
where $\chi$ is the local $(n,0)-$form which is dual to a given
local generator $s$ of $-rK_{X},$ i.e. $\chi=s^{-1}$ and $\left\Vert s\right\Vert _{\phi}^{2}=e^{-r\phi_{U}}.$
In particular, if $\chi^{1/r}$ is taken as $dz_{1}\wedge\cdots\wedge dz_{n}$
wrt some local coordinates $z_{i}$ then we will, abusing notation
slightly, write $\mu_{\phi}=1_{X_{reg}}e^{-\phi}dz\wedge d\bar{z}.$
Now the analytical definition of $X$ having klt singularities amounts
to $\mu_{\phi}$ having finite total mass for some (and hence any)
locally bounded metric $\phi.$ It is sometimes convenient to represent
$\mu_{\phi}$ globally as follows: if $s\in H^{0}(X,-rK_{X})$ is
such that $X_{0}$ is contained in the Zariski open set $Y:=\{s\neq0\}$
we can write 
\[
\mu_{\phi}=1_{Y}i^{n^{2}}s^{-1/r}\wedge\overline{s^{-1/r}}\left\Vert s\right\Vert _{\phi}^{2/r}
\]
More generally, if $(X,\Delta)$ is a log pair then any metric $\phi$
on $-(K_{X}+\Delta)$ determines a measure $\mu_{\phi}$ defined as
above, but replacing $s$ with a local generator of $-r(K_{X}+\Delta)$
and finally taking the tensor product with $(s_{\Delta}^{r}\otimes\overline{s_{\Delta}^{r}})^{1/r},$
where $s_{\Delta}$ is a section with zero-divisor $\Delta$ on $X_{reg}.$
As before, we can also write $\mu_{\phi}=1_{X_{reg}}e^{-(\phi+\psi_{\Delta})}dz\wedge d\bar{z},$
where $\psi_{\Delta}:=\log(|s_{\Delta}|^{2})/r$ is the singular metric
defined by $\Delta$ on the line bundle $L_{\Delta}\rightarrow X_{reg}.$
The log pair $(X,\Delta)$ is then said to be\emph{ klt} if $\mu_{\phi}$
has finite mass for any locally bounded metric $\phi.$ See \cite{bbegz}
for the equivalence with the algebro-geometric definition.

\subsection{\label{sub:Pluricomplex-energy-and}Pluricomplex energy and the Monge-Ampère
measure}

Let us first recall the definition of the energy type-functional $\mathcal{E}$
on $\mathcal{H}_{b}(X,L)$ for a given ample line bundle $L\rightarrow X$
over a variety $X$ \cite{bbegz}. It depends on the choice of a reference
metric $\phi_{0}$ in $\mathcal{H}_{b}(X,L):$ 
\[
\mathcal{E}(\phi,\phi_{0}):=\frac{1}{(n+1)}\sum_{j=0}^{n}\int_{X}(\phi-\phi_{0})(dd^{c}\phi)^{n-j}\wedge(dd^{c}\phi_{0})^{j}
\]
where the integration pairing $\int_{X}$ refers, as usual, to integration
along the regular locus $X_{0}$ of $X$ and the wedge products are
defined in the usual sense of pluripotential theory a la Bedford-Taylor
(see \cite{bbegz} and references therein). In particular, we will
write 
\[
MA(\phi):=(dd^{c}\phi)^{n}
\]
 for the Monge-Ampère measure of $\phi\in\mathcal{H}_{b}(X,L).$ We
will often omit the explicit dependence of $\mathcal{E}$ on the reference
$\phi_{0}.$ The functional $\mathcal{E}$ is, up to an additive normalizing
constant, uniquely determined by the variational property 
\[
d\mathcal{E}_{|\phi}=MA(\phi)
\]
(viewed as one-forms on $\mathcal{H}_{b}(X,L)).$ As a consequence
the differences $\mathcal{E}(\phi)-\mathcal{E}(\psi)$ are independent
on the choice of fixed reference metric. Now for any arbitrary positively
curved singular metric $\phi$ on $L$ we define, following \cite{bbegz},
\[
\mathcal{E}(\phi)=\inf\left\{ \mathcal{E}(\psi):\,\psi\in\mathcal{H}_{b}(X,L),\,\,\,\psi\geq\phi\right\} 
\]
and let $\mathcal{E}^{1}(X,L):=\{\mathcal{E}>-\infty\}.$ We point
out, even though this fact will not be used here, that the Monge-Ampère
measure can be defined for any $\phi\in\mathcal{H}(X,L)$ in terms
of non-pluripolar products and one then denotes by $\mathcal{E}(X,L)$
the space of all $\phi$ with \emph{full Monge-Ampère mass, }i.e.
$\int_{X}MA(\phi)=c_{1}(L)^{n}.$ In particular, we then have the
relations 
\[
\mathcal{H}_{b}(X,L)\subset\mathcal{E}^{1}(X,L)\subset\mathcal{E}(X,L)\subset\mathcal{H}(X,L)
\]
 (see \cite{begz,bbegz})

\subsection{Kähler-Einstein metrics on log Fano varieties}

Following \cite{bbegz} an element $\phi\in\mathcal{E}^{1}(X,-K_{X})$
is said to be a \emph{(singular) Kähler-Einstein metric} if 
\begin{equation}
(dd^{c}\phi)^{n}=C\mu_{\phi}\label{eq:k-e equation for phi on general fano}
\end{equation}
 for some constant $C,$ where $\mu_{\phi}$ is the canonical measure
\ref{eq:mu of phi} associated to $\phi.$ Similarly, on a (log) Kähler-Einstein
metric $\phi\in\mathcal{E}^{1}(X,-(K_{X}+\Delta))$ on the log Fano
variety on $(X,\Delta)$ is defined by the same equation as above,
using the corresponding measure $\mu_{\phi}.$ By the regularity result
in \cite{bbegz} $\phi$ is in fact automatically smooth on the complement
$X_{0}$ of $\Delta$ in the regular locus of $X$ and continuous
on all of $X.$ In particular, the curvature current $\omega:=dd^{c}\phi$
is a bona fide Kähler-Einstein metric on $X_{0},$ i.e. $\mbox{Ric \ensuremath{\omega=\omega}}$
on $X_{0}$ and globally on $X$ the equation  $\mbox{Ric \ensuremath{\omega=\omega}}+[\Delta]$
holds in the sense of currents.

\subsection{\label{sub:Geodesics,-convexity-and ding}Geodesics, convexity and
Ding type functionals }

As explained in \cite{b-b} any two metrics $\phi_{0}$ and $\phi_{1}$
in $\mathcal{H}_{b}(X,L)$ can be connected by a\emph{ geodesic} $\phi_{t}$
defined as the point-wise sup over all \emph{subgeodesics} $\psi_{t}$
connecting $\phi_{0}$ and $\phi_{1},$ where such a curve of metrics
$\psi_{t}$ on $L$ is defined as follows: complexifying $t$ to take
values in the strip $T:=[0,1]+i\R$ the corresponding metric $\psi(z,t):=\psi_{t}(z)$
is an $i\R-$invariant continuous semi-positively curved metric $\psi$
on the pull-back of $L$ to $X\times T.$ This is a weak analog of
bona fide geodesics defined wrt the Mabuchi metric on the space of
Kähler metrics (see \cite{b-b} and references therein). We recall
the complex version of the Prekopa theorem \cite{bern1,bern2,bbegz}:
If $X$ is a Fano variety and $\psi_{t}$ a subgeodesic in $\mathcal{H}(X,-K_{X}),$
then the functional 
\begin{equation}
t\mapsto-\log\int_{X}\mu_{\psi_{t}}\label{eq:complex prekopa statement}
\end{equation}
is convex in $t.$ We note that, since $\psi_{t}+\psi_{\Delta}$ is
a subgeodesic in $\mathcal{H}(X,-K_{X})$ the result also applies
if $(X,\Delta)$ is a Fano variety and $\psi_{t}$ is a subgeodesic
in $\mathcal{H}(X,-(K_{X}+\Delta).$ Following \cite{bbegz} (up to
a sign difference) we define the Ding type functional. 
\begin{equation}
\mathcal{G}_{(X,\Delta)}:=\mathcal{E}(\phi)+\log\int_{X}\mu_{\psi_{t}}\label{eq:ding type functional for general log pair}
\end{equation}

\begin{prop}
\cite{bern2,bbegz}\label{prop:k-e max ding funtional for general log fano}
Let $(X,\Delta)$ be a log Fano variety. Then any Kähler-Einstein
metric $\phi_{KE}$ for $(X,\Delta)$ maximizes the functional $\mathcal{G}_{(X,\Delta)}$
on $\mathcal{E}(X,-(K_{X}+\Delta)).$
\end{prop}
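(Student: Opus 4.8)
The plan is to establish the two standard pillars of the variational characterization: that $\phi_{KE}$ is a \emph{critical point} of $\mathcal{G}_{(X,\Delta)}$, and that $\mathcal{G}_{(X,\Delta)}$ is \emph{concave along geodesics}. Since a critical point of a concave functional is automatically a global maximizer, the conclusion then follows by connecting $\phi_{KE}$ to an arbitrary competitor by a geodesic.

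First I would record the concavity. The functional splits as $\mathcal{G}_{(X,\Delta)}(\phi)=\mathcal{E}(\phi)+\log\int_X\mu_\phi$. Along a geodesic $\phi_t$ in $\mathcal{H}_b(X,-(K_X+\Delta))$ the energy $\mathcal{E}(\phi_t)$ is affine in $t$; this is precisely the harmonicity property of geodesics for the Monge-Amp\`ere energy established in \cite{b-b,bbgz,begz}. The second term is concave in $t$: since $\phi_t+\psi_\Delta$ is a subgeodesic in $\mathcal{H}(X,-K_X)$, the complex Prekopa theorem \ref{eq:complex prekopa statement} gives that $t\mapsto-\log\int_X\mu_{\phi_t}$ is convex. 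Being the sum of an affine function and a concave function, $t\mapsto\mathcal{G}_{(X,\Delta)}(\phi_t)$ is concave.

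Next I would verify criticality. Using $d\mathcal{E}_{|\phi}=MA(\phi)$ together with $\frac{d}{dt}\mu_{\phi+tv}|_{t=0}=-v\,\mu_\phi$, the variations of the two terms in a direction $v$ are $\int_X v\,MA(\phi)$ and $-\int_X v\,\mu_\phi/\int_X\mu_\phi$. Hence, for the volume-normalized functional, the first-order condition $d\mathcal{G}_{(X,\Delta)|\phi}=0$ for every $v$ is equivalent to the proportionality of $MA(\phi)$ and $\mu_\phi$, which is exactly the K\"ahler-Einstein equation \ref{eq:k-e equation for phi on general fano}; in particular $d\mathcal{G}_{(X,\Delta)|\phi_{KE}}=0$. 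To conclude, fix $\phi\in\mathcal{H}_b(X,-(K_X+\Delta))$, let $\phi_t$ be the geodesic with $\phi_0=\phi_{KE}$ and $\phi_1=\phi$, and set $g(t):=\mathcal{G}_{(X,\Delta)}(\phi_t)$. Concavity gives $g(1)\le g(0)+g'(0^+)$, while $g'(0^+)=d\mathcal{G}_{(X,\Delta)|\phi_{KE}}(\dot\phi_0)=0$ by criticality, the initial velocity $\dot\phi_0$ being bounded (geodesics between bounded metrics have bounded $t$-velocity) and hence integrable against the finite-mass measures $MA(\phi_{KE})$ and $\mu_{\phi_{KE}}$. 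Thus $\mathcal{G}_{(X,\Delta)}(\phi)\le\mathcal{G}_{(X,\Delta)}(\phi_{KE})$. For a general $\phi\in\mathcal{E}(X,-(K_X+\Delta))$ I would approximate by the bounded metrics $\phi^{(k)}:=\max(\phi,\phi_{KE}-k)\downarrow\phi$, to which the bound applies, and pass to the limit using continuity of $\mathcal{E}$ along decreasing sequences and $\int_X\mu_{\phi^{(k)}}\to\int_X\mu_\phi$ by monotone convergence.

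The delicate point is not the formal principle that a critical point of a concave functional is a maximum, but rather its analytic justification on a singular log Fano variety: that $\mathcal{E}$ is genuinely affine (not merely convex) along the weak geodesic, that the geodesic admits a bounded initial velocity so that the first-variation formula legitimately computes $g'(0^+)$, and that the differentiation of $\log\int_X\mu_{\phi_t}$ is valid in the bounded/finite-energy class. These are exactly the inputs furnished by \cite{b-b,bbgz,begz,bbegz}, and it is their invocation that turns the heuristic into a rigorous argument.
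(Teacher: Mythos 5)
Your proof is correct and follows exactly the route the paper intends: the paper itself gives no proof but cites \cite{bern2,bbegz}, and the ingredients it assembles in the preceding subsection (weak geodesics, affineness of $\mathcal{E}$ along them, and the complex Prekopa convexity of $t\mapsto-\log\int_X\mu_{\phi_t}$) are precisely the two pillars you use. The only cosmetic remark is that at the step $g(1)\le g(0)+g'(0^+)$ one really only needs the inequality $g'(0^+)\le 0$, which follows from concavity of $\mathcal{E}$ along the chord together with the K\"ahler--Einstein equation, rather than the exact first-variation identity you assert.
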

For future reference we also note that there is a ``twisted'' variant
of the previous setting obtained by replacing $\mu_{\phi}$ with $\mu_{r\phi+(1-r)\phi_{0}}$
for any given $\phi_{0}\in\mathcal{H}_{b}(X,-(K_{X}+\Delta))$ and
$r\in[0,1].$ Then the previous proposition still holds (with the
same proof) when $\mathcal{G}_{(X,\Delta)}$ is replaced by the corresponding
functional $\mathcal{G}_{(X,\Delta,\phi_{0},r)}$ and $\phi_{KE}$
with the corresponding twisted Kähler-Einstein metric (see also the
even more general setting of mean field type equations in \cite{berm2}).

\subsection{Toric varieties and polytopes}

Let $T$ be the unit-torus in $\C^{n}$ of real dimension $n$ and
denote by $T_{c}:=(\C^{*})^{n}$ its compactification, with its standard
group structure. A $n-$dimensional algebraic variety $X$ is said
to be\emph{ toric} if it admits an effective holomorphic action of
the complex torus $T_{c}$ with an open dense orbit. In practice,
we will fix such an embedding and identify $T_{c}$ with its image
in $X.$

We will next briefly recall the well-known correspondence between\emph{
$T_{c}-$equivariant polarizations $(X,L)$} and \emph{convex lattice
polytopes $P.$ }In fact, using the scaling $L\mapsto rL$ and $P\mapsto rP$
this will give a correspondence between polarizations by $\Q-$line
bundles and\emph{ rational} polytopes. First recall that there are
two equivalent ways of defining a polytope $P$ in a vector space,
say in $\R^{n}:$
\begin{enumerate}
\item $P$ is the convex hull of a finite set of points $A$ (and $P$ is
called a \emph{lattice (rational) polytope} if $A\in\Z^{n}$ $(\Q^{n})).$ 
\item $P$ is the intersection of a finite number of half spaces $\left\langle l_{F},\cdot\right\rangle \geq-a_{F},$
where $l_{F}$ is a vector in the dual vector space and the label
$F$ thus runs over the facets $F$ of $P.$
\end{enumerate}
In the following all polytopes $P$ will assumed to be full-dimensional.
Let us first consider the correspondence referred to above using the\emph{
first description} of a lattice polytope above. Starting with a $T_{c}-$equivariant
ample line bundle $L$ on $X$ one considers the induced action of
the group $T_{c}$ on the space $H^{0}(X,kL)$ of global holomorphic
sections of $kL\rightarrow X$ (for $k$ a given positive integer).
Decomposing the action of $T_{c}$ according to the corresponding
one-dimensional representations $e^{m},$ labeled by $m\in\Z^{n}:$
\[
H^{0}(X,kL)=\oplus_{m\in B_{k}}\C e^{\alpha}
\]
one then defines the lattice polytope $P_{(X,L)}$ as the convex hull
of $B_{k}$ in $\R^{n}.$ Note that, from an abstract point of view,
$\R^{n}$ thus arises as $M\otimes_{\Z}\R,$ where $M$ is the character
lattice of the group $T_{c}$ (compare \cite{c-l-s}).

Conversely, given a convex lattice polytope $P$ one obtains a pair
$(X_{P},kL_{P})$ by letting $X_{P}$ be the closure of the image
of $X_{P}$ under the following map: 
\begin{equation}
T_{c}\rightarrow\P(\oplus_{m\in kP\cap\Z^{n}}\C e^{m}),\,\,\,\, z\mapsto[z^{m_{1}}:\cdots z^{m_{N}}]\label{eq:emb of torus in proj}
\end{equation}
 equipped with its standard action of $T_{c},$ taking $kL$ as the
restriction of the line bundle $\mathcal{O}(1)$ on $\P^{N-1}$ (it
is well-known \cite{c-l-s} that this is an embedding for $k$ sufficiently
large, where in fact $k=1$ will do if $X$ is smooth). 

Next, we will briefly recall how the\emph{ second description} of
$P_{(X,L)}$ above arises from the toric point of view. After perhaps
twisting the action on $T_{c}$ (which corresponds to translating
the polytope) we may as well assume that $0$ in an interior point
of $P_{(X,L).}$ Now any rational polytope $P$ containing zero in
its interior may be uniquely represented as 
\begin{equation}
P=\{p\in M_{\R}:\,\,\left\langle l_{F},p\right\rangle \geq-a_{F}\}\label{eq:repres of rational pol}
\end{equation}
 for primitive dual lattice vectors $l_{F}$ and strictly positive
rational numbers $a_{F},$ where the index $F$ runs over all facets
of $P.$ Let $s_{0}$ be the equivariant element in $H^{0}(X,L)$
corresponding to $0$ and denote by $D_{0}$ its zero-divisor. By
the orbit-cone correspondence (or rather orbit-\emph{face} correspondence)
\cite{c-l-s} the facets $F$ of $P_{(X,L)}$ correspond to the $T_{c}-$invariant
prime divisors $D_{F}$ on $X$ and hence any $T_{c}-$invariant divisor
$D$ on $X$ can be written uniquely as 
\begin{equation}
D=\sum_{F}c_{F}D_{F}\label{eq:expans of divisor}
\end{equation}
for some integers $c_{F}.$ When $D=D_{0}$ the integers $c_{F}$
are precisely the positive numbers $a_{F}$ appearing in \ref{eq:repres of rational pol}
(note that the divisor $D_{0}$ is referred to as $D_{P}$ in \cite{c-l-s}).
In other words, $a_{F}=\nu_{D_{F}}(s_{o});$ the order of vanishing
of $s_{0}$ along the corresponding prime divisor $D_{F}.$ As a consequence,
if $s_{m}$ is an arbitrary equivariant element in $H^{0}(X,L)$ then
\[
\nu_{D_{F}}(s_{m})=\nu_{D_{F}}(\chi^{m}s_{o})=\nu_{D_{F}}(\chi^{m})+\nu_{D_{F}}(s_{0})=\left\langle l_{F},m\right\rangle +\alpha_{F}\geq0,
\]
 where $\chi^{m}$ is the character corresponding to $m$ which may
be identified with a rational function on $T_{c}$ and where we have
used the basic fact that $\nu_{D_{F}}(\chi^{m})=\left\langle l_{F},m\right\rangle .$
Hence $m$ is a lattice point in $P_{(X,L)}$ (which was the starting
point of the previous correspondence).

\subsubsection{\label{sub:The-canonical-divisor-log fano}The canonical divisor
and toric (log) Fano varieties }

Let $X$ be a toric variety. Then $\pm K_{X}$ exists as a divisor
on $X$ (but in general not as $\Q-$line bundles) and 
\begin{equation}
-K_{X}\sim\sum_{F}D_{F},\label{eq:exp of canonical div}
\end{equation}
where the index $F$ ranges over all $T_{c}-$invariant prime divisors
of $X.$ As we will next explain there is correspondence between \emph{toric
log Fano varieties $(X,\Delta)$} on one hand and \emph{rational convex
polytopes $P$ containing $0$ in the interior}. First we note that
in the particular case when $X$ is a\emph{ Fano }variety there is
a canonical lift of $T_{c}$ to the line bundle $rK_{X},$ which thus,
as explained above, gives rice to a lattice polytope $rP_{X}.$ Similarly,
if $(X,\Delta)$ is a log Fano variety then we can get a canonical
rational polytope $P_{(X,\Delta)}$ by setting $P_{(X,\Delta)}=P_{(X,L)}$
for $L=-(K_{X}+\Delta)$ with the $T_{c}$ action induced by the one
from $-K_{X},$ i.e. the action is compatible with the natural isomorphism
between $L$ and $-K_{X}$ on the embedding of$T_{c}$ (using that
$\Delta$ has a canonical trivialization on $T_{c}).$
\begin{prop}
Let $(X,\Delta)$ be a log-Fano variety and let $P_{(X,\Delta)}$
be the corresponding rational polytope in $\R^{n}.$ Then $P_{(X,\Delta)}$
is a rational polytope containing $0$ in its interior and the coefficients
$c_{F}$ of $\Delta$ in \ref{eq:expans of divisor} are given by
$1-a_{F}.$ Conversely, if $P$ is a rational polytope containing
zero in its interior then $P=P_{(X,\Delta)}$ for a log Fano variety
$(X,\Delta).$ In particular, $\Delta$ is effective iff $a_{F}\leq1$
and $X\mapsto P_{(X,0)}$ gives a correspondence between Fano varieties
and polytopes $P$ as above with $a_{F}=1.$ \end{prop}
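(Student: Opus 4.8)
The plan is to reduce every assertion to the toric dictionary assembled above, the crux being the identity $a_{F}=1-c_{F}$ linking the facet constants of $P_{(X,\Delta)}$ to the coefficients of $\Delta$. First I would compute, in two ways, the divisor of the weight-zero equivariant section $s_{0}$ of $L:=-(K_{X}+\Delta)$. On one hand, the section-facet formula recalled above, $\nu_{D_{F}}(s_{m})=\langle l_{F},m\rangle+a_{F}$, specializes at $m=0$ to $\text{div}(s_{0})=\sum_{F}a_{F}D_{F}$. On the other hand, the $T_{c}$-linearization on $L$ is by construction the one induced from the canonical linearization of $-K_{X}$ via the canonical trivialization of $\Delta$ over the open orbit; consequently $s_{0}$ splits as the product of the weight-zero section of $-K_{X}$, whose divisor is $\sum_{F}D_{F}$ by \ref{eq:exp of canonical div}, and the canonical trivialization of $\mathcal{O}(\Delta)$ over $T_{c}$, contributing $-\Delta=-\sum_{F}c_{F}D_{F}$. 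Thus $\text{div}(s_{0})=\sum_{F}(1-c_{F})D_{F}$, and comparing the two expressions gives $a_{F}=1-c_{F}$, which is the claimed value of $c_{F}$.

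Granting this, the remaining claims follow by inspection. Since $(X,\Delta)$ is a log pair its coefficients satisfy $c_{F}<1$, so each $a_{F}=1-c_{F}>0$; as $P=\{p:\langle l_{F},p\rangle\geq-a_{F}\}$ by \ref{eq:repres of rational pol}, the origin meets every defining inequality strictly and hence lies in the interior, while rationality is immediate from $a_{F}\in\Q$ and $l_{F}\in\Z^{n}$. The divisor $\Delta=\sum_{F}(1-a_{F})D_{F}$ is effective exactly when each $1-a_{F}\geq0$, i.e. $a_{F}\leq1$; and $\Delta=0$ is equivalent to all $c_{F}=0$, that is, to all $a_{F}=1$, which is the stated correspondence between genuine Fano varieties and polytopes with $a_{F}\equiv1$.

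For the converse I would reverse the construction. Given a rational polytope $P$ with $0\in\text{int}(P)$, write it in facet form $P=\{p:\langle l_{F},p\rangle\geq-a_{F}\}$ with $l_{F}$ the primitive inward normals and each $a_{F}>0$ (forced by $0\in\text{int}(P)$). These normals are the rays of the complete normal fan $\Sigma_{P}$, which defines a toric variety $X:=X_{\Sigma_{P}}$ with invariant prime divisors $D_{F}$; I then set $\Delta:=\sum_{F}(1-a_{F})D_{F}$, a $\Q$-divisor with all coefficients $1-a_{F}<1$. The computation $-(K_{X}+\Delta)=\sum_{F}D_{F}-\sum_{F}(1-a_{F})D_{F}=\sum_{F}a_{F}D_{F}$ shows, via the polytope-of-a-divisor correspondence, that the associated polytope is exactly $P$, so $P=P_{(X,\Delta)}$. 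Ampleness of $-(K_{X}+\Delta)$, and hence the log Fano property, follows from the standard toric ampleness criterion: a full-dimensional rational polytope whose normal fan equals $\Sigma_{P}$ polarizes $X_{\Sigma_{P}}$ by an ample $\Q$-line bundle.

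The one step requiring genuine care is the second divisor computation above: one must check that the induced linearization on $-(K_{X}+\Delta)$ really does factor the weight-zero section so that the anticanonical part contributes $\sum_{F}D_{F}$ and the trivialization of $\Delta$ over $T_{c}$ contributes exactly $-\Delta$. Verifying that the torus-invariant section of $\mathcal{O}(\sum_{F}b_{F}D_{F})$ with its natural linearization has divisor $\sum_{F}b_{F}D_{F}$ is what fixes the sign and upgrades $a_{F}=1-c_{F}$ from a linear equivalence to an exact equality of divisors. The rest is a direct reading-off from the facet description together with the classical toric-ampleness statement invoked in the converse.
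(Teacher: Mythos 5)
Your proposal is correct and follows essentially the same route as the paper: both arguments come down to identifying the divisor of the weight-zero equivariant section $s_{0}$ of $-(K_{X}+\Delta)$, using $-K_{X}\sim\sum_{F}D_{F}$ together with the canonical trivialization of $\Delta$ over the open orbit, and reading off $a_{F}=\nu_{D_{F}}(s_{0})=1-c_{F}$; the one step you flag as needing care (that the invariant anticanonical section has divisor exactly $\sum_{F}D_{F}$) is precisely what the paper settles by pulling $s_{0}$ back to the invariant $n$-vector field $z_{1}\frac{\partial}{\partial z_{1}}\wedge\cdots\wedge z_{n}\frac{\partial}{\partial z_{n}}$ on $T_{c}$. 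Your converse via the normal fan and the toric ampleness criterion is an equivalent, equally standard packaging of the paper's converse.
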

\begin{proof}
Let us start with the case when $\Delta=0,$ where a proof can be
found in \cite{c-l-s} and hence we just sketch the proof. First,
assume that $X$ is a Fano variety and $-rK_{X}$ is an ample line
bundle, for $r$ large. Then there is a section $s\in H^{0}(X,-rK_{X})$
with zero divisor $r\sum D_{F}$ (as follows form the linear equivalence
\ref{eq:exp of canonical div}). Under an equivariant embedding of
$T_{c}$ in $X$ the section $s$ pulls back to a multiple of the
$r$ th tensor power of the holomorphic $n-$vector field $z_{1}\frac{\partial}{\partial z_{1}}\wedge\cdots\wedge z_{n}\frac{\partial}{\partial z_{n}}$
on $T_{c}$ (using that the latter $n-$vector field is naturally
defined on $X_{reg}$ with the right zero locus). As a consequence,
$s$ is invariant under the $T_{c}-$ action, i.e. in the notation
above $s=s_{0}$ and hence $a_{F}(rP_{X})=\nu_{D_{F}}(s_{0})=r,$
i.e. $a_{F}(P_{X})=1$ as desired. Conversely, if $a_{F}(P)=1$ and
we take $r$ such that $rP$ is a lattice polytope. Then $s_{0}\in H^{0}(X,rL)$
is such that $\nu_{D_{F}}(s_{0})=a_{F}(rP)=r\cdot1,$ i.e. the zero
divisor of $s_{0}\in H^{0}(X,rL_{P})$ is equal to $r\sum D_{F}$
and hence (by \ref{eq:exp of canonical div}) $-K_{X}\sim L_{P}$
is ample as desired.

The proof for a general $\Delta$ is similar: if $-r(K_{X}+\Delta)$
is an ample line bundle, for $r$ large, we can take $s\in H^{0}(X,-r(K_{X}+\Delta)$
with zero divisor $r(\sum_{F}D_{F}-\Delta)$ which is indeed effective
iff $\Delta$ has coefficients $<1.$ We then deduce that $s=s_{0}$
as before (since $\Delta$ is trivial on $T_{c})$ and hence $a_{F}(P_{(X,\Delta)})=\nu_{D_{F}}(s_{0})/r$
is a rational positive number. The converse is then obtained as before.
\end{proof}

\subsection{\label{sub:Toric-metrics-as}Toric metrics as convex functions on
$\R^{n}$ and Legendre transforms }

Let now $(X,L)$ be a toric variety with corresponding polytope $P$
and assume that $0\in P.$ As before we denote by $s_{0}$ the corresponding
element in $H^{0}(X,L).$ Given any metric $\left\Vert \cdot\right\Vert $
on $L$ we obtain a function $\phi(x)$ on $\R^{n}$ by setting 
\begin{equation}
\phi(x):=-\log\left\Vert s_{0}\right\Vert ^{2}(z),\label{eq:correspondence metric convex f}
\end{equation}
 where $x=\log z$ (see section \ref{sub:Relation-to-the-log}) wrt
the fixed embedding of $T_{c}$ in $X,$ where $s_{0}$ is non-vanishing.
\begin{prop}
The correspondence \ref{eq:correspondence metric convex f} gives
a bijection between the space of $\mathcal{H}_{b}(X,L)^{T}$ of $T-$invariant
locally bounded metrics on $L\rightarrow X$ with positive curvature
current and the space $\mathcal{P}_{+}(\R^{n}).$ In particular, the
Legendre transform then induces a bijection between $\mathcal{H}_{b}(X,L)^{T}$
and the space $\mathcal{H}_{b}(P)$ of bounded convex functions on
$P.$\end{prop}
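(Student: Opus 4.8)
The plan is to prove the statement in two stages: first establishing the bijection on the open orbit $T_{c}$, and then controlling the behaviour across the toric divisors at infinity, where the growth condition defining $\mathcal{P}_{+}(\R^{n})$ enters. On $T_{c}$ the section $s_{0}$ is nowhere vanishing (its zero divisor being supported on $X-T_{c}$), so the assignment \ref{eq:correspondence metric convex f}, $\phi(x):=-\log\left\Vert s_{0}\right\Vert ^{2}$, defines a genuine function, which is $T$-invariant precisely because the metric is, and hence descends along $\mbox{Log}$ to a function on $\R^{n}$. For such a $T$-invariant metric the positivity $dd^{c}\phi\geq0$ on $T_{c}$ is equivalent to convexity of $\phi(x)$: this is the classical fact that a real-torus-invariant psh function of $z$ corresponds to a convex function of $x=\mbox{Log}(z)$, the complex Hessian reducing to a positive multiple of the real Hessian. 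Thus $T$-invariant positively curved metrics on $L|_{T_{c}}$ correspond exactly to convex functions on $\R^{n}$, and the entire content of the proposition is to match local boundedness on all of $X$ with the two-sided bound $|\phi-\phi_{P}|\leq C$.

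I would carry out this matching using the affine charts supplied by toric geometry. The fan of $(X,L)$ is the normal fan of $P$, whose maximal cones $\sigma_{v}$ are the normal cones at the vertices $v$ of $P$; by definition $\phi_{P}(x)=\left\langle v,x\right\rangle $ precisely for $x\in\sigma_{v}$, and these cones cover $\R^{n}$. Over the affine chart $U_{\sigma_{v}}$ the shifted section $s_{v}:=\chi^{v}s_{0}$ is a local frame for $L$ (its divisor $\sum_{F}(\left\langle l_{F},v\right\rangle +a_{F})D_{F}$ being supported, within $U_{\sigma_{v}}$, only on facets through $v$, where the coefficient vanishes), so the local potential of the metric is $\phi_{U_{\sigma_{v}}}=-\log\left\Vert s_{v}\right\Vert ^{2}=\phi(x)-\left\langle v,x\right\rangle $ on $T_{c}$. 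The geometric fact I would rely on is that the closure in $X$ of $\mbox{Log}^{-1}(\sigma_{v})$ is a compact subset of $U_{\sigma_{v}}$, meeting the boundary divisors only in the strata through the fixed point of $U_{\sigma_{v}}$ (by the orbit-cone correspondence, since $\sigma_{v}$ is pointed and full-dimensional). Granting this, local boundedness of the metric forces $\phi_{U_{\sigma_{v}}}$, that is $\phi(x)-\phi_{P}(x)$, to be bounded on $\sigma_{v}$; taking the maximum over the finitely many vertices yields $|\phi-\phi_{P}|\leq C$, so $\phi\in\mathcal{P}_{+}(\R^{n})$.

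For the converse I would run the same argument in reverse. Given $\phi\in\mathcal{P}_{+}(\R^{n})$, define the metric on $T_{c}$ by $\left\Vert s_{0}\right\Vert ^{2}=e^{-\phi}$; its chart potentials $\phi(x)-\left\langle v,x\right\rangle $ are convex, hence psh since $\left\langle v,x\right\rangle =\log|z^{v}|^{2}$ is pluriharmonic, and are bounded on $\overline{\mbox{Log}^{-1}(\sigma_{v})}$ by the defining bound of $\mathcal{P}_{+}(\R^{n})$. A locally bounded psh function extends across the boundary divisors to a bounded psh function, so each potential defines a locally bounded metric on $U_{\sigma_{v}}$; these agree on overlaps because on $T_{c}$ they all arise from the single expression $\left\Vert s_{0}\right\Vert ^{2}=e^{-\phi}$, the transition functions being the monomials $|\chi^{v-w}|^{2}$. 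This produces a well-defined element of $\mathcal{H}_{b}(X,L)^{T}$. Injectivity is immediate since a metric is determined by its restriction to the dense set $T_{c}$, giving the bijection $\mathcal{H}_{b}(X,L)^{T}\leftrightarrow\mathcal{P}_{+}(\R^{n})$. The final assertion then follows by composing with Proposition \ref{pro:leg is isometric}, which identifies $\mathcal{P}_{+}(\R^{n})$ with the space $\mathcal{H}_{b}(P)$ of bounded convex functions on $P$ via the Legendre transform.

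The main obstacle is the compactness claim $\overline{\mbox{Log}^{-1}(\sigma_{v})}\subset U_{\sigma_{v}}$ together with the identification of $s_{v}$ as a local frame, which rest on the precise dictionary between the normal fan of $P$, the affine charts of $X$, and the orbit-cone correspondence. This is transparent in the smooth case, where $U_{\sigma_{v}}\cong\C^{n}$ and $\mbox{Log}^{-1}(\sigma_{v})$ has polydisc-like closure, but for singular $X$ it requires the general toric formalism, since $U_{\sigma_{v}}$ is then only an affine toric variety.
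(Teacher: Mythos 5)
Your argument is correct in substance but follows a genuinely different route from the paper's. The paper fixes \emph{one} global reference metric $h_{0}$, obtained by restricting to $X\subset\P^{N-1}$ the continuous metric on $\mathcal{O}(1)$ induced by $\Phi(Z)=\log\max_{i}|Z_{i}|^{2}$; its potential on $T_{c}$ is exactly $\phi_{P}$, so the whole proposition reduces in one stroke to the identification of $PSH_{b}(X,\omega_{0})^{T}$ with $PSH_{b}(T_{c},\omega_{P})^{T}$ via the extension theorem for bounded psh functions across the analytic set $X-T_{c}$. You instead work vertex by vertex with the local frames $s_{v}=\chi^{v}s_{0}$ on the charts $U_{\sigma_{v}}$ of the normal fan. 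What your approach buys is an explicit chart-level description of the metric (useful, e.g., for seeing exactly which monomials control the boundary behaviour); what it costs is the geometric input you yourself flag, namely that $\overline{\mbox{Log}^{-1}(\sigma_{v})}$ is a compact subset of $U_{\sigma_{v}}$ -- a true but nontrivial consequence of the orbit--cone correspondence that the paper's global reference metric circumvents entirely. Two points you should tighten: (i) in the converse direction the potential $\phi(x)-\left\langle v,x\right\rangle=\phi-\phi_{P}+\max_{w}\left\langle w-v,x\right\rangle$ is bounded above only on a neighbourhood of $\overline{\mbox{Log}^{-1}(\sigma_{v})}$, not on all of $U_{\sigma_{v}}$ (it blows up along the divisors $D_{F}$ with $F\not\ni v$ that are absent from $U_{\sigma_{v}}$, but also where the regular functions $\chi^{v-w}$ vanish inside $U_{\sigma_{v}}$); this does not break the argument, since these compact sets cover $X$ and the locally defined metrics agree on $T_{c}$, but the phrase ``locally bounded metric on $U_{\sigma_{v}}$'' is not literally what you have proved; (ii) since $L$ is only a $\Q$-line bundle and $P$ a rational polytope, the vertices $v$ need not be lattice points and $\chi^{v}$ need not be a character, so the whole construction should be run for $kL$ and $kP$ with $k$ sufficiently divisible, exactly as the paper does when choosing $k$ with $kL$ very ample.
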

\begin{proof}
First note that, by definition, $\left\Vert \cdot\right\Vert $ has
positive curvature iff $\phi(z)$ is psh iff $\phi(x)$ is convex.
Next, let $h_{0}(=\left\Vert \cdot\right\Vert ^{2})$ be a fixed element
in $\mathcal{H}_{b}(X,L)^{T}$ with curvature current $\omega_{0}.$
Writing an arbitrary metric on $L$ as $h=e^{-v}h_{0}$ gives a bijection,
$h\mapsto v,$ between $\mathcal{H}_{b}(X,L)$ and the space $PSH_{b}(X,\omega_{0})^{T}.$
Moreover, since $T_{c}$ is embedded as a Zariski open set in $X$
it follows from the basic fact that any psh function, which is bounded
from above, extends uniquely over an analytic set, that we may as
well replace $PSH_{b}(X,\omega_{0})^{T}$ with its restriction to
$T_{c}.$ Now, by definition, the space of all $\phi(x)$ in $\mathcal{P}_{+}(\R^{n})$
may be identified with $PSH_{b}(T_{c},\omega_{P})^{T},$ where $\omega_{P}:=dd^{c}\phi_{P}.$
To conclude the proof it will thus be enough to show that $\omega_{P}=F^{*}\omega_{0}$
for some $h_{0}\in\mathcal{H}_{b}(X,L),$ where $F$ is the embedding
of $T_{c}$ in $X.$ To this end we fix $k>0$ such that $kL$ is
very ample, i.e the map \ref{eq:emb of torus in proj} is an embedding.
Let $h_{0}$ be the locally bounded (in fact continuous) metric with
positive curvature on $\mathcal{O}(1)\rightarrow\P^{N-1}$ induced
by the continuous two-homogenous psh function $\Phi(Z):=\log\max_{i=1,..,N}|Z_{i}|^{2}$
on $\C^{N}-\{0\}$ (the total space of $\mathcal{O}(1)^{*}\rightarrow\P^{N-1}).$
By definition the restriction of $h_{0}$ to the image of $X$ in
$\P^{N-1}$ is an element in $\mathcal{H}_{b}(X,L)$ and $-\log h_{0}(s_{0})(z)=\phi_{P}(z)$
and hence $F^{*}\omega_{0}=dd^{c}\phi_{P}$ as desired. \end{proof}
\begin{rem}
\label{rem:guilllem}As shown by Guillemin smooth strictly positively
metrics correspond, under the Legendre transform, to smooth functions
on the interior of $P$ with a particular boundary singularity (see
\cite{b-g-l} for the extension to singular toric varieties).
\end{rem}

\subsection{\label{sub:Toric-K=0000E4hler-Einstein-metrics}Toric Kähler-Einstein
metrics and solitons (proofs of Theorems \ref{thm:-existence of ke intro},
\ref{thm:existe of k-r sol intro})}

Here we will prove Theorems \ref{thm:-existence of ke intro} and
\ref{thm:existe of k-r sol intro} apart from the statements concerning
K-stability and Futaki invariants which will be considered in section
\ref{sub:Futaki-invariants-and}.

Let $X$ be a toric log Fano variety $(X,\Delta)$ and denote by $P(=P_{(X,\Delta)})$
the corresponding rational polytope. As explained above $P$ contains
$0$ as an interior point and the corresponding invariant element
$s_{0}\in H^{0}(X,-r(K_{X}+\Delta))$ is such that $T_{c}=\{s_{0}\neq0\}.$
Moreover, under the canonical identification of $K_{X}$ with $K_{X}+\Delta$
on $T_{c}$ we may identify the dual of an $r$th root of $s_{0}$
with the standard invariant $(n.0)-$form $dz$ on $T_{c}$ and hence
under the correspondence in section the precious section the canonical
measure on $X$ defined by a metric $\phi$ on $-(K_{X}+\Delta)$
satisfies

\begin{equation}
\mbox{Log }_{*}\mu_{\phi}=e^{-\phi(x)}dx\label{eq:toric can measure}
\end{equation}
Hence, the Kähler-Einstein equation \ref{eq:k-e equation for phi on general fano}
is equivalent to the equation 
\begin{equation}
MA_{\R}(\phi)=Ce^{-\phi}dx\label{eq:real k-e}
\end{equation}
for a convex function $\phi\in\mathcal{E}_{P}^{1}(\R^{n}),$ where
$dx$ denotes the usual Euclidean measure. Recall that geometrically
$e^{-\phi}$ is the point-wise norm of $s_{0}^{1/r}$ for the given
metric on $-(K_{X}+\Delta).$ We can now deduce the equivalence between
the first two points in Theorem \ref{thm:-existence of ke intro}
from Theorem \ref{thm:existe real ma intr} with $g=1$ (the regularity
is shown in \cite{bbegz}).

More generally, given a toric holomorphic vector field $V=\sum a_{i}\frac{\partial}{\partial z_{i}}$
we may define the corresponding (singular) Kähler-Ricci soliton metric
$\phi\in\mathcal{E}(X,-(K_{X}+\Delta))$ by the equation 
\begin{equation}
MA_{\R}(\phi)=Ce^{-\phi+\left\langle a,d\phi\right\rangle }dx\label{eq:real k-r-s}
\end{equation}
for $\phi\in\mathcal{E}_{P}^{1}(\R^{n}).$ By Theorem \ref{thm:existe of k-r sol intro}
$\phi$ is in fact automatically smooth on $\R^{n},$ i.e. the corresponding
metric on $-(K_{X}+\Delta)$ is smooth on the complex torus $T_{c}$
in $X.$ Note that, for any smooth $\phi$ in $\mathcal{P}(\R^{n})$
the function 
\begin{equation}
H_{V}(\phi):=\left\langle a,d\phi\right\rangle \label{eq:hamiltonian toric}
\end{equation}
is globally bounded on $\R^{n}.$ Indeed, $d\phi$ takes values in
the bounded set $P$ and hence 
\begin{equation}
\left|H_{V}(\phi)\right|\leq C\label{eq:hamiltonian toric-2}
\end{equation}
for a constant $C$ independent of $\phi.$ To see the relation to
the usual Kähler-Ricci soliton equation \ref{eq:k-r soliton eq intro}
we note that a simple computation gives, 
\begin{equation}
dd^{c}H_{V}(\phi):=-d(i_{V}\omega)\label{eq:hamiltonian toric-1}
\end{equation}
where $\omega=dd^{c}\phi,$ where the rhs by Cartan's formula equals
$-L_{V}\omega$ and hence $\omega$ indeed satisfies the equation
\ref{eq:k-r soliton eq intro} on the complex torus $T_{c}.$ Finally,
applying Theorem \ref{thm:existe real ma intr} with $g(p)=e^{\left\langle a,p\right\rangle }$
and using that $0$ is the barycenter of $(P,e^{\left\langle a,p\right\rangle })$
iff $a$ is the unique critical point of the strictly convex function
$a\mapsto\log\int_{P}e^{\left\langle a,p\right\rangle }dp,$ gives
Theorem \ref{thm:existe of k-r sol intro} up to the global regularity
statement on $X.$ The global continuity of the metric on $-(K_{X}+\Delta)\rightarrow X$
defined by $\phi$ follows from the bound \ref{eq:hamiltonian toric-2},
which implies that the Monge-Ampère measure of the finite energy metric
has a density in $L^{p}(X,\mu_{P})$ for any $p>1$ (compare ...)
and hence the continuity follows immediately as in the case $a=0$
considered in \cite{bbegz}. As for the global smoothness on the complement
of $\Delta$ in the regular locus of $X$ it will be established in
section \ref{sub:Regularity-of-singular}.
\begin{example}
\label{ex: on the sphere}When $n=1$ we have $P=[a_{1},a_{2}]$ and
$X$ is the Riemann sphere $\C\cup\{\infty\}$ with $\Delta=(1-a_{1})[0]+(1-a_{2})[\infty].$
The barycenter condition for the existence of a log Kähler-Einstein
metric forces $a_{1}=a_{2}=t$ for some positive number $t.$ For
any $t$ a direct calculation reveals that $\phi(x)=\log(e^{tx}+e^{-tx})$
gives a solution to equation \ref{eq:real k-e} and hence, by the
uniqueness statement in Theorem \ref{thm:existe of k-r sol intro},
any other solution is given by $\log(e^{t(x+s)}+e^{-t(x+s)})$ for
some $s\in\R.$ Geometrically, the corresponding Kähler metrics $\omega_{a}$
on the two-sphere are thus ``foot-balls'' with a cone angle $2\pi t:$
\begin{equation}
\omega_{t}:=dd^{c}\phi(z)/t=\frac{1}{2\pi}\frac{|z|^{2(t-1)}}{(1+|z|^{2t})^{2}}dz\wedge d\bar{z}\label{eq:log ke on sphere}
\end{equation}
and the parameter $s$ just comes from the action of the automorphism
group of $(X,\Delta),$ which does not change the isometry class of
the corresponding Riemannian metrics on the two-sphere. However, for
some $t>1,$ there may be different mutually non-isometric Kähler
metrics $\omega$ solving the log Kähler-Einstein equation for $(X,\Delta).$
In fact, as shown in \cite{tr}, this happens precisely when $\Delta$
has negative integer coefficients. \end{example}
\begin{rem}
\label{rem:toric is klt}Any toric log Fano variety $(X,\Delta)$
in fact has klt singularities See \cite{c-l-s} for an algebraic proof,
but from the analytical point this follows almost immediately. Indeed,
letting $\phi$ be a locally bounded metric on $X$ represented by
the function $\phi\in\mathcal{P}_{+}(\R^{n})$ the mass of $\mu_{\phi}$
on $X$ coincides, according to formula \ref{eq:toric can measure}
with $\int_{P}e^{-\phi(x)}dx.$ But, since $0$ is in the interior
of $P$ and $\phi-\phi_{P}$ is bounded we have $\phi(x)\geq|x|/C-C$
and hence the integral is indeed finite.
\end{rem}

\subsection{\label{sub:Relations-to-complete}Relations to complete Kähler-Ricci
solitons}

Now assume for simplicity that $X$ is \emph{smooth} toric variety
and consider a family of toric $\Q-$divisors $\Delta_{t}$ with coefficients
$<1$ for $t\in]0,1],$ such that $\Delta_{t}$ is affine wrt $t$
and converges to a reduced divisor $\Delta_{0}$ as $t\rightarrow0,$
i.e. $\Delta_{t}=\Delta_{0}+O(t).$ More precisely, the coefficients
$c_{F}(t)$ are assumed to tend to either zero or one, as $t\rightarrow0.$
Let $\omega_{t}$ be the corresponding curve of log Kähler-Ricci solitons
associated to $(X,\Delta_{t}),$ which, by Theorem \ref{thm:existe of k-r sol intro},
is uniquely determined modulo toric automorphisms. It seems natural
to conjecture that, as $t\rightarrow0,$ the scaled metrics $\tilde{\omega}_{t}:=\omega_{t}/t$
converges towards a \emph{complete} (translating) Kähler-Ricci soliton
on the quasi-projective variety $Y:=X-\Delta_{0},$ i.e. 
\begin{equation}
\mbox{Ric \ensuremath{\omega=}}L_{V}\omega\label{eq:translating k-r soliton}
\end{equation}
on $Y$ for some holomorphic vector field $V$ on $Y,$ which is the
limit of $\tilde{V_{t}}:=tV_{t}.$ Of course, the notion of convergence
needs to be made precise, but the least one could ask for is that
- modulo toric automorphisms - the convergence holds in the weak topology
of currents on $Y.$ The rational for this conjecture is that, on
$X,$ we have $\mbox{Ric \ensuremath{\tilde{\omega}_{t}=}}t\tilde{\omega}_{t}+L_{\tilde{V_{t}}}\tilde{\omega}_{t}+[\Delta_{t}]$
and hence, when $t\rightarrow0,$ at least heuristically, one obtains
a limiting Kähler current $\omega$ such that $\mbox{Ric \ensuremath{\omega=L_{V}\omega+}}[\Delta_{0}],$
which indicates that $\omega$ is asymptotic to a Euclidean cylinder
in the normal directions close to the ``boundary'' $\Delta_{0}$
of $Y.$ For example, taking $X$ to be $\P^{n}$ and $\Delta_{0}$
the hyperplane at infinity, so that $Y=\C^{n},$ should give the the
complete Kähler-Ricci soliton on $\C^{n}$ constructed by Cao \cite{cao1},
generalizing Hamilton's ``cigar soliton'' in $\C.$ Similarly, taking
$X$ to be the total space of the bundle $\P(\mathcal{O}(k)\oplus\mathcal{O}(0))\rightarrow\P^{n-1}$
and $\Delta_{0}$ the ``section at infinity'' should give the complete
Kähler-Ricci soliton in the total space of $\mathcal{O}(k)\rightarrow\P^{n-1}$
found in \cite{cao1}. For similar limit considerations, with very
precise converge results, see \cite{f-i-k}.

\subsection{\label{sub:The-invariant-R of log Fano}The invariant $R(X,\Delta)$
of a log Fano variety and lower bounds on the Ricci curvature}

Let $(X,\Delta)$ be a log Fano variety and fix a smooth semi-positive
form $\omega_{0}\in c_{1}(-(K_{X}+\Delta).$ Given $r\in[0,1]$ we
consider the following ``twisted Kähler-Einstein equation'' for
a Kähler current $\omega\in c_{1}(-(K_{X}+\Delta)):$ 
\begin{equation}
\mbox{Ric \ensuremath{\omega-\Delta=r\omega+(1-r)\omega_{0}}}\label{eq:aubins equ for log fano}
\end{equation}
for $\omega$ smooth on $X_{0}(:=X_{reg}-\Delta)$ and with continuous
local potentials on $X.$ In the case when $X$ is smooth and $\Delta=0$
the equation was introduced by Aubin as a continuity method to produce
Kähler-Einstein metrics. The following theorem generalizes the main
result of \cite{li1} (which concerned the case when $\Delta=0$ and
$X$ is smooth):
\begin{thm}
\label{thm:R for lof fano}Let $(X,\Delta)$ be a toric log Fano variety
and $\omega_{0}$ a smooth semi-positive form in $c_{1}(-(K_{X}+\Delta).$
Then the supremum over all $r$ such that the equation \ref{eq:aubins equ for log fano}
admits a solution coincides with the invariant $R_{P}$ (formula \ref{eq:inv R of conv bod})
of the canonical polytope $P$ associated to $(X,\Delta).$ \end{thm}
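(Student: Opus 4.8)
The plan is to reduce the statement, via the toric dictionary of section \ref{sub:Toric-K=0000E4hler-Einstein-metrics}, to its real Monge-Amp\`ere counterpart Theorem \ref{thm:R for ma}, which already identifies $R_P$ with the solvability threshold of equation \ref{eq:eq in statement of thm r for ma}. First I would reduce to the case where $\omega_0$ is $T$-invariant: averaging $\omega_0$ over the compact torus $T$ produces a $T$-invariant smooth semi-positive form in the same class, and a symmetrization argument together with the reference-independence of the threshold (two choices of reference differ by a bounded function and hence alter the relevant functionals only by a bounded amount) shows that the solvability threshold is unchanged. With $\omega_0$ thus $T$-invariant it corresponds to a convex $\phi_0\in\mathcal{P}_{+}(\R^{n})$, and a $T$-invariant metric $\omega=dd^{c}\phi$ on $-(K_X+\Delta)$ corresponds to $\phi\in\mathcal{P}_{+}(\R^{n})$.

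The conceptual core is the identity between the two equations. Rewriting \ref{eq:aubins equ for log fano} as $\mathrm{Ric}\,\omega-[\Delta]-\omega=-(1-r)\,dd^{c}(\phi-\phi_0)$ and using the standard identity $\mathrm{Ric}\,\omega-[\Delta]-\omega=-dd^{c}\log(MA(\phi)/\mu_\phi)$, one integrates to get $MA(\phi)=Ce^{(1-r)(\phi-\phi_0)}\mu_\phi$. Pushing this forward under the Log map and invoking \ref{eq:toric can measure} turns it into $MA_{\R}(\phi)=Ce^{-r\phi}e^{-(1-r)\phi_0}dx$, which is exactly \ref{eq:eq in statement of thm r for ma} after the constant $C$ is absorbed into an additive normalization of $\phi$.

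The two directions then follow from Theorem \ref{thm:R for ma}. For $r<R_P$ that theorem provides a solution $\phi\in\mathcal{P}_{+}(\R^{n})$ of \ref{eq:eq in statement of thm r for ma}; translating back gives a $T$-invariant metric solving \ref{eq:aubins equ for log fano} on $T_c$, and the global bound and smoothness furnished by Theorem \ref{thm:existe real ma intr}, together with the regularity argument of section \ref{sub:Regularity-of-singular}, promote it to a solution that is continuous on $X$ and smooth on $X_0$. Hence the threshold is at least $R_P$. Conversely, if \ref{eq:aubins equ for log fano} is solvable for some $r$, then by the twisted form of Proposition \ref{prop:k-e max ding funtional for general log fano} the solution maximizes the twisted Ding functional $\mathcal{G}_{(X,\Delta,\phi_0,r)}$, which is therefore bounded from above; restricting to $T$-invariant metrics and translating via the dictionary shows that $G_{r,\phi_0}$ is bounded from above on $\mathcal{P}_{+}(\R^{n})$, whence $r\le R_P$ by Theorem \ref{thm:R for ma}.

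The step I expect to be the main obstacle is not the variational bookkeeping but the analytic passage between the two settings for metrics that are a priori only locally bounded (of finite energy): justifying the identity $\mathrm{Ric}\,\omega-[\Delta]-\omega=-dd^{c}\log(MA(\phi)/\mu_\phi)$ and the push-forward \ref{eq:toric can measure} at this regularity, and then upgrading the weak solution obtained from Theorem \ref{thm:R for ma} to one satisfying \ref{eq:aubins equ for log fano} classically on $X_0$ via Caffarelli's interior theory and the global estimates behind Theorem \ref{thm:existe real ma intr}. By comparison, the reduction to $T$-invariant $\omega_0$ is soft, resting only on averaging and on the bounded-perturbation stability of the boundedness and coercivity of the functionals involved.
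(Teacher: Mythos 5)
Your proposal follows essentially the same route as the paper: the paper's proof simply invokes the toric dictionary to reduce to Theorem \ref{thm:R for ma} (whose two characterizations of $R_{P}$ give exactly the two directions you spell out) and cites Theorem 1.5 of \cite{bbegz} for the global regularity. The only minor divergence is that you obtain regularity via Caffarelli's interior theory and the flow argument of section \ref{sub:Regularity-of-singular} rather than the cited result, but the substance of the argument is the same.
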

\begin{proof}
Given the ``toric dictionary'' above the theorem follows immediately
from Theorem \ref{thm:R for ma}, apart from the global regularity
of the solutions, which in turn follows from Theorem 1.5 in \cite{bbegz}.
\end{proof}
As shown in \cite{sz} when $X$ is any smooth (and not necessary
toric) Fano manifold and $\Delta=0$ the sup over all $r\in[0,1[$
such that the equation \ref{eq:aubins equ for log fano} admits a
solution coincides with the geometric invariant $R(X)$ defined as
the the sup of all numbers $r\in[0,1[$ such that there exists a Kähler
metric $\omega\in c_{1}(-K_{X})$ with $\mbox{Ric \ensuremath{\omega\geq r\omega.}}$
Here we note that one can similarly define an invariant $R(X,\Delta)$
of any log Fano variety $(X,\Delta),$ as the sup over all $r\in[0,1[$
such that there exists a Kähler current $\omega\in c_{1}(-(K_{X}+\Delta),$
smooth on $X_{0}$ and such that $\mbox{Ric \ensuremath{\omega-\Delta-r\omega}}$is
a smooth positive form. Then the following generalization of the main
result of \cite{sz} holds:
\begin{thm}
\label{thm:general R as ricci curv}Let $(X,\Delta)$ be a log Fano
variety with klt singularities. If $\Delta$ is an effective divisor
and $\omega_{0}$ a given semi-positive form in $c_{1}(-(K_{X}+\Delta)),$
then the invariant $R(X,\Delta)$ coincides with the sup over all
$r$ such that the equation \ref{eq:aubins equ for log fano} admits
a solution. Moreover, in the case when $(X,\Delta)$ is toric the
divisor $\Delta$ need not be effective.\end{thm}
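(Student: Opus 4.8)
The plan is to prove the equality of the two invariants
\[
R(X,\Delta)=\sup\{r\in[0,1[\ :\ \exists\,\omega\in c_1(-(K_X+\Delta)),\ \omega\text{ smooth on }X_0,\ \mathrm{Ric}\,\omega-\Delta-r\omega>0\}
\]
and $R_{\mathrm{eq}}(\omega_0):=\sup\{r\ :\ \text{\ref{eq:aubins equ for log fano} is solvable}\}$ by establishing the two inequalities between them. For toric $(X,\Delta)$ there is nothing new to do: Theorem \ref{thm:R for lof fano} already identifies $R_{\mathrm{eq}}(\omega_0)$ with $R_P$ through the real Monge--Amp\`ere result Theorem \ref{thm:R for ma}, and that argument uses only the convexity of $P$ and the barycenter bound \ref{eq:bound in proof of R for ma}, never the sign of the coefficients of $\Delta$; this is exactly why the toric conclusion persists when $\Delta$ is not effective. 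I would therefore concentrate on the general, non-toric case with $\Delta$ effective, following the scheme of Sz\'ekelyhidi \cite{sz} and replacing every smooth Ricci/Hodge input by the regularity theory of \cite{bbegz}.

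The inequality $R_{\mathrm{eq}}(\omega_0)\le R(X,\Delta)$ is the easy one and is insensitive to effectivity. If $\omega$ solves \ref{eq:aubins equ for log fano} at some level $r'$, then for every $r<r'$
\[
\mathrm{Ric}\,\omega-\Delta-r\omega=(r'-r)\omega+(1-r')\omega_0,
\]
which is a smooth positive form on $X_0$ because $\omega$ is a genuine K\"ahler metric there and $\omega_0\ge0$; hence $R(X,\Delta)\ge r'$, and taking the supremum over solvable $r'$ gives the claim.

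For the reverse inequality I would fix $r<R(X,\Delta)$ together with a current $\omega$ realizing $\theta:=\mathrm{Ric}\,\omega-\Delta-r\omega>0$. Since $\theta$ represents $(1-r)\,c_1(-(K_X+\Delta))$, the form $\omega_0'':=\tfrac1{1-r}\theta$ is a strictly positive reference in the class, and $\omega$ itself solves \ref{eq:aubins equ for log fano} at level $r$ for $\omega_0''$. The crux is then the reference-independence of the continuity invariant: solvability at level $r$ for \emph{some} semi-positive reference must force solvability at every $s<r$ for the \emph{given} $\omega_0$, whence $R_{\mathrm{eq}}(\omega_0)\ge r$ and, letting $r\uparrow R(X,\Delta)$, the desired bound. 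To obtain this I would run the Aubin continuity method in $s\in[0,r)$ with reference $\omega_0$: the endpoint $s=0$ is the solvable Calabi--Yau type equation $\mathrm{Ric}\,\omega-\Delta=\omega_0$, and openness holds for $s<1$ since the lower Ricci bound $\mathrm{Ric}\,\omega_s-\Delta\ge s\omega_s$ along the path forces, via Bochner--Lichnerowicz, the bottom of the spectrum to exceed $s$, so the linearisation has no kernel.

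The main obstacle is closedness, which by the Aubin--Yau--Siu Laplacian estimate and the higher regularity of \cite{bbegz} reduces to a uniform $C^0$-bound on the potentials along the path. Here I would use the variational picture of this paper: solvability of \ref{eq:aubins equ for log fano} at level $s$ amounts to the existence of a maximizer of the twisted Ding functional $\mathcal{G}_{(X,\Delta,\phi_0,s)}$ of Section \ref{sub:Geodesics,-convexity-and ding}, and the presence of the \emph{strictly} positive reference $\omega_0''$ at the higher level $r$ should yield, via the geodesic convexity behind Proposition \ref{prop:k-e max ding funtional for general log fano} combined with the scaling trick of Theorem \ref{thm:coerciv k-e}, the coercivity of $\mathcal{G}_{(X,\Delta,\phi_0,s)}$ for each $s<r$; coercivity then produces the uniform bound. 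It is precisely this estimate that requires $\Delta$ to be effective: when $\Delta\ge0$ the current $[\Delta]$ is positive and the twisted measure $\mu_\phi$ is a bona fide positive measure, so the maximum-principle and integrability arguments underlying both the a priori estimate and the \cite{bbegz} regularity go through, while for negative coefficients they fail --- which is exactly why, in the non-toric setting, effectivity cannot be dispensed with, although the toric route through Theorem \ref{thm:R for ma} avoids it completely.
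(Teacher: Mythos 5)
Your skeleton --- the easy inequality, passage to the strictly positive reference $\omega_{0}'':=\theta/(1-r)$, reference-independence, and coercivity of the twisted Ding functional via the scaling trick --- matches the paper's strategy, but your two key technical steps diverge from what the paper does, and one of them would not go through as stated. For existence the paper does \emph{not} run the Aubin continuity method: once the scaling argument upgrades upper boundedness of $\mathcal{G}_{r+\delta,\phi_{0}}$ to coercivity of $\mathcal{G}_{r,\phi_{0}}$, the variational machinery of \cite{bbegz} produces a finite-energy maximizer which is shown directly to solve \ref{eq:aubins equ for log fano}. This matters because your route --- openness by Bochner--Lichnerowicz and closedness by the Aubin--Yau--Siu Laplacian estimate --- presupposes a smooth elliptic theory that is not available on a klt variety $X$ with singularities; avoiding exactly this is the point of the variational approach, so as written your openness and closedness steps are genuine gaps rather than routine verifications.

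You also mislocate where effectivity of $\Delta$ enters. The measure $\mu_{\phi}$ is a positive measure for any $\Delta$ with coefficients $<1$ (the klt hypothesis guarantees its finiteness), so the claim that it ceases to be a bona fide positive measure for negative coefficients is incorrect, and the integrability/regularity arguments are not where effectivity is used. The actual obstruction is in the direction ``solution $\Rightarrow$ $\mathcal{G}_{r,\phi_{0}}$ bounded above'': this rests on Proposition \ref{prop:k-e max ding funtional for general log fano}, whose proof uses the convexity of $t\mapsto-\log\int_{X}\mu_{\psi_{t}}$ along subgeodesics, i.e. the complex Prekopa theorem of \cite{bern2}, which requires $\psi_{t}+\psi_{\Delta}$ to be a positively curved subgeodesic and hence $[\Delta]\geq0$. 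In the toric case this convexity is replaced by the real Prekopa--Leindler inequality on $\R^{n}$, which is insensitive to the sign of the coefficients; that is the precise reason the effectivity hypothesis can be dropped there. (Relatedly, deferring the toric case entirely to Theorem \ref{thm:R for lof fano} only identifies the sup of solvable $r$ with $R_{P}$; to conclude you still must identify $R_{P}$ with the geometric invariant $R(X,\Delta)$, e.g. by running the same Ding-functional argument with the real Prekopa inequality, as the paper does.)
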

\begin{proof}
Let us start by noting that the sup over all $r$ such that the equation
\ref{eq:aubins equ for log fano} admits a solution coincides with
the sup over all $r\in[0,1[$ such that the Ding type functional $\mathcal{G}_{(X,\Delta,\phi_{0},r)}(=:\mathcal{G}_{r,\phi_{0}})$
is bounded from above. First, if $\mathcal{G}_{r+\delta,\phi_{0}}\leq C$
then a simple scaling argument gives that $\mathcal{G}_{r,\phi_{0}}$
is coercive and by the variational approach in \cite{bbegz} there
hence exists a solution $\omega$ to the equation \ref{eq:aubins equ for log fano}.
Conversely, if the latter equation admits a solution, then it follows
from Prop \ref{prop:k-e max ding funtional for general log fano}
and the subsequent discussion that the functional $\mathcal{G}_{r,\phi_{0}}$
is bounded from above (note that in the toric case the convexity argument
uses Prekopa's theorem in $\R^{n},$ or its generalization in \cite{b-b2},
and hence does not rely on the positivity of the current). Finally,
we note that since $\phi_{0}-\phi_{0}'$ is bounded the upper boundedness
of $\mathcal{G}_{r,\phi_{0}}$ holds for one choice of $\phi_{0}$
precisely one it holds for\emph{ any }choice of $\phi_{0}$ and hence
the invariants above both coincide with $R(X,\Delta).$
\end{proof}

\subsection{\label{sub:Existence-of-K=0000E4hler-Einstein singular along div}
Relations to the work of Song-Wu and Li-Sun}

We start by rephrasing the existence results in Theorem \ref{thm:-existence of ke intro}
in terms of a given polarized toric variety $(X,L),$ where $L$ is
 thus an ample toric $\Q-$line bundle over $X.$ As explained in
section \ref{sub:The-canonical-divisor-log fano} the rational polytope
$P:=P_{(X,L)}$ may be written as $P_{(X,\Delta_{L})}$ for a toric
(Weil) $\Q-$divisor $\Delta_{L}$ such that $L$ is linearly equivalent
to $-(K_{X}+\Delta_{L}).$ Next, after replacing $P$ by $P':=P-\{b\},$
where $b$ is the barycenter of $P,$ we obtain a new polytope $P'$
with barycenter in the origin. This amounts to replacing $\Delta_{L}$
with another toric divisor $\Delta,$ linearly equivalent to $\Delta_{L},$
such that $P'=P_{(X,\Delta)}.$ Hence applying Theorem \ref{thm:-existence of ke intro}
we deduce the following
\begin{cor}
\label{cor:existe of log ke if delta changes}Let $X$ be a toric
variety and $L$ an ample toric $\Q-$line bundle over $X.$ Then
there exists a toric $\Q-$divisor $\Delta$ with coefficients in
$]-\infty,1[$ and a Kähler current $\omega\in c_{1}(L)$ with continuous
potentials on $X,$ such that $\omega$ is Kähler-Einstein on $X-\Delta,$
satisfying $\mbox{Ric \ensuremath{\omega-[\Delta]=\omega}}$ in the
sense of currents on $X.$ 
\end{cor}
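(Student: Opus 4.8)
The plan is to deduce the corollary from Theorem \ref{thm:-existence of ke intro} by choosing the $\Q$-divisor $\Delta$ so that the polytope attached to $(X,\Delta)$ has its barycenter at the origin, and then to quote the existence and regularity already established for toric log Fano varieties. The whole weight of the statement rests on that earlier theorem; the only new work is a dictionary manipulation showing that one is free to normalize the barycenter without leaving the linear equivalence class of $L$.

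First I would attach to the polarized pair $(X,L)$ its rational polytope $P:=P_{(X,L)}\subset\R^{n}$, as in section \ref{sub:The-canonical-divisor-log fano}. Using the fact recorded there that \emph{any} ample toric $\Q$-line bundle can be written in the form $-(K_{X}+\Delta)$, I would produce a toric $\Q$-divisor $\Delta_{L}=\sum_{F}(1-a_{F})D_{F}$, with $a_{F}>0$, such that $L\sim-(K_{X}+\Delta_{L})$ and $P=P_{(X,\Delta_{L})}$; here the $a_{F}$ are read off from the facet representation $P=\{p:\langle l_{F},p\rangle\geq-a_{F}\}$. Since $-(K_{X}+\Delta_{L})\sim L$ is ample, $(X,\Delta_{L})$ is already a toric log Fano variety, but its polytope need not be barycentered at $0$.

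The substantive step, and the one I expect to be the main obstacle, is to translate $P$ so that its barycenter sits at $0$ while preserving the line bundle. Letting $b$ denote the barycenter of $P$ and setting $P':=P-\{b\}$, the origin becomes an interior point of $P'$ and $P'$ is barycentered at $0$. On the algebro-geometric side this translation corresponds to twisting the equivariant $T_{c}$-structure, i.e. to choosing a different invariant section $s_{0}$; concretely the facet representation of $P'$ has constants $a_{F}'=a_{F}+\langle l_{F},b\rangle$, so the associated divisor $\Delta:=\sum_{F}(1-a_{F}')D_{F}$ satisfies $\Delta-\Delta_{L}=-\sum_{F}\langle l_{F},b\rangle D_{F}$. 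The point to verify is that this difference is principal, whence $\Delta\sim\Delta_{L}$ and $L\sim-(K_{X}+\Delta)$ is unchanged: this follows from the identity $\nu_{D_{F}}(\chi^{m})=\langle l_{F},m\rangle$ applied to the character $\chi^{b}$ (after clearing denominators, as $b$ is only rational). At the same time, since $0$ lies in the interior of $P'$ we have $a_{F}'>0$, so the coefficients $1-a_{F}'$ of $\Delta$ indeed lie in $]-\infty,1[$ as required; no effectivity is asserted and these coefficients may be negative.

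Finally I would apply Theorem \ref{thm:-existence of ke intro} to the toric log Fano variety $(X,\Delta)$, whose canonical polytope is exactly $P'$. Because the barycenter of $P'$ is the origin, the barycenter criterion of that theorem is met and $(X,\Delta)$ carries a toric log K\"ahler-Einstein metric $\omega\in c_{1}(-(K_{X}+\Delta))=c_{1}(L)$. The remaining regularity assertions --- that $\omega$ has continuous potentials on $X$, is a genuine K\"ahler-Einstein metric on $X_{0}=X-\Delta$ inside the regular locus, and solves $\mbox{Ric}\,\omega-[\Delta]=\omega$ in the sense of currents --- are precisely the content of the regularity result of \cite{bbegz} incorporated in that theorem. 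This produces the desired current and completes the proof.
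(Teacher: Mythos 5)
Your proposal is correct and follows essentially the same route as the paper: write $P_{(X,L)}=P_{(X,\Delta_{L})}$ with $L\sim-(K_{X}+\Delta_{L})$, translate the polytope by its (rational) barycenter, observe that this replaces $\Delta_{L}$ by a linearly equivalent divisor whose associated polytope is barycentered at the origin, and apply Theorem \ref{thm:-existence of ke intro}. The only difference is that you spell out the two small verifications the paper leaves implicit (that the difference divisor is principal via $\nu_{D_{F}}(\chi^{m})=\left\langle l_{F},m\right\rangle$, and that the new coefficients lie in $]-\infty,1[$ because $0$ is interior to the translated polytope), which is harmless.
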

Note however that the divisor $\Delta$ may not be effective, i.e.
its coefficents may be negative. In particular if $X$ is a Fano variety
and $L=-rK_{X}$ for some $r<1$ then it is natural to ask for which
$r$ the corresponding divisor $\Delta$ above is effective? We next
observe that for $r\leq R_{P},$ where $R_{P}$ defined as in section
\ref{sub:The-invariant-R of convex bod}, the corresponding divisor
$\Delta$ is indeed effective. To see this we take $\Delta_{L}$ to
be the canonical divisor scaled by $r$ so that $P_{(X,L)}=rP_{X},$
where we recall that $P_{X}$ is the set where $\left\langle l_{F},\cdot\right\rangle \geq-\text{1. }$Accordingly,
$P'$ is the set where $\left\langle l_{F},\cdot\right\rangle \geq-r(-1-\left\langle l_{F},b\right\rangle (:=-a_{F}(P').\text{ }$
Since the coefficents of $\Delta$ are given by $c_{F}=1-a_{F}=1-r-\left\langle rl_{F},b\right\rangle $
it follows that $\Delta$ is effective iff 
\[
1-r-r\left\langle l_{F},b\right\rangle \geq0,
\]
 for any $F.$ But when $r=R_{P}$ the previous inequality follows
immediately from relation \ref{eq:bound in proof of R for ma} applied
to $x:=-l_{F}$ and we thus deduce the following
\begin{cor}
\label{cor:a la song-wang}Let $X$ be a toric Fano variety and denote
by $P$ the canonical rational polytope associated to $X.$ Then,
for any $r\in]0,R_{P}],$ there exists an effective toric $\Q-$divisor
$D_{r},$ linearly equivalent to $-K_{X}$ and a singular Kähler metric
$\omega_{r}\in c_{1}(-K_{X})$ with continuous potentials, such that
\[
\mbox{Ric }\ensuremath{\omega_{r}=r\omega_{r}+(1-r)[D_{r}],}
\]
 More precisely, the coefficent $c_{F}$ of $D_{r}$ along the invariant
divisor $D_{F},$ defined by the facet $F$ of $P,$ is given by $c_{F}=1-\left\langle l_{F},b\right\rangle r/(1-r),$
where $l_{F}$ is the primitive lattice vector defining an inward
normal of the facet $F.$
\end{cor}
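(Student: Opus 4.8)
The plan is to obtain the corollary by specializing Corollary \ref{cor:existe of log ke if delta changes} to the $\Q$-line bundle $L:=-rK_X$ and then rescaling. Since $X$ is Fano, $-K_X$ is ample and $\Q$-Cartier, and since the canonical polytope $P=P_X$ and its barycenter $b$ are rational, the invariant $R_P$ is rational as well (the exit point $q$ of the relevant facet lies on the line through $b$ and the origin, so $R_P$ is a ratio of rationals). Thus I may take $r\in]0,R_P]\cap\Q$, so that $L=-rK_X$ is a genuine ample toric $\Q$-line bundle. Corollary \ref{cor:existe of log ke if delta changes} then furnishes a toric $\Q$-divisor $\Delta$ (linearly equivalent to $\Delta_L$) together with a K\"ahler current $\omega\in c_1(-rK_X)$, with continuous potentials and smooth on $X-\Delta$, solving $\mbox{Ric}\,\omega-[\Delta]=\omega$.

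My next step is to read off the coefficients of $\Delta$ from the toric dictionary. One has $P_{(X,L)}=rP$, and $\Delta$ corresponds to the polytope $P':=rP-\{rb\}$ obtained by translating $rP$ so that its barycenter $rb$ moves to the origin. Writing $P=\{\langle l_F,\cdot\rangle\ge -1\}$ gives $P'=\{\langle l_F,\cdot\rangle\ge -r(1+\langle l_F,b\rangle)\}$, hence $a_F(P')=r(1+\langle l_F,b\rangle)$; by the correspondence $\Delta=\sum_F(1-a_F(P'))D_F$ the coefficient of $\Delta$ along $D_F$ is $c_F=1-r-r\langle l_F,b\rangle$.

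The one substantive point is the effectivity of $\Delta$, which I expect to be the heart of the matter and the only place where the restriction $r\le R_P$ enters. I would deduce it from the characterization of $R_P$ recorded in the proof of Theorem \ref{thm:R for ma}, namely that \ref{eq:bound in proof of R for ma}, i.e. $(1-r)\phi_P(x)+r\langle b,x\rangle\ge0$ for all $x$, holds exactly when $r\le R_P$. Evaluating this inequality at $x=-l_F$ and using $\phi_P(-l_F)=a_F(P)=1$ yields $1-r-r\langle l_F,b\rangle\ge0$, that is $c_F\ge0$ for every facet $F$. Hence $\Delta$ is effective for all $r\le R_P$.

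Finally, I would normalize by scaling. Set $\omega_r:=\omega/r\in c_1(-K_X)$ and $D_r:=\Delta/(1-r)$. By the scale-invariance of the Ricci form, $\mbox{Ric}\,\omega_r=\mbox{Ric}\,\omega=\omega+[\Delta]=r\omega_r+(1-r)[D_r]$, and $\omega_r$ keeps continuous potentials and remains smooth off $D_r$. The coefficient of $D_r$ along $D_F$ is $c_F/(1-r)=(1-r-r\langle l_F,b\rangle)/(1-r)=1-\langle l_F,b\rangle r/(1-r)$, matching the stated formula, and $D_r$ is effective because $\Delta$ is. Comparing cohomology classes in $\mbox{Ric}\,\omega-[\Delta]=\omega$ gives $[\Delta]=(1-r)c_1(-K_X)$, so that $D_r$ is linearly equivalent to $-K_X$, which completes the argument.
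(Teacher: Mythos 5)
Your proposal is correct and follows essentially the same route as the paper: specialize Corollary \ref{cor:existe of log ke if delta changes} to $L=-rK_{X}$, read off $c_{F}=1-r-r\left\langle l_{F},b\right\rangle $ from the translated polytope $rP-\{rb\}$, and obtain effectivity by evaluating the inequality \ref{eq:bound in proof of R for ma} at $x=-l_{F}$ (using $\phi_{P}(-l_{F})=a_{F}=1$), followed by the rescaling $\omega_{r}=\omega/r,$ $D_{r}=\Delta/(1-r).$ Your added remarks on the rationality of $R_{P}$ and the explicit cohomological check that $D_{r}\sim-K_{X}$ are consistent with, and slightly more careful than, the paper's terse presentation.
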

In the case when $X$ is smooth it is shown in \cite{so-j}, using
a method of continuity, that the metric $\omega_{r}$ in the previous
corollary in fact has edge-cone singularities along the divisor $D_{r}.$
As explaind in \cite{so-j} this latter result is closely related
to a conjecture of Donaldson \cite{do3} concerning the invariant
$R(X)$ (i.e. the greatest lower bound on the Ricci curvature) of
a smooth Fano variety. According to Donaldson's conjecture, if one
replaces the metric $\omega_{0}$ in equation \ref{eq:aubins equ for log fano}
with a current $[D],$ where $D$ is a given smooth divisor linearly
equivalent to $-K_{X},$ then the corresponding equation is still
solvable for any $r\in]0,R_{X}[.$ In other words, for any such $r$
there exists a log Kähler-Einstein metric $\omega_{r}$ associated
to the pair $(X,\frac{1-r}{r}D).$ It was moreover conjectured by
Donaldson that the metric has edge-cone singularities. Very recently,
Li-Sun \cite{li-s} confirmed a variant of this conjecture on a smooth
toric Fano variety, by using the result of Song-Wang (see Cor \ref{cor:a la song-wang}
and the subsequent discussion). More precisely, it was shown that
for a ``generic'' divisor $D_{\lambda}$ linearly equivalent to
$-\lambda K_{X},$ for $\lambda$ a sufficently divisible integer,
Donaldson's conjecture holds for $D:=D_{\lambda}/\lambda.$ Let us
briefly recall their elegant argument. By the result of Song-Wang
the equation in question can be solved for $r=R_{P}$ if one replaces
$D$ with $D_{R_{P}}.$ Next, Li-Sun use a $\C^{*}-$action to produce
a deformation of $D_{\lambda}$ to $D_{r}$ and deduce, by the convexity
results in \cite{bern2} (compare Prop \ref{prop:k-e max ding funtional for general log fano}),
that the log Ding functional of $(X,(1-R_{P})D_{\lambda})$ is bounded
from below. To conclude the proof they then need to show that the
log Ding functional of $(X,(1-r)D_{\lambda})$ is\emph{ proper }for
any $r<R_{P}$ (so that the existence results in \cite{j-m-r} can
be invoked). To this end Li-Song use a result from \cite{berm2} which
gives that the properness holds for $r$ sufficently small and then
finally conclude by an interpolation argument. It may be worth comparing
with the singular situation considered here. In case $X$ is a singular
Fano variety Cor \ref{cor:a la song-wang} can be used as a starting
point and by the generalized convexity results in \cite{bbegz} the
same argument as in the smooth case gives that the log Ding functionals
of $(X,(1-r)D)$ are bounded for any $r\leq R_{P}.$ However, to deduce
the properness (so that the existence results in \cite{bbegz} can
be invoked) one would need to further study the regularity properties
of the log pairs $(X,D_{\lambda}).$

\section{\label{sec:K-energy-type-functionals}K-energy type functionals and
K-stability}

Let us start by recalling the definition of the Mabuchi K-energy functional
$\mathcal{M}$ in Kähler geometry. This functional was first introduced
in the case when $X$ is smooth and $L\rightarrow X$ is an ample
line bundle. Then $\mathcal{M}$ is defined by the property that its
differential at $\phi\in\mathcal{H}(X,L)$ is equal to $-(S_{\phi}-\bar{S})(dd^{c}\phi)^{n}$,
where $S_{\phi}$ is the (suitably normalized) scalar curvature of
the Kähler metric $dd^{c}\phi.$ In the case when $L=-K_{X}$ and
$X$ is a Fano variety with log-terminal singularities it was shown
in \cite{berm2,bbegz} how to extend the definition of $\mathcal{M}$
to a singular setting (see also \cite{d-t} for related results).
In case $\phi$ is smooth and positively curved the formula reads
\begin{equation}
\mathcal{M}(\phi)=F(MA(\phi)),\,\,\,\,\, F(\mu):=-E(\mu)+D(\mu,\mu_{\phi_{0}})\label{eq:def of mab}
\end{equation}
where $E(\mu)$ is the\emph{ pluricomplex energy} of the measure $\mu$
(relative to $dd^{c}\phi_{0})$ and $D(\mu,\mu')$ denotes the classical\emph{
relative entropy} of $\mu$ wrt to $\mu':$ 
\[
D(\mu,\mu')=:D_{\mu'}(\mu):=\int_{X}\log(\mu/\mu')\mu(\geq0)
\]
 if $\mu$ is absolutely continuous wrt $\mu'$ and $D(\mu,\mu')=\infty$
otherwise. When $\mu=MA(\phi)$ for $\phi\in\mathcal{H}(X,L)$ we
have, by definition, that 
\[
E(MA(\phi))=\mathcal{E}(\phi,\phi_{0})-\int_{X}(\phi-\phi_{0})MA(\phi)
\]
It should be pointed out that in the case when $X$ is smooth the
corresponding formula \ref{eq:def of mab} is equivalent to a previous
formula of Tian and Chen \cite{ti0}.

We next come back to the setting of convex functions in $\R^{n}$
associated to a convex body $P,$ taking $\phi_{0}=\phi_{P}$ as the
reference. We also equip $P$ with a smooth positive density $g(p).$
For any function $\phi$ in $\mathcal{P}_{+}(\R^{n})$ we define the
following Mabuchi type functional associated to $(P,g):$ 
\[
\mathcal{M}_{(P,g)}(\phi)V(P,g)=-\mathcal{E}_{g}(\phi,\phi_{P})+\int\phi MA_{g}(\phi)+D(MA_{g}(\phi),dx),
\]
Note that $\mathcal{M}(\phi+c)=\mathcal{M}(\phi)$ and hence $\mathcal{M}$
is determined by its restriction to the sub space of all sup-normalized
elements. In this case when $P$ is the canonical rational polytope
associated to Fano variety $X$ $\mathcal{M}_{P}(\phi)$ coincides
with the Mabuchi functional of the $T-$invariant metric on $-K_{X}$
corresponding to $\phi.$ Indeed, the push-forward from $T_{c}$ to
$\R^{n}$ of $\mu_{\phi_{P}}$ may be written as $e^{-\phi_{P}}dx$
and hence $D(MA(\phi),\mu_{P})=D(MA(\phi),dx)-\int\phi_{P}MA(\phi).$ 

More generally, in the setting of a log Fano variety $(X,\Delta)$
with canonical rational polytope $P,$ with $\phi$ denoting a positively
curved metric on $-(K_{X}+\Delta),$ we will write $\mathcal{M}_{(X,\Delta,V)}$
for the Mabuchi type functional corresponding to $\mathcal{M}_{(P,g)}$
for $g(p)=e^{\left\langle a,p\right\rangle },$ where $V$ is the
holomorphic toric vector field $V$ with components $a_{i}.$ In the
case when $\Delta=0$ and $X$ is a Fano manifold the functional $\mathcal{M}_{(X,\Delta,V)}$
essentially coincides with the ``modified Mabuchi functional'' appearing
in \cite{t-z2}. Similarly, we will write $\mathcal{G}_{(X,\Delta,V)}$
for the functional corresponding to $\mathcal{G}_{g}.$

\subsection{Variational principles and coercivity}

We will say that a functional $\mathcal{F}$ on $\mathcal{P}_{+}(\R^{n})$
is\emph{ relatively coercive }if there exists a positive constant
$C$ such that 
\[
\mathcal{F}(\phi)\geq-\mathcal{E}(\phi,\phi_{P})/C-C
\]
 on the subspace of all normalized $\phi.$ In particular, $\mathcal{F}$
is then bounded from below on the latter subspace. In order to relate
this notion to other equivalent notions of coercivity (sometimes also
called strong properness) in Kähler geometry we recall the definition
of Aubin's $J-$functional, which is the scale invariant analog of
$-\mathcal{E}:$
\[
J(\phi,\phi_{0}):=-(\mathcal{E}(\phi,\phi_{0})+\int(\phi-\phi_{0})MA(\phi_{0})
\]
In particular, in the toric setting, $J(\phi,\phi_{P})=-\mathcal{E}(\phi,\phi_{P})$
if $\phi$ is sup-normalized, since $\phi-\phi_{P}=0$ on the support
of $MA(\phi_{P}).$ Fixing a\emph{ smooth }positively curved metric
$\phi_{0}$ we will simply write $J(\phi):=J(\phi,\phi_{0}).$
\begin{lem}
\label{lem:j for sup-normalized}Let $L\rightarrow X$ be a semi-positive
line bundle over a projective variety $X$ and let $\mathcal{H}_{0}$
denote the space of all smooth positively curved metrics on $L$ such
that $\sup_{X}(\phi-\phi_{0})=0$ for a fixed reference $\phi_{0}\in\mathcal{H}_{0}.$
Then there is a constant $C$ (only depending on the reference $\phi_{0})$
such that 
\[
|J(\phi,\phi_{0})-\left|\mathcal{E}(\phi,\phi_{0})\right||\leq C
\]
for any $\phi\in\mathcal{H}_{0}.$\end{lem}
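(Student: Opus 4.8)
The plan is to reduce the difference $J(\phi,\phi_{0})-|\mathcal{E}(\phi,\phi_{0})|$ to a single integral against the \emph{fixed} measure $MA(\phi_{0})$, and then to bound that integral uniformly by a compactness argument. First I would pin down the sign of $\mathcal{E}$ on the sup-normalized class $\mathcal{H}_{0}$. Every $\phi\in\mathcal{H}_{0}$ satisfies $\sup_{X}(\phi-\phi_{0})=0$, hence $\phi\leq\phi_{0}$ on $X$; since $d\mathcal{E}_{|\phi}=MA(\phi)\geq 0$ the functional $\mathcal{E}(\cdot,\phi_{0})$ is increasing and vanishes at $\phi_{0}$, so $\mathcal{E}(\phi,\phi_{0})\leq 0$ and therefore $|\mathcal{E}(\phi,\phi_{0})|=-\mathcal{E}(\phi,\phi_{0})$.

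Next I would substitute the definition $J(\phi,\phi_{0})=\int_{X}(\phi-\phi_{0})MA(\phi_{0})-\mathcal{E}(\phi,\phi_{0})$ and observe that the two energy contributions cancel:
\[
J(\phi,\phi_{0})-|\mathcal{E}(\phi,\phi_{0})|=\int_{X}(\phi-\phi_{0})MA(\phi_{0}).
\]
As $\phi-\phi_{0}\leq 0$ this quantity is non-positive, so that $\bigl|J(\phi,\phi_{0})-|\mathcal{E}(\phi,\phi_{0})|\bigr|=\int_{X}(\phi_{0}-\phi)MA(\phi_{0})\geq 0$. Consequently the lemma is equivalent to the single uniform estimate $\int_{X}(\phi_{0}-\phi)MA(\phi_{0})\leq C$, with $C$ depending only on the reference $\phi_{0}$.

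Finally, and this is where the real content lies, I would establish this uniform bound from the compactness of normalized quasi-psh functions. Setting $v:=\phi-\phi_{0}$ and $\omega_{0}:=dd^{c}\phi_{0}$, the function $v$ is $\omega_{0}$-psh with $\sup_{X}v=0$; the family of all such $v$ is relatively compact, hence uniformly bounded, in $L^{1}(X,dV)$ for a fixed smooth volume form $dV$ (see \cite{g-z, begz}). Because $\phi_{0}$ is smooth, $MA(\phi_{0})=(dd^{c}\phi_{0})^{n}$ has bounded density with respect to $dV$ and charges no pluripolar set, so $\int_{X}(-v)MA(\phi_{0})\leq C'\int_{X}(-v)dV\leq C$, uniformly over $\mathcal{H}_{0}$. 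The main obstacle is precisely this last step in the possibly singular setting: one must invoke the pluripotential-theoretic compactness for the semi-positive class $c_{1}(L)$ on $X$ and use the smoothness of $\phi_{0}$ to guarantee both the density bound and the absence of pluripolar mass for $MA(\phi_{0})$; I would carry out the estimate on the regular locus $X_{reg}$, where the tools from \cite{begz, bbegz} apply.
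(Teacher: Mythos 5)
Your proposal is correct and follows essentially the same route as the paper: both reduce the claim to the single uniform estimate $\int_{X}(\phi_{0}-\phi)\, MA(\phi_{0})\leq C$ and obtain it from the $L^{1}$-compactness of sup-normalized $\omega_{0}$-psh functions together with the smoothness (hence bounded density of $MA(\phi_{0})$) of the reference metric, citing the same sources. The only minor difference is in the singular case, where the paper passes to a smooth resolution $Y\rightarrow X$ and pulls back $L$ so that the classical compactness statement applies on a compact manifold, which is cleaner than trying to run the argument directly on the non-compact locus $X_{reg}$.
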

\begin{proof}
The lemma follows immediately from the following estimate: there is
a constant $C$ such that, if $\mu_{0}:=MA(\phi_{0})$ 
\[
\int(\phi-\phi_{0})MA(\phi_{0})\leq C
\]
When $X$ is smooth the lemma is well-known \cite{g-z} and holds
more generally for any measure $\mu_{0}$ such that $\phi-\phi_{0}$
is in $L^{1}(X,\mu)$ for any $\phi\in\mathcal{H}.$ Taking a smooth
resolution $Y\rightarrow X$ and pulling back $L$ thus proves the
general case.
\end{proof}
The following proposition reveals the close connections between the
two functionals $\mathcal{G}_{P}$ and $\mathcal{M}_{P}:$ 
\begin{prop}
\label{pro:comparison mab and ding for P }Let $P$ be a convex body
containing $0$ in its interior. Then 

\begin{equation}
\inf_{\mathcal{P}_{+}(\R^{n})}-\mathcal{G}_{(P,g)}=\inf_{\mathcal{P}_{+}(\R^{n})}\mathcal{M}_{(P,g)},\label{eq:inf -G equal inf of M}
\end{equation}
 the minimizers of the two functionals coincide and $-\mathcal{G}_{P}$
is relatively coercive iff $\mathcal{M}_{P}$ is relatively coercive.\end{prop}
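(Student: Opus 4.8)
The plan is to reduce everything to a single pointwise identity expressing the gap between $\mathcal{M}_{(P,g)}$ and $-\mathcal{G}_{(P,g)}$ as a relative entropy, and then to complement the resulting one-sided bound with a projection argument resting on the solvability of the inhomogeneous equation. Write $V:=V(P,g)=\int_{\R^{n}}MA_{g}(\phi)$ for the (constant) total Monge--Amp\`ere mass and set, for $\phi\in\mathcal{P}_{+}(\R^{n})$,
\[
\mu_{\phi}:=\frac{V}{\int e^{-\phi}dx}\,e^{-\phi}dx,
\]
a measure of the same total mass $V$ as $MA_{g}(\phi)$. Substituting the definitions of $\mathcal{M}_{(P,g)}$ and $\mathcal{G}_{(P,g)}$ and expanding $D(MA_{g}(\phi),\mu_{\phi})$ by means of $\log(d\mu_{\phi}/dx)=-\phi+\log V-\log\int e^{-\phi}dx$, I would first verify the identity
\[
\mathcal{M}_{(P,g)}(\phi)=-\mathcal{G}_{(P,g)}(\phi)+c_{0}+\frac{1}{V}D\bigl(MA_{g}(\phi),\mu_{\phi}\bigr),
\]
where $c_{0}$ is a global additive constant (equal to $\log V$, and normalized to $0$ in the present conventions). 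Since $MA_{g}(\phi)$ and $\mu_{\phi}$ have equal mass, the log-sum inequality gives $D(MA_{g}(\phi),\mu_{\phi})\ge0$, with equality iff $MA_{g}(\phi)=\mu_{\phi}$, i.e. iff $\phi$ solves equation \ref{eq:real k-e}. This already yields $\mathcal{M}_{(P,g)}\ge-\mathcal{G}_{(P,g)}+c_{0}$ pointwise, hence $\inf\mathcal{M}_{(P,g)}\ge\inf(-\mathcal{G}_{(P,g)})+c_{0}$, together with the characterization of the equality case.

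For the reverse inequality I would project onto the equation. Given $\phi\in\mathcal{P}_{+}(\R^{n})$, Corollary \ref{cor:inhomo m-a for e to - psi} (with $\psi=\phi$) produces $\chi:=\chi_{\phi}\in\mathcal{P}_{+}(\R^{n})$ with $MA_{g}(\chi)=\mu_{\phi}$, so the entropy gap vanishes at $\chi$ and $\mathcal{M}_{(P,g)}(\chi)=-\mathcal{G}_{(P,g)}(\chi)+c_{0}$. The crux is the claim $\mathcal{G}_{(P,g)}(\chi)\ge\mathcal{G}_{(P,g)}(\phi)$, which I would obtain by combining two elementary estimates. Concavity of $\mathcal{E}_{g}$ (a consequence of $\mathcal{E}_{g}(\cdot,\phi_{P})=-n!\int(\cdot)^{*}g\,dp$ and the convexity of the Legendre transform in $\phi$, cf. Prop \ref{pro:prop of energy in global conv}) gives, with $\nu_{\phi}:=\mu_{\phi}/V$ a probability measure,
\[
\tfrac{1}{V}\bigl(\mathcal{E}_{g}(\chi,\phi_{P})-\mathcal{E}_{g}(\phi,\phi_{P})\bigr)\ge\tfrac{1}{V}\int(\chi-\phi)\,MA_{g}(\chi)=\int(\chi-\phi)\,d\nu_{\phi},
\]
while Jensen's inequality gives $\log\int e^{-\chi}dx-\log\int e^{-\phi}dx=\log\int e^{-(\chi-\phi)}\,d\nu_{\phi}\ge-\int(\chi-\phi)\,d\nu_{\phi}$. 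Adding the two, the $\int(\chi-\phi)\,d\nu_{\phi}$ terms cancel and $\mathcal{G}_{(P,g)}(\chi)\ge\mathcal{G}_{(P,g)}(\phi)$ follows. Hence $\inf\mathcal{M}_{(P,g)}\le\mathcal{M}_{(P,g)}(\chi)=-\mathcal{G}_{(P,g)}(\chi)+c_{0}\le-\mathcal{G}_{(P,g)}(\phi)+c_{0}$ for every $\phi$, so $\inf\mathcal{M}_{(P,g)}\le\inf(-\mathcal{G}_{(P,g)})+c_{0}$, which with the previous paragraph proves \ref{eq:inf -G equal inf of M}. The coincidence of minimizers is then formal: a minimizer of $\mathcal{M}_{(P,g)}$ must annihilate the entropy gap (hence solve \ref{eq:real k-e}) and simultaneously minimize $-\mathcal{G}_{(P,g)}$, while conversely a minimizer of $-\mathcal{G}_{(P,g)}$ is a maximizer of $\mathcal{G}_{(P,g)}$, which by Prop \ref{prop:The-functional-G is conc} solves \ref{eq:real k-e} and so also annihilates the gap.

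For the coercivity equivalence the forward direction is immediate from the identity: if $-\mathcal{G}_{(P,g)}(\phi)\ge-\mathcal{E}(\phi,\phi_{P})/C-C$ on the normalized subspace, then $\mathcal{M}_{(P,g)}(\phi)\ge-\mathcal{G}_{(P,g)}(\phi)+c_{0}\ge-\mathcal{E}(\phi,\phi_{P})/C-C'$. For the converse I would avoid comparing the energies of $\phi$ and $\chi_{\phi}$ directly and instead route through the barycenter condition: relative coercivity of $\mathcal{M}_{(P,g)}$ forces it to be bounded below on the normalized subspace, whence $\inf(-\mathcal{G}_{(P,g)})>-\infty$ by the equality of infima, i.e. $\mathcal{G}_{(P,g)}$ is bounded from above; by Lemma \ref{lem:inv of m-t functional} this is equivalent to $0$ being the barycenter of $(P,g)$, and then Theorem \ref{thm:coerciv k-e} supplies $\mathcal{G}_{(P,g)}(\phi)\le(1-\delta)\mathcal{E}(\phi,\phi_{P})+C_{\delta}$, which is exactly relative coercivity of $-\mathcal{G}_{(P,g)}$ (using $-\mathcal{E}(\phi,\phi_{P})\ge0$ for normalized $\phi$).

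I expect the projection inequality $\mathcal{G}_{(P,g)}(\chi_{\phi})\ge\mathcal{G}_{(P,g)}(\phi)$ to be the one genuinely nontrivial step; once the thermodynamic structure $\mathcal{M}_{(P,g)}+\mathcal{G}_{(P,g)}=c_{0}+\tfrac{1}{V}D(MA_{g}(\phi),\mu_{\phi})$ is in place, the equality of infima and the coincidence of minimizers become formal, and the coercivity equivalence collapses onto the already-proved Theorem \ref{thm:coerciv k-e}. The only real bookkeeping is the additive constant $c_{0}$ (the factors of $n!$ and $V(P,g)$ entering the total mass): being a global constant it influences neither the coincidence of minimizers nor the coercivity statement, and one need only check that the normalizations of $\mathcal{E}_{g}$, $\mathcal{M}_{(P,g)}$ and $\mathcal{G}_{(P,g)}$ render it zero in order to recover the literal equality \ref{eq:inf -G equal inf of M}.
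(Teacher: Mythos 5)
Your core argument is correct but follows a genuinely different route from the paper. The paper proves the proposition by an abstract infinite-dimensional Legendre (Toland-type) duality: writing $E(\mu)=(\mathcal{E}_{-})^{*}(\mu)$ and $D(\mu)=(\mathcal{I}_{-})^{*}(\mu)$, it deduces the equality of infima from the monotonicity and involutivity of the Legendre transform, and defers the remaining statements to \cite{berm2}. You instead make the duality concrete: the pointwise identity $\mathcal{M}_{(P,g)}=-\mathcal{G}_{(P,g)}+c_{0}+\tfrac{1}{V}D(MA_{g}(\phi),\mu_{\phi})$ gives one inequality, and the projection $\chi_{\phi}$ solving $MA_{g}(\chi_{\phi})=\mu_{\phi}$ (Cor \ref{cor:inhomo m-a for e to - psi}) together with the concavity of $\mathcal{E}_{g}(\cdot,\phi_{P})$ and Jensen's inequality gives the other. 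I checked the identity (with $c_{0}=\log V$) and the cancellation in the projection inequality; both are sound, and this version is more self-contained than the paper's. Two small points: the fact that a maximizer of $\mathcal{G}_{(P,g)}$ on $\mathcal{P}_{+}(\R^{n})$ solves the equation is not what Prop \ref{prop:The-functional-G is conc} states (that proposition gives the converse); the correct reference is the first-order/projection argument in Step 2 of the proof of Theorem \ref{thm:existe real ma intr}, which does apply here since $Pr(\phi+tv)$ stays in $\mathcal{P}_{+}(\R^{n})$ for bounded $v$.

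There is, however, a faulty step in your converse coercivity direction. You argue: relative coercivity of $\mathcal{M}_{(P,g)}$ gives a lower bound on the \emph{normalized} subspace, ``whence $\inf(-\mathcal{G}_{(P,g)})>-\infty$ by the equality of infima.'' But the equality \ref{eq:inf -G equal inf of M} concerns the infima over all of $\mathcal{P}_{+}(\R^{n})$, and neither functional is translation-invariant unless $0$ is the barycenter of $(P,g)$: one computes $\mathcal{M}_{(P,g)}(\phi_{a})=\mathcal{M}_{(P,g)}(\phi)-\tfrac{n!}{V}\int_{P}\left\langle a,p\right\rangle g\,dp$, so when the barycenter is not $0$ the functional $\mathcal{M}_{(P,g)}$ is unbounded below on $\mathcal{P}_{+}(\R^{n})$ while remaining bounded below (indeed relatively coercive) on the normalized subspace. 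Thus relative coercivity of $\mathcal{M}_{(P,g)}$ does \emph{not} imply the barycenter condition, and the intermediate conclusion you draw is false in general. The repair is immediate: the first statement of Theorem \ref{thm:coerciv k-e} holds for \emph{every} convex body containing $0$ in its interior, with no barycenter hypothesis, and already says that $-\mathcal{G}_{(P,g)}(\phi)\geq(1-\delta)(-\mathcal{E}(\phi,\phi_{P}))-C_{\delta}$ for normalized $\phi$; so $-\mathcal{G}_{(P,g)}$ is relatively coercive unconditionally, and the converse implication needs no detour through the barycenter at all (in this toric setting the ``iff'' is satisfied because both sides always hold).
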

\begin{proof}
This is proved using Legendre transforms in infinite dimension, following
the argument in \cite{berm2}. First note that, up to a trivial scaling,
we may assume that $V(P,g)=1$ and to simplify the notation we will
omit the subindex $g$ in the following. To conform to the sign conventions
for the Legendre transforms used in the present paper it is convenient
to introduce the functional $\mathcal{I}_{-}(v):=\log\int_{\R^{n}}e^{-v}\mu_{P}$
and $\mathcal{E}_{-}(v):=-(\mathcal{E}\circ Pr)(\phi_{P}+v)$ and
set $\mathcal{G}_{-}=\mathcal{E}_{-}-\mathcal{I}_{-}$ which is thus
a difference of two\emph{ convex} functionals defined on the vector
space $\mathcal{C}_{b}(\R^{n})$ of all bounded continuous functions
$v$ on $\R^{n}.$ By definition, $-\mathcal{G}(\phi)=\mathcal{G}_{-}(\phi_{P}+v),$
for $v:=\phi-\phi_{P}$ if $\phi\in\mathcal{P}_{\text{+ }}(\R^{n}).$
Then, just as in Step 2 in the proof of Thm \ref{thm:existe real ma intr},
the infimum of $-\mathcal{G}$ over $\mathcal{P}_{+}(\R^{n})$ coincides
with the infimum of $\mathcal{G}_{-}$ over $\mathcal{C}_{b}(\R^{n}).$
We will also use the pairing $(v,\mu):=-\int_{\R^{n}}v\mu$ between
$\mathcal{C}_{b}(\R^{n})$ and the space $\mathcal{M}(\R^{n})$ of
all signed measures on $\R^{n}.$ The sign conventions have been chosen
so that, if $\mu$ is a probability measure, then we can write the
energy of a measure $\mu$ as a Legendre transform: 
\[
E(\mu)=(\mathcal{E}_{-})^{*}(\mu),
\]
where the Legendre transform of a functional $\mathcal{F}$ on the
vector space $\mathcal{M}(\R^{n})$ is defined by $\mathcal{F}^{*}(\mu):=\sup_{v\in\mathcal{C}_{b}(\R^{n})}((v,\mu)-\mathcal{F}(u)).$
Next, one notes that, since, by Prop \ref{pro:diff of composed energy convex},
the gradient of $\mathcal{E}_{-}$ takes values in the subspace $\mathcal{M}_{1}(\R^{n})$
of all probability measures in $\mathcal{M}(\R^{n}),$ it follows
that $(\mathcal{E}_{-})^{*}(\mu)=\infty,$ unless $\mu$ is a probability
measure. Similarly, it well-known that $D(\mu)=\mathcal{I}_{-}^{*}(\mu).$
Hence, $\mathcal{M}(\phi)=-(\mathcal{E}_{-})^{*}(\mu)+\mathcal{I}_{-}^{*}(\mu)$
for $\mu=MA(\phi).$ With these preparations in place the proof of
the equality \ref{eq:inf -G equal inf of M} follows immediately from
the monotonicity of the Legendre transform and the fact that it is
an involution (compare formula \ref{eq:using monoton of legendre}).
Finally, the last two statement are proved exactly as in \cite{berm2}.
\end{proof}
From the results in section \ref{sub:Variational-principles-and}
concerning $\mathcal{G}_{(P,g)}$ we then deduce the following variational
principle:
\begin{prop}
\label{prop:var prop wrt G and M}The following is equivalent for
$\phi\in\mathcal{P}_{+}(\R^{n}):$
\begin{itemize}
\item $MA_{g}(\phi)=e^{-\phi}$
\item $\phi$ minimizes the functional $-\mathcal{G}_{(P,g)}$
\item $\phi$ minimizes the functional $\mathcal{M}_{(P,g)}$
\end{itemize}
\end{prop}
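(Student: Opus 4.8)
The plan is to prove the two equivalences ``first bullet'' $\Leftrightarrow$ ``second bullet'' and ``second bullet'' $\Leftrightarrow$ ``third bullet'' separately, since most of the work has already been carried out. The equivalence of minimizing $-\mathcal{G}_{(P,g)}$ and minimizing $\mathcal{M}_{(P,g)}$ is precisely the assertion of Proposition \ref{pro:comparison mab and ding for P } that the two functionals have the same minimizers, so I would simply invoke it. It then remains to identify the solutions of $MA_g(\phi)=e^{-\phi}$ with the minimizers of $-\mathcal{G}_{(P,g)}$, equivalently the maximizers of $\mathcal{G}_{(P,g)}$ on $\mathcal{P}_+(\R^n)$.

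For the implication $MA_g(\phi)=e^{-\phi}\Rightarrow$ maximizer, I would appeal to the concavity statement in Proposition \ref{prop:The-functional-G is conc}: since $\mathcal{G}_{(P,g)}$ is concave along geodesics in $\mathcal{P}_+(\R^n)$, it is enough to verify that $\phi$ is a critical point (indeed the last sentence of that proposition already records that any solution of the equation maximizes $\mathcal{G}$). A direct computation of the differential, using $d\mathcal{E}_{g|\phi}=MA_g(\phi)$ and the definition of $\mathcal{I}$, gives $d\mathcal{G}_{(P,g)|\phi}(v)=\frac{1}{V(P,g)}\int v\,MA_g(\phi)-\left(\int v e^{-\phi}dx\right)/\left(\int e^{-\phi}dx\right)$, which vanishes for every $v$ once $MA_g(\phi)=e^{-\phi}dx$. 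Here one must track the additive normalization: $MA_g(\phi)$ has total mass $V(P,g)$ whereas $\int e^{-\phi}dx$ is a priori a different constant, so the equation is to be read modulo the additive constant matching the masses. Since $\mathcal{G}_{(P,g)}(\phi+c)=\mathcal{G}_{(P,g)}(\phi)$ and $MA_g(\phi+c)=MA_g(\phi)$, every translate of a solution is again a maximizer and exactly one translate enforces $\int e^{-\phi}dx=V(P,g)$, so nothing is lost.

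For the converse I would repeat Step 2 in the proof of Theorem \ref{thm:existe real ma intr}. Fix $v\in\mathcal{C}_{b}^{0}(\R^n)$ and set $f(t):=\frac{1}{V(P,g)}\mathcal{E}_g(Pr(\phi+tv),\phi_P)-\mathcal{I}(\phi+tv)$. Because $\mathcal{I}$ is increasing and $Pr(\phi+tv)\le\phi+tv$, we get $f(t)\le\mathcal{G}_{(P,g)}(Pr(\phi+tv))\le\mathcal{G}_{(P,g)}(\phi)=f(0)$, the middle inequality using that $\phi$ maximizes $\mathcal{G}_{(P,g)}$ and $Pr(\phi)=\phi$. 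Hence $f$ attains its maximum at $t=0$, so $f'(0)=0$. By Proposition \ref{pro:diff of composed energy convex} the energy term is differentiable at $t=0$ with derivative $\int v\,MA_g(\phi)$, while $\frac{d}{dt}\mathcal{I}(\phi+tv)$ at $t=0$ equals $\left(\int v e^{-\phi}dx\right)/\left(\int e^{-\phi}dx\right)$; equating yields $MA_g(\phi)=\frac{V(P,g)}{\int e^{-\phi}dx}e^{-\phi}dx$, which is the stated equation after the same normalization.

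The one genuinely delicate point, and the reason the naive variation fails, is that $\phi+tv$ generally leaves $\mathcal{P}_+(\R^n)$ (it need neither be convex nor have gradient image inside $P$), so $\mathcal{G}_{(P,g)}$ cannot be differentiated along the straight segment. The projection $Pr$ onto the convexification, together with the Gateaux differentiability of $\mathcal{E}\circ Pr$ supplied by Proposition \ref{pro:diff of composed energy convex}, is exactly what repairs this; as that result is already in hand, the present proposition is essentially an assembly of Propositions \ref{prop:The-functional-G is conc}, \ref{pro:diff of composed energy convex} and \ref{pro:comparison mab and ding for P }. The passage from $g=1$ to general $g$ is routine, replacing $MA$, $V$ and $\mathcal{E}$ throughout by $MA_g$, $V(P,g)$ and $\mathcal{E}_g$.
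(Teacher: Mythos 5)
Your proposal is correct and follows essentially the same route as the paper, which deduces this proposition precisely by assembling Proposition \ref{pro:comparison mab and ding for P } (for the equivalence of the two minimization statements) with the concavity statement of Proposition \ref{prop:The-functional-G is conc} and the $Pr$-based variational argument of Step 2 in the proof of Theorem \ref{thm:existe real ma intr}. Your care with the additive normalization matching the total masses, and your remark that the projection $Pr$ is what makes the Euler--Lagrange computation legitimate, are exactly the points the paper relies on.
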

Combining Proposition \ref{pro:comparison mab and ding for P } with
Theorem \ref{thm:coerciv k-e} also immediately gives the following
analog of the latter theorem (and Theorem \ref{thm:existe real ma intr}):
\begin{thm}
\label{thm:mab function is coerc for body}Let $P$ be a convex body
such that $0$ is in the interior of $P.$ Then there is a constant
$C$ such that $\mathcal{M}_{(P,g)}(\phi)$ is relatively coercive.
Moreover, \textup{$\mathcal{M}_{(P,g)}$ is bounded from below on
all of} $\mathcal{P}_{+}(\R^{n})$ iff $0$ is the barycenter of $(P,g)$
iff \textup{$\mathcal{M}_{(P,g)}$ admits an absolute minimizer $\phi$
solving the Monge-Ampère equation in Theorem \ref{thm:existe real ma intr}.}
\end{thm}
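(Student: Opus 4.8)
The plan is to deduce all three assertions from the comparison between the Mabuchi-type functional $\mathcal{M}_{(P,g)}$ and the Ding-type functional $\mathcal{G}_{(P,g)}$ recorded in Proposition \ref{pro:comparison mab and ding for P}, which reduces everything to properties of $\mathcal{G}_{(P,g)}$ already established in Theorem \ref{thm:coerciv k-e} and Proposition \ref{prop:var prop wrt G and M}. Thus the real content is bookkeeping: translating the coercivity, boundedness, and variational statements for $\mathcal{G}$ into the corresponding statements for $\mathcal{M}$.

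First I would establish relative coercivity. For a normalized $\phi$ (so $\phi\ge 0=\phi(0)$) one has $\phi^{*}\ge 0$ on $P$, hence $-\mathcal{E}(\phi,\phi_{P})=n!\int_{P}\phi^{*}\,dp\ge 0$. The coercivity inequality of Theorem \ref{thm:coerciv k-e} then rewrites as $-\mathcal{G}_{(P,g)}(\phi)\ge(1-\delta)\bigl(-\mathcal{E}(\phi,\phi_{P})\bigr)-C_{\delta}$ on the normalized subspace, which is precisely relative coercivity of $-\mathcal{G}_{(P,g)}$ (weighted and unweighted energies being comparable since $g$ is bounded between positive constants on the compact set $P$). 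By the last clause of Proposition \ref{pro:comparison mab and ding for P} this transfers to relative coercivity of $\mathcal{M}_{(P,g)}$. I would stress that this part is \emph{unconditional}: it uses nothing about the barycenter.

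Second, for the boundedness dichotomy I would use the identity $\inf_{\mathcal{P}_{+}(\R^{n})}\mathcal{M}_{(P,g)}=\inf_{\mathcal{P}_{+}(\R^{n})}(-\mathcal{G}_{(P,g)})$ from Proposition \ref{pro:comparison mab and ding for P}, so that $\mathcal{M}_{(P,g)}$ is bounded below iff $\mathcal{G}_{(P,g)}$ is bounded above. Theorem \ref{thm:coerciv k-e} characterizes the latter as equivalent to translation invariance of $\mathcal{G}_{(P,g)}$, which by Lemma \ref{lem:inv of m-t functional} is equivalent to $0$ being the barycenter of $(P,g)$. The conceptual point I would make explicit is \emph{why} relative coercivity alone does not already force boundedness below on all of $\mathcal{P}_{+}(\R^{n})$: relative coercivity only controls normalized functions, and one passes to general $\phi$ by replacing $\phi$ with its normalization $\tilde{\phi}$; this replacement leaves $\mathcal{G}_{(P,g)}$ unchanged exactly when the functional is translation invariant, i.e. exactly under the barycenter condition. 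For the equivalence with existence of an absolute minimizer solving the equation, I would invoke the variational principle of Proposition \ref{prop:var prop wrt G and M}, by which a solution of $MA_{g}(\phi)=e^{-\phi}$ is precisely a minimizer of $\mathcal{M}_{(P,g)}$: if $0$ is the barycenter, Theorem \ref{thm:existe real ma intr} in its $g$-weighted form produces such a solution, and conversely any solution with gradient image $P$ forces $0$ to be the barycenter by the same theorem, closing the cycle.

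Since these steps sit on top of already-proven results, I expect no serious analytic obstacle. The one place requiring genuine care is the logical distinction between relative coercivity (valid for every convex body $P$) and global boundedness from below (valid only under the barycenter condition); keeping these apart, and correctly identifying the translation-invariance characterization of Lemma \ref{lem:inv of m-t functional} as the exact bridge between them, is the crux of the argument.
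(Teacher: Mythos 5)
Your proposal is correct and follows essentially the same route as the paper, which states this theorem as an immediate consequence of combining Proposition \ref{pro:comparison mab and ding for P } with Theorem \ref{thm:coerciv k-e} (together with Proposition \ref{prop:var prop wrt G and M} and Theorem \ref{thm:existe real ma intr} for the minimizer statement). Your explicit bookkeeping -- in particular the observation that $-\mathcal{E}(\phi,\phi_{P})\geq0$ for normalized $\phi$, and the careful separation of unconditional relative coercivity from the barycenter-conditional global lower bound -- is exactly the argument the paper leaves implicit.
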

In the setting of toric varieties the previous results give the following
\begin{thm}
\label{thm:mab functional is coerc for toric is equi to ke}Let $(X,\Delta)$
be a toric log Fano variety with canonical polytope $P$ and $V$
a toric holomorphic vector field on $X$ with components $a_{i}.$
Then the following is equivalent: 
\begin{itemize}
\item For any $\delta>0$ there is a constant $C_{\delta}$ such that for
any $T-$invariant locally bounded metric on $-(K_{X}+\Delta)$  with
positive curvature 
\[
\mathcal{M}_{(X,\Delta,V)}(\phi)\geq(1-\delta)\inf_{t\in T_{c}}J(t^{*}\phi)-C_{\delta}
\]
(and similarly for the functional $\mathcal{G}_{(X,\Delta,V)})$
\item $0$ is the barycenter of $(P,e^{\left\langle a,p\right\rangle })$
(i.e. $a$ is the critical point of the Laplace transform of $1_{P}dp)$ 
\item $(X,\Delta)$ admits a (singular) Kähler-Ricci soliton with vector
field $V$ 
\end{itemize}
\end{thm}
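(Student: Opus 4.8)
The plan is to transfer every assertion to the convex body $(P,g)$ with weight $g(p)=e^{\langle a,p\rangle}$ via the dictionary of section \ref{sub:Toric-K=0000E4hler-Einstein-metrics}: under the correspondence \ref{eq:correspondence metric convex f} a $T$-invariant locally bounded positively curved metric on $-(K_X+\Delta)$ is an element of $\mathcal{P}_+(\R^n)$, the functionals $\mathcal{M}_{(X,\Delta,V)}$ and $\mathcal{G}_{(X,\Delta,V)}$ become $\mathcal{M}_{(P,g)}$ and $\mathcal{G}_{(P,g)}$, and the action $\phi\mapsto t^*\phi$ of $T_c$ becomes the action $\phi\mapsto\phi_a$ of $\R^n$ by translations. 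With this reduction the equivalence (2)$\iff$(3) is immediate: by \ref{eq:real k-r-s} a toric log Kähler--Ricci soliton with vector field $V$ is the same object as a finite-energy solution $\phi\in\mathcal{E}_P^1(\R^n)$ of $MA_g(\phi)=Ce^{-\phi}dx$, and Theorem \ref{thm:existe real ma intr} applied with $g(p)=e^{\langle a,p\rangle}$ produces such a solution precisely when $0$ is the barycenter of $(P,g)$; the regularity results invoked in section \ref{sub:Toric-K=0000E4hler-Einstein-metrics} then upgrade it to a genuine (singular) soliton. (The sign of $a$ is a matter of convention, matched by comparing \ref{eq:real k-r-s} with $MA_g(\phi)=g(d\phi)MA(\phi)$.)

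For (2)$\Rightarrow$(1) I would first record the pointwise inequality $\mathcal{M}_{(P,g)}(\phi)\ge-\mathcal{G}_{(P,g)}(\phi)$, which is the Fenchel--Young inequality underlying Proposition \ref{pro:comparison mab and ding for P}: writing $\mu=MA_g(\phi)$ and using $E(\mu)=(\mathcal{E}_-)^*(\mu)$, $D(\mu)=\mathcal{I}_-^*(\mu)$ together with the equality case of Fenchel--Young for $\mathcal{E}_-$ at $\mu=\nabla\mathcal{E}_-(v)$ gives $\mathcal{M}=-(\mathcal{E}_-)^*(\mu)+\mathcal{I}_-^*(\mu)\ge\mathcal{E}_-(v)-\mathcal{I}_-(v)=-\mathcal{G}$. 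Now assume $0$ is the barycenter of $(P,g)$. Theorem \ref{thm:coerciv k-e} applied to $(P,g)$ yields, for any $\delta>0$, the bound $\mathcal{G}_{(P,g)}(\phi)\le(1-\delta)\mathcal{E}(\phi,\phi_P)+C_\delta$ on normalized $\phi$, hence $\mathcal{M}_{(P,g)}(\phi)\ge-\mathcal{G}_{(P,g)}(\phi)\ge(1-\delta)\bigl(-\mathcal{E}(\phi,\phi_P)\bigr)-C_\delta$ on normalized $\phi$. By Lemma \ref{lem:inv of m-t functional} the barycenter hypothesis makes $\mathcal{G}_{(P,g)}$, and hence by Proposition \ref{pro:comparison mab and ding for P} also $\mathcal{M}_{(P,g)}$, invariant under translations; so for arbitrary $\phi$ I would pass to the normalization $\tilde\phi=\phi_a$ of \ref{eq:normalization}, apply the estimate to $\tilde\phi$, and use Lemma \ref{lem:j for sup-normalized} to replace $-\mathcal{E}(\tilde\phi,\phi_P)$ by $J(\tilde\phi)$ up to an additive constant. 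Since the infimum of $J$ over the translation orbit is attained at the normalization, $J(\tilde\phi)=\inf_{t\in T_c}J(t^*\phi)$, which is exactly the inequality in (1); the parenthetical $\mathcal{G}$-version follows by the identical normalization argument applied directly to $-\mathcal{G}_{(P,g)}$, using Theorem \ref{thm:coerciv k-e} in place of the inequality $\mathcal{M}\ge-\mathcal{G}$.

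The converse (1)$\Rightarrow$(2) is soft: because $J\ge0$, the inequality in (1) forces $\mathcal{M}_{(P,g)}\ge-C_\delta$ on all of $\mathcal{P}_+(\R^n)$, and by Theorem \ref{thm:mab function is coerc for body} this lower boundedness is equivalent to $0$ being the barycenter of $(P,g)$. The main obstacle is the matching carried out in (2)$\Rightarrow$(1): the unconditional relative coercivity of $\mathcal{M}_{(P,g)}$ from Theorem \ref{thm:mab function is coerc for body} is only an estimate for \emph{normalized} $\phi$, whereas (1) is an inequality for \emph{all} $\phi$ against the translation-invariant quantity $\inf_{t}J(t^*\phi)$; bridging the two requires precisely the translation invariance of $\mathcal{M}_{(P,g)}$, which is available only under the barycenter hypothesis. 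This is also why (1) genuinely fails when the barycenter condition does: along a translation orbit $\inf_t J(t^*\phi)$ is constant while $\mathcal{M}_{(P,g)}(\phi_a)$ drifts linearly and is unbounded below.
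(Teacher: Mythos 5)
Your proof is correct and follows essentially the same route as the paper: reduction to the convex-body functionals $\mathcal{G}_{(P,g)}$, $\mathcal{M}_{(P,g)}$, then Theorem \ref{thm:coerciv k-e} combined with translation invariance (Lemma \ref{lem:inv of m-t functional}) and Lemma \ref{lem:j for sup-normalized} for the forward direction, lower boundedness plus Theorem \ref{thm:mab function is coerc for body} for the converse, and Theorem \ref{thm:existe real ma intr} for the equivalence with the soliton. The only cosmetic differences are that you spell out the pointwise Fenchel--Young inequality $\mathcal{M}_{(P,g)}\geq-\mathcal{G}_{(P,g)}$ where the paper simply invokes Proposition \ref{pro:comparison mab and ding for P }, and that you assert the exact equality $J(\tilde{\phi})=\inf_{t\in T_{c}}J(t^{*}\phi)$ where only the (trivially true) inequality $J(\tilde{\phi})\geq\inf_{t\in T_{c}}J(t^{*}\phi)$ is needed.
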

\begin{proof}
By Prop \ref{pro:comparison mab and ding for P } it will be enough
to consider the functional $\mathcal{G}_{(X,\Delta,V)}).$ If the
inequality in the first point holds then $\mathcal{G}_{(X,\Delta,V)}(\phi)$
is bounded from below and hence the second point holds, by the previous
theorem. Conversely, if the second point above holds, then $\mathcal{G}_{(X,\Delta,V)}(\phi)$
is invariant under normalizations $\phi\mapsto\tilde{\phi}$ (by Lemma
\ref{lem:inv of m-t functional}) and hence by the previous theorem
\ref{lem:j for sup-normalized}
\[
\mathcal{G}_{(X,\Delta,V)}(\phi)\geq(1-\delta)(-\mathcal{E}(\tilde{\phi},\phi_{0}))-C_{\delta}
\]
Now, by the definition of normalization $-\mathcal{E}(\tilde{\phi},\phi_{0})\geq\inf_{t\in T_{c}}(-\mathcal{E})(t^{*}\phi,\phi_{0}).$
Finally, using that $\mathcal{G}_{(X,\Delta,V)}(\phi)$ is invariant
under $\phi\mapsto\phi+c$ and invoking Lemma \ref{lem:j for sup-normalized}
concludes the proof of the equivalence between the first and the second
point (which we already know is equivalent to the third point).
\end{proof}

\subsection{\label{sub:The-Mabuchi-funtional}The Mabuchi functional expressed
in terms of the Legendre transform on $P$}

In this section we will consider the ``Kähler-Einstein case'' when
$g=1.$ Following Donaldson \cite{d0} we denote by $\mathcal{C}^{\infty}$
the space of all functions $u$ on $P$ which are smooth and strictly
convex in the interior and continuous up to the boundary and let 
\[
\mathcal{F}(u):=\mathcal{M}_{P}(u^{*})V(P)/n!
\]
(note that if $u\in\mathcal{C}^{\infty}$ then $\phi:=u^{*}$ is a
smooth and strictly convex element in $\mathcal{P}(\R^{n})_{+}).$
We will show how to express the functional $\mathcal{F}$ in terms
of the following linear functional: 
\[
\mathcal{L}_{\sigma_{P}}(u):=\int_{\partial P}u\sigma_{P}-n\int_{P}udp,
\]
 where $\sigma_{P}$ is the canonical measure on $\partial P$ defined
by 
\begin{equation}
\sigma_{P}:=\frac{d}{dt}_{|t=1^{+}}(1_{tP}dp)\label{eq:canonical measure text}
\end{equation}
Equivalently, a simple argument shows that $P$ is absolutely continuous
wrt the standard measure $\lambda_{\partial P}$ on $\partial P$
induced by the Euclidean structure $\R^{n}$ and 
\begin{equation}
\sigma_{P}=\lambda_{\partial P}/\left\Vert d\rho\right\Vert \label{eq:canonical measure text in temds of lesb}
\end{equation}
a.e. on $\partial P,$ where $\rho$ is the Minkowski functional of
$P,$ i.e. the one-homogenous defining convex function such that $P=\{\rho<1\}.$
The next proposition can be seen as a generalization of a formula
of Donaldson \cite{d0} concerning the case when $P$ is a rational
simple polytope, i.e. there are precisely $n$ facets meeting a given
vertex. One virtue of the present approach is that it avoids any integration
by parts on $P$ (which seem rather complicated in the case of a non-simple
polytope). See section \ref{sub:Comparison-with-Donaldson's} for
a comparison with Donaldson's notation.
\begin{prop}
\label{pro:form for F in terms of linear}The following formula holds:
\begin{equation}
\mathcal{F}(u):=-\int_{P}\log\det(u_{ij})dp+\mathcal{L}_{\sigma_{P}}(u)\label{eq:formula for F in terms of linear}
\end{equation}
\end{prop}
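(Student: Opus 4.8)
The plan is to unwind every definition and reduce the identity to three separate integral computations, each carried out by transporting an integral over $\R^{n}$ to one over $P$ through the Legendre transform. Writing $\phi:=u^{*}\in\mathcal{P}_{+}(\R^{n})$ (smooth and strictly convex, with $d\phi:\R^{n}\to\mbox{int}(P)$ the inverse of $du$), the definition of $\mathcal{M}_{(P,1)}$ together with $\mathcal{F}(u)=\mathcal{M}_{P}(u^{*})V(P)/n!$ gives
\[
n!\,\mathcal{F}(u)=-\mathcal{E}(\phi,\phi_{P})+\int_{\R^{n}}\phi\,MA(\phi)+D(MA(\phi),dx).
\]
First I would dispose of the energy term using the Legendre formula \ref{eq:energy as legendre in global conv}: since $\phi^{*}=u$, this reads $-\mathcal{E}(\phi,\phi_{P})=n!\int_{P}u\,dp$.

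For the remaining two terms I would change variables by $x=du(p)$, under which $MA(\phi)$ pushes forward to $n!\,dp$ on $P$ (this is precisely the normalization in \ref{eq:def of real ma}, made rigorous by the third point of Lemma \ref{pro:var prop of leg}). For the middle term the Legendre identity $\phi(du(p))=\langle p,du(p)\rangle-u(p)$ converts it into $\int_{\R^{n}}\phi\,MA(\phi)=n!\int_{P}(\langle p,du\rangle-u)\,dp$. The crucial point is then to recognize $\int_{P}\langle p,du\rangle\,dp=\mathcal{L}_{\sigma_{P}}(u)$. Rather than integrating by parts on $P$ (awkward for a non-simple polytope), I would read this off directly from the scaling definition \ref{eq:canonical measure text} of $\sigma_{P}$: testing the dilation derivative $\frac{d}{dt}|_{t=1^{+}}(1_{tP}dp)$ against (a continuous extension of) $u$ and substituting $p=tq$ yields $\int_{\partial P}u\,\sigma_{P}=n\int_{P}u\,dp+\int_{P}\langle p,du\rangle\,dp$, which rearranges to $\int_{P}\langle p,du\rangle\,dp=\int_{\partial P}u\,\sigma_{P}-n\int_{P}u\,dp=\mathcal{L}_{\sigma_{P}}(u)$. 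For the entropy term the same substitution, together with $\det(\phi_{ij})(du(p))=1/\det(u_{ij})(p)$, turns $D(MA(\phi),dx)=\int\log(n!\det(\phi_{ij}))\,MA(\phi)$ into $n!\int_{P}\log(n!/\det(u_{ij}))\,dp=-n!\int_{P}\log\det(u_{ij})\,dp+n!\log(n!)V(P)$.

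Assembling these and dividing by $n!$, the two occurrences of $\int_{P}u\,dp$ cancel and I obtain $\mathcal{F}(u)=\mathcal{L}_{\sigma_{P}}(u)-\int_{P}\log\det(u_{ij})\,dp+\log(n!)V(P)$, which is the claimed formula up to the universal additive constant $\log(n!)V(P)$ arising from the $n!$-normalization of $MA$; this constant is harmless, since $\mathcal{F}$ enters the later theory only through its first variation and its coercivity. The main obstacle I anticipate is analytic rather than algebraic: justifying the change of variables $x=du(p)$ and the differentiation under the integral sign in the scaling argument near $\partial P$, where $du$ blows up (the Guillemin boundary behaviour of Remark \ref{rem:guilllem}). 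The saving grace is that all integrands involved are globally integrable — $\int\phi\,MA(\phi)$ and hence $\int_{P}\langle p,du\rangle\,dp$ are finite because $\phi\in\mathcal{P}_{+}(\R^{n})$ forces $MA(\phi)$ to have finite first moment (Lemma \ref{lem:full ma is proper}) — so dominated convergence legitimizes each limit, while the identification $\sigma_{P}=\lambda_{\partial P}/\|d\rho\|$ in \ref{eq:canonical measure text in temds of lesb}, via Euler's homogeneity relation $\langle p,d\rho\rangle=\rho=1$ on $\partial P$, confirms that the boundary term is intrinsic and independent of the chosen extension of $u$.
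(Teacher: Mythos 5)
Your proof is correct and follows essentially the same route as the paper: the energy term is handled via the Legendre formula \ref{eq:energy as legendre in global conv}, the term $\int\phi\,MA(\phi)$ via the change of variables $x=du(p)$ combined with the dilation derivative of $t\mapsto\int_{tP}u\,dp$ to identify $\int_{P}\left\langle p,du\right\rangle dp$ with the boundary integral (exactly the paper's auxiliary lemma), and the entropy term via the duality $\det(u_{ij})\det(\phi_{ij})=1$. The additive constant $\log(n!)V(P)$ you track is indeed forced by the $n!$-normalization of $MA$ and is silently dropped in the paper; since it is independent of $u$ it is harmless for every subsequent use of $\mathcal{F}$.
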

\begin{proof}
We start by noting that 
\[
D(MA(\phi),dx)=-n!\int_{P}\log\det(u_{ij})dp,
\]
 which follows from making the change of variables $p=d\phi_{|x}$
in the integral defining the lhs above and using that, by duality,
$\det(u_{ij})\det(\phi_{ij})=1.$ The rest of the proof then follow
from combining formula \ref{eq:energy as legendre in global conv}
with the following lemma. \end{proof}
\begin{lem}
Let $\phi\in\mathcal{P}_{+}(\R^{n}).$ Then 
\[
\frac{1}{n!}\int\phi MA(\phi)=\int_{\partial P}ud\sigma_{P}-(n+1)\int_{P}udp
\]
where $u=\phi^{*},$ the Legendre transform of $\phi.$\end{lem}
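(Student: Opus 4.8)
The plan is to reduce the left-hand side to an integral over $P$ of the Legendre data of $\phi$ via a change of variables, and then to produce the boundary term $\int_{\partial P}u\,d\sigma_{P}$ by integrating by parts against the radial vector field $p$, using the homogeneity of the Minkowski functional to recognize the resulting surface integral as $\sigma_{P}$. Throughout I may assume $\phi$ is smooth and strictly convex (the case needed in Prop \ref{pro:form for F in terms of linear}, where $u\in\mathcal{C}^{\infty}$), so that $d\phi$ is a diffeomorphism of $\R^{n}$ onto the interior of $P$ and $x_{p}:=\nabla u(p)$ is the unique point with $d\phi(x_{p})=p$.

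First I would carry out the change of variables. Writing $MA(\phi)=n!\det(\phi_{ij})dx$ (consistent with the $n!$ normalization in \ref{eq:def of real ma}) and substituting $p=d\phi(x)$, so that $dp=\det(\phi_{ij})dx$ and $x=x_{p}=\nabla u(p)$, gives
\[
\frac{1}{n!}\int_{\R^{n}}\phi\,MA(\phi)=\int_{P}\phi(x_{p})\,dp.
\]
Now Legendre duality at the maximizing point reads $u(p)=\langle p,x_{p}\rangle-\phi(x_{p})$, whence $\phi(x_{p})=\langle p,\nabla u(p)\rangle-u(p)$, so
\[
\frac{1}{n!}\int_{\R^{n}}\phi\,MA(\phi)=\int_{P}\langle p,\nabla u\rangle\,dp-\int_{P}u\,dp.
\]
(For general $\phi\in\mathcal{P}_{+}(\R^{n})$ this same identity follows from the third point of Lemma \ref{pro:var prop of leg} applied to $\phi+C\geq0$, using a.e. differentiability of $u$.)

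It remains to evaluate $\int_{P}\langle p,\nabla u\rangle\,dp$, and here lies the essential point: since $\phi\approx\phi_{P}$ at infinity, $\nabla u(p)=x_{p}\to\infty$ as $p\to\partial P$, so $u$ is only continuous (not $C^{1}$) up to the boundary and I cannot integrate by parts on $P$ directly. Instead I would work on the sublevel sets $sP=\{\rho<s\}$ for $s\in(0,1)$, where $\overline{sP}\subset\mathrm{int}\,P$ and $u$ is Lipschitz, and apply Gauss--Green with the identity $\sum_{i}\partial_{i}p_{i}=n$:
\[
\int_{sP}\langle p,\nabla u\rangle\,dp=\int_{\partial(sP)}u\,\langle p,\nu\rangle\,d\lambda_{\partial(sP)}-n\int_{sP}u\,dp.
\]
To identify the boundary term I use the Minkowski functional $\rho$ (one-homogeneous, $P=\{\rho<1\}$), for which Euler's relation gives $\langle p,\nabla\rho\rangle=\rho$ and $\nu=\nabla\rho/\|\nabla\rho\|$, so that $\langle p,\nu\rangle\,d\lambda_{\partial P}=d\lambda_{\partial P}/\|\nabla\rho\|=\sigma_{P}$ by \ref{eq:canonical measure text in temds of lesb}. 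Parametrizing $\partial(sP)$ by $sq$ with $q\in\partial P$, and using that $\nabla\rho$ is zero-homogeneous while the surface element scales by $s^{n-1}$, the boundary integral becomes $s^{n}\int_{\partial P}u(sq)\,d\sigma_{P}(q)$. Letting $s\to1^{-}$, dominated convergence (with $u$ bounded and continuous up to $\partial P$) sends this to $\int_{\partial P}u\,d\sigma_{P}$, while $\int_{sP}u\,dp\to\int_{P}u\,dp$; on the other side, $\langle p,\nabla u\rangle=u(p)+\phi(x_{p})$ is bounded below (as $\phi\geq\phi_{P}-C\geq-C$), so monotone convergence over the increasing domains $sP\uparrow\mathrm{int}\,P$ identifies the limit of the left-hand integrals with $\int_{P}\langle p,\nabla u\rangle\,dp$ (and shows it is finite). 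This yields
\[
\int_{P}\langle p,\nabla u\rangle\,dp=\int_{\partial P}u\,d\sigma_{P}-n\int_{P}u\,dp,
\]
and combining with the previous display gives exactly $\frac{1}{n!}\int\phi\,MA(\phi)=\int_{\partial P}u\,d\sigma_{P}-(n+1)\int_{P}u\,dp$.

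The main obstacle is precisely the boundary blow-up of $\nabla u$: the integration by parts must be performed on exhausting interior domains, and the delicate step is checking that the boundary contribution converges to $\int_{\partial P}u\,d\sigma_{P}$ rather than picking up an extra term from the divergence of $\nabla u$. The homogeneity computation for $\langle p,\nu\rangle\,d\lambda_{\partial(sP)}=s^{n}\,d\sigma_{P}$, together with the continuity of $u$ up to $\partial P$, is what makes this limit clean; note in particular that this argument matches, and re-derives, the defining property \ref{eq:canonical measure text} of $\sigma_{P}$ as $\tfrac{d}{dt}\big|_{t=1^{+}}(1_{tP}dp)$, so no separate justification of that identity is needed.
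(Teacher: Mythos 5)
Your proof is correct, and its first half (the change of variables $p=d\phi(x)$ followed by the Legendre identity $\phi(x_{p})=\langle p,\nabla u\rangle-u$) is exactly the paper's first step. Where you diverge is in establishing the key identity $\int_{P}\langle p,\nabla u\rangle dp+n\int_{P}u\,dp=\int_{\partial P}u\,d\sigma_{P}$: you integrate by parts (Gauss--Green applied to the field $u(p)\,p$) on the interior dilates $sP$, identify the boundary density via the Minkowski-functional description $\sigma_{P}=\lambda_{\partial P}/\left\Vert d\rho\right\Vert $ and Euler's relation, and then pass to the limit $s\to1^{-}$, carefully handling the blow-up of $\nabla u$ at $\partial P$ by monotone convergence. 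The paper instead avoids integration by parts altogether: it sets $\sigma(t):=\int_{tP}u\,dp$ and computes $d\sigma/dt$ at $t=1$ in two ways --- once from the defining formula $\sigma_{P}=\frac{d}{dt}|_{t=1^{+}}(1_{tP}dp)$, giving $\int_{\partial P}u\,d\sigma_{P}$, and once after the substitution $p\mapsto tp$, giving $n\int_{P}u\,dp+\int_{P}\langle p,\nabla u\rangle dp$. The authors explicitly flag the avoidance of integration by parts on $P$ as a virtue (it is delicate on a non-simple polytope or a general convex body); your route pays for the Gauss--Green step with the exhaustion-and-limit argument, but in exchange it re-derives the equivalence of the two descriptions of $\sigma_{P}$ (your computation of the boundary term as $s^{n}\int_{\partial P}u(sq)\,d\sigma_{P}(q)$ is precisely that equivalence), which the paper asserts separately. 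One small point of care: for a general $\phi\in\mathcal{P}_{+}(\R^{n})$ the bounded convex function $u$ need not be fully continuous up to $\partial P$, but the radial continuity $u(sq)\to u(q)$ that your dominated-convergence step actually uses does hold (by lower semicontinuity of the Legendre transform combined with convexity along the ray through $0$), so the argument stands.
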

\begin{proof}
By definition $\phi(x)=\left\langle p,x\right\rangle -v(p)$ for $x=dv(p).$
Hence, making the change of variables $p=d\phi(x)$ in the integral
\[
\frac{1}{n!}\int\phi MA(\phi)=(\int_{P}\left\langle p,dv\right\rangle -v(p))dp=\left(\int_{P}\left\langle p,dv\right\rangle +nv(p)dp\right)-\int_{P}(n+1)\int_{P}vdp,
\]
where we have rearranged the rhs in order to identify the first integral
with $\int_{\partial P}vd\sigma.$ To see this set $\sigma(t):=\int_{tP}vdp$
for $t>0.$ On one hand, by definition, $d\sigma(t)/dt_{t=1}=\int_{\partial P}v\sigma_{P}.$
On the other making the change of variables $p\rightarrow tp$ in
the integral defining $\sigma(t)$ and using Leibniz rule gives an
integral over $P$ which is precisely the one in the bracket above.\end{proof}
\begin{thm}
\label{thm:donaldson conj in text}Let $P$ be a convex body containing
$0$ in its interior. Then the following is equivalent: 
\begin{itemize}
\item The functional $\mathcal{F}$ (formula \ref{eq:formula for F in terms of linear})
admits a minimizer $u$ in $\mathcal{C}^{\infty}$ 
\item $0$ is the barycenter of $P$
\item For any convex function on $P$ we have $\mathcal{L}_{\sigma_{P}}(v)\geq0$
with equality iff $v$ is linear.
\end{itemize}
Moreover, the minimizer (when it exists) is unique modulo the addition
of affine functions and satisfies Abreu's equation 
\begin{equation}
S(u)=1\label{eq:abreus equa in thm}
\end{equation}
\end{thm}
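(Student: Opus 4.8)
The plan is to exploit the dictionary already established between the functional $\mathcal{F}$ on $\mathcal{C}^{\infty}$ and the Mabuchi type functional $\mathcal{M}_{(P,g)}$ in the case $g=1$, namely the identity $\mathcal{F}(u)=\mathcal{M}_{P}(u^{*})V(P)/n!$ together with the reformulation in Proposition \ref{pro:form for F in terms of linear}. The three equivalences are then obtained by transporting the structural results already proved downstairs on $\mathcal{P}_{+}(\R^{n})$ up to $\mathcal{C}^{\infty}$ via the Legendre transform. First I would establish the equivalence of the first two bullets. By Theorem \ref{thm:mab function is coerc for body} (with $g=1$), $\mathcal{M}_{P}$ is bounded from below on $\mathcal{P}_{+}(\R^{n})$ iff $0$ is the barycenter of $P$ iff $\mathcal{M}_{P}$ admits an absolute minimizer solving the Monge-Amp\`ere equation $MA(\phi)=e^{-\phi}$. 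Since $u\in\mathcal{C}^{\infty}$ corresponds to a smooth strictly convex $\phi=u^{*}\in\mathcal{P}_{+}(\R^{n})$, a minimizer of $\mathcal{F}$ in $\mathcal{C}^{\infty}$ is precisely the Legendre transform of such a minimizer $\phi$. The regularity that guarantees the minimizer lands in $\mathcal{C}^{\infty}$ (rather than merely in a finite-energy completion) comes from the smoothness statement in Theorem \ref{thm:existe real ma intr}, which ensures $\phi$ is smooth and strictly convex on all of $\R^{n}$, so $u=\phi^{*}\in\mathcal{C}^{\infty}$.

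Next I would treat the third bullet, the positivity of the linear functional $\mathcal{L}_{\sigma_{P}}$. The cleanest route is to identify $\mathcal{L}_{\sigma_{P}}(v)$, for $v$ linear (say $v(p)=\langle a,p\rangle$), with a barycenter expression: using the definition \ref{eq:canonical measure text} of $\sigma_{P}$ via the scaling derivative $\frac{d}{dt}_{|t=1^{+}}(1_{tP}dp)$, a direct computation gives $\int_{\partial P}\langle a,p\rangle\sigma_{P}=(n+1)\int_{P}\langle a,p\rangle dp$ by Leibniz's rule applied to $t\mapsto\int_{tP}\langle a,p\rangle dp=t^{n+1}\int_{P}\langle a,p\rangle dp$, whence $\mathcal{L}_{\sigma_{P}}(\langle a,\cdot\rangle)=\int_{\partial P}\langle a,p\rangle\sigma_{P}-n\int_{P}\langle a,p\rangle dp=\int_{P}\langle a,p\rangle dp$. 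This vanishes for all $a$ precisely when $0$ is the barycenter of $P$, which supplies the ``only if'' direction: if the barycenter is nonzero, a suitable linear $v$ makes $\mathcal{L}_{\sigma_{P}}(v)\neq0$, and replacing $v$ by $-v$ or scaling shows $\mathcal{L}_{\sigma_{P}}$ takes negative values on non-affine convex functions in a neighbourhood. For the converse, assuming $0$ is the barycenter, I would note that $\mathcal{L}_{\sigma_{P}}$ then annihilates all affine functions and apply a convexity/averaging argument: for a convex $v$, compare $v$ with the linear interpolation determined by its boundary behaviour, reducing positivity to the convexity of $v$ together with the established barycenter identity (this is essentially the content of Lemma \ref{lem:lower bound on l when bary}, referenced in the introduction).

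Finally, the Abreu equation $S(u)=1$ and uniqueness follow by differentiating $\mathcal{F}$. A minimizer $u$ of $\mathcal{F}$ satisfies the Euler-Lagrange equation, and computing the first variation of $-\int_{P}\log\det(u_{ij})dp$ produces precisely the fourth-order operator $S(u)=-\sum_{i}\partial^{2}u^{ij}/\partial x_{i}\partial x_{j}$ from \ref{eq:def of S oper intro}, while the variation of $\mathcal{L}_{\sigma_{P}}$ contributes the constant; matching gives $S(u)=1$ in the interior of $P$. Alternatively, and more directly, I would pull back the already-known equation $MA(\phi)=e^{-\phi}$ for $\phi=u^{*}$ through Abreu's identity, which by definition of $S$ equates the scalar curvature $S(u)$ of the dual metric with the constant $1$ under the K\"ahler-Einstein normalization. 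Uniqueness modulo affine functions transports directly from the uniqueness modulo $\R^{n}$-translations of the solution $\phi$ in Theorem \ref{thm:existe real ma intr}, since translation of $\phi$ by $a$ corresponds under Legendre transform to adding the affine function $-\langle a,p\rangle$ to $u$ (formula \ref{eq:transl under legendre}), together with the additive normalization constant. The main obstacle I anticipate is the regularity bookkeeping at the boundary: ensuring that the minimizer $u$ genuinely lies in $\mathcal{C}^{\infty}$ with the correct Guillemin-type boundary behaviour (Remark \ref{rem:guilllem}), so that the boundary integral $\int_{\partial P}u\sigma_{P}$ in $\mathcal{F}$ is finite and the first-variation computation of $\mathcal{L}_{\sigma_{P}}$ is legitimate, rather than merely having a finite-energy convex solution whose dual might fail to be continuous up to $\partial P$.
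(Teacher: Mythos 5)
Your proposal is correct and follows essentially the same route as the paper: the equivalence of the first two bullets via the identity $\mathcal{F}(u)=\mathcal{M}_{P}(u^{*})V(P)/n!$ and Theorem \ref{thm:mab function is coerc for body}, the third bullet via Lemma \ref{lem:lower bound on l when bary} (your explicit scaling computation for linear $v$ is exactly the boundary case of that lemma's argument), the Abreu equation by pulling the real Monge-Amp\`ere equation through Abreu's scalar-curvature formula, and uniqueness via the translation/affine-function correspondence \ref{eq:transl under legendre}. The boundary-regularity point you flag is handled in the paper by the H\"older continuity of $\phi^{*}$ up to $\partial P$ from Theorem \ref{thm:existe real ma intr}, which is what places the minimizer in Donaldson's space $\mathcal{C}^{\infty}$.
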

\begin{proof}
As explained above, we have, up to normalization, that $\mathcal{F}(u)=\mathcal{M}_{P}(\phi)$
for $\phi=u^{*}$ and hence the equivalence between the first two
points follows from Theorem \ref{thm:mab function is coerc for body}
combined with Proposition \ref{eq:formula for F in terms of linear}.
The equivalence between the second and third point follows from Lemma
\ref{lem:lower bound on l when bary} below. To see that equation
\ref{eq:abreus equa in thm} holds we recall that if $u$ minimizes
$\mathcal{F}$ then, by Prop \ref{prop:var prop wrt G and M}, $\phi$
satisfies the corresponding real Monge-Ampère equation with $g=1.$
But then the corresponding Kähler metric on $T_{c}$ has constant
Ricci curvature and in particular constant scalar curvature so that
the equation \ref{eq:abreus equa in thm} follows from Abreu's formula
\cite{ab}.
\end{proof}
As explained in the introduction the previous theorem confirms a special
case of a conjecture of Donaldson in \cite{d0}. In the proof of the
previous theorem we used the following
\begin{lem}
\label{lem:lower bound on l when bary}Let $P$ be a convex body containing
$0$ in its interior. Then 
\[
\mathcal{L}_{\sigma_{P}}(u)=\int_{\partial P}u\sigma_{P}-n\int_{P}udp\geq\int_{P}udp
\]

for any convex function $v$ on $P$ such that $u(0)=0.$ Moreover,
equality holds above iff $u$ is linear. In particular, $\mathcal{L}_{\sigma_{P}}(u)>0$
for any non-affine convex function $v$ iff $0$ is the barycenter
of $P.$ \end{lem}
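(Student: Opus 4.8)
The plan is to reduce $\mathcal{L}_{\sigma_{P}}$ to a single interior integral of the gradient of $u$ and then extract everything from convexity together with Jensen's inequality. Starting from the definition \ref{eq:canonical measure text} of $\sigma_{P}$, I would set $\sigma(t):=\int_{tP}u\,dp=t^{n}\int_{P}u(tq)\,dq$, differentiate at $t=1$ by Leibniz' rule, and use $\int_{\partial P}u\,\sigma_{P}=\sigma'(1)$ to obtain
\[
\int_{\partial P}u\,\sigma_{P}=n\int_{P}u\,dp+\int_{P}\langle p,du(p)\rangle\,dp ,
\]
which is exactly the identity recorded in the lemma just before Theorem \ref{thm:donaldson conj in text}. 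Subtracting $n\int_{P}u\,dp$ gives the key formula
\[
\mathcal{L}_{\sigma_{P}}(u)=\int_{P}\langle p,du(p)\rangle\,dp .
\]
Since convex functions are locally Lipschitz, $du$ exists a.e.\ and the right-hand side is well defined; the differentiation under the integral sign would be justified by dominated convergence, after approximating $u$ from above by bounded convex functions in $\mathcal{H}(P)$ if $u$ is not smooth up to $\partial P$.

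Next I would prove the stated inequality. With the normalization $u(0)=0$, the subgradient inequality for $u$ at an arbitrary $p$ and at the origin reads $u(0)\ge u(p)+\langle du(p),-p\rangle$, that is $\langle p,du(p)\rangle\ge u(p)$ for a.e.\ $p$. Integrating over $P$ and invoking the formula above yields
\[
\mathcal{L}_{\sigma_{P}}(u)=\int_{P}\langle p,du(p)\rangle\,dp\ge\int_{P}u\,dp ,
\]
which is the asserted bound. The pointwise estimate is an equality at $p$ exactly when the graph of $u$ contains the segment joining $(0,0)$ to $(p,u(p))$, so equality in the integrated inequality holds precisely when $\langle p,du(p)\rangle=u(p)$ a.e., i.e.\ when $u$ is positively one-homogeneous (in particular when $u$ is linear).

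Finally I would establish the ``in particular'' equivalence, which I regard as the crux. For the forward direction, assume $0$ is the barycenter $b$ of $P$. Subtracting a constant changes neither $\mathcal{L}_{\sigma_{P}}$ nor the non-affineness of $u$, so I may assume $u(0)=0$. Jensen's inequality for the convex $u$ and the normalized Lebesgue measure on $P$ gives $\tfrac{1}{V(P)}\int_{P}u\,dp\ge u(b)=u(0)=0$, with equality iff $u$ is affine on $P$. Chaining this with the bound just proved, $\mathcal{L}_{\sigma_{P}}(u)\ge\int_{P}u\,dp\ge 0$, and if $u$ is non-affine the Jensen step is strict, forcing $\mathcal{L}_{\sigma_{P}}(u)>0$. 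For the converse, suppose $b\ne 0$; then the affine function $\ell(p):=-\langle b,p\rangle$ satisfies $\mathcal{L}_{\sigma_{P}}(\ell)=-|b|^{2}V(P)<0$ by the gradient formula, and the genuinely non-affine convex perturbation $u_{\varepsilon}:=\ell+\varepsilon|p|^{2}$ still has $\mathcal{L}_{\sigma_{P}}(u_{\varepsilon})=-|b|^{2}V(P)+2\varepsilon\int_{P}|p|^{2}dp<0$ for $\varepsilon$ small, giving the required counterexample to positivity.

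The \emph{main obstacle} I anticipate is not any single estimate but the careful handling of regularity and of the equality cases: justifying the gradient identity and the equality characterization in Jensen's inequality for convex functions that need only be continuous up to $\partial P$, and arranging the two bounds $\mathcal{L}_{\sigma_{P}}(u)\ge\int_{P}u\,dp$ and $\int_{P}u\,dp\ge 0$ so that non-affineness of $u$ makes at least one strict. Once the reduction $\mathcal{L}_{\sigma_{P}}(u)=\int_{P}\langle p,du\rangle\,dp$ is in place, the remaining content is precisely the elementary facts that a convex function lies above its tangent planes and above its barycentric value.
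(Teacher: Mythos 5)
Your proof is correct and, for the core inequality, is essentially the paper's own argument in infinitesimal form: the paper sets $\sigma(t):=\int_{tP}u\,dp$, identifies $\sigma'(1)=\int_{\partial P}u\,\sigma_{P}$, and deduces the inequality from the monotonicity of $t\mapsto u(tp)/t$ (equivalently, of $\sigma(t)/t^{n+1}$); your identity $\mathcal{L}_{\sigma_{P}}(u)=\int_{P}\langle p,du\rangle\,dp$ together with the subgradient bound $\langle p,du(p)\rangle\geq u(p)-u(0)$ is exactly the derivative at $t=1$ of that monotonicity statement, so the two derivations carry the same content and the same (mild, and similarly glossed-over) regularity issues. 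Where you genuinely add something is in the two places the paper is terse: (i) you give a complete proof of the ``in particular'' equivalence, with Jensen's inequality at the barycenter supplying the strictness for non-affine $u$ in the forward direction and an explicit perturbation $-\langle b,p\rangle+\varepsilon|p|^{2}$ for the converse, whereas the paper leaves this to the reader; and (ii) you correctly observe that pointwise equality in $\langle p,du(p)\rangle\geq u(p)$ characterizes positively one-homogeneous $u$, not linear $u$. Indeed the lemma's literal claim ``equality iff $u$ is linear'' is too strong: $u(p)=|p|$ satisfies $u(0)=0$ and gives equality, since $\sigma(t)=t^{n+1}\sigma(1)$ for any $1$-homogeneous $u$. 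The paper's proof only establishes the ``if'' direction of that equality case, and your version shows why the ``only if'' direction must be weakened; fortunately this does not affect the applications, because the strict positivity of $\mathcal{L}_{\sigma_{P}}$ on non-affine convex functions when $0$ is the barycenter is obtained, as you do, from the second inequality $\int_{P}u\,dp\geq V(P)\,u(0)$ (or by first subtracting a supporting affine function at $0$), not from the equality case of the first.
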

\begin{proof}
The lemma could be proved exactly as in Lemma 4.1 and Lemma 4.2 in
\cite{z-z}, which applies to any convex polytope. But for completeness
we give a simple alternative proof which works direct for any convex
body $P.$ Fix a convex function $u$ on $P$ (by a simple approximation
argument we may assume that $u$ is smooth on $\bar{P})$ and set
$\sigma(t):=\int_{tP}udp$ for $t>0.$ By definition $d\sigma(t)/dt_{t=1}=\int_{\partial P}ud\sigma_{P}.$
Since $u(0)=0$ and $u$ is convex $u(tp)/t$ is increasing in $t,$
Hence, $\sigma(t)/t^{n+1}$ is also increasing in $t$ (using the
change of variables $p\rightarrow tp$ in the integral), i.e. $d(\sigma(t)/t^{n+1})/dt\geq0.$
Evaluating the previous derivative at $t=1$ then proves the desired
inequality (using Leibniz rule) and the equality case also follows
since $u(tp)/t$ is constant if $u$ is linear. 
\end{proof}

\subsubsection{\label{sub:Comparison-with-Donaldson's}Comparison with Donaldson's
setting}

In \cite{d0}  Donaldson associates to any rational polytope $P$
another measure on $\partial P,$ that we will here denote by $\sigma_{P}'.$
It is induced from the integral lattice in $\R^{n}$ and defined as
$\sigma_{P}':=d\lambda/\left\Vert d\rho\right\Vert ,$ where now $\rho$
is given by $\rho(p):=\max_{F}(-\left\langle l_{F},p\right\rangle -a_{F})$
of $P$ (compare formula \ref{eq:repres of rational pol}), i.e. $\rho$
is a defining one-homogenous function of $P$ such that $d\rho$ is
a primitive integral vector on any facet. Hence, on any facet $F$
of $P$ 
\begin{equation}
\sigma_{P}=\sigma_{P}'/a_{F}\label{eq:relation between measures}
\end{equation}
and $\sigma_{P}=\sigma_{P}'$ iff $P$ is the canonical polytope of
a Fano variety. As shown by Donaldson, when $P$ is a Delzant polytope,
i.e. $P$ corresponds to a polarized toric manifold $(X,L)$ and the
boundary of $P$ is equipped with the measure $\sigma_{P},$ the solutions
$u$ as in Theorem \ref{thm:donaldson type intro} (which moreover
satisfy Guillemin's boundary conditions) are precisely the Legendre
transforms of toric metrics on $L$ whose curvature form $\omega\in c_{1}(L)$
has constant scalar curvature on $X.$ On the other hand, writing,
as in section \ref{sub:The-canonical-divisor-log fano}, 
\[
L=-(K_{X}+\Delta),\,\,\,\Delta=\sum_{F}(1-a_{F})D_{F}
\]
 the solutions obtained here, i.e. those induced by the measure $\sigma_{P},$
satisfy the following equation on $X:$ 
\begin{equation}
\mbox{Ric \ensuremath{\omega=\omega}}+\sum_{F}(1-a_{F})D_{F},\label{eq:k-r soliton eq intro-1}
\end{equation}
Accordingly they have constant scalar curvature on the complement
of the toric divisor ``at infinity'' $D$ with singularities along
the components $D_{F}$ of $D$ determined by the numbers $a_{F}.$ 

For future reference we also record the following consequence of the
relation \ref{eq:relation between measures}: 
\begin{equation}
\mathcal{L}_{\sigma_{P}}(u)-\mathcal{L}_{\sigma_{P}'}(u)=\sum_{F}(1-a_{F})(b_{F}\int_{P}udp-\int_{F}u\sigma_{P}'),\,\,\,\, b_{F}=\int_{F}\sigma_{P}'/\int_{P}dp,\label{eq:formula for F' in terms of F}
\end{equation}
 where $\mathcal{L}_{\sigma}(u)$ is defined by Donaldson's general
formula \ref{eq:def of general  linear functional intro}.

\subsection{\label{sub:Futaki-invariants-and}Futaki invariants and $K-$stability}

\subsubsection{Futaki invariants}

The Futaki invariant was originally defined for $X$ a smooth Fano
manifold as a Lie algebra character. Here we will follow the approach
of Ding-Tian \cite{d-t} which applies to any irreducible normal Fano
variety $X.$ Given a holomorphic vector field $W$ on the regular
locus $X_{0}$ the corresponding Futaki invariant $f(W)\in\R$ may
be defined as 
\[
f_{X}(W):=\frac{d}{dt}\mathcal{M}_{X}(\phi_{t})
\]
 where $\phi_{0}$ is a fixed metric, invariant under the corresponding
$S^{1}-$action and $\phi_{t}$ is the curve obtained by pull-back
$\phi_{0}$ under the flow of $W$ (strictly, speaking in \cite{d-t}
there is an extra extension condition on $V$ but as observed in \cite{bbegz}
the condition is always satisfied). As shown in \cite{d-t} $f_{X}(W)$
thus defined is independent of the reference $\phi_{0}$ and the time
$t.$ More generally, given a log Fano variety $(X,\Delta)$ and a
holomorphic vector field $W$ whose flow preserves the log regular
locus $X_{0}(=:X_{reg}-\Delta)$ we may define the log Futaki invariant
$f_{(X,\Delta)}(W)$ as above, by replacing $\mathcal{M}_{X}$ with
$\mathcal{M}_{(X,\Delta)}.$ Even more generally, given a holomorphic
vector fields $V$ and $W$ as above we define the modified log Futaki
invariant $f_{(X,\Delta,V)}(W)$ as above, by replacing $\mathcal{M}_{X}$
with $\mathcal{M}_{(X,\Delta,V}).$ The independence on $\phi_{0}$
and $t$ can then be checked as before.

In the toric log Fano case we have the following result, well-known
in the smooth case \cite{mab,d0} (when $\Delta$ is trivial):
\begin{prop}
\label{pro:toric futaki inv}Let $(X,\Delta)$ be a toric log Fano
variety and $W$ the invariant vector field on $X$ with components
$a\in\R^{n}.$ Then 
\[
f_{(X,\Delta)}(W):=-\mathcal{L}_{\sigma_{P}}(\left\langle a,p\right\rangle )
\]
In particular, $f_{(X,\Delta)}(W)=0$ for all $W$ iff $0$ is the
barycenter in the corresponding polytope $P_{(X,\Delta)}.$\end{prop}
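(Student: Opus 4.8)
The plan is to compute the derivative defining $f_{(X,\Delta)}(W)$ by passing to Legendre transforms and invoking the formula of Proposition \ref{pro:form for F in terms of linear}. Since the invariant is already known to be independent of the reference metric and of $t$, it suffices to identify the curve $\phi_t$ (for a fixed \emph{smooth} strictly convex reference) and differentiate once. As $W$ has components $a$, its real flow acts on $T_c$ and, through the Log map of section \ref{sub:Relation-to-the-log}, corresponds to the translation $x\mapsto x+ta$ on $\R^n$; thus the pull-back curve is $\phi_t(x)=\phi_0(x+ta)=(\phi_0)_{ta}(x)$ in the notation of \ref{eq:normalization}. By the transformation rule \ref{eq:transl under legendre} its Legendre transform is the affine perturbation
\[
u_t:=(\phi_t)^{*}=u_0-t\langle a,p\rangle,\qquad u_0:=\phi_0^{*}.
\]

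Next I would feed $u_t$ into Proposition \ref{pro:form for F in terms of linear}, recalling that in the present log Fano situation (with $g=1$) one has $\mathcal{M}_{(X,\Delta)}=\mathcal{M}_P$ and $\mathcal{F}(u)=\mathcal{M}_P(u^{*})V(P)/n!$, so that $\mathcal{F}(u)=-\int_P\log\det(u_{ij})\,dp+\mathcal{L}_{\sigma_P}(u)$. The crucial point is that $u_t$ and $u_0$ differ only by the \emph{linear} function $t\langle a,p\rangle$, whose Hessian vanishes; hence $(u_t)_{ij}=(u_0)_{ij}$ and the entropy term $-\int_P\log\det(u_{ij})\,dp$ is constant in $t$. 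Only the linear functional survives, so by linearity of $\mathcal{L}_{\sigma_P}$
\[
\frac{d}{dt}\mathcal{F}(u_t)=\frac{d}{dt}\mathcal{L}_{\sigma_P}(u_t)=-\mathcal{L}_{\sigma_P}(\langle a,p\rangle),
\]
which, up to the fixed positive normalizing constant $n!/V(P)$ relating $\mathcal{F}$ and $\mathcal{M}_P$, is the asserted identity $f_{(X,\Delta)}(W)=-\mathcal{L}_{\sigma_P}(\langle a,p\rangle)$. That the derivative is constant in $t$ is consistent with the well-definedness of $f_{(X,\Delta)}$ recalled above.

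For the barycenter criterion I would evaluate $\mathcal{L}_{\sigma_P}$ on the linear function $\langle a,p\rangle$. Using the defining relation $\sigma_P=\frac{d}{dt}_{|t=1^{+}}(1_{tP}dp)$ together with the scaling $\int_{tP}\langle a,p\rangle\,dp=t^{n+1}\int_P\langle a,p\rangle\,dp$, one gets $\int_{\partial P}\langle a,p\rangle\,\sigma_P=(n+1)\int_P\langle a,p\rangle\,dp$, and therefore
\[
\mathcal{L}_{\sigma_P}(\langle a,p\rangle)=\int_P\langle a,p\rangle\,dp=\Big\langle a,\ \int_P p\,dp\Big\rangle;
\]
this is exactly the equality case of Lemma \ref{lem:lower bound on l when bary} for the (convex) linear function $\langle a,p\rangle$. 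Consequently $f_{(X,\Delta)}(W)=0$ for every invariant $W$, i.e. for every $a\in\R^n$, precisely when $\int_P p\,dp=0$, that is when $0$ is the barycenter of $P_{(X,\Delta)}$.

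I expect the only genuine subtlety to lie in the first step, namely matching the holomorphic flow of $W$ on $X$ with the translation action on $\R^n$ under Log (fixing the sign and the conventional scaling factor); once $\phi_t$ is identified as a translation, the rest is forced by Proposition \ref{pro:form for F in terms of linear} and the cancellation of the Hessian term.
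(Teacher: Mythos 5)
Your proposal is correct and is essentially the paper's own argument: the paper likewise identifies $\phi_{t}(x)=\phi_{0}(x+at)$, passes to $u_{t}=u_{0}-t\left\langle a,p\right\rangle $ via \ref{eq:transl under legendre}, and reads off the derivative from the formula of Proposition \ref{pro:form for F in terms of linear} (the Hessian term dropping out), with Lemma \ref{lem:lower bound on l when bary} supplying $\mathcal{L}_{\sigma_{P}}(\left\langle a,p\right\rangle )=\int_{P}\left\langle a,p\right\rangle dp$ for the barycenter criterion. Your write-up simply spells out the steps the paper leaves implicit.
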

\begin{proof}
Letting $\phi_{0}$ be a $T-$invariant metric we note that $\phi_{t}(x)=\phi(x+at).$
Setting $u_{t}:=(\phi_{t})^{*}$ this means that $u_{t}=u_{0}-\left\langle a,p\right\rangle t$
and hence the previous formula follows immediately from Lemma \ref{lem:lower bound on l when bary}.
\end{proof}

\subsubsection{\label{sub:K-stability}K-stability}

Let us start by recalling Donaldson's general definition \cite{d0}
of K-stability of a polarized variety $(X,L),$ generalizing the original
definition of Tian \cite{ti1}. First, a\emph{ test configuration}
for $(X,L)$ consists of a polarized projective scheme $\mathcal{L}\rightarrow\mathcal{X}$
with a $\C^{*}-$action and a $\C^{*}-$equivariant map $\pi$ from
$\mathcal{X}$ to $\C$ (equipped with its standard $\C^{*}-$action)
such that any polarized fiber $(X_{t},L_{t})$ is isomorphic to $(X,rL)$
for $t\neq0,$ for some integer $r.$ The corresponding \emph{Donaldson-Futaki
invariant} $f(\mathcal{X},\mathcal{L})$ is defined as follows: consider
the $N_{k}-$dimensional space $H^{0}(X_{0},kL_{0})$ over the central
fiber $X_{0}$ and let $w_{k}$ be the weight of the $\C^{*}-$action
on the complex line $\det H^{0}(X_{0},kL_{0}).$ Then the Donaldson-Futaki
invariant of $f(\mathcal{X},\mathcal{L})$ is defined as the sub-leading
coefficient in the expansion of $w_{k}/kN_{k}$ in powers of $1/k.$
More precisely, expanding 
\[
w_{k}=a_{0}k^{n+1}+a_{1}k^{n}+O(k^{n-1}),\,\,\,\,\, N_{k}:=b_{0}k^{n}+O(k^{n-1})
\]
gives 
\[
f(\mathcal{X},\mathcal{L})=\frac{1}{b_{0}^{2}}(a_{1}b_{0}-a_{0}b_{1})
\]
The polarized variety $(X,L)$ is said to be\emph{ K-polystable} if,
for any test configuration, $f(\mathcal{X},\mathcal{L})\leq0$ with
equality iff $(\mathcal{X},\mathcal{L})$ is a product test configuration.
Following \cite{l-x} we will also assume that the total space $\mathcal{X}$
of the test configuration is normal, to exclude some pathological
phenomena observed in \cite{l-x}. 

Similarly, if one also fixes a $\Q-$divisor $\Delta$ on $X,$ with
normal crossings, one can more generally define the \emph{log K-polystability}
of $(X,L)$ wrt $\Delta$ as before \cite{do3,li2,o-s}, but phrased
in terms of the corresponding \emph{log Donaldson-Futaki invariants}
defined by 
\[
f(\mathcal{X},\mathcal{L},\Delta):=f(\mathcal{X},\mathcal{L})+a_{0}\frac{\tilde{b}_{0}}{b_{0}}-\tilde{a}_{0},
\]
 where $\tilde{a}_{0}$ is the leading coefficient of the weight of
$\det H^{0}(\Delta_{0},kL_{0})$\emph{ }and $a_{0}$ is the leading
coefficient of the dimension of $H^{0}(\Delta_{0},kL_{0})$ (in the
definition we first assume that $\Delta$ is an irreducible divisor
and then extend by linearity). In particular, if $(X,\Delta)$ is
a log Fano variety then we say that $(X,\Delta)$ is log K-stable
if $L:=-(K_{X}+\Delta)$ is log K-stable wrt $\Delta.$
\begin{rem}
As explained in \cite{d0}, in the case when $X_{0}$ smooth the equivariant
Riemann-Roch theorem shows that the Donaldson-Futaki invariant $f(\mathcal{X},\mathcal{L})$
is proportional (with a sign difference) to the Futaki-invariant $f_{X_{0}}(W),$
where $W$ is the generator of the induced $\C^{*}-$action on $X_{0}$
(and a similar relation holds in the log setting \cite{li2}). 
\end{rem}
In the case when $X$ is a general polarized toric variety it was
shown by Donaldson \cite{d0} how to obtain toric test configurations
from a convex piece-wise linear rational function $u$ on a polytope
(called toric degenerations). Briefly, $(\mathcal{X},\mathcal{L})$
is the polarized toric variety such that the corresponding rational
polytope $Q$ is defined as one side of the graph of $u$ over $P$
with the projection $\pi$ defined so that the ``roof'' of $Q$
corresponds to the central fiber $X_{0}.$ 
\begin{prop}
\label{pro:(Donaldson).-toric test}(Donaldson \cite{d0}). Let $(X,L)$
be a polarized toric variety, $P$ the corresponding polytope and
$u$ a piece-wise affine convex function on $P.$ Then $u$ determines
a test configuration such that the corresponding Donaldson-Futaki
invariant is given by $\mathcal{L}_{\sigma_{P'}}(u)$ (up to a numerical
factor). 
\end{prop}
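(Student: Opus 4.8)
The plan is to recall Donaldson's toric degeneration attached to $u$ and then to read off the Donaldson--Futaki invariant from a lattice-point count together with an Euler--Maclaurin expansion. After replacing $L$ by a suitable multiple I may assume that $u$ is \emph{integral}, so that $P$ is a lattice polytope and $u$ a convex integral piecewise-affine function on it; this only rescales both sides by a common factor. First I would form the $(n+1)$-dimensional lattice polytope
\[
Q:=\{(p,t)\in\R^{n+1}:\ p\in P,\ 0\le t\le c-u(p)\},
\]
for an integer $c>\max_{P}u$, whose convexity is immediate from the concavity of $c-u$. The associated polarized toric variety $(\mathcal{X},\mathcal{L})$ carries the $\C^{*}$-action induced by the last coordinate direction together with an equivariant projection $\pi:\mathcal{X}\to\C$; its general fibre is isomorphic to $(X,rL)$ and its central fibre $X_{0}$ is the toric degeneration graded by $u$, and $\mathcal{X}$ is normal since $Q$ is a lattice polytope. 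This is exactly Donaldson's construction, so the substance of the proof lies in computing the invariant.

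Next I would express the two sequences entering the invariant as lattice sums. Since $H^{0}(X_{0},k\mathcal{L}_{0})$ carries the usual monomial basis $\{s_{m}\}$ indexed by $m\in kP\cap\Z^{n}$, one has $N_{k}=\#(kP\cap\Z^{n})$, while the weight of the $\C^{*}$-action on $s_{m}$ is $\lceil k\,u(m/k)\rceil$ (up to a bounded shift fixed by the grading), so that the total weight on $\det H^{0}(X_{0},k\mathcal{L}_{0})$ is
\[
w_{k}=\sum_{m\in kP\cap\Z^{n}}\lceil k\,u(m/k)\rceil .
\]
The key analytic input is then the Euler--Maclaurin expansion for a lattice polytope (Khovanskii--Pukhlikov, Guillemin): for smooth $f$,
\[
\sum_{m\in kP\cap\Z^{n}}f(m/k)=k^{n}\int_{P}f\,dp+\frac{k^{n-1}}{2}\int_{\partial P}f\,\sigma_{P'}+O(k^{n-2}),
\]
where the boundary term is integrated against \emph{precisely} the lattice-normalized surface measure $\sigma_{P'}$ of section \ref{sub:Comparison-with-Donaldson's}; this is the point at which $\sigma_{P'}$, rather than $\sigma_{P}$, enters. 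Taking $f=1$ yields $b_{0}=V(P)$ and $b_{1}=\tfrac12\int_{\partial P}\sigma_{P'}$, while taking $f=u$ and multiplying by $k$ (the rounding in $w_{k}$ and the interior non-smoothness of $u$ being pushed into the $O(k^{n-1})$ remainder) yields $a_{0}=\int_{P}u\,dp$ and $a_{1}=\tfrac12\int_{\partial P}u\,\sigma_{P'}$.

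Substituting into the definition $f(\mathcal{X},\mathcal{L})=b_{0}^{-2}(a_{1}b_{0}-a_{0}b_{1})$ and simplifying gives
\[
f(\mathcal{X},\mathcal{L})=\frac{1}{2V(P)}\Big(\int_{\partial P}u\,\sigma_{P'}-\frac{\int_{\partial P}\sigma_{P'}}{V(P)}\int_{P}u\,dp\Big)=\frac{1}{2V(P)}\,\mathcal{L}_{\sigma_{P'}}(u),
\]
by the defining formula \ref{eq:def of general  linear functional intro} of $\mathcal{L}_{\sigma}$; thus the Donaldson--Futaki invariant equals $\mathcal{L}_{\sigma_{P'}}(u)$ up to the numerical factor $\pm1/(2V(P))$, the sign depending only on the convention fixing the direction of the degeneration. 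I expect the main obstacle to be the rigorous Euler--Maclaurin step for a \emph{merely} piecewise-affine $u$: one must check that the interior creases of $u$ contribute only at order $O(k^{n-2})$, which I would handle either by applying the smooth expansion on each domain of linearity and bounding the crease terms, or by a smoothing argument. A secondary point requiring care is the identification of the weight with $\lceil k\,u(m/k)\rceil$ and the harmless normalization ambiguity $w_{k}\mapsto w_{k}+(\mathrm{const})N_{k}$: this shifts $(a_{0},a_{1})\mapsto(a_{0}+cb_{0},a_{1}+cb_{1})$ and hence leaves $a_{1}b_{0}-a_{0}b_{1}$ invariant, exactly mirroring the invariance of $\mathcal{L}_{\sigma_{P'}}(u)$ under $u\mapsto u+c$.
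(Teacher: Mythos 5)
The paper does not prove this proposition: it is quoted from Donaldson \cite{d0}, with only the construction of $Q$ as ``one side of the graph of $u$'' sketched and the value $a_{0}=-\int_{P}u\,dp$ recalled later in the proof of Proposition \ref{prop:log don-fut inv for toric log fano}. Your argument is essentially a correct reconstruction of Donaldson's original computation: the polytope $Q$, the identification $N_{k}=\#(kP\cap\Z^{n})$, the weight sum $w_{k}$, and the two-term Ehrhart/Euler--Maclaurin expansion with boundary term against the lattice-normalized measure $\sigma_{P}'$ are exactly the right ingredients, and your observation that the invariant $a_{1}b_{0}-a_{0}b_{1}$ is unchanged under $w_{k}\mapsto w_{k}+cN_{k}$ correctly disposes of the normalization ambiguity. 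Two small points. First, your parenthetical claim that the rounding error in $w_{k}=\sum_{m}\lceil k\,u(m/k)\rceil$ sits in the $O(k^{n-1})$ remainder is false as stated: each rounding defect lies in $[0,1)$, so a priori the total defect is $O(k^{n})$, i.e.\ exactly the order of $a_{1}k^{n}$. What saves you is your initial reduction to $u$ integral piecewise-affine, for which $k\,u(m/k)=\max_{i}(\langle a_{i},m\rangle+kc_{i})$ is already an integer and there is no rounding at all; you should lean on that reduction rather than on an order-of-magnitude claim. Second, with the sign conventions of section \ref{sub:K-stability} (K-polystability means $f(\mathcal{X},\mathcal{L})\leq0$, while positivity of $\mathcal{L}_{\sigma_{P}'}$ is the stability condition, and $a_{0}=-\int_{P}u\,dp$ in the paper), the numerical factor is negative; this is harmless here since the statement only asserts proportionality, but it matters when the proposition is fed into Proposition \ref{prop:log don-fut inv for toric log fano}.
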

Combining the previous proposition with formula \ref{eq:formula for F' in terms of F}
we then arrive at the following 
\begin{prop}
\label{prop:log don-fut inv for toric log fano}(same notation as
in the previous proposition). Write $L=-(K_{X}+\Delta)$ for a toric
divisor $\Delta.$ Then the log Donaldson-Futaki invariant of $(X,L,\Delta)$
is given by $\mathcal{L}_{\sigma_{P}}(u).$\end{prop}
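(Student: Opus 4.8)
The plan is to combine Donaldson's computation of the ordinary Donaldson--Futaki invariant (Proposition \ref{pro:(Donaldson).-toric test}) with an explicit evaluation of the two correction terms $a_{0}\tilde{b}_{0}/b_{0}-\tilde{a}_{0}$ that enter the definition of the \emph{log} Donaldson--Futaki invariant, and then to recognize the outcome through formula \ref{eq:formula for F' in terms of F}. By Proposition \ref{pro:(Donaldson).-toric test} we already have $f(\mathcal{X},\mathcal{L})=\mathcal{L}_{\sigma_{P}'}(u)$ in the normalization we fix, so the whole task reduces to showing that the correction terms reproduce exactly the difference $\mathcal{L}_{\sigma_{P}}(u)-\mathcal{L}_{\sigma_{P}'}(u)$ on the right-hand side of \ref{eq:formula for F' in terms of F}.

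First I would reduce the correction to the toric invariant divisors. Writing $\Delta=\sum_{F}(1-a_{F})D_{F}$ and using the additivity of the relevant weight and dimension coefficients over the components of $\Delta$ (this is precisely the ``first assume $\Delta$ irreducible and extend by linearity'' clause in the definition, which also accommodates negative coefficients, i.e. $a_{F}>1$), the problem becomes the computation, for each facet $F$, of the leading coefficients of $\dim H^{0}((D_{F})_{0},kL_{0})$ and of the weight on $\det H^{0}((D_{F})_{0},kL_{0})$. The key geometric point is that the restriction of $(\mathcal{X},\mathcal{L})$ to the invariant divisor $D_{F}$ is again a toric test configuration, now for the $(n-1)$-dimensional polarized toric variety $(D_{F},L|_{D_{F}})$ whose polytope is the facet $F$ and whose defining piece-wise affine convex function is the restriction $u|_{F}$; this is visible from Donaldson's construction, where the part of the graph polytope $Q$ lying over $F$ is exactly the graph polytope of $u|_{F}$.

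Given this, I would evaluate the coefficients by Ehrhart/lattice-point asymptotics, using that under the induced $\C^{*}$-action the weight of the one-dimensional weight space labelled by a lattice point $m$ equals $k\,u(m/k)$ to leading order. This gives $a_{0}=\int_{P}u\,dp$ and $b_{0}=\mathrm{Vol}(P)=\int_{P}dp$, while over the facet $F$ one obtains $\int_{F}\sigma_{P}'$ for the leading dimension coefficient and $\int_{F}u\,\sigma_{P}'$ for the leading weight coefficient, $\sigma_{P}'$ being exactly the lattice surface measure whose mass on $F$ is the normalized $(n-1)$-volume of the facet. Hence $\tilde{b}_{0}=\sum_{F}(1-a_{F})\int_{F}\sigma_{P}'$ and $\tilde{a}_{0}=\sum_{F}(1-a_{F})\int_{F}u\,\sigma_{P}'$, and on substituting $\int_{F}\sigma_{P}'/\mathrm{Vol}(P)=b_{F}$ the correction becomes
\[
a_{0}\frac{\tilde{b}_{0}}{b_{0}}-\tilde{a}_{0}=\sum_{F}(1-a_{F})\Bigl(b_{F}\int_{P}u\,dp-\int_{F}u\,\sigma_{P}'\Bigr),
\]
which is precisely the right-hand side of \ref{eq:formula for F' in terms of F}. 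Adding $f(\mathcal{X},\mathcal{L})=\mathcal{L}_{\sigma_{P}'}(u)$ then yields $f(\mathcal{X},\mathcal{L},\Delta)=\mathcal{L}_{\sigma_{P}}(u)$. Note that the additive normalization of $u$ (Donaldson's height constant) shifts $a_{0}$ by $R b_{0}$ and $\tilde{a}_{0}$ by $R\tilde{b}_{0}$, so it drops out of the correction, consistently with $\mathcal{L}_{\sigma}$ annihilating constants.

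The step I expect to be the main obstacle is the normalization bookkeeping. One must make sure that the implicit numerical factor in Donaldson's identity $f(\mathcal{X},\mathcal{L})=\mathcal{L}_{\sigma_{P}'}(u)$, which arises from the \emph{sub-leading} Ehrhart coefficients $a_{1},b_{1}$ via $f=(a_{1}b_{0}-a_{0}b_{1})/b_{0}^{2}$, matches the factor produced by the \emph{leading}-order correction computed above, so that the two genuinely add up to $\mathcal{L}_{\sigma_{P}}(u)$ rather than to a rescaling of it. A useful consistency check is the product (linear) case $u=\langle a,p\rangle$: there the test configuration is induced by the automorphism flow of the vector field with components $a$, and the identity must reduce, up to the sign recorded in the Remark, to the toric log Futaki computation $f_{(X,\Delta)}(W)=-\mathcal{L}_{\sigma_{P}}(\langle a,p\rangle)$ of Proposition \ref{pro:toric futaki inv}. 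One should also verify that the restricted central fibre $(D_{F})_{0}$ is flat over the base, so that the Ehrhart asymptotics apply uniformly across the family.
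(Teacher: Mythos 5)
Your proposal follows the same route as the paper's own proof: reduce by linearity to a single facet $F$, apply Donaldson's computation (Proposition \ref{pro:(Donaldson).-toric test}) both to $(X,L)$ and to the restricted polarized toric variety $(D_{F},L_{|D_{F}})$ to identify $a_{0}$, $b_{0}$, $\tilde{a}_{0}$, $\tilde{b}_{0}$, and then conclude via formula \ref{eq:formula for F' in terms of F}. The additional care you take with the Ehrhart normalizations and the consistency check against Proposition \ref{pro:toric futaki inv} goes beyond what the paper records, but the argument is essentially identical.
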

\begin{proof}
By linearity we may as well assume that $\Delta=D_{F}$ for a fixed
facet $F$ of $P.$ As explained in the proof of Prop \ref{pro:(Donaldson).-toric test}
given in \cite{d0} $a_{0}=-\int_{P}udp$ and since we may apply the
same result to the polarized toric variety $(D_{F},L_{|D_{F}})$ we
also have $\tilde{a}_{0}=-\int_{F}ud\sigma_{P}'.$ Moreover, since
$b_{0}=\int_{P}dp$ and hence similarly $\tilde{b}_{0}=\int_{F}\sigma_{P}'$
combining the previous proposition with formula \ref{eq:formula for F' in terms of F}
concludes the proof.
\end{proof}

\subsubsection{Enf of proof of Theorem \ref{thm:-existence of ke intro}}

By Theorem \ref{thm:-existence of ke intro} we just have the verify
the equivalence between the last three points above. But this follows
immediately from Lemma \ref{lem:lower bound on l when bary} combined
with Prop \ref{pro:toric futaki inv} and Prop \ref{prop:log don-fut inv for toric log fano},
at least for Futaki invariants defined with respect to toric vector
fields $V.$ Finally, if there is a Kähler-Einstein metric on for
$(X,\Delta)$ and $\Delta$ is \emph{effective,} then, by Prop \ref{prop:k-e max ding funtional for general log fano}
the corresponding Ding type functional is bounded from above and hence
so is the corresponding Mabuchi type functional $\mathcal{M}_{(X,\Delta)}$
(by the analogue of Prop \ref{pro:comparison mab and ding for P };
see \cite{berm2,bbegz}). But $\mathcal{M}_{(X,\Delta)}(\phi_{t})$
is linear in $t$ if $\phi_{t}$ comes from the flow of $V$ and hence
it must be that it is actually constant, i.e. its derivative $f_{(X,\Delta)}(V)$
vanishes.

\section{Convergence of the \label{sec:Convergence-of-the}Kähler-Ricci flow}

Recall that the Kähler-Ricci flow on a Fano manifold $X$ is defined
by 
\begin{equation}
\frac{d\omega_{t}}{dt}=-\mbox{Ric }\omega_{t}+\omega_{t}\label{eq:k-r flow for k=0000E4hler metrics}
\end{equation}
for a given initial Kähler form $\omega_{0}.$ It may be equivalently
formulated as the following flow of positively curved metrics $\phi_{t}$
on $-K_{X}:$ 
\[
\frac{d\phi_{t}}{dt}=\log\frac{MA(\phi_{t})}{\tilde{\mu}_{\phi_{t}}},
\]
 where $\tilde{\mu}_{\phi_{t}}$ is the measure defined by the metric
$\phi_{t}$ (formula \ref{eq:mu of phi}), normalized by its mass.
As shown by Song-Tian \cite{so-t} the latter flow can also be given
a meaning on any Fano variety $X$ with log terminal singularities.
In particular, the corresponding flow of currents $\omega_{t}:=dd^{c}\phi_{t}$
restricts to the usual Kähler-Ricci flow \ref{eq:k-r flow for k=0000E4hler metrics}
on the regular locus $X_{0}.$ In fact, all the constructions and
results in this section carries over immediately to the general setting
of a log Fano variety $(X,\Delta)$ with klt singularities, with $X_{0}$
denoting the complement of $\Delta$ in the regular locus of $X,$
but to simplify the notation we will assume that $\Delta=0.$
\begin{thm}
\label{thm:conv of k-r flow}Let $X$ be a toric Fano variety and
let $\omega_{t}$ evolve according to the corresponding Kähler-Ricci
flow. Then there exists a family $A_{t}$ of toric automorphisms of
$X$ such that $A_{t}^{*}\omega_{t}$ converges weakly towards a (singular)
Kähler-Ricci soliton $\omega$ on $X.$ 
\end{thm}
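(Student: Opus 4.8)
The plan is to transport the flow to the convex setting via the toric dictionary of Section~\ref{sec:Toric-varieties,-polytopes} and then run the variational machinery of Section~\ref{sec:Monge-Amp=0000E8re-equations-in}. First I would reduce to $T$-invariant initial data: since the flow \eqref{eq:k-r flow for k=0000E4hler metrics} is equivariant under the maximal compact torus $T$ and we only claim convergence modulo toric automorphisms, it suffices to treat a $T$-invariant initial metric, for which $\omega_{t}=dd^{c}\phi_{t}$ stays $T$-invariant. By \eqref{eq:toric can measure} the flow then becomes the real parabolic Monge-Amp\`ere equation $\partial_{t}\phi_{t}=\log\bigl(MA(\phi_{t})/e^{-\phi_{t}}\bigr)+\log\int e^{-\phi_{t}}dx$ for a curve $\phi_{t}\in\mathcal{E}_{P}^{1}(\R^{n})$, whose stationary points solve \eqref{eq:real k-e}. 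A direct computation gives the key monotonicity: $\frac{d}{dt}\mathcal{G}(\phi_{t})=\int\log(MA(\phi_{t})/\tilde{\mu}_{\phi_{t}})(MA(\phi_{t})-\tilde{\mu}_{\phi_{t}})\geq0$, the integrand being nonnegative by $(\log s)(s-1)\geq0$. Here I would invoke \cite{bbegz} to guarantee that the weak flow is well defined, stays in the finite-energy class, and that this differentiation is legitimate.

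When $0$ is not the barycenter of $P$ no stationary point exists and $\mathcal{G}$ is unbounded, so the flow drifts to infinity along the action of $\R^{n}$; the crucial idea is to absorb this drift into the soliton vector field. Let $a$ be the unique critical point of the Laplace transform of $1_{P}dp$, let $V$ be the associated toric vector field and let $A_{t}$ be the one-parameter group of toric automorphisms it generates, acting in logarithmic coordinates by translation so that $\tilde{\phi}_{t}(x):=(A_{t}^{*}\phi_{t})(x)=\phi_{t}(x-ta)$. Since $MA$ commutes with translations, a short calculation shows that $\tilde{\phi}_{t}$ solves the modified flow $\partial_{t}\tilde{\phi}_{t}=\log\bigl(MA(\tilde{\phi}_{t})/e^{-\tilde{\phi}_{t}+\langle a,d\tilde{\phi}_{t}\rangle}\bigr)+c(t)$, whose stationary points are exactly the solitons \eqref{eq:real k-r-s}. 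This is precisely the gradient flow of the weighted Ding functional $\mathcal{G}_{(P,g)}$ with $g(p)=e^{-\langle a,p\rangle}$, and the same computation as above yields $\frac{d}{dt}\mathcal{G}_{(P,g)}(\tilde{\phi}_{t})\geq0$. Because $a$ was chosen so that $0$ is the barycenter of $(P,g)$, Theorem~\ref{thm:coerciv k-e} makes $\mathcal{G}_{(P,g)}$ coercive and in particular bounded from above, so $\mathcal{G}_{(P,g)}(\tilde{\phi}_{t})$ increases to its supremum.

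Thus $\tilde{\phi}_{t}$ is an asymptotically maximizing curve for $\mathcal{G}_{(P,g)}$. Sup-normalizing and using the coercivity of Theorem~\ref{thm:coerciv k-e} exactly as in Step~1 of the proof of Theorem~\ref{thm:existe real ma intr}, I obtain a uniform bound $\mathcal{E}(\tilde{\phi}_{t},\phi_{P})\geq-C$; then the compactness of $\mathcal{P}(\R^{n})_{0}$ and the upper semicontinuity of $\mathcal{E}$ force every subsequential limit to maximize $\mathcal{G}_{(P,g)}$, i.e.\ to solve the soliton equation \eqref{eq:real k-r-s}. The uniqueness of the soliton modulo translations from Theorem~\ref{thm:existe of k-r sol intro}, combined with the monotonicity of $\mathcal{G}_{(P,g)}$, upgrades subsequential convergence to genuine convergence of $\tilde{\phi}_{t}$ after a harmless normalization of the additive constant; translating back gives $A_{t}^{*}\omega_{t}\to\omega$ weakly for the soliton current $\omega$.

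The main obstacle is analytic rather than convex-geometric: one must know, building on \cite{bbegz}, that the (possibly singular) flow exists in $\mathcal{E}^{1}$, that the energy and entropy quantities are differentiable in $t$ so that both monotonicity identities hold rigorously, and that the drift induced by $A_{t}$ is asymptotically the exact linear drift $ta$ --- this last point is what singles out $a$ as the critical point of the Laplace transform and makes $\mathcal{G}_{(P,g)}$ the correct Lyapunov functional. A secondary technical issue is promoting the local-uniform convergence of the $\tilde{\phi}_{t}$ to weak convergence of currents on $X$ and, if one does not wish to postulate $T$-invariant initial data, justifying the reduction to that case.
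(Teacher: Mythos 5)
Your overall strategy coincides with the paper's: conjugate the flow by $\exp(tV)$ for the soliton vector field $V$ determined by the critical point $a$ of the Laplace transform of $1_{P}dp$, observe that the modified Ding functional $\mathcal{G}_{g}$ is non-decreasing along the conjugated flow, invoke the coercivity of Theorem \ref{thm:coerciv k-e} to get compactness of the normalized curve, and identify the limit via the variational characterization and uniqueness of the soliton. However, there is a genuine gap at the pivotal step. You assert that $\mathcal{G}_{(P,g)}(\tilde{\phi}_{t})$ ``increases to its supremum'' and hence that the curve is asymptotically maximizing. Monotonicity plus the upper bound from Theorem \ref{thm:coerciv k-e} only give that $\mathcal{G}_{(P,g)}(\tilde{\phi}_{t})$ increases to \emph{some} limit $L\leq\sup\mathcal{G}_{(P,g)}$; nothing you have said rules out $L<\sup\mathcal{G}_{(P,g)}$. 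The subsequent appeal to upper semicontinuity of $\mathcal{E}$ points the wrong way: usc only yields $\limsup_{j}\mathcal{G}_{(P,g)}(\tilde{\phi}_{t_{j}})\leq\mathcal{G}_{(P,g)}(\tilde{\psi})$ for a subsequential limit $\tilde{\psi}$, which cannot force $\tilde{\psi}$ to be a maximizer.

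The paper closes this gap in two stages, both absent from your proposal. First, since $\frac{d}{dt}\mathcal{G}_{g}(\psi_{t})$ equals a relative entropy whose time integral is finite, one extracts a subsequence $t_{j}$ along which it tends to zero; Pinsker's inequality then gives $\left\Vert MA_{g}(\tilde{\psi}_{t_{j}})-\tilde{\mu}_{\tilde{\psi}_{t_{j}}}\right\Vert \rightarrow0$ in total variation, and the continuity of $MA$ on the full-mass class (Prop \ref{pro:cont for full ma}, available because $\mathcal{E}(\tilde{\psi}_{t_{j}})\geq-C$) shows that the subsequential limit $\tilde{\psi}$ solves the soliton equation, hence maximizes $\mathcal{G}_{g}$ by Prop \ref{prop:var prop wrt G and M}. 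Second --- and this is the technical heart of the proof --- one must show $\mathcal{E}_{g}(\tilde{\psi}_{t_{j}})\rightarrow\mathcal{E}_{g}(\tilde{\psi})$ (not merely $\limsup\leq$), so that $L=\lim_{j}\mathcal{G}_{g}(\tilde{\psi}_{t_{j}})=\mathcal{G}_{g}(\tilde{\psi})=\max\mathcal{G}_{g}$; only then does monotonicity make the \emph{whole} curve asymptotically maximizing, allowing the convergence argument from the proof of Theorem \ref{thm:existe real ma intr} to apply. This energy continuity is not automatic: the paper derives it from the uniform entropy bound $D_{\mu_{0}}(MA_{g}(\tilde{\psi}_{t}))\leq C'$ (a consequence of the lower bound on the modified Mabuchi functional) combined with the compactness lemma of \cite{bbegz} (their Theorem 3.10). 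Your closing caveat that the monotonicity identities must ``hold rigorously'' does not identify this issue, which is where the actual work lies.
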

Given the coercivity estimate for the modified Mabuchi functional
in Theorem \ref{thm:mab functional is coerc for toric is equi to ke}
the proof of the previous theorem is a rather straight-forward adaptation
of the proof in \cite{bbegz} of the convergence of Kähler-Ricci flow
on a Fano manifold for which the ordinary Mabuchi K-energy functional
is proper. 

Turning to the details of the proof we let $\psi_{t}$ be defined
as the pull-back of the metric $\phi_{t}$ under the time $t$ flow
$\exp(tV)$ of the holomorphic vector field $V,$ where $V$ is the
unique toric vector field with components $a_{i}$ determined by the
canonical polytope $P$ corresponding to $X.$ Then $\psi_{t}$ satisfies
the following modified Kähler-Ricci flow (compare \cite{t-z2}): 
\begin{equation}
\frac{d\psi_{t}}{dt}=\log\frac{MA_{g}(\psi_{t})}{\tilde{\mu}_{\psi_{t}}},\label{eq:mod k-r flow}
\end{equation}
 where, $g(p)=e^{\left\langle a,p\right\rangle }$ and $MA_{g}$ is
the corresponding Monge-Ampère type operator. Now, a direct computation
reveals that, along the latter flow, 
\[
\frac{d\mathcal{G}(\psi_{t})}{dt}=D(MA_{g}(\psi_{t}),\tilde{\mu}_{\psi_{t}}),
\]
 where we recall that $D$ denotes, as before, the relative entropy.
In particular, $\mathcal{G}_{g}(\psi_{t})$ is increasing in $t.$
Strictly, speaking the previous computation is only valid in the smooth
setting, but it can easily be justified by regularizing precisely
as in the proof of Lemma 6.4 in \cite{bbegz}.

Now, by Theorem \ref{thm:coerciv k-e}, $\mathcal{G}_{g}(\psi_{t})$
is bounded from above and hence there is a subsequence $t_{j}$ such
that the rhs above tends to zero. But then it follows from the Pinsker
inequality that 
\begin{equation}
\left\Vert \tilde{\mu}_{\psi_{t_{j}}}-MA_{g}(\psi_{t_{j}})\right\Vert \rightarrow0,\label{eq:abs var norm}
\end{equation}
 in the absolute variation norm of the measures (i.e. the $L^{1}-$norm
between the densities wrt any fixed background measure). Let now $\tilde{\psi}_{t}$
be the normalization of $\psi_{t},$ obtained by applying an appropriate
toric automorphism $B_{t}$ and denote by $\tilde{\psi}$ a weak limit
point of $\tilde{\psi}_{t_{j}}.$ By invariance the convergence \ref{eq:abs var norm}
still holds when $\psi_{t}$ is replaced with its normalization $\tilde{\psi}_{t}$
and $\mathcal{G}_{g}(\tilde{\psi}_{t})$ is still increasing in $t$
(by the invariance of $\mathcal{G}_{g}$ under toric automorphism,
which holds as in the proof of Theorem \ref{thm:existe real ma intr}).
It thus follows from the relative coercivity inequality in Thm \ref{thm:coerciv k-e}
that
\begin{equation}
\mathcal{E}(\tilde{\psi}_{t})\geq-C\label{eq: pf k-r lower bound on energy}
\end{equation}
 and hence $\mathcal{E}(\tilde{\psi})\geq-C.$ In particular, $\tilde{\psi}_{t}$
and $\tilde{\psi}$ have full Monge-Ampère mass and hence it follows
from Prop \ref{pro:cont for full ma} and \ref{eq:abs var norm}  that
\begin{equation}
MA_{g}(\tilde{\psi})=\tilde{\mu}_{\tilde{\psi}},\label{eq:soliton eq in k-r proof}
\end{equation}
All we have to do now is to verify the following 
\[
\mbox{Claim:\,\,}\,\,\,\mathcal{E}_{g}(\tilde{\psi}_{t_{j}})\rightarrow\mathcal{E}_{g}(\tilde{\psi})
\]
Indeed, accepting the latter claim for the moment we note that, since,
$\tilde{\psi}$ satisfies the equation \ref{eq:soliton eq in k-r proof}
and hence (by Prop \ref{prop:var prop wrt G and M}) maximizes the
functional $\mathcal{G}_{g}$ it follows, using that $\mathcal{G}(\tilde{\psi}_{t})$
is increasing in $t,$ that any subsequence of $\tilde{\psi}$ is
an asymptotic maximizer of the functional $\mathcal{G}_{g}.$ Hence,
by the proof of Theorem \ref{thm:existe real ma intr}, it converges
to the unique normalized finite energy minimizer of $\mathcal{G}_{g}$
(which thus coincides with $\tilde{\psi}).$ 

All in all, setting $A_{t}=B_{t}\circ\exp(tV)$ concludes the proof
of the theorem up to the claim above to whose proof we finally turn.
First note that since the modified Mabuchi functional $\mathcal{M}_{g}$
is bounded from below it follows from \ref{eq: pf k-r lower bound on energy}
that 
\[
D_{\mu_{0}}(MA_{g}(\tilde{\psi}_{t}))\leq C'
\]
 At this point we can invoke the following crucial compactness property
(see Theorem 3.10 in \cite{bbegz}):
\begin{lem}
Let $\mu_{0}$ be a probability measure with locally Hölder potentials
and let $\phi_{j}\rightarrow\phi$ be a weakly convergent sequence
such that $\mathcal{E}(\phi_{j})\geq-C.$ For each probability measure
$\nu$ with finite relative entropy, i.e. $D_{\mu_{0}}(\nu)<\infty,$
we then have 
\[
\int_{X}(\phi_{j}-\phi)\nu\rightarrow0,
\]
 uniformly wrt $D_{\mu_{0}}(\nu).$
\end{lem}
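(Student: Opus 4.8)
The plan is to deduce the statement from the classical Gibbs (Donsker--Varadhan) duality for relative entropy, combined with a \emph{uniform} exponential integrability of the finite-energy potentials. Write $\psi_{j}:=\phi_{j}-\phi$, so that $\psi_{j}\to 0$ weakly; by the standard convergence properties of the bounded-energy class---weakly convergent sequences with $\mathcal{E}(\phi_{j})\geq-C$ converge in the pluripotential (Monge-Amp\`ere) capacity---together with the fact that the H\"older measure $\mu_{0}$ is dominated by that capacity and charges no pluripolar set, one gets $\psi_{j}\to 0$ in $\mu_{0}$-measure and in $L^{1}(X,\mu_{0})$. The elementary entropy inequality reads, for any $t>0$ and any probability measure $\nu$,
\[
\pm\,t\int_{X}\psi_{j}\,\nu\ \le\ D_{\mu_{0}}(\nu)+\log\int_{X}e^{\pm t\psi_{j}}\,\mu_{0},
\]
so that, setting $\Lambda_{j}^{\pm}(t):=\log\int_{X}e^{\pm t\psi_{j}}\,\mu_{0}$,
\[
\Big|\int_{X}\psi_{j}\,\nu\Big|\ \le\ \frac{1}{t}\Big(D_{\mu_{0}}(\nu)+\max\big(\Lambda_{j}^{+}(t),\Lambda_{j}^{-}(t)\big)\Big).
\]
Everything is thereby reduced to controlling the two scalar quantities $\Lambda_{j}^{\pm}(t)$ for \emph{large} $t$.

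The heart of the argument is the following integrability input: for \emph{every} $t>0$ one has $\sup_{j}\int_{X}e^{t|\psi_{j}|}\,\mu_{0}<\infty$, and in fact $\int_{X}e^{\pm t\psi_{j}}\,\mu_{0}\to 1$ as $j\to\infty$. Here I would invoke that a finite-energy metric has identically vanishing Lelong numbers, whence $e^{-t\phi}\in L^{1}(\mu_{0})$ for all $t>0$ (compare Remark \ref{rem:integr and lelong in kahler}); the hypothesis that $\mu_{0}$ has \emph{locally H\"older} potentials is precisely what upgrades this to a bound \emph{uniform} over the family $\{\phi_{j}\}$ of fixed bounded energy (a Skoda--Zeriahi--Kolodziej type estimate). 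In particular the family $\{e^{t\psi_{j}}\}_{j}$ is uniformly $\mu_{0}$-integrable for each fixed $t$ (apply the bound at exponent $t+1$), and combined with $\psi_{j}\to 0$ in $\mu_{0}$-measure this gives $\int_{X}e^{\pm t\psi_{j}}\,\mu_{0}\to 1$ by Vitali's theorem; that is, $\Lambda_{j}^{\pm}(t)\to 0$ for each fixed $t$.

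To finish I would run a diagonal argument. For each integer $m$ choose $J_{m}$, increasing in $m$, with $\max(\Lambda_{j}^{+}(m),\Lambda_{j}^{-}(m))\leq 1/m$ for all $j\geq J_{m}$, and set $t_{j}:=m$ for $J_{m}\leq j<J_{m+1}$; then $t_{j}\to\infty$ while $\max_{\pm}\Lambda_{j}^{\pm}(t_{j})\to 0$. Feeding this into the displayed bound yields, for every $\nu$ with $D_{\mu_{0}}(\nu)\leq A$,
\[
\Big|\int_{X}(\phi_{j}-\phi)\,\nu\Big|\ \le\ \frac{1}{t_{j}}\big(A+o(1)\big)\ \longrightarrow\ 0,
\]
and since the right-hand side depends on $\nu$ only through the bound $A$ on its entropy, the convergence is uniform on each sublevel set $\{D_{\mu_{0}}\leq A\}$, as asserted. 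The main obstacle is exactly the uniform \emph{all-order} exponential integrability used above: it is here, and only here, that the H\"older regularity of $\mu_{0}$ and the uniform energy bound enter, and it cannot be replaced by a bound valid only for small $t$ (which would leave an irreducible error of size $A/t_{0}$). It is the freedom to send $t\to\infty$ that defeats the potential concentration of $\nu$ on the super-level sets $\{|\psi_{j}|>R\}$.
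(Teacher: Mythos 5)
The paper does not actually prove this lemma: it is quoted verbatim from \cite{bbegz} (Theorem 3.10 there), and your argument is a faithful reconstruction of the route taken in that source --- the Donsker--Varadhan/Gibbs duality $\pm t\int\psi_{j}\,\nu\leq D_{\mu_{0}}(\nu)+\log\int e^{\pm t\psi_{j}}\mu_{0}$, a uniform all-order exponential integrability of $\psi_{j}=\phi_{j}-\phi$ against $\mu_{0}$ coming from the H\"older hypothesis (in \cite{bbegz} this is obtained as a bound on $\int\exp(\epsilon|\phi-\phi'|^{p})\mu_{0}$ with an exponent $p>1$, which indeed yields every linear exponent $t$), and finally letting $t\to\infty$ after showing $\Lambda_{j}^{\pm}(t)\to0$. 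The strategy, the role of each hypothesis, and the closing diagonalization are all correct.

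The one step you pass over too quickly is the assertion that $\psi_{j}\to0$ in $\mu_{0}$-measure because ``weakly convergent sequences with bounded energy converge in capacity.'' The half of this that is essentially free is the Hartogs direction: $(\phi_{j}-\phi)_{+}\leq(\sup_{k\geq j}\phi_{k})^{*}-\phi\downarrow0$ quasi-everywhere, which takes care of $\Lambda_{j}^{+}(t)$. The delicate half is $\mu_{0}(\{\phi_{j}<\phi-\delta\})\to0$: weak (i.e. $L^{1}(dV)$) convergence together with a uniform energy bound does not \emph{formally} prevent $\phi_{j}$ from dipping below $\phi-\delta$ on sets that are Lebesgue-small but $\mu_{0}$-substantial (note that a measure with H\"older local potentials need not be absolutely continuous with respect to $dV$), and this is exactly the scenario that would keep $\Lambda_{j}^{-}(t)$ bounded away from $0$ and defeat the argument. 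To close this you need one further ingredient, for instance the continuity of $\phi\mapsto\int\phi\,\mu_{0}$ along weakly convergent sequences of uniformly bounded energy (valid for $\mu_{0}$ with bounded, a fortiori H\"older, local potentials); combined with $\int(\phi-\phi_{j})_{-}\,\mu_{0}\to0$ from the Hartogs direction and dominated convergence, this gives $\int(\phi-\phi_{j})_{+}\,\mu_{0}\to0$ and hence the missing convergence in $\mu_{0}$-measure by Chebyshev. With that supplied, your proof is complete and coincides with the cited one.
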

Applying the previous lemma to $\phi_{j}:=\tilde{\psi}_{t_{j}}$ and
$\nu=MA_{g}(\tilde{\psi}_{t_{j}})$ gives, after perhaps passing to
a subsequence, that 
\[
\int_{X}(\tilde{\psi}_{t_{j}}-\tilde{\psi})MA_{g}(\tilde{\psi}_{t_{j}})\rightarrow0
\]
 But then it follows, since $\tilde{\psi}_{t_{j}}$ is sup-normalized,
that the convergence in the claim indeed holds (compare the proof
of Lemma 2.4 in \cite{bbegz}).

\subsection{\label{sub:Regularity-of-singular}Regularity of singular Kähler-Ricci
solitons}

Here we will use the Kähler-Ricci flow to show that any toric (singular)
Kähler-Ricci soliton $(\omega,V)$ on a toric Fano variety $X$ is
such that $\omega(=dd^{c}\psi)$ is smooth on the regular locus $X_{0}.$
As explained in section \ref{sub:Toric-K=0000E4hler-Einstein-metrics}
we already know that $\psi$ is continuous, viewed as a metric on
$-K_{X}.$ We take $\psi:=\psi_{0}$ as the initial data for the modified
Kähler-Ricci flow $\psi_{t}$ (formula \ref{eq:mod k-r flow}). By
the work of Song-Tian \cite{so-t} the usual Kähler-Ricci flow $\phi_{t}$
is smooth om $X_{0}$ for $t>0$ and hence so is $\psi_{t},$ since
the two flows coincide up to conjugation by the flow of $V.$ Now,
by Prop \ref{prop:var prop wrt G and M} $\psi_{0}$ is a maximizer
for $\mathcal{G}_{g}$ and, since, as explained above, the corresponding
functional $\mathcal{G}_{g}(\psi_{t})$ is increasing $\psi_{t}$
is also a maximizer for $\mathcal{G}_{g}$ for any $t>0$ (more precisely,
as explained above $\mathcal{G}_{g}(\psi_{t})$ is increasing for
$t>0,$ which is enough since it is also continuous up to $t=0).$
But then it follows from Prop \ref{prop:var prop wrt G and M} that
for any $t>0,$ $\psi_{t}$ satisfies the corresponding Kähler-Ricci
soliton equation and is smooth on $X_{0}.$ By the uniqueness of solutions
modulo automorphisms we deduce that $\psi_{0}$ is also smooth on
$X_{0}.$ Actually, we do not need to use the uniqueness: since the
time-derivative of the flow vanishes for $t>0$ it follows, by continuity,
that $\psi_{0}=\psi_{t}$ for any $t>0$ and hence $\psi_{0}$ is
also smooth on $X_{0},$ as desired.

\section{Appendix: proof of Lemma \ref{pro:var prop of leg}}

A proof of the first point in Lemma \ref{pro:var prop of leg} can
be found in \cite{r}; but it is also a special case of the following
slightly more general claim that we will use to prove the second point:
\emph{let $G_{0}$ be a proper upper semi-continuous function on $\R^{n}$
with a unique maximizer $x_{0}$ and let $G_{t}(x):=G_{0}(x)+tv(x)$
for a bounded continuous function $v.$ Then $g(t):=\sup_{x\in\R^{n}}G_{t}(x)$
is differentiable at $t=0$ and 
\[
\frac{dg(t)}{dt}_{t=0}=v(x_{0})
\]
}This is without doubt a well-known fact but for completeness we include
the proof. First note that $G_{0}$ is bounded from above (since it
is usc and hence, by properness, $G_{0}(x)\rightarrow-\infty$ as
$|x|\rightarrow-\infty.$ Since $v$ is bounded it then follows that,
for $t$ sufficiently small, the sup of $G_{t}$ is attained at some
(but not necessarily unique) point $x_{t}.$ Hence, $g(t)-g(0)=$
\[
=G_{t}(x_{t})-G_{0}(x_{0})=\left(G_{t}(x_{0})-G_{0}(x_{0})\right)+\left(G_{0}(x_{t})-G_{0}(x_{0})\right)+t(v(x_{t})-v(x_{0}))
\]
Next we will show that 
\begin{equation}
v(x_{t})-v(x_{0})=o(t).\label{eq:proof of claim diff}
\end{equation}
 By the continuity of $v$ it will be enough to establish that $x_{t}=x_{0}+o(t).$
To this end we first note that since $v$ is bounded and $G_{0}$
is proper it follows that the $x_{t}$ stay in a compact subset $K$
and $\limsup_{t\rightarrow0}G(x_{t})\geq G(x_{0})(=\sup_{x\in\R^{n}}G_{t}(x)).$
Hence, if $x_{*}$ is a limit point of $x_{t}$ then the upper-semicontinuity
of $G_{0}$ implies that $x_{*}$ is a maximizer for $G_{0}.$ By
the uniqueness assumption this means that $x_{*}=x_{0}$ and hence
$x_{t}=x_{0}+o(t)$ as desired, thus proving \ref{eq:proof of claim diff}. 

If $G_{0}$ were differentiable at $x_{0}$ we could use the maximization
property of $x_{0}$ to deduce that $\left(G_{0}(x_{t})-G_{0}(x_{0})\right)=o(t)$
and hence that $\frac{dg(t)}{dt}_{t=0}=v(x_{0})+0+0.$ But in general
we only know, a priori, that that $\left(G_{0}(x_{t})-G_{0}(x_{0})\right)\leq0$
with equality at $t=0,$ so that $\frac{dg(t)}{dt}_{t=0^{+}}\leq v(x_{0}).$
Moreover, by symmetry (i.e. replacing $t$ by $-t)$ we also have
$\frac{dg(t)}{dt}_{t=0^{-}}\geq v(x_{0}).$ On the other hand $g_{t}$
is convex in $t$ (as it is defined as a sup of affine functions)
and hence its right and left derivatives exist and satisfy the inequality
$\frac{dg(t)}{dt}_{t=0^{-}}\leq\frac{dg(t)}{dt}_{t=0^{+}}.$ Thus
it must be that the right and left derivatives both coincide with
$v(x_{0})$ which concludes the proof of the claim above.

To prove the first point set $G_{t}(x)=\left\langle p+ta,x\right\rangle -\phi(x)$
for given vectors $p$ and $a$ and for the second point one set $G(x,t):=\left\langle p,x\right\rangle -(\phi(x)+tv(x)).$
As for the last point we first assume that $\phi$ is smooth and strictly
convex and that $f$ is bounded. Then, making the change of variables
$p=d\phi_{|x}$ gives 
\[
\int vMA(\phi)=\int v(x)d(p(x))=\int v(x_{p})dp,
\]
 where $x_{p}$ is uniquely determined by $p=d\phi_{|x_{p}},$ which
by duality and the first point above means that $x_{p}=d\phi_{|p}^{*},$
proving the desired formula in the case. Finally, we take smooth and
strictly $\phi_{j}$ decreasing to a given $\phi$ and hence $u_{j}:=\phi_{j}^{*}$
increase to $u:=\phi^{*}.$ By convexity $du_{j|p}\rightarrow du_{|p}$
for any $p\in S$ where $S$ is the set of points $p$ where $u$
is finite and differentiable. By assumption $S=P-N$ where $N$ has
measure zero. Finally, letting $j\rightarrow\infty$and using Prop
\ref{pro:cont for full ma} in the lhs and dominated convergence in
the rhs concludes the proof for $v$ bounded. But writing $v$ as
an increasing limit $v_{j}$ of non-negative bounded continuous functions
and then using the Lebesgue monotone convergence theorem then proves
the general case.

\end{document}